\def\hB{\hspace*{\fill}$\qed$}
\title{Non-unital $C^{*}$-categories, (co)limits, crossed products and exactness  }
\author{
Ulrich Bunke\thanks{Fakult{\"a}t f{\"u}r Mathematik,
Universit{\"a}t Regensburg,
93040 Regensburg,
ulrich.bunke@mathematik.uni-regensburg.de} 
}
\numberwithin{equation}{section}
\newtheorem{theorem}{Theorem}[section] 
\newtheorem{prop}[theorem]{Proposition}
\newtheorem{lem}[theorem]{Lemma}
\newtheorem{ddd}[theorem]{Definition}
\newtheorem{kor}[theorem]{Corollary}
\theoremstyle{remark}
\theoremstyle{definition}
\newtheorem{rem}[theorem]{Remark}
\newcommand{ \Ker }{\mathrm{Ker}}
\newcommand{\F}{\mathbb{F}}
\newcommand{\bQ}{\mathbf{Q}}
\newcommand{\Ob}{\mathrm{Ob}}
\newcommand{\bB}{{\mathbf{B}}}
\newcommand{\incl}{\mathrm{incl}}
\newcommand{\Groupoids}{\mathbf{Groupoids}}
\newcommand{\bA}{{\mathbf{A}}}
\newcommand{\bK}{{\mathbf{K}}}
\newcommand{\cD}{{\mathcal{D}}}
 \newcommand{\Cat}{{\mathbf{Cat}}}
\newcommand{\Ccat}{{\mathbf{C}^{\ast}\mathbf{Cat}}}
\newcommand{\Calg}{{\mathbf{C}^{\ast}\mathbf{Alg}}}
\newcommand{\op}{\mathrm{op}}
\newcommand{\codom}{\mathrm{codom}}
\begin{document}
 	
\maketitle

\begin{abstract}
We provide a reference for basic categorial properties of the categories of (possibly non-unital) $\C$-linear $*$-categories or $C^{*}$-categories, and (not necessarily unit-preserving) functors.  Generalizing the classical case of algebras with $G$-action, 
we   extend the  construction of   crossed products to categories with $G$-action.  We will show that the crossed product functor preserves exact sequences and excisive squares and sends weak equivalences to equivalences.
 \end{abstract}

\tableofcontents
\setcounter{tocdepth}{5}


\newcommand{\nCcat}{C^{*}\mathbf{Cat}^{\mathrm{nu}}}
\renewcommand{\Ccat}{C^{*}\mathbf{Cat}}
\newcommand{\alg}{\mathrm{alg}}
\newcommand{\nClincat}{\mathbf{{}^{*}\Cat^{\mathrm{nu}}_{\C}}}
\newcommand{\npClincat}{\mathbf{{}_{\mathrm{pre}}^{*}\Cat^{\mathrm{nu}}_{\C}}}
\newcommand{\pClincat}{\mathbf{{}_{\mathrm{pre}}^{*}\Cat_{\C}}}

\newcommand{\Clincat}{\mathbf{{}^{*}\Cat_{\C}}}
\renewcommand{\Calg}{C^{*}\mathbf{Alg}}
\newcommand{\nCalg}{C^{*}\mathbf{Alg}^{\mathbf{nu}}}
\renewcommand{\nCalg}{C^{*}\mathbf{Alg}^{\mathrm{nu}}}

\newcommand{\Algc}{{}^{*}\mathbf{Alg}_{\mathbb{C}}}
\newcommand{\nAlgc}{{}^{*}\mathbf{Alg}^{\mathrm{nu}}_{\mathbb{C}}}
\newcommand{\npAlgc}{{}_{\mathrm{pre}}^{*}\mathbf{Alg}^{\mathrm{nu}}_{\mathbb{C}}}
\newcommand{\pAlgc}{{}_{\mathrm{pre}}^{*}\mathbf{Alg}_{\mathbb{C}}}
\newcommand{\Bd}{\mathrm{Bd}}
\newcommand{\Compl}{\mathrm{Compl}}
\newcommand{\inj}{\mathrm{inj}}
\newcommand{\nClincatinj}{{}^{*}\mathbf{Cat}^{\mathrm{nu}}_{\C,\inj}}
\newcommand{\nCcatinj}{C^{*}\mathbf{Cat}^{\mathrm{nu}}_{\inj}}

\section{Introduction}

If a group $G$ acts on a  (not necessarily commutative or unital) ring $A$ by automorphisms, then we can construct  in a functorial way a new ring  $A\rtimes^{\alg}G$ called the crossed product of $A$ with $G$.   Its underlying abelian group   is given by $\bigoplus_{g\in G} A$. Let $(a,g)$ denote the element of $A\rtimes^{\alg}G$  corresponding to  the element  $a$ of $A$ in the summand with index $g$ in $G$. 
Then the multiplication in  the crossed product is determined by bi-linearity and the rule 
$(a',g')(a,g)=((g^{-1}a')a,g'g)$, where $ga'$ denotes the image of $a'$ under the automorphism of $A$ given by $g$.

A $*$-algebra over $\C$ is an algebra $A$ over $\C$ with a  complex anti-linear involution $a\mapsto a^{*} $ such that $(a'a)^{*}= a^{*}a^{\prime*}$.
If $A$ is a $*$-algebra over $\C$ and $G$ acts by automorphisms of $*$-algebras, then $A\rtimes^{\alg} G$ is again   a   $*$-algebra over $\C$ with   involution determined by  $(a,g)^{*}=(ga^{*},g^{-1})$. 

A $C^{*}$-algebra is a $*$-algebra $A$ over $\C$ which is complete with respect to some\footnote{The norm is actually unique.} norm $\|-\|_{A}$ satisfying $\|a^{*}\|_{A}=\|a\|_{A}$ for all $a$ in $A$, $\|aa'\|_{A}\le \|a\|_{A}\|a'\|_{A}$  for all $a,a'$ in $A$, and the $C^{*}$-condition $\|a^{*}a\|_{A}=\|a\|_{A}^{2}$ for all $a$ in $A$. If $A$ is a $C^{*}$-algebra
with $G$-action, then a $C^{*}$-algebraic crossed product  $A\rtimes G$  is obtained from $A\rtimes^{\alg}G$ by completion with respect  to a suitable $C^{*}$-norm. In general there are various interesting choices of this norm. For the purpose of the present paper we consider the maximal norm $\|-\|_{\max}$ on $A\rtimes^{\alg}G$ defined by
$$\|x\|_{\max}:=\sup_{\rho} \|\rho(x)\|_{B}\ ,$$ where
$\rho$ runs over all homomorphisms $\rho:A\rtimes^{\alg}G\to B$ of $*$-algebras  over $\C$  with target a $C^{*}$-algebra (note the discussion after the Corollary \ref{qergiowegwregergewgrg}). 
    
The crossed product is often considered as a kind of homotopy quotient of the ring $A$ by the group action. One of the outcomes of the present paper is to make this idea precise in a technical sense at least in the unital case. To this end 
we embed  the category of $*$-algebras over $\C$ into the category of $\C$-linear $*$-categories,  and the category of 
$C^{*}$-algebras into   the category of $C^{*}$-categories.  Before we can give the precise formulation in Corollary \ref{wthpowkphtrhwegrwegwregw} 
we will introduce the basic notions which go into its statement.

A $\C$-linear $*$-category is a small (possibly non-unital) category  which is enriched in $\C$-vector spaces,  and which has  an involution $*$ which fixes the objects, reverses arrows, and which acts anti-linearly on the $\Hom$-vector spaces.  The algebra of endomorphisms of every object in such a category  then becomes a $*$-algebra over $\C$. 

A morphism between    $\C$-linear $*$-categories  is a (not necessarily unit-preserving)  functor which is compatible with the enrichment and the involutions. In this way we obtain the category $\nClincat$ of    small $\C$-linear $*$-categories and     functors. The superscript $\mathrm{nu}$ stands for non-unital and  indicates that we do not require the existence of units or that functors preserve units.
The category of  $*$-algebras $\nAlgc$ over $\C$  embeds into $\nClincat$ as the full subcategory of  $\C$-linear $*$-categories with a single object.

The relation between $C^{*}$-categories and $\C$-linear $*$-categories is similar as in the case of algebras. If $\bC$ is a $\C$-linear $*$-category, then we can define a maximal semi-norm (which might assume the value $\infty$) on the morphism spaces by
$$\|f\|_{\max}:=\sup_{\rho} \|\rho(f)\|_{B}\ ,$$
where $\rho$ runs over all morphisms of $\C$-linear $*$-categories $\rho:\bC\to B$
with target a $C^{*}$-algebra (considered as a  $\C$-linear $*$-category with a single object). 

A   $\C$-linear $*$-category  is a   $C^{*}$-category if its maximal semi-norm  is a  finite norm,   and if the morphism spaces 
are complete with respect to this norm. The $*$-algebra of endomorphisms of an object in a $C^{*}$-category is a $C^{*}$-algebra.
We refer to \cite[Rem 2.15]{startcats} for a discussion of the equivalence of this definition with other (previous) definitions 
in the literature.\footnote{The word ``parallel'' must be deleted in  Condition 4. in  \cite[Rem 2.15]{startcats} and also in point 4.  in the text after  \cite[Def. 9]{startcats}.}

A  morphism between $C^{*}$-categories is just a   morphism between $\C$-linear $*$-categories.
In this way we can consider the category $\nCcat$ of    small $C^{*}$-categories  
as a full subcategory of $\nClincat$.   Moreover, the category of $C^{*}$
algebras $\nCalg$ embeds into as the full subcategory of $\nCcat$ consisting of $C^{*}$-categories with a single object.
 
We let $\Clincat$ be the subcategory of $\nClincat$ of small unital $\C$-linear $*$-categories and unital functors.  Then the category 
$\Ccat:=\Clincat\cap \nCcat$ is the category of unital  $C^{*}$-categories and unital functors. Furthermore,
$\Algc:=\nAlgc\cap \Clincat$ is the category of unital $*$-algebras over $\C$ and unital homomorphisms, and finally $\Calg:=\nCalg\cap \Ccat$ is the category of unital $C^{*}$-algebras and unital homomorphisms.
 
It is known \cite{DellAmbrogio:2010aa}, \cite[Thm. 8.1]{startcats} that the categories $\Clincat$ and $\Ccat$ are complete and cocomplete, i.e., that they admit limits and colimits for all diagrams indexed by small categories. 
Since we are going  perform  categorical constructions  in the non-unital cases the following is useful to know.

\begin{theorem}[Theorem \ref{riguhqwieufqewfeqfqewf}]
The categories $\nClincat$ and $\nCcat$ are complete and cocomplete.
\end{theorem}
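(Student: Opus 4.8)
The plan is to treat the four categories together, reducing everything to a careful study of forgetful functors and a size/completion argument. First I would establish that $\nClincat$ is complete and cocomplete, and then deduce the $C^{*}$-case by producing a reflector (completion functor) $\nClincat \to \nCcat$, or more precisely by intersecting with an appropriate localizing/closure procedure. So the two halves of the argument are: (i) $\nClincat$ is bicomplete; (ii) $\nCcat$ is closed inside $\nClincat$ under the relevant (co)limit formation, up to applying the maximal-seminorm completion.

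For (i), the key observation is that $\nClincat$ sits over $\mathbf{Set}$ via the object functor $\bC \mapsto \Ob(\bC)$, and that for each fixed object set $X$ the fibre is the category of ``$\C$-linear $*$-categories with object set $X$'', which is itself a category of models for an essentially algebraic (or $\C$-linear operadic) theory: a family of $\C$-vector spaces $\bC(x,y)$ indexed by $X\times X$, together with composition maps, an anti-linear involution, and equations. Such categories are locally presentable, hence bicomplete; limits of the $\Hom$-spaces are computed objectwise in $\C$-vector spaces, and colimits are computed by the usual monadic/algebraic colimit construction. To assemble the fibres into $\nClincat$ I would use the standard Grothendieck-style recipe: a limit of a diagram $i \mapsto \bC_i$ has object set $\lim_i \Ob(\bC_i)$ and morphism spaces $\lim_i \bC_i(\text{—})$ with the obvious restriction maps; a colimit has object set $\colim_i \Ob(\bC_i)$, and on a pair of objects one first pulls back along the structure maps to a filtered/indexed diagram over the ``paths'' in the index category and then forms the colimit of the resulting $\C$-linear $*$-category structure — concretely one can present $\colim$ as a quotient of a free $\C$-linear $*$-category on the disjoint union of the $\bC_i$. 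A cleaner route, if I want to avoid hand computation, is to exhibit $\nClincat$ as the category of algebras for an accessible monad on $\mathbf{Set}^{\to}$-like data (a functor category valued in $\C$-vector spaces), invoke that categories of algebras over accessible monads on locally presentable categories are locally presentable, and conclude bicompleteness abstractly. Either way the content is bookkeeping; no genuine obstacle here.

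For (ii), I would use the maximal $C^{*}$-seminorm construction already described in the excerpt. Given a $\C$-linear $*$-category $\bC$ whose maximal seminorm is finite on all morphism spaces, Hausdorffification plus completion yields a $C^{*}$-category $\bC^{C^{*}}$, and this assignment is left adjoint to the inclusion $\nCcat \hookrightarrow \{\text{fin.\ seminorm}\}\subseteq \nClincat$. For limits: the morphism spaces of a limit of $C^{*}$-categories are closed subspaces of products of the $\bC_i(x,y)$, hence complete, and the maximal seminorm is already a norm there (it dominates the sup norm coming from the ambient product, and is dominated by it via the projections), so limits in $\nClincat$ of $C^{*}$-categories are again $C^{*}$-categories — the limit in $\nCcat$ is computed in $\nClincat$. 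For colimits: compute the colimit $\colim^{\nClincat}_i \bC_i$, check its maximal seminorm is finite (it is, because any $*$-representation of the colimit restricts to compatible representations of the $\bC_i$, which are bounded, and the colimit is generated by the images — so the seminorm of a given morphism is bounded by a finite sum of the norms in the $\bC_i$), and then apply $(-)^{C^{*}}$; the resulting $C^{*}$-category is the colimit in $\nCcat$ by the usual adjunction argument (a cocone in $\nCcat$ factors through the $\nClincat$-colimit, and each leg is automatically bounded, hence extends over the completion).

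The step I expect to be the main obstacle is the \emph{finiteness of the maximal seminorm on colimits} in the $C^{*}$-case, i.e.\ showing that $\colim^{\nClincat}_i \bC_i$ has no morphism on which every $C^{*}$-representation is unbounded. In the unital case with directed colimits this is classical, but for a general small diagram — in particular for pushouts/coequalizers, where morphisms in the colimit are arbitrarily long alternating words in morphisms coming from different $\bC_i$ modulo relations — one must argue that any such word has bounded image under every representation, uniformly enough to get a finite seminorm. The resolution is that a $*$-representation of the algebraic colimit is exactly a compatible family of $*$-representations of the $\bC_i$, each of which is contractive for the respective $C^{*}$-norms (since $*$-homomorphisms out of $C^{*}$-algebras/categories are automatically norm-decreasing), so a word $f_n \cdots f_1$ maps to a product of operators of norm $\le \prod \|f_k\|_{\bC_{i_k}}$; taking the sup over representations gives $\|f_n\cdots f_1\|_{\max} \le \prod_k \|f_k\|$, which is finite, and extends to finite sums by the triangle inequality. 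Once this bound is in hand, the completion $(-)^{C^{*}}$ applies and the adjunction finishes the proof; I would then remark that the unital statements follow by the same argument, since the free-unitalization functor $\nClincat \to \Clincat$ (adjoining identity morphisms) is compatible with everything and restricts appropriately to the $C^{*}$-setting.
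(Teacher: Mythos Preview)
Your strategy for $\nClincat$ (local presentability / algebras for an accessible monad) is a different route from the paper's, which instead reduces to the already-known unital case: it shows that $\nClincat$ is equivalent to the full subcategory of the arrow category $\Clincat^{\Delta^{1}}$ on objects of the form $\bC^{+}\to \C[\Ob(\bC)]$, and then argues that this subcategory inherits (co)limits from $\Clincat$ because the endofunctor $\C[\Ob(-)]$ preserves colimits (and connected limits; products are handled by hand). Your approach is more direct and avoids invoking the unital theorem, while the paper's trick is cleaner if one already has the unital case in hand.

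For $\nCcat$, your colimit argument is essentially correct and matches the paper's in spirit: one computes the colimit in $\nClincat$, observes that the maximal seminorm is finite (your word-bound argument is exactly the reason), and applies the completion reflector. The paper routes this through an intermediate category $\npClincat$ of pre-$C^{*}$-categories, but that is just packaging.

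However, your argument for \emph{limits} in $\nCcat$ has a genuine gap. You claim that limits of $C^{*}$-categories computed in $\nClincat$ are again $C^{*}$-categories, with the maximal seminorm equal to the sup norm. This fails already for infinite products: the product in $\nClincat$ of a family $(\bC_{i})_{i\in I}$ has morphism spaces $\prod_{i\in I}\Hom_{\bC_{i}}(C_{i},C_{i}')$, the full algebraic product, which for infinite $I$ contains families $(f_{i})$ with $\sup_{i}\|f_{i}\|_{\bC_{i}}=\infty$. For such an element the maximal seminorm is infinite (each projection is a representation witnessing $\|(f_{i})\|_{\max}\ge\|f_{j}\|$ for every $j$), so the $\nClincat$-product is not even a pre-$C^{*}$-category. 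The projections give a \emph{lower} bound on the maximal seminorm, not an upper bound; your sentence ``dominated by it via the projections'' is the wrong inequality. The paper repairs this by inserting a coreflection step: the functor $\Bd^{\infty}:\nClincat\to\npClincat$ (a transfinite iteration of ``throw away morphisms of infinite maximal seminorm'') is right adjoint to the inclusion, so limits in $\npClincat$ are obtained by applying $\Bd^{\infty}$ to the $\nClincat$-limit. One then uses that $\nCcat\hookrightarrow\npClincat$ is reflective (via $\Compl$) to conclude that limits of $C^{*}$-categories, computed in $\npClincat$, already lie in $\nCcat$. Without some device like $\Bd^{\infty}$ your limit argument does not go through.
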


 For a group $G$ we let $\Fun(BG,\cC)$ denote the category of objects with $G$-action and equivariant morphisms in a category $\cC$.    The main construction of the present paper is the extension of the crossed product functors
 $$-\rtimes^{\alg}G: \Fun(BG,\nAlgc)\to \nAlgc  \ , \quad -\rtimes G:\Fun(BG,\nCalg)\to \nCalg $$
 described above to   categories, i.e. we will extend these functors 
  to  functors
$$-\rtimes^{\alg}G:\Fun(BG,\nClincat)\to \nClincat\ , \quad  -\rtimes G:\Fun(BG,\nCcat)\to \nCcat$$
(see Definitions \ref{wergiojwergregregwergerg} and \ref{feivoevvewfvfevfdsv}).
 Both versions of the crossed product functors preserve unitality.
 
  The restriction of the definition of the crossed product from $C^{*}$-categories to $C^{*}$-algebras (considered as $C^{*}$-categories with a single object) differs slightly from the standard Definition \ref{qreugiqwgrefwqewfqewf} for $C^{*}$-algebras. In Proposition \ref{egwergergwrgw} we verify that both definitions provide the same result.

For unital $\C$-linear $*$-categories or unital $C^{*}$-categories we have the notion of a unitary  isomorphism between morphisms \cite[Def. 5.1]{startcats}. Morphisms which are invertible up to unitary isomorphisms are called unitary equivalences   \cite[Eef. 2.4]{DellAmbrogio:2010aa}, \cite[Def. 5.2]{startcats}. We will use the symbol  $\approx$ in order to denote the relation of unitary equivalence between objects. 

 Forming the Dwyer-Kan localization of
$\Clincat$ or $\Ccat$ with respect to  collection  of unitary equivalences we obtain $\infty$-categories $\Clincat_{\infty}$ and $\Ccat_{\infty}$ (see Definition \ref{erkgowergerwgwergergwerg}, \cite[Def. 5.7]{startcats}) which model a homotopy theory of   unital $\C$-linear $*$-categories, or  unital $C^{*}$-categories, respectively.
For $\bC$ in $\Fun(BG,\Clincat)$ or $\Fun(BG,\Ccat)$
we get  a notion of  a homotopy quotient  of $\bC$ by $G$  denoted by $\bC_{hG}$\footnote{In the notation of Theorem \ref{weiogwgwerrewgwregwreg} we have 
$\bC_{hG}:=\colim_{BG }\ell_{BG}^{\alg}(\bC)$ or $\bC_{hG}:=\colim_{BG }\ell_{BG}(\bC)$, respectively.}. The homotopy quotient $\bC_{hG}$ is an object of $\Clincat$, or $\Ccat$ respectively,  which is well-defined up to unitary equivalence.

The following  is a reformulation of  Theorem \ref{weiogwgwerrewgwregwreg}.
\begin{theorem}\label{ioqjeroijgwregwergregwg} \mbox{}
\begin{enumerate} \item  If $\bC$ is a  unital $\C$-linear $*$-category   with $G$-action, then 
  $\bC\rtimes^{\alg}G\approx \bC_{hG}$ .  
  \item If $\bC$ is a unital $C^{*}$-category  with $G$-action, then 
  $\bC\rtimes G\approx \bC_{hG}$. 
\end{enumerate}
\end{theorem}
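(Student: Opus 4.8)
\emph{Proof strategy.}
The plan is to show that $\bC\rtimes^{\alg}G$, equipped with its canonical functor $\iota\colon\bC\to\bC\rtimes^{\alg}G$, co-represents in the localized $\infty$-category $\Clincat_{\infty}$ the same functor as $\bC_{hG}=\colim_{BG}\ell_{BG}(\bC)$, namely the functor of cocones under the $BG$-diagram $\ell_{BG}(\bC)$. The equivalence $\bC\rtimes^{\alg}G\approx\bC_{hG}$ then follows by the Yoneda lemma, since $\approx$ is precisely the relation of equivalence in $\Clincat_{\infty}$; the comparison map is the one induced, via the universal property of the colimit, by the canonical cocone $\ell_{BG}(\bC)\to\bC\rtimes^{\alg}G$ built from the implementing unitaries $(1,g)$, $g\in G$, of the crossed product.

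First I would record the strict, $1$-categorical universal property of the crossed product. Unwinding Definition~\ref{wergiojwergregregwergerg} (whose multiplication and involution generalize the rules $(a',g')(a,g)=((g'^{-1}a')a,g'g)$ and $(a,g)^{*}=(ga^{*},g^{-1})$ of the algebra case), the functor $\iota$, $X\mapsto X$, $f\mapsto(f,e)$, exhibits $\bC\rtimes^{\alg}G$ as the universal recipient of a \emph{covariant representation} of $(\bC,G)$: for every unital $\C$-linear $*$-category $\bD$, precomposition with $\iota$ together with evaluation on the generators $(1,g)$ identifies $\Hom_{\Clincat}(\bC\rtimes^{\alg}G,\bD)$ with the set of pairs $(\phi,(u_{g})_{g\in G})$, where $\phi\colon\bC\to\bD$ is a functor and $u_{g}\colon\phi\circ g\Rightarrow\phi$ (writing $g$ also for the autofunctor of $\bC$ induced by $g$) is a unitary natural isomorphism, subject to $u_{e}=\mathrm{id}$ and the evident normalized cocycle condition relating $u_{gh}$ to $u_{g}$ and $u_{h}$. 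The $*$-structure is exactly what forces the implementing isomorphisms to be unitary. For the $C^{*}$-version (Definition~\ref{feivoevvewfvfevfdsv}) the same statement holds with $\bD$ ranging over unital $C^{*}$-categories, because by construction $\bC\rtimes G$ is the enveloping $C^{*}$-category of $\bC\rtimes^{\alg}G$, i.e.\ its image under the left adjoint of the inclusion $\Ccat\hookrightarrow\Clincat$; Proposition~\ref{egwergergwrgw} ensures that on one-object categories this reproduces the usual crossed product of $C^{*}$-algebras.

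Second I would pass from $\Clincat$ to $\Clincat_{\infty}$. Here I would invoke the description of the mapping spaces of the Dwyer--Kan localization (Definition~\ref{erkgowergerwgwergergwerg}, \cite[Def.~5.7]{startcats}): between unital $\C$-linear $*$-categories the mapping space of $\Clincat_{\infty}$ is $1$-truncated and has the homotopy type of the nerve of the groupoid whose objects are functors and whose morphisms are unitary natural isomorphisms. Granting this, the bar (descent) description of $\colim_{BG}$ identifies a point of the mapping space computing $\mathrm{Map}_{\Clincat_{\infty}}(\bC_{hG},\bD)$ --- that is, a cocone $\ell_{BG}(\bC)\to\underline{\bD}$ into the constant diagram --- with exactly a functor $\phi$ together with a normalized cocycle of unitary natural isomorphisms $u_{g}$ as in the previous paragraph: since the relevant mapping spaces have vanishing $\pi_{\ge 2}$, all higher coherences of the cocone are uniquely, hence contractibly, filled. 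Matching this against the first step shows that the canonical cocone induces an equivalence $\mathrm{Map}_{\Clincat_{\infty}}(\bC\rtimes^{\alg}G,\bD)\simeq\mathrm{Map}_{\Clincat_{\infty}}(\bC_{hG},\bD)$, natural in $\bD$, and Yoneda yields $\bC\rtimes^{\alg}G\approx\bC_{hG}$. The $C^{*}$-statement follows by applying the $C^{*}$-completion functor, which is a left adjoint, hence commutes with $\colim_{BG}$, is compatible with the two localizations, and carries $\bC\rtimes^{\alg}G$ to $\bC\rtimes G$.

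The step I expect to be the main obstacle is the rectification underlying the second paragraph: one must show that an $\infty$-categorical cocone over $BG$ in $\Clincat_{\infty}$ carries no more data than a strict covariant representation. This rests on (a) the precise identification of the mapping spaces of $\Clincat_{\infty}$ with nerves of groupoids of functors and unitary isomorphisms, which is the technical core of the homotopy theory developed in \cite{startcats}, and (b) an essential use of \emph{unitality}: only when $\bD$ is unital do the coherence isomorphisms of a cocone actually live in $\bD$ and admit adjoint-inverse unitaries $u_{g}^{*}=u_{g}^{-1}$ matching the generators of $\bC\rtimes^{\alg}G$. In the non-unital case the implementing unitaries would have to be sought in a multiplier category rather than in $\bD$ itself, and correspondingly $\bC\rtimes^{\alg}G$ has no reason to model $\bC_{hG}$ --- which is why the theorem is confined to the unital setting.
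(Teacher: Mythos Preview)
Your approach is correct and reaches the same conclusion, but it is genuinely different from the paper's proof. The paper does not argue via mapping spaces in $\Clincat_{\infty}$. Instead it works model-categorically: it uses the combinatorial model structures on $\Clincat$ and $\Ccat$ from \cite{startcats} and the induced projective model structures on $\Fun(BG,-)$; it introduces an explicit functor $L:\Fun(BG,\nClincat)\to\Fun(BG,\nClincat)$ (which multiplies the object set by $G$ and leaves morphism spaces unchanged) and shows that $\lambda:L\to\id$ is a cofibrant replacement; then \cite[Prop.~14.3]{startcats} gives $\colim_{BG}\ell_{BG}^{\alg}(\bC)\simeq\ell^{\alg}(\colim_{BG}L(\bC))$, and the paper proves separately (Proposition~\ref{regiueqrhgiqwgewgqgfewrg}) that the \emph{strict} colimit $\colim_{BG}L(\bC)$ is isomorphic to $\bC\rtimes^{\alg}G$ via an explicit map $c^{\alg}_{\bC}$. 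The $C^{*}$-case is parallel rather than deduced from the algebraic one by completion.

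What each approach buys: your argument is more direct and conceptually transparent---once one accepts that the mapping spaces of $\Clincat_{\infty}$ are nerves of the functor/unitary-isomorphism groupoids, the crossed product visibly co-represents the right functor, via the covariant-representation universal property (Lemma~\ref{gbiojwogwerewgwreer}). The paper's route avoids invoking that description of mapping spaces and instead produces a concrete $1$-categorical object $L(\bC)$ whose ordinary $BG$-colimit is the crossed product; this identification (Proposition~\ref{regiueqrhgiqwgewgqgfewrg}) is formulated and proved even in the non-unital case and is reused later (e.g.\ in the proof of Theorem~\ref{fbgbgrbsfbsdfbs}). Two small points of phrasing in your write-up: the ``left adjoint of $\Ccat\hookrightarrow\Clincat$'' is not quite the right functor---what you actually use is that $\bC\rtimes^{\alg}G$ lands in the pre-$C^{*}$-categories (Lemma~\ref{rgioqwfqwefqefe}) so that $\Compl$ applies; and it is cleaner to run the $C^{*}$-argument directly in $\Ccat_{\infty}$ using Corollary~\ref{qergiowegwregergewgrg}, rather than to pass through compatibility of completion with localization.
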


 If $A$ and $B$ are $*$-algebras over $\C$ or $C^{*}$-algebras,  then the relation  $A\approx B$ implies $A\cong B$. 
In the following corollary $A_{hG}$ is still interpreted in the respective category of $*$-categories.

\begin{kor}\label{wthpowkphtrhwegrwegwregw}\mbox{}
\begin{enumerate} \item  If $A$ is a  unital  $*$-algebra over $\C$  with $G$-action, then  $A\rtimes^{\alg}G$ is the unique (up to isomorphism) unital  $*$-algebra over $\C$ which is unitarily equivalent to $A_{hG}$.
  \item If $A$ is a  unital $C^{*}$-algebra    with $G$-action, then  $A\rtimes G$ is the unique (up to isomorphism) unital  $C^{*}$-algebra   which is unitarily equivalent to $A_{hG}$.
\end{enumerate}
\end{kor}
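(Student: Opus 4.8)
The plan is to reduce everything to Theorem~\ref{ioqjeroijgwregwergregwg} together with the remark, stated just before the corollary, that unitary equivalence between $*$-algebras over $\C$ (resp. $C^{*}$-algebras) implies isomorphism. I will treat part~(1) in detail; part~(2) is obtained verbatim upon replacing $\rtimes^{\alg}$ by $\rtimes$, "$*$-algebra over $\C$" by "$C^{*}$-algebra", and Theorem~\ref{ioqjeroijgwregwergregwg}(1) by Theorem~\ref{ioqjeroijgwregwergregwg}(2).

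First I would view the unital $*$-algebra $A$, with its $G$-action by $*$-automorphisms, as an object of $\Fun(BG,\Clincat)$ via the embedding of $\Algc$ into $\Clincat$ as the full subcategory of one-object $\C$-linear $*$-categories; in particular the $G$-action is by unital functors. By construction of the category-valued algebraic crossed product (Definition~\ref{wergiojwergregregwergerg}) its restriction to one-object categories is the classical crossed product, so the one-object category $A\rtimes^{\alg}G$ is literally the $*$-algebra $A\rtimes^{\alg}G$; moreover the crossed product functor preserves unitality, so this is a unital $*$-algebra over $\C$. Applying Theorem~\ref{ioqjeroijgwregwergregwg}(1) to $A\in\Fun(BG,\Clincat)$ then gives $A\rtimes^{\alg}G\approx A_{hG}$, which establishes the existence of a unital $*$-algebra over $\C$ unitarily equivalent to $A_{hG}$.

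For uniqueness, let $B$ be any unital $*$-algebra over $\C$ with $B\approx A_{hG}$. Since unitary equivalences of unital $\C$-linear $*$-categories are closed under composition and admit quasi-inverses (\cite[Def. 5.2]{startcats}), the relation $\approx$ on objects is symmetric and transitive, so $B\approx A\rtimes^{\alg}G$. As both $B$ and $A\rtimes^{\alg}G$ are $*$-algebras over $\C$, the remark preceding the corollary gives $B\cong A\rtimes^{\alg}G$, which is the claimed uniqueness.

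There is no serious obstacle: the statement is a formal consequence of Theorem~\ref{ioqjeroijgwregwergregwg}. The two points deserving a line of justification are that $\approx$ is an equivalence relation on objects — for which one only needs that unitary equivalences compose and have quasi-inverses — and that the category-valued crossed products restrict, on one-object categories, to the classical algebraic crossed product, respectively to the (maximal) $C^{*}$-algebraic crossed product, the latter being exactly Proposition~\ref{egwergergwrgw}.
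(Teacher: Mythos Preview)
Your proposal is correct and follows exactly the approach the paper intends: the corollary is stated immediately after Theorem~\ref{ioqjeroijgwregwergregwg} and the remark that $\approx$ between $*$-algebras (resp.\ $C^{*}$-algebras) implies isomorphism, and the paper gives no further argument since the corollary is meant to be immediate from these two ingredients. Your only addition is to spell out why $\approx$ is symmetric and transitive and to note that the categorical crossed product restricts to the classical one on one-object categories; both are harmless and accurate clarifications.
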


We now consider invariance properties of the crossed products.
 Assume that $\phi:\bC\to \bC^{\prime}$ is a morphism in $\Fun(BG,\Clincat)$.  We first consider the obvious case that  
  that there exists an inverse equivalence $\psi:\bC^{\prime}\to \bC $ in $\Fun(BG,\Clincat)$, i.e, 
 the compositions $\psi\circ \phi$ and $\phi\circ \psi$ are unitarily isomorphic  to the respective identities in  $\Fun(BG,\Clincat)$.  Then $\phi\rtimes G:\bC\rtimes^{\alg} G\to \bC'\rtimes^{\alg} G$ and $\psi\rtimes G: \bC'\rtimes^{\alg} G\to \bC\rtimes^{\alg} G$ are
inverse to each other (up to unitary isomorphism)  unitary  equivalences in $\nClincat$.
An analogous statement holds true in the $C^{*}$-case.

  Theorem \ref{ioqjeroijgwregwergregwg}  implies that
the crossed product preserves a weaker form of equivalences.
A morphism $\phi:\bC\to \bC^{\prime}$  in $\Fun(BG,\Clincat)$
or $\Fun(BG,\Ccat)$ is called a weak equivalence (see Definition \ref{wtihowergergwegergwreg}) if it becomes a unitary equivalence after forgetting the $G$-action. Thus for $\phi$ being a weak equivalence we drop the requirement that the inverse equivalence $\psi$  is equivariant.  
The following is Proposition \ref{efiobgebgewrverbvewvbev}.
   \begin{prop} [crossed product sends weak equivalences to equivalences]\mbox{}\label{efiobgebgewrverbvewvbev1}
\begin{enumerate}
\item \label{wthiorthrherherthetrhtrh}
If $ \phi:\bC\to \bD$  is a weak equivalence in $\Fun(BG,\Clincat)$, then the induced morphism
$\phi\rtimes^{\alg}G:\bC\rtimes^{\alg}G\to \bD\rtimes^{\alg}G$
is a unitary equivalence.
\item  \label{wthiorthrherherthetrhtrh1}
If $\phi: \bC\to \bD$  is a weak equivalence in $\Fun(BG,\Ccat)$, then  the induced morphism
$\phi\rtimes G:\bC\rtimes G\to \bD\rtimes G $
is a unitary equivalence. \end{enumerate}
\end{prop}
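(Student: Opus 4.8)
The plan is to deduce Proposition \ref{efiobgebgewrverbvewvbev1} from Theorem \ref{ioqjeroijgwregwergregwg} together with the fact that the homotopy quotient construction $\bC\mapsto \bC_{hG}$ is invariant under weak equivalences. Since the two assertions have identical structure, I would treat the $\C$-linear case \eqref{wthiorthrherherthetrhtrh} in detail and remark that the $C^{*}$-case \eqref{wthiorthrherherthetrhtrh1} is verbatim the same after replacing $\rtimes^{\alg}G$ by $\rtimes G$, $\Clincat$ by $\Ccat$, and $\ell^{\alg}_{BG}$ by $\ell_{BG}$.

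First I would recall that $\bC_{hG}:=\colim_{BG}\ell^{\alg}_{BG}(\bC)$ is computed in the $\infty$-category $\Clincat_{\infty}$, and that the localization functor $\ell^{\alg}_{BG}:\Fun(BG,\Clincat)\to \Fun(BG,\Clincat_{\infty})$ sends a morphism to an equivalence precisely when the underlying functor of $\C$-linear $*$-categories is a unitary equivalence — this is exactly the statement that being a weak equivalence is detected after forgetting the $G$-action, combined with the fact that equivalences in a functor $\infty$-category $\Fun(BG,-)$ are detected objectwise (here $BG$ has a single object, so objectwise means on the single underlying object). Hence if $\phi:\bC\to\bD$ is a weak equivalence, then $\ell^{\alg}_{BG}(\phi)$ is an equivalence in $\Fun(BG,\Clincat_{\infty})$, and applying the colimit functor $\colim_{BG}$ yields an equivalence $\bC_{hG}\xrightarrow{\ \sim\ }\bD_{hG}$ in $\Clincat_{\infty}$.

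Next I would compare this abstract equivalence with the concrete morphism $\phi\rtimes^{\alg}G$. By Theorem \ref{ioqjeroijgwregwergregwg}(1) there are unitary equivalences $\bC\rtimes^{\alg}G\approx \bC_{hG}$ and $\bD\rtimes^{\alg}G\approx \bD_{hG}$; moreover the proof of that theorem (Theorem \ref{weiogwgwerrewgwregwreg}) should provide these equivalences naturally in $\bC$, i.e.\ a commuting square relating $\phi\rtimes^{\alg}G$ to the comparison map $\bC_{hG}\to\bD_{hG}$ in $\Clincat_{\infty}$. Since the two vertical comparison arrows and the bottom arrow of that square are all equivalences in $\Clincat_{\infty}$, the morphism $\phi\rtimes^{\alg}G$ becomes an equivalence in $\Clincat_{\infty}$. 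Finally I would invoke the characterization of the equivalences inverted by the Dwyer–Kan localization $\Clincat\to\Clincat_{\infty}$ (Definition \ref{erkgowergerwgwergergwerg}): a morphism of unital $\C$-linear $*$-categories becomes an equivalence in $\Clincat_{\infty}$ if and only if it is a unitary equivalence. Applying this to $\phi\rtimes^{\alg}G$ gives the claim; the same chain of implications in the $C^{*}$-setting, using Theorem \ref{ioqjeroijgwregwergregwg}(2) and the localization $\Ccat\to\Ccat_{\infty}$, proves \eqref{wthiorthrherherthetrhtrh1}.

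\textbf{Main obstacle.}
The delicate point is the naturality: I am assuming that the unitary equivalence $\bC\rtimes^{\alg}G\approx\bC_{hG}$ of Theorem \ref{ioqjeroijgwregwergregwg} is sufficiently functorial in $\bC\in\Fun(BG,\Clincat)$ to fit into the comparison square, so that ``$\phi\rtimes^{\alg}G$ is an equivalence in $\Clincat_{\infty}$'' really follows from ``the abstract comparison map is''. If the proof of Theorem \ref{weiogwgwerrewgwregwreg} only gives the equivalence pointwise without naturality, then one needs instead a more hands-on argument: exhibit, for a given weak equivalence $\phi$, an explicit (non-equivariant) unitary inverse $\psi$ of the underlying functor and an explicit unitary natural isomorphism, and then track what the crossed-product construction (Definitions \ref{wergiojwergregregwergerg}, \ref{feivoevvewfvfevfdsv}) does to this data — showing directly that $\psi$ and the unitaries assemble, after applying $-\rtimes^{\alg}G$ (and using that the $G$-coordinate is untouched), into a unitary equivalence inverse to $\phi\rtimes^{\alg}G$. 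I expect the naturality route to be the clean one and would only fall back on the explicit computation if the cited theorem does not deliver the required functoriality.
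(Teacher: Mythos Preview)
Your approach is essentially the same as the paper's: localize $\phi$ to an equivalence in $\Fun(BG,\Clincat_{\infty})$, apply $\colim_{BG}$, identify the result via Theorem~\ref{weiogwgwerrewgwregwreg} with $\ell^{\alg}(\phi\rtimes^{\alg}G)$, and then descend back to a unitary equivalence. Two small remarks on points you flagged yourself. First, your naturality worry is unfounded: the equivalence of Theorem~\ref{weiogwgwerrewgwregwreg} is produced from the \emph{functorial} cofibrant replacement $\lambda:L\to\id$ and the natural map $c^{\alg}_{\bC}$ of Proposition~\ref{regiueqrhgiqwgewgqgfewrg}, so the comparison square you need commutes on the nose. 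Second, the final step---``becomes an equivalence in $\Clincat_{\infty}$ $\Rightarrow$ is a unitary equivalence''---is not automatic from the universal property of a Dwyer--Kan localization; the paper justifies it by invoking the model structure on $\Clincat$ in which all objects are fibrant and cofibrant, so that weak equivalences are detected by $\ell^{\alg}$. Your fallback explicit argument is exactly what the paper calls the ``instructive exercise'' via Proposition~\ref{obgfbgbgfbsfdgbsdfb9} and Remark~\ref{ewroigjwegergwerg9}.
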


In the non-unital case we still have a precise relation of the crossed product with a colimit in $\nClincat$ or $\nCcat$. We refer to Proposition  \ref{regiueqrhgiqwgewgqgfewrg} for the statement.

The notion of an exact sequence of $*$-algebras over $\C$ or $C^{*}$-algebras has a natural generalization Definition \ref{ergiowejogergergwergwergwregw} to the case of categories. 
A sequence with $G$-actions is exact if it becomes an exact sequence after forgetting the $G$-action.
It is essentially obvious from the definition that the algebraic crossed product
$$-\rtimes G:\Fun(BG, \nClincat)\to  \nClincat$$ preserves exact sequences 
(this fact is stated as Theorem \ref{fbgbgrbsfbsdfbs}.\ref{sivijaoddsvadsv}).  
Because of the completions involved in its construction,  it is not so obvious  but well known, that the $C^{*}$-crossed product preserves exact sequences of $C^{*}$-algebras\footnote{Note that we define the crossed product with the maximal norm.}. The following  Theorem  \ref{fbgbgrbsfbsdfbs}.\ref{giowejgoeregwegergrg}
 extends this assertion to $C^{*}$-categories.

\begin{theorem}[exactness of crossed product] If $$0\to \bC\to \bD\to \bQ\to 0$$ is an exact sequence in $\Fun(BG,\nCcat)$ such that $\bD$ is unital, then 
$$0\to \bC\rtimes G\to \bD\rtimes G\to \bQ\rtimes G\to 0$$
is an exact sequence in $\nCcat$.
\end{theorem}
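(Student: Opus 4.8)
The plan is to reduce the statement about $C^*$-categories to the already-known exactness of the maximal crossed product for $C^*$-algebras, by exploiting the structure of morphisms in $\nCcat$ together with the explicit description of the crossed product on morphism spaces. The starting point is the observation that exactness of a sequence $0\to\bC\to\bD\to\bQ\to 0$ in $\nCcat$ is, by Definition \ref{ergiowejogergergwergwergwregw}, a condition that can be tested object-by-object (the functor $\bC\to\bD$ is injective and identifies $\bC$ with an ideal in $\bD$ sharing the same objects, and $\bD\to\bQ$ is the quotient). Likewise, the crossed product $\bD\rtimes G$ from Definition \ref{feivoevvewfvfevfdsv} has the same objects as $\bD$, and on each morphism space its algebraic version is $\bigoplus_{g\in G}\Hom_\bD(\cdot,g\cdot)$ with the twisted convolution composition, completed in the maximal norm. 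So for fixed objects $x,y$ the sequence of morphism spaces in $\bC\rtimes G\to\bD\rtimes G\to\bQ\rtimes G$ is, up to the completions, the algebraic crossed-product sequence applied entrywise.

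First I would handle the purely algebraic layer. From Theorem \ref{fbgbgrbsfbsdfbs}.\ref{sivijaoddsvadsv} the algebraic crossed product preserves exact sequences in $\nClincat$, so $0\to\bC\rtimes^{\alg}G\to\bD\rtimes^{\alg}G\to\bQ\rtimes^{\alg}G\to 0$ is exact. The remaining content is that exactness survives the $C^*$-completions: injectivity of $\bC\rtimes G\to\bD\rtimes G$, surjectivity of $\bD\rtimes G\to\bQ\rtimes G$, and the identification of the kernel of the latter with the image of the former. Surjectivity is the easiest: $\bD\rtimes^{\alg}G\to\bQ\rtimes^{\alg}G$ is surjective, and the maximal norm is functorial, so it is norm-decreasing; hence the completed map has dense image, and since one checks its image is closed (it is a $*$-homomorphism of $C^*$-categories onto something with the quotient $C^*$-norm, which one verifies equals the maximal norm on $\bQ\rtimes^{\alg}G$) it is surjective. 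The identification $\ker(\bD\rtimes G\to\bQ\rtimes G)=\overline{\mathrm{im}(\bC\rtimes G\to\bD\rtimes G)}$ is exactly the classical exactness argument for maximal crossed products transcribed to the categorical setting: given a morphism in $\bD\rtimes G$ mapping to zero in $\bQ\rtimes G$, one approximates and uses a suitable (quasi-)central approximate unit for the ideal $\bC\rtimes G$ in $\bD\rtimes G$ to pull it back.

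This is where the hypothesis that $\bD$ is unital enters, and I expect it to be the crux. Unitality of $\bD$ is what lets one build, for each pair of objects, an approximate unit of the ideal $\cC\rtimes G$ inside $\cD\rtimes G$ that behaves well under the $G$-action — concretely, one takes an approximate unit $(u_\lambda)$ of the ideal $\bC$ in $\bD$ (available entrywise in $\Calg$), averages or at least controls it against the group action, and views $u_\lambda\in A\rtimes^{\alg}G$ as the element $(u_\lambda,e)$ supported at the identity. The key inequality to establish is the analogue of the classical one: for $x$ in $\bD\rtimes^{\alg}G$ mapping to something of small norm in $\bQ\rtimes G$, the element $(1-u_\lambda)x(1-u_\lambda)$ has norm tending to $\|[x]\|_{\bQ\rtimes G}$, so that $x$ lies in the closed ideal generated by $\bC$. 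Injectivity of $\bC\rtimes G\to\bD\rtimes G$ then follows formally: the maximal norm on $\bC\rtimes^{\alg}G$ is dominated by the restriction of the maximal norm on $\bD\rtimes^{\alg}G$ because every $*$-representation of $\bC\rtimes^{\alg}G$ extends (using the ideal structure and unitality of $\bD$, via multiplier-type arguments, cf.\ the discussion after Corollary \ref{qergiowegwregergewgrg}) to one of $\bD\rtimes^{\alg}G$, and conversely it dominates it; hence the two norms agree and the completed map is isometric. The only genuinely new bookkeeping relative to the algebra case is that all estimates must be carried out uniformly over pairs of objects, but since the crossed-product norm on a morphism space $\Hom_{\bD\rtimes G}(x,y)$ is controlled by the norms on the corner $C^*$-algebras $\mathrm{End}_{\bD\rtimes G}(x)$ and $\mathrm{End}_{\bD\rtimes G}(y)$ (a standard feature of $C^*$-categories), the reduction to the algebra statement goes through object-pair by object-pair with no new analytic input.
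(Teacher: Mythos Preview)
Your strategy has a genuine gap in the step where you claim the reduction to the $C^{*}$-algebra case ``goes through object-pair by object-pair''. The problem is that for an object $x$ of $\bD$, the endomorphism algebra $\End_{\bD\rtimes G}(x)$ is \emph{not} the crossed product $\End_{\bD}(x)\rtimes G$: by Definition \ref{wergiojwergregregwergerg} its underlying vector space is $\bigoplus_{g\in G}\Hom_{\bD}(x,g^{-1}x)$, which involves morphisms to all the translates $g^{-1}x$, not just endomorphisms of $x$. Unless $x$ happens to be $G$-fixed, this is not an algebra crossed product at all, so the classical exactness theorem for $C^{*}$-algebra crossed products does not apply to it. The same obstruction blocks your approximate-unit argument: an approximate unit for $\End_{\bC}(x)\subseteq\End_{\bD}(x)$ has no reason to act well on elements $(f,g)$ with $f:x\to g^{-1}x$ and $g\neq e$, and your extension-of-representations argument for injectivity runs into the same issue.

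The paper deals with exactly this difficulty by passing to the functor $A:\nCcatinj\to\nCalg$ of Definition \ref{zhioerhrthtrhehth}, which collects \emph{all} morphism spaces of $\bD$ into a single $C^{*}$-algebra. The non-trivial input is Theorem \ref{qrioqwfewfewfewfqef}, which shows $A(\bD\rtimes G)\cong A(\bD)\rtimes G$; combined with the exactness of $A$ (Proposition \ref{qwefewfewfqewfc}) and the known algebra result, this yields injectivity of $\bC\rtimes G\to\bD\rtimes G$ via the isometric embeddings $\rho$ (Lemma \ref{ergtegwergwergwrg}). For the identification of the cokernel the paper does not use approximate units either: it uses unitality of $\bD$ to interpret the quotient map $\bD\rtimes G\to(\bD\rtimes G)/(\bC\rtimes G)$ as coming from a covariant representation (Corollary \ref{qergiowegwregergewgrg}.\ref{trhiorghrethtrhrhetr}), factors it through $\bQ$, and thereby produces an explicit inverse to the comparison map $(\bD\rtimes G)/(\bC\rtimes G)\to\bQ\rtimes G$. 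So both the role of unitality and the mechanism of the reduction are quite different from what you sketched.
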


One basic motivation for the present paper is to provide a  reference for constructions with $C^{*}$-categories which go into the construction of a version of equivariant coarse $K$-homology in \cite{coarsek}. 
The non-equivariant case has beed worked out in \cite[Sec. 8]{buen}. 
The proof of excision in \cite{coarsek}  uses the notion of excisive squares of $C^{*}$-categories.  
  This  notion  is relevant since the topological $K$-theory functor for $C^{*}$-categories (see Definition \ref{qrgijewofweqfqweqwefqefq}) sends excisive squares of $C^{*}$-categories to push-out squares of spectra (see Proposition \ref{ergiuheigerwgqwfwefqwef}).

%
    \begin{ddd} A commutative square \begin{equation}\label{asdv2e4fwdwdqwdqwdqdqwqev}
\xymatrix{\bA\ar[r]\ar[d]&\bB\ar[d]\\\bC\ar[r]&\bD}
\end{equation}
 in $\nCcat$ is called excisive, if: 
    \begin{enumerate}
  \item $\bB$ and $\bD$ are unital and the morphism $\bB\to \bD$ is unital.
  \item The morphisms $\bA\to \bB$ and $\bC\to \bD$ are inclusion of ideals. 
  \item  The induced morphism between the quotients  $\bB/\bA\to  \bD/\bC$  is a unitary  equivalence. 
  \end{enumerate}
  \end{ddd}

  A square of the shape \eqref{asdv2e4fwdwdqwdqwdqdqwqev} in $\Fun(BG,\nCcat)$ is excisive, if it becomes excisive after forgetting the $G$-action. 
  
  \begin{theorem}[crossed product preserves excisive squares]
  If the square of the shape  \eqref{asdv2e4fwdwdqwdqwdqdqwqev} is an excisive square  in  $\Fun(BG,\nCcat)$, then  \begin{equation}\label{asdv2e4fwdqwfefefwefwefwefwefeffqev}
\xymatrix{\bA\rtimes G\ar[r]\ar[d]&\bB\rtimes G\ar[d]\\\bC\rtimes G\ar[r]&\bD\rtimes G}
\end{equation}
is an excisive square in $\nCcat$.
 \end{theorem}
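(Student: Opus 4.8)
The plan is to verify the three defining conditions of an excisive square for the diagram \eqref{asdv2e4fwdqwfefefwefwefwefwefeffqev} directly, using the exactness theorem (Theorem \ref{fbgbgrbsfbsdfbs}.\ref{giowejgoeregwegergrg}) as the main input and the fact (Proposition \ref{efiobgebgewrverbvewvbev1}, or rather its obvious strengthening for genuine equivariant equivalences noted just after Corollary \ref{wthpowkphtrhwegrwegwregw}) that the crossed product sends unitary equivalences to unitary equivalences. Since excisiveness of a square in $\Fun(BG,\nCcat)$ is by definition checked after forgetting the $G$-action, and the crossed product commutes with the forgetful functor in the relevant sense, I may as well carry out the argument directly with the equivariant structures present.

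First I would establish condition (1): $\bB$ and $\bD$ are unital with $G$-action, so $\bB\rtimes G$ and $\bD\rtimes G$ are unital because the crossed product preserves unitality (stated in the introduction, Definitions \ref{wergiojwergregregwergerg} and \ref{feivoevvewfvfevfdsv}); likewise the unital morphism $\bB\to\bD$ induces a unital morphism $\bB\rtimes G\to\bD\rtimes G$. Next, condition (2): I need that $\bA\rtimes G\to\bB\rtimes G$ and $\bC\rtimes G\to\bD\rtimes G$ are inclusions of ideals. An inclusion of an ideal is, in particular, an exact sequence $0\to\bA\to\bB\to\bB/\bA\to 0$ together with the observation that the image is an ideal; applying the exactness theorem to $0\to\bA\to\bB\to\bB/\bA\to 0$ (legitimate since $\bB$ is unital) gives that $\bA\rtimes G\to\bB\rtimes G$ is injective and its image is a (closed, two-sided, $*$-closed) ideal, with quotient $(\bB/\bA)\rtimes G$; the same for $\bC\hookrightarrow\bD$. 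This also identifies the quotients appearing in condition (3): $(\bB\rtimes G)/(\bA\rtimes G)\simeq (\bB/\bA)\rtimes G$ and $(\bD\rtimes G)/(\bC\rtimes G)\simeq (\bD/\bC)\rtimes G$, naturally, so that the induced map between the quotients of \eqref{asdv2e4fwdqwfefefwefwefwefwefeffqev} is identified with $(\bB/\bA)\rtimes G\to(\bD/\bC)\rtimes G$, which is the crossed product of the map $\bB/\bA\to\bD/\bC$. By hypothesis the latter is a unitary equivalence (of $C^{*}$-categories with $G$-action, in fact, but at least after forgetting the action), so by Proposition \ref{efiobgebgewrverbvewvbev1}.\ref{wthiorthrherherthetrhtrh1} its crossed product is a unitary equivalence. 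This gives condition (3) and completes the verification.

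The main obstacle I expect is the careful identification of the quotient of the crossed product with the crossed product of the quotient --- i.e. making precise that the exactness theorem is not merely the assertion of exactness but yields a natural isomorphism $(\bD/\bC)\rtimes G\cong(\bD\rtimes G)/(\bC\rtimes G)$ compatible with the horizontal maps of the square. This is essentially the content of what the exactness theorem proves (exactness of a sequence in $\nCcat$ at the right-hand term $\bQ\rtimes G$ means precisely that $\bQ\rtimes G$ is the quotient of $\bD\rtimes G$ by the ideal $\bC\rtimes G$), but I would want to state it as a lemma or extract it explicitly from the proof of Theorem \ref{fbgbgrbsfbsdfbs}, including its naturality in the short exact sequence, so that the square \eqref{asdv2e4fwdwdqwdqwdqdqwqev} of short exact sequences (one for the row $\bA\to\bB\to\bB/\bA$, one for $\bC\to\bD\to\bD/\bC$, with the given equivalence $\bB/\bA\to\bD/\bC$) maps to a compatible square of short exact sequences after applying $-\rtimes G$. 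Once that naturality is in hand, the proof is just bookkeeping; the only genuine analytic content --- that the maximal-norm completion behaves well under quotients --- is already packaged inside Theorem \ref{fbgbgrbsfbsdfbs}.\ref{giowejgoeregwegergrg}.
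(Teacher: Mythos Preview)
Your proposal is correct and follows essentially the same approach as the paper's proof: both use Theorem \ref{fbgbgrbsfbsdfbs}.\ref{giowejgoeregwegergrg} to obtain the ideal inclusions and the natural identification $(\bD\rtimes G)/(\bC\rtimes G)\cong (\bD/\bC)\rtimes G$, and then invoke Proposition \ref{efiobgebgewrverbvewvbev} to conclude that the crossed product of the quotient equivalence is a unitary equivalence. The naturality you flag as a potential obstacle is handled implicitly in the paper via the functoriality of $-\rtimes G$, and your instinct to extract it explicitly is sound but not strictly necessary.
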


Besides proving the results stated so far, in Sections \ref{qriugoqrgqfewfqwef} and  \ref{eqwrughuqiorgfwefwqefqwefqewf} we provide a reference for various facts about the categories introduced above. We discuss adjunctions relating the unital and the non-unital cases. Furthermore we provide adjunctions which relate $\C$-linear $*$-categories with $C^{*}$-categories via the intermediate category of pre-$C^{*}$-categories.  The unital case of all this has been worked out in 
\cite[Sec. 3]{startcats}, and in this paper we provide the non-unital generalizations.

{\em Acknowledgement: The author  has benefitted   from the cooperations with Alexander Engel. Some of the  ideas used in the present paper have been developed  in the joined project \cite{buen}.  
The author furthermore thanks Siegfried Echterhoff for an encouraging discussion.
Finally, the author was supported by the SFB 1085 (Higher Invariants) founded by the DFG.}

\section{Unital and non-unital $\C$-linear $*$-categories}\label{qriugoqrgqfewfqwef}

In order to fix set-theoretic size issues we fix a sequence of two Grothendieck universes whose elements will be called small and large sets.

A $\C$-linear $*$-category is a category that is enriched over $\C$-vector spaces and is equipped
with an involution which fixes objects, and which acts anti-linearly on the morphism vector spaces reversing their direction 
\cite[Def. 2.3]{startcats}. A morphism between $\C$-linear $*$-categories is a functor which is compatible with the enrichment and which preserves the involution. The large category of small $\C$-linear $*$-categories   will be denoted by $\Clincat$.

If we omit the requirement that a category has identity morphisms, and that functors preserve identities, then we arrive at the notions of a possibly non-unital $\C$-linear $*$-category and of a  possibly non-identity preserving morphisms. We let 
$\nClincat$ denote the large category of  possibly non-unital  small  $\C$-linear $*$-categories  and  possibly non-identity preserving  morphisms. 
 We have an inclusion functor
 \begin{equation}\label{ewfqwfeiuhqwiefqewfqewf}\incl: \Clincat\to \nClincat\ .
  \end{equation} 

\begin{prop}
The inclusion functor \eqref{ewfqwfeiuhqwiefqewfqewf} is the left- and right adjoint of adjunctions
\begin{equation}\label{egrggqwwefeqwfr4r41}  (-)^{+}:\nClincat\leftrightarrows \Clincat:\incl\ ,  \end{equation} 
and
 \begin{equation}\label{qewfoihqoiewfqwfqwefqewf}
\incl:\Clincat\leftrightarrows \nClincat:U\ .
\end{equation}
\end{prop}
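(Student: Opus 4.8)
The plan is to exhibit, for each of the two claimed adjunctions, explicit unit and counit natural transformations and check the triangle identities; both reduce to essentially trivial verifications once the functors $(-)^{+}$ and $U$ are described, so the real content is to give these constructions carefully.

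First I would treat the right adjoint $U$ in \eqref{qewfoihqoiewfqwfqwefqewf}. For a (possibly non-unital) $\C$-linear $*$-category $\bD$ one defines $U(\bD)$ to be the maximal unital subcategory: same objects, but $\Hom_{U(\bD)}(x,y)$ consists of those $f\in\Hom_\bD(x,y)$ that factor through (equivalently, are absorbed by) the sets of ``local units'' at $x$ and $y$ — concretely, $U(\bD)$ is the full unital $*$-subcategory spanned by those objects of $\bD$ which admit a (necessarily unique, by a standard $C^{*}$/algebra argument) identity morphism, with all morphisms between them. One checks $U(\bD)$ is closed under composition and $*$ and is unital. For a morphism $\phi:\bD\to\bD'$ in $\nClincat$, if $x$ has an identity $1_x$ then $\phi(1_x)$ is an idempotent $*$-fixed morphism which is an identity on the image, but to land in $U(\bD')$ one must restrict to objects of $U(\bD)$ whose image still carries a unit; the cleaner route, which I would actually take, is: $U(\bD)$ has as objects the pairs consisting of an object $x$ of $\bD$ together with a chosen identity morphism $1_x$, and $U$ acts on a morphism $\phi$ by sending $(x,1_x)$ to $(\phi(x),\phi(1_x))$ — this works because $\phi(1_x)$ is automatically an identity for $\phi(x)$ in the subcategory generated by the image. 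Then the counit $\incl\circ U\Rightarrow \id_{\nClincat}$ is the evident inclusion $U(\bD)\hookrightarrow\bD$, the unit $\id_{\Clincat}\Rightarrow U\circ\incl$ is an isomorphism (a unital category is its own maximal unital part), and the triangle identities are immediate. The universal property to verify is that any unital functor $\bC\to\bD$ from a unital $\bC$ factors uniquely through $U(\bD)$, which is clear since the image of each $1_x$ is an identity.

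Next I would treat the left adjoint $(-)^{+}$ in \eqref{egrggqwwefeqwfr4r41}, the unitalization. This is the categorical version of the standard unitalization of a non-unital algebra: $\bC^{+}$ has the same objects as $\bC$, and $\Hom_{\bC^{+}}(x,y):=\Hom_\bC(x,y)\oplus \C\cdot\delta_{xy}$, with composition extending that of $\bC$ by declaring the adjoined elements $\delta_{xx}$ to be identities, and involution extending that of $\bC$ with $\delta_{xx}^{*}=\delta_{xx}$. One checks $\bC^{+}$ is a unital $\C$-linear $*$-category and that $\bC\hookrightarrow\bC^{+}$ is the unit of the adjunction. The universal property is that any morphism $\bC\to\bD$ in $\nClincat$ with $\bD$ unital extends uniquely to a unital functor $\bC^{+}\to\bD$, by sending $\delta_{xx}\mapsto 1_{\phi(x)}$; uniqueness and well-definedness (compatibility with composition and $*$) are routine. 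The counit $\bC^{+}\to\bC$ for unital $\bC$ sends $\delta_{xx}\mapsto 1_x$ and is the algebra of the identity; again the triangle identities are immediate from the constructions.

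I do not expect a genuine obstacle here: the statement is a bookkeeping exercise, and the main thing to be careful about is the precise definition of $U$ so that it is functorial on \emph{all} (possibly non-unit-preserving) morphisms — that is the one place where a naive definition (``full subcategory on objects admitting a unit'') fails to be functorial, and the fix is to package the chosen identity morphism into the data of an object of $U(\bD)$, or equivalently to note that $U(\bD)$ can be identified with the category of idempotent-complete-at-the-object-level data and that $\nClincat$-morphisms preserve this. Everything else — that $(-)^{+}$ and $U$ are well-defined $*$-functors, and that $\incl$ is fully faithful so that the units/counits involving unital categories are isomorphisms — is direct verification that I would present in a few lines each rather than spell out in full.
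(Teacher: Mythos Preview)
Your treatment of the unitalization $(-)^{+}$ is correct and matches the paper's construction exactly.

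The construction of $U$, however, has a genuine gap. You correctly identify that the naive ``full subcategory on objects admitting an identity'' is not functorial, but your proposed fix --- taking objects to be pairs $(x,1_x)$ with $1_x$ an \emph{identity} morphism --- still fails for the same reason. If $1_x$ is a two-sided unit for $x$ in $\bD$ and $\phi:\bD\to\bD'$ is a morphism in $\nClincat$, then $\phi(1_x)$ is a selfadjoint idempotent in $\End_{\bD'}(\phi(x))$, but in general it is \emph{not} an identity for $\phi(x)$ in $\bD'$: take $\bD=\C$, $\bD'=M_2(\C)$ (each viewed as a one-object category), and $\phi(\lambda)=\mathrm{diag}(\lambda,0)$. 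So $(\phi(x),\phi(1_x))$ is not an object of $U(\bD')$ under your definition, and $U(\phi)$ is not defined. Your remark that ``$\phi(1_x)$ is automatically an identity for $\phi(x)$ in the subcategory generated by the image'' is true but irrelevant, since $U(\bD')$ is defined intrinsically from $\bD'$, not from the image of $\phi$.

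The paper's fix is to enlarge the object set: an object of $U(\bD)$ is a pair $(D,p_D)$ with $p_D$ an arbitrary \emph{selfadjoint projection} in $\End_\bD(D)$ (not an identity), and
\[
\Hom_{U(\bD)}\big((D,p_D),(D',p_{D'})\big):=p_{D'}\,\Hom_\bD(D,D')\,p_D\ .
\]
Then $p_D$ really is the identity of $(D,p_D)$ in this corner category, and $U(\phi)(D,p_D):=(\phi(D),\phi(p_D))$ is well-defined because non-unital $*$-functors do preserve selfadjoint projections. The counit $\omega_\bD:\incl(U(\bD))\to\bD$ forgets the projection and includes the corner into the full Hom-space; the adjunction bijection is then checked directly. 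Note that with this definition the unit $\bC\to U(\incl(\bC))$ is \emph{not} an isomorphism (there are many projections on each object of a unital $\bC$), contrary to what you asserted; the paper accordingly verifies the Hom-set bijection rather than the triangle identities.
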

\begin{proof}
 The functor $(-)^{+}$ is the unitalization functor. Let $\bC$ be in $\nClincat$. 
Its  unitalization     $\bC^{+}$  has the following description:
\begin{enumerate}
\item objects: $\bC^{+}$ has the same set of objects as $\bC$.  \item
morphisms: The $\C$-vector space of morphisms in $\bC^{+}$ between two objects $C,C^{\prime}$ in $\bC$ is given by
 $$\Hom_{\bC^{+}}(C,C^{\prime}):=\left\{
 \begin{array}{cc} \Hom_{\bC}(C,C^{\prime})&C\not=C^{\prime}\\ \Hom_{\bC}(C,C )\oplus \C &C=C^{\prime}
 \end{array}\right.$$ 
 \item involution:
The involution sends a morphism $f:C\to C^{\prime}$ in $\bC^{+}$ to $f^{*}:C'\to C$ if $C\not=C^{\prime}$, and
the morphism $(f,\lambda):C\to C$  in $\bC^{+}$ to $(f^{*},\bar \lambda)$. 
\item composition:
The composition is determined by the following cases and the compatibility with the involution.
\begin{enumerate} 
\item 
If $C,C',C''$ are three distinct objects of $\bC$, and $f:C\to C'$ and $f':C'\to C''$ are morphisms in $\bC^{+}$, then their composition is given by $f'\circ f:C\to C''$. 
\item If $C\not=C'$ and $f:C\to C'$ and $(f',\lambda):C'\to C'$ are morphisms in $\bC^{+}$, then  
  their composition is given by 
  $(f',\lambda)\circ f:=(f'\circ f+\lambda f):C\to C^{\prime}$.
  \item Finally, if $(f,\lambda),(f',\lambda'):C\to C$ are two endomorphisms of $C$  in $\bC^{+}$, then  $(f',\lambda')\circ (f,\lambda):=(f'\circ f+\lambda' f+f' \lambda,\lambda' \lambda):C\to C$.\end{enumerate}
  \end{enumerate}
  If $\phi:\bC\to \bC'$ is a morphism in  $\nClincat$, then we define
  $\phi^{+}:\bC^{+}\to \bC^{\prime,+}$ as follows:
  \begin{enumerate}
  \item objects: $\phi^{+}$  acts on objects as $\phi$.
  \item morphisms: If $f$ or $(f,\lambda)$ is a morphism in $\bC^{+}$, then its image under $\phi^{+}$ is given by
  $\phi(f)$ or $(\phi(f),\lambda)$, respectively.
  \end{enumerate}
This finishes the description of the unitalization functor.
  
  The unit of the adjunction \eqref{egrggqwwefeqwfr4r41} is given by the family $(\alpha_{\bC})_{\bC\in \nClincat}$  of  morphisms 
  $$\alpha_{\bC}:\bC\to \incl(\bC^{+})\ .$$ Here $\alpha_{\bC}$   is the identity on objects and sends a morphism
  $f:C\to C'$ in $\bC$ to the morphism $f$ in $\bC^{+}$ if $C\not=C^{\prime}$, or to  the morphism $(f,0)$ in $\bC^{+}$ if $C=C^{\prime}$.
 
For $\bC$ in $\nClincat$ and $\bD$ in $\Clincat$ we consider the map \begin{equation}\label{ergoih1o5gweg}
 \Hom_{\Clincat}(\bC^{+},\bD)\to   \Hom_{\nClincat}(\bC,\incl(\bD))
\end{equation}
  which sends 
 $\phi:\bC^{+}\to \bD$ to  the composition
$$\bC\stackrel{\alpha_{\bC}}{\to}  \incl(\bC^{+})\stackrel{\incl(\phi)}{\to} \incl(\bD)\ .$$
It is straightforward to check that    \eqref{ergoih1o5gweg} is a bijection   and bi-natural in $\bC$ and $\bD$. This finishes the description of the adjunction  \eqref{egrggqwwefeqwfr4r41}.

We now describe the adjunction \eqref{qewfoihqoiewfqwfqwefqewf}. We first explain the functor $U$.
Let $\bD$ be in $\nClincat$. Then $U(\bD)$ in $\Clincat$ is defined as follows:
\begin{enumerate}
\item objects: The objects of $U(\bD)$ are pairs $(D,p_{D})$ of an object $D$ of $\bD$ and a selfadjoint projection $p$ in $\End_{\bD}(D)$.
\item morphisms: The $\C$-vector space of morphisms $\Hom_{U(\bD)}((D,p_{D}),(D',p_{D'}))$ is defined as the subspace
$p_{D'}\Hom_{\bD}(D,D')p_{D}$ of $\Hom_{\bD}(D,D')$.
\item composition and  involution: The composition and the involution are inherited from $\bD$.
\end{enumerate}
If $\phi:\bD\to \bD^{\prime}$ is a morphism in $\nClincat$, then we define the morphism
$U(\phi):U(\bD)\to U(\bD')$ in $\Clincat$ as follows:
\begin{enumerate}
\item objects: The functor $U(\phi)$ sends the object $(D,p_{D})$ in $U(\bD)$ to the object $(\phi(D),\phi(p_{D}))$ in $U(\bD^{\prime})$.
\item morphisms: The action of $U(f)$ on morphisms is defined by restriction of the action of $f$.  
\end{enumerate}
Note that $U(\bD)$ is a small unital $\C$-linear $*$-category, and that the functor $U(\phi)$ is unital. Indeed, the identity of the object $(D,p_{D})$ in $U(\bD)$ is $p_{D}$. 
This finishes the description of the functor $U$.

The counit of the adjunction \eqref{qewfoihqoiewfqwfqwefqewf} is given by the family $(\omega_{\bD})_{\bD\in \nClincat}$  of morphisms
$$\omega_{\bD}:\incl(U(\bD))\to \bD\ .$$ Here $\omega_{\bD}$  sends  the object $(D,p_{D})$  of $\incl(U(\bD))$ to the object $D$ of $\bD$ and is given  by the canonical inclusion on the level of morphisms. 

For $\bC$ in $\Clincat$ and $\bD$ in $\nClincat$  we consider the map 
  \begin{equation}\label{weoivoeiwvwervewrvwervrev}
 \Hom_{\Clincat}( \bC,U(\bD)) \to \Hom_{\nClincat}(\incl(\bC),\bD)
\end{equation}
 which 
sends   $\phi:\bC \to  U(\bD)$  to the composition 
 $$\incl(\bC)\stackrel{\incl(\phi)}{\to} \incl(U(\bD))\stackrel{\omega_{\bD}}{\to} \bD\ .$$
 It is straightforward to check that   \eqref{weoivoeiwvwervewrvwervrev}  is a bijection and bi-natural in $\bC$ and $\bD$. 
  This finishes the description of the adjunction  \eqref{qewfoihqoiewfqwfqwefqewf}.
%
%
\end{proof}

We consider possibly non-unital $*$-algebras over $\C$ as possibly non-unital $\C$-linear $*$-categories with a single object. 
In this way we get a fully faithful inclusion
 \begin{equation}\label{}\nAlgc \to \nClincat \end{equation}
 of the category of  possibly non-unital $*$-algebras over $\C$ and algebra homomorphisms into the category of possibly nonunital $\C$-linear $*$-categories.  We then have a pull-back square of categories
  $$\xymatrix{\Algc\ar[r]\ar[d]&\nAlgc\ar[d]\\\Clincat\ar[r]&\nClincat}\ ,$$
 where $\Algc$ is the category of unital $*$-algebras over $\C$ and morphisms.

\begin{rem}
The adjunction  \eqref{egrggqwwefeqwfr4r41} restricts to an adjunction   \begin{equation}\label{egrggqwwefeqwf1}  (-)^{+}:\nCalg\leftrightarrows \Calg:\incl\ .  \end{equation} 
In contrast, the adjunction  \eqref{qewfoihqoiewfqwfqwefqewf} does not have a counterpart in algebras since the functor
$U$ does not preserve categories with a single object. In fact, the inclusion functor $\incl:\Algc \to \nAlgc$ is not a left adjoint functor since  it does not preserve inital objects. The initial object of $\Algc$ is $\C$, while the initial object of $\nAlgc$ is the zero algebra. \hB
\end{rem}
 
 We consider the inclusion $\incl: \nAlgc\to \nClincat$.
 \begin{lem}
 The inclusion functor is the right-adjoint of 
 an adjunction  \begin{equation}\label{erwgwergwgrevgweg234r3f} A^{f,\alg}:\nClincat\leftrightarrows \nAlgc:\incl\ .\end{equation} 
 \end{lem}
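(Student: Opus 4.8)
The plan is to produce the left adjoint $A^{f,\alg}$ explicitly as a ``free $*$-algebra on the morphisms of $\bC$'' and then verify the adjunction bijection directly, exactly as in the preceding proposition. First I would construct the functor. Given $\bC$ in $\nClincat$, set
$$
V_{\bC}:=\bigoplus_{C,C'\in\Ob(\bC)}\Hom_{\bC}(C,C'),
$$
a small $\C$-vector space carrying the conjugate-linear involution which on the summand $\Hom_{\bC}(C,C')$ is the map $*\colon\Hom_{\bC}(C,C')\to\Hom_{\bC}(C',C)$. Let $T(V_{\bC}):=\bigoplus_{n\ge 1}V_{\bC}^{\otimes n}$ be the non-unital tensor algebra over $\C$, made into a $*$-algebra by $(v_{1}\otimes\cdots\otimes v_{n})^{*}:=v_{n}^{*}\otimes\cdots\otimes v_{1}^{*}$, and let $I_{\bC}\subseteq T(V_{\bC})$ be the two-sided ideal generated by the elements $g\otimes f-g\circ f$ for all composable pairs $f\colon C\to C'$, $g\colon C'\to C''$ in $\bC$. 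Since $(g\otimes f-g\circ f)^{*}=f^{*}\otimes g^{*}-f^{*}\circ g^{*}$ is again of this form, $I_{\bC}$ is a $*$-ideal, and one sets $A^{f,\alg}(\bC):=T(V_{\bC})/I_{\bC}$, a small non-unital $*$-algebra over $\C$ (smallness is automatic from smallness of $\bC$). For a morphism $\phi\colon\bC\to\bC'$ in $\nClincat$, the $\C$-linear involution-preserving map $V_{\bC}\to V_{\bC'}$ assembled from the morphism components of $\phi$ extends to an algebra homomorphism $T(V_{\bC})\to T(V_{\bC'})$ which carries $I_{\bC}$ into $I_{\bC'}$ because $\phi$ preserves composition; it therefore descends to $A^{f,\alg}(\phi)$, and functoriality is immediate.

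Next I would write down the unit. Define $\eta_{\bC}\colon\bC\to\incl(A^{f,\alg}(\bC))$ by sending every object of $\bC$ to the unique object of $\incl(A^{f,\alg}(\bC))$ and sending a morphism in $\Hom_{\bC}(C,C')$ to its class under the composite $\Hom_{\bC}(C,C')\hookrightarrow V_{\bC}\hookrightarrow T(V_{\bC})\twoheadrightarrow A^{f,\alg}(\bC)$. This map is $\C$-linear on each morphism space, intertwines the two involutions by construction, and satisfies $\eta_{\bC}(g\circ f)=\eta_{\bC}(g)\,\eta_{\bC}(f)$ precisely because $g\otimes f-g\circ f\in I_{\bC}$; hence $\eta_{\bC}$ is a morphism in $\nClincat$ (note that, being in the non-unital setting, there is nothing to check about identities), and naturality in $\bC$ is clear.

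Then I would check the universal property. For $B$ in $\nAlgc$ and a morphism $\psi\colon\bC\to\incl(B)$ in $\nClincat$, observe that $\psi$ is forced to send every object of $\bC$ to the single object of $\incl(B)$, so it amounts exactly to a family of $\C$-linear maps $\Hom_{\bC}(C,C')\to B$, i.e.\ a $\C$-linear involution-preserving map $V_{\bC}\to B$, subject to $\psi(g)\psi(f)=\psi(g\circ f)$ for all composable pairs. The induced algebra homomorphism $T(V_{\bC})\to B$ therefore annihilates the generators of $I_{\bC}$, hence all of $I_{\bC}$, and descends to $\bar\psi\in\Hom_{\nAlgc}(A^{f,\alg}(\bC),B)$ with $\incl(\bar\psi)\circ\eta_{\bC}=\psi$; uniqueness of $\bar\psi$ holds because the image of $\eta_{\bC}$ on morphisms generates $A^{f,\alg}(\bC)$ as an algebra. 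Unwinding this, the assignment $\bar\psi\mapsto\incl(\bar\psi)\circ\eta_{\bC}$ is a bijection
$$
\Hom_{\nAlgc}(A^{f,\alg}(\bC),B)\;\cong\;\Hom_{\nClincat}(\bC,\incl(B)),
$$
and it is manifestly natural in $\bC$ and in $B$, which is the asserted adjunction \eqref{erwgwergwgrevgweg234r3f}.

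The main point to be careful about — and the only genuine obstacle — is that one must impose \emph{exactly} the composition relations and nothing further: products of non-composable morphisms of $\bC$ have to remain free in $A^{f,\alg}(\bC)$, since a $*$-functor $\bC\to\incl(B)$ places no constraint on the product in $B$ of the images of two non-composable morphisms. In particular the naive ``category algebra'' $\bigoplus_{C,C'}\Hom_{\bC}(C,C')$ with matrix-style multiplication (products of non-composable morphisms set to zero) is a proper quotient of $A^{f,\alg}(\bC)$ and is \emph{not} the left adjoint. Apart from this, the remaining work is the routine involution bookkeeping on $T(V_{\bC})$, the verification that $I_{\bC}$ is a $*$-ideal, and the harmless check that all objects constructed remain small.
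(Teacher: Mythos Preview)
Your proposal is correct and follows essentially the same approach as the paper: the paper describes $A^{f,\alg}(\bC)$ as the free $*$-algebra on the morphisms of $\bC$ subject to the composition, involution, and linearity relations (citing \cite{joachimcat}), defines the unit $\delta^{\alg}_{\bC}$ exactly as your $\eta_{\bC}$, and declares the adjunction bijection ``straightforward to check''. You have simply made the free-algebra construction explicit via the tensor algebra and spelled out the verification; your final warning that the matrix-style category algebra is \emph{not} the left adjoint is apt, and indeed that other construction appears separately later in the paper as the functor $A^{\alg}$ on $\nClincatinj$.
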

 \begin{proof} The functor 
 $A^{f,\alg}$ sends   $\bC$ in $\nClincat$ to the free $*$-algebra over $\C$ generated by the morphisms of $\bC$
  subject to the relations given by the possible compositions in $\bC$, the $*$-operation, and the linear structure of the $\Hom$-vector spaces  \cite[Def. 3.7]{joachimcat}. The unit of the adjunction
\eqref{erwgwergwgrevgweg234r3f} is the family $(\delta^{\alg}_{\bC})_{\bC\in \nClincat}$ of  morphisms \begin{equation}\label{qewfpoopqwefqwefqwf}
\delta^{\alg}_{\bC}:\bC\to \incl(A^{f,\alg}(\bC))\ .
\end{equation}
Here $\delta^{\alg}_{\bC}$
 sends all objects of $\bC$ to the unique object of $\incl(A^{f,\alg}(\bC))$, and a morphism $f$ in $\bC$ to the corresponding generator of $\incl(A^{f,\alg}(\bC))$. For $\bC$ in $\nClincat$ and $B$ in $\nAlgc$ we consider the map
  \begin{equation}\label{wergkwerpowervwer}
\Hom_{\nAlgc}(A^{f,\alg}(\bC),B)\to \Hom_{\nClincat}(\bC,\incl(B))
\end{equation} which
  sends $\phi:A^{f,\alg}(\bC)\to B$ to the composition 
$$\bC\stackrel{\delta^{\alg}_{\bC}}{\to} \incl(A^{f,\alg}(\bC))\stackrel{\incl(\phi)}{\to} \incl(B)\ .$$
 It is straightforward to check that   \eqref{wergkwerpowervwer}  is a bijection and bi-natural in $\bC$ and $B$. 
\end{proof}

 We have a functor  $$\Ob:\nClincat\to \Set\ , \quad \bC\mapsto \Ob(\bC)$$
sending   an object of $\nClincat$  to its set of objects.

\begin{lem}\mbox{} \begin{enumerate} \item The functor $\Ob$ is the left-adjoint of  an adjunction  
  \begin{equation}\label{qfewfq}\Ob:\nClincat\leftrightarrows \Set: 0[-]\ . \end{equation}
  \item  The functor $\Ob$ is the  right-adjoint of  an adjunction  
  \begin{equation}\label{qfewfewfwefwefewfefwefwffq}0[-]: \Set \leftrightarrows  \nClincat : \Ob \ . \end{equation}
\item The restriction of $\Ob$ to $\Clincat$ is the left-adjoint of an adjunction \begin{equation}\label{qfewfq1}\Ob:\Clincat\leftrightarrows \Set: 0[-] \end{equation} obtained by restriction of    \eqref{qfewfq}. 
  \item  The restriction of $\Ob$ to $\Clincat$
  is the right-adjoint of an adjunction \begin{equation}\label{asdvasvdsvasvadvsv}\C[-]:\Set\leftrightarrows \Clincat:\Ob\ . \end{equation} 
  \end{enumerate}
\end{lem}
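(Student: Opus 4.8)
The plan is to establish the four adjunctions by exhibiting the relevant functors together with their units/counits and checking the hom-set bijections, exactly in the style of the preceding proof. For item (1), the functor $0[-]\colon \Set\to \nClincat$ sends a set $S$ to the category with object set $S$ in which every hom-vector space (including endomorphisms) is the zero vector space; a map of sets $S\to T$ induces the evident functor on objects and the zero map on morphisms. Since $\Ob(0[S]) = S$ canonically, one checks directly that for $\bC$ in $\nClincat$ and $S$ in $\Set$ every map $\Ob(\bC)\to S$ extends uniquely to a morphism $\bC\to 0[S]$ (all morphisms of $\bC$ must go to zero), which gives the bijection $\Hom_{\nClincat}(\bC, 0[S]) \cong \Hom_{\Set}(\Ob(\bC), S)$, bi-natural in both variables; the counit $\Ob(0[S]) = S$ and the unit $\bC \to 0[\Ob(\bC)]$ (identity on objects, zero on morphisms) are the obvious candidates. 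For item (2) one uses the \emph{same} functor $0[-]$, now as a left adjoint: a morphism $0[S]\to \bD$ is, since the only morphisms in $0[S]$ are the zero morphisms, completely and freely determined by an arbitrary map $S\to \Ob(\bD)$, so $\Hom_{\nClincat}(0[S],\bD)\cong \Hom_{\Set}(S,\Ob(\bD))$; here the unit is $\id_S\colon S\to \Ob(0[S])$ and the counit $0[\Ob(\bD)]\to \bD$ is identity on objects and zero on morphisms. (Note the key point that in a possibly non-unital category $0[S]$ is a legitimate object — there are no identity morphisms to worry about.)

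For item (3) there is nothing to prove beyond bookkeeping: the functor $0[-]$ visibly lands in $\Clincat$ precisely when... actually it does \emph{not}, since $0[S]$ has no units; rather, the claim is that $\Ob\colon \Clincat\to \Set$ is still a left adjoint, and its right adjoint is again written $0[-]$ but now interpreted so that its value is a unital category — this forces the endomorphism algebra of each object to contain a unit, so one takes the category with object set $S$, with $\End(s) = \C$ for each $s$ and all other hom-spaces zero. One then checks that a unital functor $\bC\to 0[S]$ is the same as a map $\Ob(\bC)\to S$ (the unital functor must send $\id_C\mapsto 1\in\C$ and is otherwise forced), giving the bijection by restriction of the one in (1). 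For item (4), the left adjoint of $\Ob\colon\Clincat\to\Set$ is the ``free unital $\C$-linear $*$-category on a set'', $\C[S]$, which one can describe concretely: objects $S$, with $\Hom_{\C[S]}(s,s') = \C$ for all $s,s'$ (a one-dimensional space spanned by a formal arrow $e_{s',s}$), composition $e_{s'',s'}\circ e_{s',s} = e_{s'',s}$, involution $e_{s',s}^{*} = e_{s,s'}$, and unit $e_{s,s} = \id_s$ — this is the ``matrix category'' whose endomorphism algebra at each object is $\C$ and which is unitarily equivalent to $\C$ when $S$ is nonempty. The unit $S\to \Ob(\C[S]) = S$ is the identity, and a unital $*$-functor $\C[S]\to \bD$ is determined by where it sends the objects together with a compatible choice of isometries realizing the forced relations; one verifies that in fact it is freely determined by the map on objects, giving $\Hom_{\Clincat}(\C[S],\bD)\cong\Hom_{\Set}(S,\Ob(\bD))$.

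The only genuine subtlety — and the step I would treat most carefully — is item (4): one must check that a unital $*$-functor out of $\C[S]$ really is determined by, and can be defined from, an \emph{arbitrary} map of sets $S\to\Ob(\bD)$, with no further choices. The point is that once the objects $F(s)$ are fixed, the functor must send $e_{s,s}\mapsto \id_{F(s)}$, and then $e_{s',s} = e_{s',s}\circ e_{s,s}$ together with $e_{s,s'}\circ e_{s',s} = e_{s,s}$ and self-adjointness $e_{s,s'} = e_{s',s}^{*}$ would pin down $F(e_{s',s})$ to be a partial isometry with prescribed source and range projections $\id_{F(s)}$, $\id_{F(s')}$ — but such an element need not exist in a general unital $C^{*}$-category, so the correct free object must instead be built so that these relations impose \emph{no} constraint, i.e. $\C[S]$ has $\End(s) = \C$ and trivial (zero) hom-spaces between distinct objects. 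With that corrected description, $F$ is forced to send $e_{s,s}\mapsto \id_{F(s)}$ and has nothing else to specify, so the hom-set bijection is immediate, and bi-naturality in $S$ and $\bD$ is routine. The remaining three items involve only the already-noted observation that the unique morphisms in the source categories are zero morphisms or scalar multiples of identities, so there are no further obstacles.
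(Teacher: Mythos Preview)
Your treatment of (1), (2), and—after your self-correction—(4) is correct and matches the paper's approach. The paper obtains (4) a bit more slickly by setting $\C[S]:=0[S]^{+}$ via the unitalization adjunction already established; this immediately gives the composite left adjoint and the concrete description you eventually reach (objects $S$, $\End(s)=\C$, all other hom-spaces zero).

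There is, however, a genuine error in your item (3). You assert that $0[S]$ (all hom-spaces zero) is not unital and therefore replace it by the category with $\End(s)=\C$. But the zero $\C$-algebra \emph{is} unital: it has $1=0$, and this element satisfies $0\cdot x = x = x\cdot 0$ for the unique element $x=0$. Thus the original $0[S]$ already lies in $\Clincat$, each object being a zero object with identity the zero morphism. The paper's proof of (3) is accordingly the one-liner ``observe that $0[-]$ factorizes over $\Clincat$.''

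Worse, your proposed replacement does not work as a right adjoint to $\Ob$. If $\bC$ in $\Clincat$ has an object $C$ with $\End_{\bC}(C)\cong M_2(\C)$, then there is no unital $*$-homomorphism $M_2(\C)\to\C$, and hence no unital functor $\bC\to\C[S]$ sending $C$ to any chosen $s\in S$. So the map
\[
\Hom_{\Clincat}(\bC,\C[S])\longrightarrow \Hom_{\Set}(\Ob(\bC),S)
\]
fails to be surjective. A related warning sign you might have caught: your final candidates for (3) and for (4) are the \emph{same} category, which would make $\C[-]$ simultaneously left and right adjoint to $\Ob$ on $\Clincat$; in the paper, $0[S]$ (all hom-spaces zero) and $\C[S]=0[S]^{+}$ (endomorphisms $\C$, other homs zero) are genuinely different and play the two distinct roles.
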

\begin{proof}
We describe the adjunction \eqref{qfewfq}. The functor $0[-]$ sends a set $X$ to the category $0[X]$ whose set of objects is $X$, and where all objects are zero objects \cite[Example 2.5]{startcats}. The action of $0[-]$ on maps between sets is clear.  For $\bC$ in $\nClincat$ and $X$ in $\Set$ we consider  the map
 $$ \Hom_{\nClincat}(\bC,0[X])\to \Hom_{\Set}(\Ob(\bC),X)$$    which sends a functor $\bC\to 0[X]$ to its action on the sets of objects. It is straightforward to check that the map is a bijection and bi-natural in $\bC$ and $X$.
 
The adjunction \eqref{qfewfewfwefwefewfefwefwffq} is provided by the bi-natural isomorphism 
$$\Hom_{\Clincat}(0[X],\bC)\to \Hom_{\Set}(X,\Ob(\bC))$$  for $\bC$ in $\nClincat$ and $X$ in $\Set$ 
which sends  $\phi:0[X]\to \bC$ to is action on the set of objects. 

In order to get the adjunction \eqref{qfewfq1} we just observe that $0[-]$ factorizes over $\Clincat$.

The left adjoint $\C[-]$ of the adjunction \eqref{asdvasvdsvasvadvsv}
is given as the composition  $\C[-]:=0[-]^{+}$ of the functor $0[-]$ and the unitalization $(-)^{+}$.
The bi-natural  isomorphism
$$ \Hom_{\Clincat}(\C[X],\bC)\to \Hom_{\Set}(X,\Ob(\bC))$$
for $\bC$ in $\Clincat$ and $X$ in $\Set$ sends a functor $\C[X]\to \bC$ to its action on the sets of objects.
 \end{proof}

\section{$C^{*}$-categories}\label{eqwrughuqiorgfwefwqefqwefqewf}
 
Usually a $C^{*}$-category is defined as a $\C$-linear $*$-category with the additional structure of norms on the   morphism   vector   spaces    \cite{ghr}, \cite{mitchc}, \cite[Def. 2.1]{DellAmbrogio:2010aa}.  One requires, that  the  norms behave  sub-multiplicative with respect to the  composition, that the morphism spaces are complete, and that  a version of the    $C^{*}$-condition \cite[Def. 2.1 (iv)']{DellAmbrogio:2010aa}  is satisfied.  A functor between $C^{*}$-categories is a functor between $\C$-linear $*$-categories which is addition norm-continuous on the morphism spaces.

   But it turns out that being a $C^{*}$-category is actually a property of a  $\C$-linear $*$-category.  
   Moreover, a morphism of     $\C$-linear $*$-categories between $C^{*}$-categories is automatically continuous, i.e., a morphism between $C^{*}$-categories. We can thus consider the category of small $C^{*}$-categories as a  
  full subcategory of the category $\C$-linear $*$-categories. There are unital and non-unital variants.
 In the following we introduce unital and non-unital $C^{*}$-categories from this point of view.   
   
Recall that a $C^{*}$-algebra $B$ is  an object of $\nAlgc$ whose underlying complex vector space  admits a    norm $\|-\|_{B}$  with the following properties:
\begin{enumerate}
\item    $B$ is  complete with respect to the metric induced by $\|-\|_{B}$.
 \item   For all $b,b'$ in $B$ we have $\|bb'\|_{B}\le \|b\|_{B}\|b'\|_{B}$.
\item  For all $b$ in $B$ we have $\|b^{*}b\|_{B}=\|b\|_{B}^{2}$.
\end{enumerate}
Note that $\|-\|_{B}$ is uniquely determined by the $*$-algebra $B$ so that the notation $\|-\|_{B}$ is unambiguous.
  
 We consider an object $\bC$  in $\nClincat$ and a morphism $f$ in $C$.
 \begin{ddd}\label{egergwefqwefqwfqewf}
 We define the maximal semi-norm $ 
\|f\|_{\max}$   of $f$  as the element  \begin{equation}\label{wergwergewgwergwerg}\|f\|_{\max}:=\sup_{\rho}\|\rho(f)\|_{B}\ ,\end{equation} of $[-\infty,\infty]$,
 where the supremum runs over all morphisms $\rho:\bC\to B$ in $\nClincat$ with target a    $C^{*}$-algebra $B$ . 
 \end{ddd}
 Note that we always have a morphism $\bC\to 0[*]$. Hence the index set of the supremum in \eqref{wergwergewgwergwerg} is always non-empty and therefore $\|-\|_{\max}$ takes values in $[0,\infty]$.

The following Definition \footnote{Warning: The notion of a pre-$C^{*}$-category according to Definition \ref{ruighqregqrgqfewfqqf} differs from the notion   defined in \cite[Def. 2.1]{DellAmbrogio:2010aa}.}
  is a straightforward generalization of \cite[Def. 2.10]{startcats} to the non-unital case. Let $\bC$ be in $\nClincat$.
\begin{ddd}\label{ruighqregqrgqfewfqqf} $\bC$ is a pre-$C^{*}$-category if all morphisms in $\bC$  have a finite maximal semi-norm. \end{ddd}
We let $\npClincat$ denote the full subcategory of $\nClincat$ of pre-$C^{*}$-categories.
\begin{lem}
The inclusion is the left-adjoint of an adjunction \begin{equation}\label{wefqwefefefqwfqfeewfewfqewf}
\incl:\npClincat\leftrightarrows \nClincat:\Bd^{\infty}\ .
\end{equation}
\end{lem}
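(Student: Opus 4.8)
The plan is to construct the right adjoint $\Bd^{\infty}$ explicitly as a ``bounded quotient'' functor and then verify the universal property. Given $\bC$ in $\nClincat$, I would first observe that the collection of morphisms $f$ with $\|f\|_{\max}=0$ (together with all morphisms of finite maximal semi-norm, but the point is to kill the infinite ones) behaves well under composition and involution: since $\|-\|_{\max}$ is a semi-norm that is submultiplicative and $*$-invariant on each $\Hom$-space (this follows directly from Definition \ref{egergwefqwefqwfqewf}, because each representation $\rho$ lands in a $C^{*}$-algebra), the subspaces $N(C,C'):=\{f\in \Hom_{\bC}(C,C')\mid \|f\|_{\max}=\infty \text{ or is not well-behaved}\}$ — more precisely one works with the ideal generated by morphisms of infinite semi-norm — form a $*$-ideal in $\bC$. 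The cleaner route: define $\Bd^{\infty}(\bC)$ to be the subcategory-with-the-same-objects whose morphism spaces are $\{f\in\Hom_{\bC}(C,C')\mid \|g f h\|_{\max}<\infty \text{ for all composable } g,h\}$; but in fact I expect the simplest correct definition is that $\Bd^{\infty}(\bC)$ has the same objects as $\bC$ and $\Hom_{\Bd^{\infty}(\bC)}(C,C')$ is the largest $\C$-linear $*$-subspace, stable under composition with all of $\bC$, on which $\|-\|_{\max}$ is finite.

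The key steps, in order, would be: (1) show $\|-\|_{\max}$ restricted to a pre-$C^{*}$-category is genuinely finite everywhere and that the assignment $\bC\mapsto \Bd^{\infty}(\bC)$ produces an object of $\npClincat$; (2) check functoriality — if $\phi:\bC\to\bD$ is a morphism in $\nClincat$, then $\phi$ maps morphisms of finite maximal semi-norm to morphisms of finite maximal semi-norm, because any representation $\rho:\bD\to B$ pulls back along $\phi$ to a representation of $\bC$, giving $\|\phi(f)\|_{\max}\le\|f\|_{\max}$; this gives the restricted functor $\Bd^{\infty}(\phi):\Bd^{\infty}(\bC)\to\Bd^{\infty}(\bD)$; (3) exhibit the counit $\epsilon_{\bC}:\incl(\Bd^{\infty}(\bC))\hookrightarrow\bC$ as the obvious inclusion, which is identity on objects; (4) verify the adjunction bijection: for $\bP$ in $\npClincat$ and $\bC$ in $\nClincat$,
\begin{equation}\label{eq:badjunctionplan}
\Hom_{\npClincat}(\bP,\Bd^{\infty}(\bC))\cong \Hom_{\nClincat}(\incl(\bP),\bC)
\end{equation}
by composition with $\epsilon_{\bC}$. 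The content of \eqref{eq:badjunctionplan} is that any morphism $\psi:\incl(\bP)\to\bC$ automatically factors through $\Bd^{\infty}(\bC)$, i.e. the image of every morphism of $\bP$ lands in the finite-maximal-semi-norm part of $\bC$ — and this is exactly the inequality $\|\psi(f)\|_{\max,\bC}\le\|f\|_{\max,\bP}<\infty$ from step (2) applied to $\psi$, now using that $\bP$ is a pre-$C^{*}$-category so every $\|f\|_{\max,\bP}$ is finite.

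The main obstacle I anticipate is step (1): pinning down the correct definition of $\Bd^{\infty}(\bC)$ so that it is simultaneously closed under composition, closed under involution, a $\C$-linear subcategory, and has all maximal semi-norms finite. The naive choice ``morphisms of finite maximal semi-norm'' need not be closed under composition a priori (the semi-norm is only submultiplicative, which is fine) or under addition (a genuine issue: the sum of two finite-semi-norm morphisms could conceivably fail to be finite only if the semi-norm were not a semi-norm, but subadditivity $\|f+g\|_{\max}\le\|f\|_{\max}+\|g\|_{\max}$ does hold since each $\rho$ is linear), so in fact the naive choice does work once one checks subadditivity and submultiplicativity carefully. Thus the ``obstacle'' is really just the bookkeeping verification that $\|-\|_{\max}$ is a submultiplicative $*$-semi-norm on each $\Hom$-space — which is immediate from the definition as a supremum over $*$-homomorphisms into $C^{*}$-algebras — after which $\Bd^{\infty}(\bC)$ is simply the non-full subcategory on the finite-semi-norm morphisms, and the rest is formal. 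I would also note the parallel with the unital case \cite[Sec. 3]{startcats}, from which the argument is transported essentially verbatim.
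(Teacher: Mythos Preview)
There is a genuine gap in step (1). Your final definition of $\Bd^{\infty}(\bC)$ as ``the non-full subcategory on the finite-semi-norm morphisms'' is what the paper calls $\Bd(\bC)$, and this category need \emph{not} be a pre-$C^{*}$-category. The point you have overlooked is that the maximal semi-norm $\|-\|_{\max}$ is defined relative to the ambient category: it is the supremum over all $*$-functors from \emph{that} category into a $C^{*}$-algebra. A representation $\rho:\Bd(\bC)\to B$ has no reason to extend to $\bC$, so passing to the subcategory $\Bd(\bC)$ can \emph{enlarge} the supremum. Thus a morphism $f$ with $\|f\|_{\max,\bC}<\infty$ may well satisfy $\|f\|_{\max,\Bd(\bC)}=\infty$, and the process has to be repeated.

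The paper resolves this by defining $\Bd^{\infty}$ as a transfinite iteration of $\Bd$, taking intersections at limit ordinals, and arguing that the descending chain of wide subcategories $\Bd^{\alpha}(\bC)$ must stabilize once $\alpha$ exceeds the cardinality of the morphism set of $\bC$; only at that point is the result guaranteed to be a pre-$C^{*}$-category. Your step~(4) argument is in fact the seed of the correct proof: since any $\psi:\incl(\bP)\to\bC$ from a pre-$C^{*}$-category factors through $\Bd(\bC)$, and then (by the same reasoning applied to $\Bd(\bC)$) through $\Bd^{2}(\bC)$, and so on, it factors through every $\Bd^{\alpha}(\bC)$ and hence through their intersection. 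The paper even flags this subtlety in a footnote, noting that the earlier reference \cite{startcats} erred by using only a countable iteration. So your outline is on the right track but stops one idea short; the missing ingredient is precisely this transfinite stabilization.
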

 \begin{proof}  This lemma is 
 the straightforward generalization of 
\cite[Lemma 3.8]{startcats} to the non-unital case\footnote{Thereby we take the chance to correct a mistake in \cite[Lemma 3.8]{startcats}.
In the reference we defined the functor $\Bd^{\infty}$ as a countable iteration of the functor $\Bd$ in order to ensure the relation
$\Bd(\Bd^{\infty}(\bC))\cong \Bd^{\infty}(\bC)$. But in general  this formula is only correct if we define $\Bd^{\infty}$ as a sufficiently large transfinite iteration of $\Bd$ as is done in the present paper.}.
In order to describe the functor $\Bd^{\infty}$, as a first approximation we
consider the endo-functor
$$\Bd:\nClincat\to \nClincat$$ defined as follows. Let $\bC$ be in $  \nClincat$.
Then $\Bd(\bC)$ has the following description:
\begin{enumerate}
\item objects: The set of objects of $\Bd(\bC)$ is the set of objects of $\bC$.
\item For objects $C,C^{\prime}$ in $\bC$ we have
$\Hom_{\Bd(\bC)}(C,C^{\prime}):=\{f\in \Hom_{\bC}(C,C^{\prime})\:|\: \|f\|_{\max}<\infty\}$.
\end{enumerate}
One checks that $\Bd(\bC)$ is a wide $\C$-linear $*$-subcategory of $\bC$.
In order to define $\Bd$ on morphisms we observe that if $\phi:\bC\to \bC'$ is a morphism in $\nClincat$, then
$\phi$ sends $\Bd(\bC)$ to $\Bd(\bC^{\prime})$. We define $\Bd(\phi)$ as the restriction of $\phi$ to $\Bd(\bC)$.

We have a canonical inclusion $\kappa_{\bC}:\Bd(\bC)\to \bC$.

By transfinite induction we now construct a family, indexed by ordinals $\alpha$, of functors  $ \Bd^{\alpha} :\nClincat\to \nClincat$  together with  transformations
$\kappa^{\alpha} : \Bd^{\alpha} \to \id$  which on each object are  inclusions of subcategories.
 \begin{enumerate}
\item $\Bd^{0}:=\id$.
\item If $\alpha$ is a successor ordinal, i.e., $\alpha= \beta+1$, then we set  $\Bd^{\alpha}:=\Bd\circ \Bd^{\beta}$, and $\kappa^{\alpha}:=   \kappa \circ \Bd(\kappa^{\beta}) $.
\item If $\alpha$ is a limit ordinal, then we define
$\Bd^{\alpha}:=\lim_{\beta<\alpha} \Bd^{\beta}$ and  let
$\kappa^{\alpha}$ be the evaluation of the limit at $\beta=0$.
\end{enumerate}
Note  $(\Bd^{\alpha}(\bC))_{\alpha}$ is a decreasing family of wide subcategories of $\bC$.

We now define a functor
$$\Bd^{\infty}:\nClincat\to \npClincat$$ as follows:
\begin{enumerate}
\item objects:
Given an object  $\bC$ in $\nClincat$  there exists an ordinal $\alpha$ (depending on $\bC$) such that the canonical morphism $\Bd^{\alpha'}(\bC)\to  \Bd^{\alpha}(\bC)$ is an isomorphism for all $\alpha'\ge \alpha$. 
It suffices to take $\alpha$ larger then the size of the union of the morphism spaces of $\bC$.
This implies that   $$\Bd^{\infty}(\bC):=\lim_{\alpha}\Bd^{\alpha}(\bC)$$  (the limit is an intersection) exists and is a pre-$C^{*}$-category.  
\item morphisms:
 If $\phi:\bC\to \bC^{\prime}$ is a morphism in $\nClincat$, then
we  define
$$\Bd^{\infty}(\phi):\Bd^{\infty}(\bC)\to \Bd^{\infty}(\bC')$$
as $\Bd^{\alpha}(\phi)$ for sufficiently large $\alpha$.
 \end{enumerate} 
The functor $\Bd^{\infty}$ comes with a natural transformation 
   $$\kappa^{\infty}:\incl(\Bd^{\infty})\to \id$$
   which is the counit
  of the adjunction  \eqref{wefqwefefefqwfqfeewfewfqewf}.
  In general, for $\bC$ in $\nClincat$ the functor $\kappa_{\bC}^{\infty}$ is the inclusion of a wide subcategory, but if 
 $\bC$ is  a   pre-$C^{*}$-category, then $\kappa^{\infty}_{\bC}$ is an isomorphism (actually an equality).
  For $\bD$ in $\npClincat$ and $\bC$ in $\nClincat$ we consider the map \begin{equation}\label{wergopjoipwergergwergwrg} \Hom_{\npClincat}(\bD,\Bd^{\infty}(\bC))
 \to \Hom_{\nClincat}(\incl(\bD),\bC)
\end{equation}
  which sends a morphism 
$\phi:\bD\to \Bd^{\infty}(\bC)$ to the composition
$$\incl(\bD) \stackrel{\incl(\phi)}{\to} \incl(\Bd^{\infty}(\bC))\stackrel{\kappa^{\infty}_{\bC} }{\to} \bC \ .$$
It is straightforward to check that  \eqref{wergopjoipwergergwergwrg} is bi-natural in $\bD$ and $\bC$. 
As in the proof of  \cite[Lemma 3.8]{startcats} one checks that it is a bijection.
 \end{proof}


Let $\bC$ be in $\npClincat$.
\begin{ddd}
$\bC$  is a $C^{*}$-category if its morphism spaces are complete with respect to the maximal norm.
\end{ddd}

\begin{rem} 
Note that the maximal norm is in general a semi-norm, i.e., non-zero elements might have zero maximal semi-norm. 
Completeness in particular involves the condition that the maximal semi-norm is a norm. \hB
\end{rem}

The category of possibly non-unital $C^{*}$-categories and morphisms is the full subcategory 
of $\nClincat$ consisting of $C^{*}$-categories.
 We have a diagram of pull-back squares \begin{equation}\label{qrgqrgqrgqrg}
 \xymatrix{\Ccat\ar[r]\ar[d]&\nCcat\ar[d]\\\pClincat\ar[r]\ar[d]&\npClincat\ar[d]\\\Clincat\ar[r] &\nClincat } 
\end{equation}
defining the categories $\pClincat$ of unital pre-$C^{*}$-categories and unital $C^{*}$-categories $\Ccat$.

\begin{lem}\mbox{}
\begin{enumerate}
\item We have an adjunction  \begin{equation}\label{oidfhbuihsfuibfefefesdfbsfb}\Compl:\npClincat\leftrightarrows \nCcat:\incl \ . \end{equation}
\item The adjunction \eqref{oidfhbuihsfuibfefefesdfbsfb} restricts to an adjunction 
 \begin{equation}\label{oidfhbuihsfuibfefefesdfbsfb1}\Compl:\pClincat\leftrightarrows \Ccat:\incl \ . \end{equation}
\end{enumerate}
\end{lem}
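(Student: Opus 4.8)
The plan is to construct the completion functor $\Compl$ directly and verify the adjunction by producing a universal unit transformation. First I would define $\Compl(\bC)$ for $\bC$ in $\npClincat$ as the category with the same objects as $\bC$, and with morphism spaces $\Hom_{\Compl(\bC)}(C,C')$ given by the Hausdorff completion of $\Hom_{\bC}(C,C')$ with respect to the maximal semi-norm $\|-\|_{\max}$ of Definition \ref{egergwefqwefqwfqewf} — that is, first quotient by the subspace of elements of maximal semi-norm zero, then complete the resulting normed space. The composition and involution descend to this quotient (because $\|-\|_{\max}$ is sub-multiplicative and $*$-invariant, both of which follow formally from the definition as a supremum over $*$-homomorphisms into $C^{*}$-algebras) and extend continuously to the completion. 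One then checks that $\Compl(\bC)$ is again a pre-$C^{*}$-category and that its morphism spaces are complete, hence it is a $C^{*}$-category; the key point here is that the maximal semi-norm computed \emph{in} $\Compl(\bC)$ agrees with the norm coming from the completion, which holds because every $*$-representation $\bC\to B$ into a $C^{*}$-algebra factors through $\Compl(\bC)$ by continuity and conversely restricts. On morphisms, a functor $\phi:\bC\to\bC'$ in $\npClincat$ is automatically norm-decreasing for the maximal semi-norms (again immediate from the supremum description, since representations of $\bC'$ pull back to representations of $\bC$), so $\phi$ descends and extends to $\Compl(\phi):\Compl(\bC)\to\Compl(\bC')$.

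Next I would exhibit the unit: for $\bC$ in $\npClincat$ there is a canonical morphism $\eta_{\bC}:\bC\to\incl(\Compl(\bC))$ which is the identity on objects and, on morphisms, the composite of the quotient map with the inclusion of a dense subspace into its completion. Then for $\bD$ in $\nCcat$ and $\bC$ in $\npClincat$ I would define the comparison map
\[
\Hom_{\nCcat}(\Compl(\bC),\bD)\to\Hom_{\npClincat}(\bC,\incl(\bD))\ ,\qquad \psi\mapsto \incl(\psi)\circ\eta_{\bC}\ ,
\]
and check it is a bijection, bi-natural in $\bC$ and $\bD$. Surjectivity: given $\phi:\bC\to\incl(\bD)$, on each morphism space $\phi$ is norm-decreasing for the maximal semi-norms, so it kills the null space and, being uniformly continuous, extends uniquely to the completion, yielding $\psi:\Compl(\bC)\to\bD$ with $\incl(\psi)\circ\eta_{\bC}=\phi$; functoriality (compatibility with composition, involution, linearity) of $\psi$ follows from that of $\phi$ by density and continuity. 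Injectivity is immediate since $\eta_{\bC}$ has dense image on morphisms and any two morphisms out of $\Compl(\bC)$ agreeing after precomposition with $\eta_{\bC}$ agree on a dense set, hence everywhere by continuity. This establishes \eqref{oidfhbuihsfuibfefefesdfbsfb}.

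For part (2) I would simply observe that $\Compl$ preserves unitality: if $\bC$ is in $\pClincat$, i.e.\ has identity morphisms, then the images of the identities under $\eta_{\bC}$ are identities in $\Compl(\bC)$ (the quotient-and-complete operation on each endomorphism space is a homomorphism of $*$-algebras sending $1$ to $1$), so $\Compl(\bC)$ lies in $\Ccat$; and the unit $\eta_{\bC}$ is then a unital morphism. Since the pull-back square on the left of \eqref{qrgqrgqrgqrg} identifies $\Ccat$ as the intersection of $\nCcat$ with $\pClincat$ inside $\nClincat$, and similarly for $\pClincat$, the hom-set bijection above restricts to a bijection $\Hom_{\Ccat}(\Compl(\bC),\bD)\cong\Hom_{\pClincat}(\bC,\incl(\bD))$ for $\bD$ in $\Ccat$, giving \eqref{oidfhbuihsfuibfefefesdfbsfb1}. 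I expect the main obstacle to be the bookkeeping in verifying that the maximal semi-norm is unchanged by passing to $\Compl(\bC)$ — one must be careful that no new representations appear and that continuity genuinely forces old ones to factor through the completion — but this is essentially the categorified analogue of the standard fact that a pre-$C^{*}$-algebra and its completion have the same $C^{*}$-seminorm, so it should go through as in \cite[Lemma 3.8]{startcats} and the unital completion results of \cite[Sec. 3]{startcats}.
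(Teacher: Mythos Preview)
Your proposal is correct and follows essentially the same approach as the paper: construct $\Compl$ by quotienting morphism spaces by the null subspace of $\|-\|_{\max}$ and completing, exhibit the canonical unit $\bC\to\incl(\Compl(\bC))$, and verify the hom-set bijection directly (the paper offloads this last step to \cite[Rem.~3.3]{startcats}, whereas you spell out the density/continuity argument). Your treatment is in fact more detailed than the paper's on the point that $\Compl(\bC)$ is genuinely a $C^{*}$-category---i.e.\ that the completion norm coincides with the maximal semi-norm on $\Compl(\bC)$---which the paper leaves implicit.
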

\begin{proof}
We first describe the completion functor $\Compl$. Let $\bC$ be in $\npClincat$. Then
$\Compl(\bC)$ has the following description:
\begin{enumerate}
\item objects: The set of objects of $\Compl(\bC)$ is the set of objects of $\bC$.
\item morphisms:  For objects $C ,C^{\prime}$ in $\bC$ the space of morphisms $\Hom_{\Compl(\bC)}(C,C')$ is obtained from 
$\Hom_{ \bC}(C,C')$ by first forming the quotient by the subspace of vectors of zero maximal seminorm (zero-morphisms), and then forming the metric completion.
\item composition and involution: We observe that the composition of any morphism with a zero morphism  and the adjoint of a zero morphism  are  again   zero morphisms. Hence we get an induced composition or involution on the quotient morphism spaces which then extends by continuity to the completions.  \end{enumerate}
Let $\phi:\bC\to \bC^{\prime}$ be a morphism in $\npClincat$. We observe that
$\phi$ preserves zero-morphisms. Hence it induces maps between the quotients of morphism spaces by zero-morphisms. 
Then the morphism $\Compl(\phi)$ is the defined from these induced maps by continuous extension.

The unit $\alpha: \id\to  \incl\circ \Compl $ of the adjunction \eqref{oidfhbuihsfuibfefefesdfbsfb} is given by
 is the canonical morphisms \begin{equation}\label{qwefkjiqkwefqwefqewf}\alpha_{\bC}:\bC\to \incl(\Compl(\bC)) \end{equation}  for all $\bC$ in $\npClincat$.  For $\bC$ in $\npClincat$ and $\bD$ in $  \nCcat$
we consider the map 
  \begin{equation}\label{wevwervwervfvvsdfv}
 \Hom_{\npClincat}(\Compl(\bC), \bD ) \to \Hom_{\npClincat}(\bC,\incl(\bD))
\end{equation}
 which sends a morphism $\phi:\Compl(\bC)  \to   \bD $ to the composition
$$\bC\stackrel{\alpha_{\bC}}{\to}\incl(\Compl(\bC))\stackrel{\incl( \phi) }{\to}   \incl(\bD)\ .$$
It is straightforward to check that \eqref{wevwervwervfvvsdfv}  is bi-natural in $\bC$ and $\bD$, and easy to see that it is a bijection \cite[Rem. 3.3]{startcats}.

In order to get the adjunction \eqref{oidfhbuihsfuibfefefesdfbsfb1}  from \eqref{oidfhbuihsfuibfefefesdfbsfb} we just observe that the completion of a unital pre-$C^{*}$-category is a unital $C^{*}$-category.
\end{proof}
 
 The unitalization of $C^{*}$-categories has been considered already in  \cite[Prop. 3.4 \& 3.5]{mitchc}. \begin{lem}
 We have adjunctions \begin{equation}\label{egrggqwwefeqwfr4r411}  (-)^{+}:\nCcat\leftrightarrows \Ccat:\incl\ ,  \end{equation} 
and
 \begin{equation}\label{qewfoihqoiewfqwfqwefqewf1}
\incl:\Ccat\leftrightarrows \nCcat:U\ .
\end{equation}\end{lem}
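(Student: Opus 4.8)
The plan is to derive both adjunctions by restricting, along the full subcategory inclusions $\nCcat\hookrightarrow\nClincat$ and $\Ccat\hookrightarrow\Clincat$, the two adjunctions \eqref{egrggqwwefeqwfr4r41} and \eqref{qewfoihqoiewfqwfqwefqewf} already established for $\C$-linear $*$-categories. The elementary principle I would invoke is: if $L\dashv R$ is an adjunction with unit $\eta$ and full subcategories are given on both sides such that $L$ and $R$ carry these subcategories into one another, then the restricted functors again form an adjunction, with unit the restriction of $\eta$ and the hom-set bijection obtained by restriction (fullness is what makes this work). Since $\incl$ obviously sends $\Ccat$ into $\nCcat$, everything then reduces to two closure assertions: (1) if $\bC\in\nCcat$ then the unitalization $\bC^{+}$ lies in $\Ccat$; (2) if $\bD\in\nCcat$ then $U(\bD)$ lies in $\Ccat$. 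In both cases the earlier Proposition already guarantees that $\bC^{+}$ and $U(\bD)$ are \emph{unital} $\C$-linear $*$-categories, so only the $C^{*}$-property has to be checked.

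For (1) I would use the explicit description of the unitalization from the proof of \eqref{egrggqwwefeqwfr4r41}: the only change to the morphism spaces is $\End_{\bC^{+}}(C)=\End_{\bC}(C)\oplus\C$, which is precisely the $C^{*}$-algebraic unitalization of the $C^{*}$-algebra $\End_{\bC}(C)$ and hence again a $C^{*}$-algebra, while the off-diagonal spaces $\Hom_{\bC^{+}}(C,C')=\Hom_{\bC}(C,C')$ for $C\neq C'$ are unchanged. Equipping the endomorphism algebras with the unitalization norm and the remaining hom-spaces with their old norms, I would verify the classical $C^{*}$-category axioms (submultiplicativity, completeness, and the $C^{*}$-identity $\|f^{*}f\|=\|f\|^{2}$ across objects); this is exactly the single-object computation of \cite[Prop. 3.4 \& 3.5]{mitchc} run on each hom-space, and \cite[Rem. 2.15]{startcats} then upgrades it to a $C^{*}$-category in the present sense, the chosen norm being the maximal one. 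A variant I would keep in reserve is to fix a faithful, hence isometric, $*$-functor $\iota:\bC\to\Hilb$ and extend it to $\iota^{+}:\bC^{+}\to\Hilb$ by $\iota^{+}(C):=\iota(C)\oplus\C$, $\iota^{+}((f,\lambda)):=\iota(f)\oplus\lambda$ and $\iota^{+}(g):=\iota(g)\oplus 0$ for off-diagonal $g$; this is faithful, and one only has to observe that the induced norms on $\bC^{+}$ are complete.

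For (2) I would argue directly from the description of $U(\bD)$: the hom-space $\Hom_{U(\bD)}((D,p),(D',q))=q\,\Hom_{\bD}(D,D')\,p$ is the image of the continuous idempotent $f\mapsto qfp$ on $\Hom_{\bD}(D,D')$, hence norm-closed and therefore complete; submultiplicativity, positivity, and the $C^{*}$-identity (note $f^{*}f=p f^{*} q f p\in\End_{U(\bD)}((D,p))$) are inherited from $\bD$, so $U(\bD)$ with the inherited norms is a classical $C^{*}$-category and \cite[Rem. 2.15]{startcats} again applies, the inherited norm being the maximal one because the evident forgetful $*$-functor $U(\bD)\to\bD$, $(D,p)\mapsto D$, is isometric. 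I expect the single real obstacle to be the norm in (1): the naive maximum norm on $\End_{\bC}(C)\oplus\C$ is not submultiplicative, so one must use the genuine $C^{*}$-unitalization norm, which is exactly why I would invoke \cite[Prop. 3.4 \& 3.5]{mitchc} (or the faithful-representation device) rather than compute it by hand. Step (2), being the familiar fact that corners of $C^{*}$-categories are $C^{*}$-categories, should be entirely routine.
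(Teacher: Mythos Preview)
Your proposal is correct and follows exactly the paper's approach: the paper's proof consists of the single sentence ``These adjunctions are obtained by restricting the adjunctions \eqref{egrggqwwefeqwfr4r41} and \eqref{qewfoihqoiewfqwfqwefqewf} to $C^{*}$-categories. We just observe that the functors $(-)^{+}$ and $U$ preserve $C^{*}$-categories.'' You have simply unpacked that observation in detail, invoking the same reference \cite[Prop.~3.4 \& 3.5]{mitchc} for~(1) that the paper cites immediately before the lemma, and giving the routine corner argument for~(2).
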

\begin{proof}
These adjunctions are obtained by restricting the adjunctions \eqref{egrggqwwefeqwfr4r41}
and \eqref{qewfoihqoiewfqwfqwefqewf} to $C^{*}$-categories.
We just observe that the functors $(-)^{+}$ and $U$ preserve $C^{*}$-categories.
\end{proof}

\begin{lem}\mbox{} \begin{enumerate} \item The functor $\Ob$ is the left-adjoint of an adjunction 
  \begin{equation}\label{1qfewfq}\Ob:\nCcat\leftrightarrows \Set: 0[-]\ . \end{equation}   \item The functor $\Ob$ is the  right-adjoint of  an adjunction 
    \begin{equation}\label{1qfevdfvdfvfdvfdwfq}0[-]  :\Set  \leftrightarrows \nCcat : \Ob\ . \end{equation}
  \item The restriction of $\Ob$ to $\Ccat$ is the left adjoint of  the adjunction \begin{equation}\label{1qfewfq1}\Ob:\Ccat\leftrightarrows \Set: 0[-] \end{equation}
  obtained by restriction of \eqref{1qfewfq}.
  \item  The restriction of $\Ob$ to $\Ccat$
  is the right-adjoint of an adjunction \begin{equation}\label{1asdvasvdsvasvadvsv}\C[-]:\Set\leftrightarrows \Ccat:\Ob\ . \end{equation} 
  \end{enumerate}
\end{lem}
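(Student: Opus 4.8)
The plan is to obtain all four adjunctions by restricting the corresponding adjunctions between $\nClincat$, $\Clincat$ and $\Set$ proved in the previous Lemma (the one establishing \eqref{qfewfq}--\eqref{asdvasvdsvasvadvsv}), using that the squares in \eqref{qrgqrgqrgqrg} exhibit $\nCcat$ and $\Ccat$ as \emph{full} subcategories of $\nClincat$ and $\Clincat$. For such a restriction to make sense the only thing I need to verify is that the relevant left resp.\ right adjoints take values in $C^{*}$-categories.

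First I would record that $0[X]$ is a $C^{*}$-category for every set $X$: its morphism spaces are all the zero vector space, whose maximal semi-norm is the zero norm, which is trivially a norm and complete; moreover $0[X]$ is unital (the identity of a zero object being $0$), so $0[X]$ in fact lies in $\Ccat$. Hence $0[-]$ factors through $\nCcat$ and through $\Ccat$. Next I would note that $\C[X]=0[X]^{+}$ is a unital $C^{*}$-category, since the unitalization functor $(-)^{+}$ preserves $C^{*}$-categories by the Lemma containing \eqref{egrggqwwefeqwfr4r411}; so $\C[-]$ factors through $\Ccat$.

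With this in place, item (1) is the restriction of \eqref{qfewfq} along the full inclusion $\nCcat\subseteq\nClincat$: the target $0[X]$ of the right adjoint lies in $\nCcat$, so $\Hom_{\nCcat}(\bC,0[X])=\Hom_{\nClincat}(\bC,0[X])\cong\Hom_{\Set}(\Ob(\bC),X)$ bi-naturally. Item (2) is the restriction of \eqref{qfewfewfwefwefewfefwefwffq} along the same full inclusion, using that the source $0[X]$ of the left adjoint lies in $\nCcat$; item (3) is the restriction of \eqref{qfewfq1} along $\Ccat\subseteq\Clincat$, using $0[X]\in\Ccat$; and item (4) is the restriction of \eqref{asdvasvdsvasvadvsv} along $\Ccat\subseteq\Clincat$, using $\C[X]\in\Ccat$. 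In each case the unit resp.\ counit is simply the one of the ambient adjunction, which already has the required source and target.

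The main, and essentially only, obstacle is the verification carried out in the second step — that $0[X]$ and $\C[X]$ really are $C^{*}$-categories; once this is known, everything else follows formally from fullness of the inclusions in \eqref{qrgqrgqrgqrg}, exactly as in the proof of the analogous Lemma for $\nClincat$ and $\Clincat$.
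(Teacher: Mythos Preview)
Your proposal is correct and follows exactly the paper's approach: the paper's proof consists of the single sentence that the adjunctions are obtained by restricting \eqref{qfewfq}, \eqref{qfewfewfwefwefewfefwefwffq}, \eqref{qfewfq1} and \eqref{asdvasvdsvasvadvsv}, observing that $0[-]$ and $\C[-]$ take values in $C^{*}$-categories. You have merely written out the details that the paper leaves implicit.
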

\begin{proof}
The adjunctions are obtained by restricting the adjunctions \eqref{qfewfq},  \eqref{qfewfewfwefwefewfefwefwffq}, \eqref{qfewfq1} and \eqref{asdvasvdsvasvadvsv}. To this end we observe that $\C[-]$ and $0[-]$ take values in $C^{*}$-categories.
\end{proof}

The adjunction  \eqref{erwgwergwgrevgweg234r3f} has a counterpart in the   $C^{*}$-case. The following is   \cite[Def. 3.7]{joachimcat}.
\begin{lem}
We have 
 an adjunction  \begin{equation}\label{evwervwervwevwev} A^{f }:\nCcat\leftrightarrows \nCalg:\incl\ .\end{equation}
 \end{lem}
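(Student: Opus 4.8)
The plan is to build the adjunction $A^{f}: \nCcat \leftrightarrows \nCalg : \incl$ by composing the algebraic free-algebra adjunction \eqref{erwgwergwgrevgweg234r3f} with the completion adjunction \eqref{oidfhbuihsfuibfefefesdfbsfb}, using the fact that a $C^{*}$-algebra is in particular a pre-$C^{*}$-category (and a pre-$*$-algebra). First I would note that for $\bC$ in $\nCcat$, the free $*$-algebra $A^{f,\alg}(\bC)$ need not be a pre-$*$-algebra, so one cannot simply compose $A^{f,\alg}$ with $\Compl$; instead one should define $A^{f}(\bC)$ to be the completion of $A^{f,\alg}(\bC)$ with respect to the maximal $C^{*}$-seminorm of Definition \ref{egergwefqwefqwfqewf}, i.e.\ $A^{f}(\bC) := \Compl(\Bd^{\infty}(A^{f,\alg}(\bC)))$, where $\Bd^{\infty}$ cuts down to the pre-$C^{*}$-part. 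The unit $\delta_{\bC}: \bC \to \incl(A^{f}(\bC))$ is the composition of $\delta^{\alg}_{\bC}: \bC \to \incl(A^{f,\alg}(\bC))$ with the canonical maps $A^{f,\alg}(\bC) \to \Bd^{\infty}(A^{f,\alg}(\bC))$ (well, the other direction: $\incl(\Bd^{\infty}(-)) \to (-)$ goes the wrong way, so I must be careful — see below) and $A^{f,\alg}(\bC) \to \Compl(\cdots)$.

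Let me reorganize: the cleanest route is to observe that $\incl: \nCalg \to \nClincat$ factors as $\nCalg \xrightarrow{\incl} \npClincat \xrightarrow{\incl} \nClincat$ composed suitably, but actually the honest composition of known adjoints is $\nClincat \xrightarrow{A^{f,\alg}} \nAlgc \hookrightarrow ?$. The issue is that $\nCalg$ is not a full subcategory of $\nAlgc$ obtained by an adjunction there. Instead, I would go through $\nCcat$: we have $A^{f,\alg}: \nClincat \leftrightarrows \nAlgc$, and restricting the big diagram, the composite left adjoint $\nCcat \hookrightarrow \nClincat \xrightarrow{A^{f,\alg}} \nAlgc$ is left adjoint to $\incl: \nAlgc \to \nClincat$ followed by... no. The truly clean statement: compose the three adjunctions $\Compl \dashv \incl$ (from \eqref{oidfhbuihsfuibfefefesdfbsfb}), $\incl \dashv \Bd^{\infty}$ (from \eqref{wefqwefefefqwfqfeewfewfqewf}), and $A^{f,\alg} \dashv \incl$ (from \eqref{erwgwergwgrevgweg234r3f}), after checking that they restrict correctly to the single-object subcategories so that $A^{f}$ lands in $\nCalg$ and $\incl: \nCalg \to \nCcat$ is the relevant inclusion. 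Concretely, $A^{f} := \Compl \circ \Bd^{\infty} \circ A^{f,\alg}$ (viewing $A^{f,\alg}(\bC)$ inside $\nClincat$ as a one-object category, applying $\Bd^{\infty}$ to land in $\npClincat$, then $\Compl$ to land in $\nCcat$, and noting the result has one object hence lies in $\nCalg$), and $\incl: \nCalg \to \nCcat$ is the one-object inclusion, which is the restriction of $\incl: \nCcat \to \nClincat$ precomposed appropriately; the hom-set bijection
$$\Hom_{\nCalg}(A^{f}(\bC), B) \cong \Hom_{\nCcat}(\bC, \incl(B))$$
for $\bC$ in $\nCcat$ and $B$ a $C^{*}$-algebra follows by chaining the three bijections \eqref{wergkwerpowervwer}, \eqref{wergopjoipwergergwergwrg} (with $\bD = A^{f,\alg}(\bC)$, recalling $B$ is a pre-$C^{*}$-category so $\Hom_{\npClincat}(A^{f,\alg}(\bC), \Bd^{\infty}(\incl B)) $ matches up), and \eqref{wevwervwervfvvsdfv}, using throughout that a $C^{*}$-algebra, regarded as a $\C$-linear $*$-category, equals its own $\Bd^{\infty}$ and its own $\Compl$.

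The main obstacle I expect is bookkeeping around the one-object condition and the identification of inclusions: one must verify that $\Bd^{\infty}$ and $\Compl$ preserve the property of having a single object (clear, since both fix the object set), so that the composite $A^{f}$ genuinely takes values in $\nCalg$ rather than merely in $\nCcat$; and one must check that the counit maps $\kappa^{\infty}$ and $\alpha$ restrict to the single-object setting so the chained hom-bijection is natural. Since the paper already grants us all three constituent adjunctions and their explicit unit/counit descriptions, and since \eqref{evwervwervwevwev} is attributed to \cite[Def. 3.7]{joachimcat}, the proof is essentially the remark ``compose the adjunctions \eqref{erwgwergwgrevgweg234r3f}, \eqref{wefqwefefefqwfqfeewfewfqewf} and \eqref{oidfhbuihsfuibfefefesdfbsfb}, observing that $A^{f,\alg}$ followed by $\Bd^{\infty}$ and $\Compl$ preserves the one-object property and that $C^{*}$-algebras are fixed by $\Bd^{\infty}$ and $\Compl$,'' with the bi-naturality of the hom-bijection inherited from the three constituent bi-natural bijections — nothing deeper is needed.
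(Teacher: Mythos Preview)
There is a genuine gap. You assert that ``for $\bC$ in $\nCcat$, the free $*$-algebra $A^{f,\alg}(\bC)$ need not be a pre-$*$-algebra'' and then try to repair this by inserting $\Bd^{\infty}$, defining $A^{f}:=\Compl\circ\Bd^{\infty}\circ A^{f,\alg}$ and chaining the three hom-bijections \eqref{wergkwerpowervwer}, \eqref{wergopjoipwergergwergwrg}, \eqref{wevwervwervfvvsdfv}. But the adjunction \eqref{wefqwefefefqwfqfeewfewfqewf} has $\incl$ on the \emph{left} and $\Bd^{\infty}$ on the \emph{right}: the bijection \eqref{wergopjoipwergergwergwrg} identifies $\Hom_{\npClincat}(\bD,\Bd^{\infty}(\bC'))$ with $\Hom_{\nClincat}(\incl(\bD),\bC')$, i.e.\ it moves a \emph{source} that is already pre-$C^{*}$ across. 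In your chain the source is $A^{f,\alg}(\bC)$, which you explicitly do not assume to be pre-$C^{*}$, so you cannot invoke \eqref{wergopjoipwergergwergwrg} there. Concretely, restriction along $\kappa^{\infty}:\Bd^{\infty}(A^{f,\alg}(\bC))\hookrightarrow A^{f,\alg}(\bC)$ need not induce a bijection on maps into a $C^{*}$-algebra $B$; a composite of left, right, and left adjoints is not a left adjoint.

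The paper's proof hinges on exactly the point you dismiss: it shows that for $\bC$ in $\nCcat$ the algebra $A^{f,\alg}(\bC)$ \emph{is} a pre-$C^{*}$-algebra. The argument is short: any $\rho:A^{f,\alg}(\bC)\to B$ with $B$ a $C^{*}$-algebra precomposes with $\delta^{\alg}_{\bC}$ to a morphism of $C^{*}$-categories $\bC\to B$, which is automatically contractive, hence $\|\delta^{\alg}_{\bC}(f)\|_{\max}\le\|f\|_{\bC}$; since $A^{f,\alg}(\bC)$ is generated as a $*$-algebra by the $\delta^{\alg}_{\bC}(f)$, every element has finite maximal seminorm. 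With this in hand, $\Bd^{\infty}$ is superfluous and $A^{f}=\Compl\circ A^{f,\alg}$ is a genuine composite of two left adjoints (\eqref{erwgwergwgrevgweg234r3f} restricted to $\nCcat\to\npAlgc$, then \eqref{ervevwevefvdsfvsdfvsdfv}), from which the hom-bijection follows formally. Your outline becomes correct once you replace the $\Bd^{\infty}$ step by this boundedness check.
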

 \begin{proof} We define the category of pre-$C^{*}$-algebras as the intersection \begin{equation}\label{wqrefonwiefjqwefewqfw}
 \npAlgc:=\nAlgc\cap \npClincat
\end{equation}
  in $\nClincat$. The adjunction \eqref{oidfhbuihsfuibfefefesdfbsfb} restricts to an adjunction  \begin{equation}\label{ervevwevefvdsfvsdfvsdfv}
\Compl: \npAlgc\leftrightarrows \nCalg:\incl\ .
\end{equation}

   The functor $A^{f}$  is given  by the composition  \begin{equation}\label{ewvqwvwccxewcqwcxwd} A^{f}:\nCcat\stackrel{A^{f,\alg},\eqref{erwgwergwgrevgweg234r3f}}{\to} \npAlgc \stackrel{\Compl, \eqref{ervevwevefvdsfvsdfvsdfv}}{\to} \nCalg\ .  \end{equation}    
   We must check that     the restriction of $A^{f,\alg}$ to $C^{*}$-categories takes values in pre-$C^{*}$-algebras. To this end we note that for a $C^{*}$-category $\bC$ and a
 morphism $\rho:A^{f,\alg}(\bC)\to B$ into a $C^{*}$-algebra $B$  by precomposing it with the unit of the adjunction    \eqref{erwgwergwgrevgweg234r3f} we get a morphism 
 $$\tilde \rho:\bC\stackrel{\delta^{\alg}_{\bC}, \eqref{qewfpoopqwefqwefqwf} }{\to} A^{f,\alg}(\bC)\to B\ .$$ For every morphism $f$ in $\bC$ we   have the inequality  $$\|\rho(\delta^{\alg}_{\bC}(f))\|_{B}=\|\tilde \rho(f)\|_{B}\le \|f\|_{\bC}\ .$$ Varying $\rho$ and $B$
 we conclude that 
  for every morphism $f$ in $\bC$ we have
 $$\|\delta^{\alg}_{\bC}(f)\|_{\max}\le \|f\|_{\bC}\ .$$ Since every element of $A^{f,\alg}(\bC)$ is a finite linear combination of finite products of morphisms of the form $\delta^{\alg}_{\bC}(f)$ we conclude that every element of $ A^{f,\alg}(\bC)$ has finite maximal norm.  
 The unit of the adjunction
  \eqref{evwervwervwevwev} is the
 natural transformation $\delta:\id\to \incl\circ A^{f}$ given by 
 $$\delta_{\bC}:\bC\stackrel{\delta^{\alg}_{\bC}, \eqref{qewfpoopqwefqwefqwf} }{\to} \incl(A^{f,\alg}(\bC))\stackrel{\alpha_{ \incl(A^{f,\alg}(\bC))},\eqref{qwefkjiqkwefqwefqewf}}{\to} \incl(\Compl( \incl(A^{f,\alg}(\bC))))=\incl(A^{f}(\bC))$$
 for every $\bC$ in $\nCcat$.
 For $\bC$ in $\nCcat$ and $B$ in $\nCalg$ we define the map
 \begin{equation}\label{wegojgkpwoergwergwergwerg}
\Hom_{\nCalg}(A^{f}(\bC),B)\to \Hom_{\nCcat}(\bC,\incl(B))
\end{equation} 
 which sends 
 $\phi:A^{f}(\bC)\to B$ to
 $$\bC\stackrel{\delta_{\bC}}{\to}\incl(A^{f}(\bC)) \stackrel{\incl(\phi)}{\to} \incl(B)\ .$$
 It is straightforward to check that \eqref{wegojgkpwoergwergwergwerg} is bi-natural in $\bC$ and $B$ and an isomorphism.
   \end{proof}

\section{Completeness and cocompleteness of $\nClincat$, $\npClincat$ and $\nCcat$}


A category is called complete  if it admits limits  for all diagrams indexed by small categories.  Similarly, a category is called  cocomplete,  if it admits colimits   for all diagrams indexed by small categories. It is known that  the categories
$\Clincat$, $\pClincat$ and $\Ccat$ are complete and cocomplete, see   \cite{DellAmbrogio:2010aa} (for $\Ccat$) or \cite[Thm. 8.1]{startcats} for arguments.
 In this section we show that this result   extends to the non-unital case.

\begin{theorem}\label{riguhqwieufqewfeqfqewf}
\mbox{}
The categories $\nClincat$, $\npClincat$ and $\nCcat$  are complete and cocomplete. 
\end{theorem}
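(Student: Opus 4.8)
The plan is to establish completeness and cocompleteness for the three categories $\nClincat$, $\npClincat$ and $\nCcat$ by leveraging the adjunctions already constructed in Sections \ref{qriugoqrgqfewfqwef} and \ref{eqwrughuqiorgfwefwqefqwefqewf}, together with the known completeness and cocompleteness of the unital counterparts $\Clincat$, $\pClincat$ and $\Ccat$. First I would treat $\nClincat$ directly. For \emph{completeness}, the functor $\Ob:\nClincat\to\Set$ together with the forgetful data of the $\Hom$-vector spaces lets one construct limits objectwise: given a diagram $\bC_\bullet:I\to\nClincat$, take $\Ob(\lim\bC_\bullet):=\lim_i\Ob(\bC_i)$ in $\Set$, and for a compatible family of objects $(C_i)_i$ set $\Hom(\,(C_i),(C'_i)\,):=\lim_i\Hom_{\bC_i}(C_i,C'_i)$ in $\C\mathbf{Vect}$, with composition and involution induced levelwise; one then checks the universal property, which is routine. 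For \emph{cocompleteness} one can instead invoke the adjoint functor theorem: $\nClincat$ is complete, locally presentable (it is a category of models of an essentially algebraic theory — small $\C$-linear $*$-categories are algebras for a multi-sorted finitary monad over $\Set$, once one fixes the object set), hence cocomplete; alternatively one builds filtered colimits objectwise and coproducts/coequalizers by hand, or transports colimits along the adjunction $(-)^{+}\dashv\incl$ of \eqref{egrggqwwefeqwfr4r41}.

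Next, for $\npClincat$ and $\nCcat$, the strategy is to exploit that each is a reflective or coreflective subcategory of $\nClincat$. Concretely, $\npClincat\hookrightarrow\nClincat$ has a \emph{right} adjoint $\Bd^{\infty}$ by \eqref{wefqwefefefqwfqfeewfewfqewf}, so as a coreflective subcategory $\npClincat$ is closed under colimits computed in $\nClincat$ — giving cocompleteness immediately — and its limits are obtained by applying $\Bd^{\infty}$ to the limit computed in $\nClincat$. For $\nCcat$: the completion adjunction $\Compl:\npClincat\leftrightarrows\nCcat:\incl$ of \eqref{oidfhbuihsfuibfefefesdfbsfb} exhibits $\nCcat$ as a reflective subcategory of $\npClincat$, hence $\nCcat$ is complete (limits computed in $\npClincat$, i.e. as $\Bd^{\infty}$ of a limit in $\nClincat$, automatically land in $\nCcat$ since a limit of complete normed spaces with the maximal norm is complete — this needs a small argument comparing the limit norm with the maximal norm), and cocomplete with colimits given by $\Compl$ applied to the colimit in $\npClincat$.

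The main obstacle I anticipate is the interplay of the \emph{maximal seminorm} with the categorical operations, since unlike the unital setting there is no a priori control of norms and the seminorm can be degenerate. For completeness of $\nCcat$ the subtle point is that the maximal norm on a limit category is not obviously the limit of the maximal norms: one has a canonical comparison map, and one must show the limit of $C^{*}$-categories, formed as a subobject of the product, is again norm-complete and that the ambient $\nClincat$-limit already satisfies the pre-$C^{*}$ condition (or else replace it by $\Bd^{\infty}$ of it and check nothing is lost). Dually, for cocompleteness, coproducts and pushouts in $\nCcat$ are built by a universal construction — form the free/colimit object in $\nClincat$ (or in $\npClincat$), then apply $\Bd^{\infty}$ and $\Compl$ — and one must verify the resulting object genuinely corepresents the expected functor; here the key lemma is that $\Compl$, being a left adjoint, preserves colimits, so the colimit in $\nCcat$ is $\Compl$ of the colimit in $\npClincat$, which in turn (by the coreflection) is the colimit in $\nClincat$ restricted via $\Bd^{\infty}$. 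I would organize the proof so that all three statements follow from: (i) $\nClincat$ complete and cocomplete, proved by hand; (ii) $\npClincat$ coreflective in $\nClincat$; (iii) $\nCcat$ reflective in $\npClincat$; plus the two small normed-space lemmas mentioned above.
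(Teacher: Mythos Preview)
Your treatment of $\npClincat$ and $\nCcat$ via the coreflective and reflective localizations \eqref{wefqwefefefqwfqfeewfewfqewf} and \eqref{oidfhbuihsfuibfefefesdfbsfb} is exactly what the paper does; the paper even isolates the general fact you use as a separate Proposition (reflective localizations of (co)complete categories are (co)complete, with the explicit formula for colimits), so that part of your outline is on target and needs no further norm comparisons beyond the abstract statement.

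The genuine divergence is in step~(i), the (co)completeness of $\nClincat$ itself. You propose building limits by hand and deducing cocompleteness from local presentability. The paper instead exhibits an \emph{equivalence of categories}
\[
\nClincat \;\simeq\; (\Clincat)_{F}\,,\qquad F:=\C[\Ob(-)]\,,
\]
where $(\Clincat)_{F}$ is the full subcategory of $\Clincat^{\Delta^{1}}$ on arrows of the form $\bC\to F(\bC)$; the inverse sends $\bC$ in $\nClincat$ to $\bC^{+}\to\C[\Ob(\bC)]$. Since $F$ is a composite of left adjoints it preserves colimits, so $(\Clincat)_{F}$ inherits cocompleteness from $\Clincat$ immediately. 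For limits the paper notes that $F$ preserves only \emph{connected} limits (not products), so it handles connected limits via the equivalence, constructs products explicitly (as you do), and checks the final object separately. This buys a clean reduction to the already-established unital case without appealing to local presentability.

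Two small caveats on your version. First, the throwaway ``transport colimits along $(-)^{+}\dashv\incl$'' does not work as stated: that adjunction has $\incl$ on the \emph{right}, so it transfers nothing about colimits in the desired direction, and $\incl:\Clincat\to\nClincat$ is not fully faithful (unital categories admit non-unital functors between them), so neither adjunction \eqref{egrggqwwefeqwfr4r41} nor \eqref{qewfoihqoiewfqwfqwefqewf} exhibits $\Clincat$ as a (co)reflective subcategory. Second, the locally-presentable claim is true but your justification (``algebras for a multi-sorted finitary monad over $\Set$, once one fixes the object set'') is garbled: fixing the object set gives only a fibre of $\Ob$, not $\nClincat$ itself. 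If you go this route, say instead that $\nClincat$ is the category of models of a finite-limit sketch (objects, morphism sets with source/target, $\C$-vector-space structure, partial composition, involution), hence locally finitely presentable. Either your direct approach or the paper's $(\Clincat)_{F}$ trick is fine; just be precise about which lever you are actually pulling.
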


The main idea of the proof of this theorem is to reduce the assertion to the corresponding assertion in the unital case. This reduction is based on the following constructions. As usual we let  $\Delta^{1}$ denote  the category of the shape $\bullet \to \bullet$. Then for a category $\cC$ the functor category  $\cC^{\Delta^{1}}$ is the category of morphisms in  $\cC$.
 \begin{ddd}\label{wtiwegerwergergwreg}For a category $\cC$ with an endofunctor $F:\cC\to \cC$ 
  we let $\cC_{F}$ denote the full subcategory of $\cC^{\Delta^{1}}$ on  objects of the form $\bC\to F(\bC)$ for objects $\bC$ of $\cC$. \end{ddd}

Let $F:\cC\to \cC$ be an endofunctor,  and let $\bI$ be a small category.  \begin{lem}\label{efbwebwervervwev} \mbox{}
\begin{enumerate} \item  \label{fvkfovavsdvasvadsvasdv1} If $\cC$  admits $\bI$-shaped colimits     and $F$ preserves  $\bI$-shaped colimits, then $\cC_{F}$ admits  $\bI$-shaped colimits. 
\item \label{fvkfovavsdvasvadsvasdv2} If $\cC$ admits  $\bI$-shaped  limits  and $F$ preserves   $\bI$-shaped limits, then $\cC_{F}$  admits  $\bI$-shaped  limits. \end{enumerate}
\end{lem}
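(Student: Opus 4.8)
The plan is to verify both statements by the standard observation that limits and colimits in a functor category $\cC^{\Delta^{1}}$ are computed objectwise, and then checking that the objectwise (co)limit of a diagram landing in the full subcategory $\cC_{F}$ again lies in $\cC_{F}$. So first I would fix a diagram $i \mapsto (\bC_{i} \to F(\bC_{i}))$ indexed by $\bI$ in $\cC_{F}$. Forgetting down to $\cC^{\Delta^{1}}$, this is a diagram of arrows in $\cC$; equivalently, via the two evaluation functors $\mathrm{ev}_{0}, \mathrm{ev}_{1}: \cC^{\Delta^{1}} \to \cC$, it is a pair of $\bI$-diagrams $(\bC_{i})$ and $(F(\bC_{i}))$ in $\cC$ together with the natural transformation given by the structure maps. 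Note that the diagram $\mathrm{ev}_{1}$ applied to our $\cC_{F}$-diagram is literally $F$ applied to the $\mathrm{ev}_{0}$-diagram, since each object has the form $\bC \to F(\bC)$ and each morphism in $\cC_{F}$ is a commuting square whose right-hand vertical leg is $F$ of the left-hand one.

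For part \ref{fvkfovavsdvasvadsvasdv1}: assume $\cC$ has $\bI$-shaped colimits and $F$ preserves them. Form $\bC_{\infty} := \colim_{i \in \bI} \bC_{i}$ in $\cC$. Because $F$ preserves $\bI$-shaped colimits, $F(\bC_{\infty}) \cong \colim_{i \in \bI} F(\bC_{i})$, compatibly with the structure maps, so the induced arrow $\bC_{\infty} \to F(\bC_{\infty})$ is the colimit of the diagram of arrows $(\bC_{i} \to F(\bC_{i}))$ in $\cC^{\Delta^{1}}$ (here I use that $\cC^{\Delta^{1}}$ has colimits computed by $\mathrm{ev}_{0}$ and $\mathrm{ev}_{1}$ objectwise, a standard fact). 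Since $\bC_{\infty} \to F(\bC_{\infty})$ is manifestly an object of $\cC_{F}$, and $\cC_{F} \subseteq \cC^{\Delta^{1}}$ is a full subcategory, this arrow with its cocone is also the colimit in $\cC_{F}$: any cocone in $\cC_{F}$ is in particular a cocone in $\cC^{\Delta^{1}}$, and the unique comparison map lands in $\cC_{F}$ automatically by fullness. Part \ref{fvkfovavsdvasvadsvasdv2} is entirely dual: form $\bC_{-\infty} := \lim_{i \in \bI} \bC_{i}$, use that $F$ preserves $\bI$-shaped limits to identify $F(\bC_{-\infty}) \cong \lim_{i} F(\bC_{i})$, observe that $\bC_{-\infty} \to F(\bC_{-\infty})$ computes the limit in $\cC^{\Delta^{1}}$ and lies in $\cC_{F}$, and conclude by fullness.

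I do not expect a genuine obstacle here; the lemma is essentially bookkeeping. The one point that deserves a sentence of care is the identification of a morphism in $\cC_{F}$ with a commuting square whose right edge is $F$ of the left edge — i.e. that $\mathrm{ev}_{1}$ restricted to $\cC_{F}$ equals $F \circ \mathrm{ev}_{0}$ on the nose, so that the diagram $(F(\bC_{i}))$ really is $F$ applied to $(\bC_{i})$ and the hypothesis "$F$ preserves $\bI$-shaped (co)limits" applies verbatim. The only other thing to keep in mind is that the comparison map produced in $\cC^{\Delta^{1}}$ automatically has source/target in $\cC_{F}$, which is immediate from full-faithfulness of the inclusion $\cC_{F} \hookrightarrow \cC^{\Delta^{1}}$, so no separate universal-property verification inside $\cC_{F}$ is needed.
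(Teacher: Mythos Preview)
Your approach is exactly the paper's: both observe that $\cC^{\Delta^{1}}$ has objectwise $\bI$-shaped (co)limits and then assert that the full subcategory $\cC_{F}$ is closed under them when $F$ preserves $\bI$-shaped (co)limits.

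There is, however, a genuine issue with the justification you spell out. You assert that ``$\mathrm{ev}_{1}$ restricted to $\cC_{F}$ equals $F\circ\mathrm{ev}_{0}$ on the nose,'' i.e.\ that every morphism in $\cC_{F}$ has its target-component equal to $F$ of its source-component. This is not true. By Definition~\ref{wtiwegerwergergwreg}, $\cC_{F}$ is the \emph{full} subcategory of $\cC^{\Delta^{1}}$ on objects $\bC\to F(\bC)$, so a morphism is an arbitrary commuting square, and its component $F(\bC)\to F(\bC')$ need not equal $F$ applied to the component $\bC\to\bC'$. Consequently the $\mathrm{ev}_{1}$-diagram of an $\bI$-diagram in $\cC_{F}$ need not be $F$ applied to the $\mathrm{ev}_{0}$-diagram, and the hypothesis ``$F$ preserves $\bI$-shaped colimits'' does not directly identify its colimit with $F(\colim_{i}\bC_{i})$. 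For a concrete failure take $\cC=\Set$, $F=\id$: one can produce parallel morphisms in $\cC_{F}$ with equal $\mathrm{ev}_{0}$-components but distinct $\mathrm{ev}_{1}$-components, whose coequalizer in $\Set^{\Delta^{1}}$ is an arrow between sets of different cardinalities and hence does not lie in $\cC_{F}$. The paper's two-sentence proof makes the same leap without comment; what saves the application is the specific shape of $F=\C[\Ob(-)]$ together with the equivalence of Lemma~\ref{ewjiowebwerwerbwerbeb}, but the general statement as written needs more care than either argument supplies.
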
\begin{proof}
 If $\cC$  admits $\bI$-shaped colimits, then    the functor category $\cC^{\Delta^{1}}$    admits $\bI$-shaped colimits. Furthermore, if $F$ preserves  $\bI$-shaped  colimits, then 
 the full subcategory 
$\cC_{F}$ of $\cC^{\Delta^{1}}$ is   closed under  $\bI$-shaped colimits   and hence itself admits $\bI$-shaped colimits. 

The argument for limits is similar.
\end{proof}

 We apply this construction and lemma to the categories  $\Clincat$ and  $\Ccat$  in place of $\cC$ and the 
 endofunctor \begin{equation}\label{favsvdsdav}
F:=\C[\Ob(-)]\ .
\end{equation}

Let $p:\bC\to \bD$ be a morphism in $\nClincat$. Then we can form the wide  subcategory $\Ker(p)$ of $\bC$ as an object  in $\nClincat$ as follows:
\begin{enumerate}
\item objects: The set of objects of $\Ker(p)$ is the set of objects of $\bC$.
\item morphisms: The $\C$-vector space of morphisms between objects $C,C^{\prime}$ of $\bC$ is given by
$$\Hom_{\Ker(p)}(C,C^{\prime}):=\ker\left(\Hom_{\bC}(C,C')\to \Hom_{\bD}(p(C),p(C'))\right)\ .$$
\item composition and involution: These structures are inherited from $\bC$.
\end{enumerate}

 We define a functor
$$\beta:(\Clincat)_F\to  \nClincat$$ as follows:
\begin{enumerate}
\item objects: The functor $\beta$ sends the object $p:\bC\to F(\bC)$ in $(\Clincat)_F$  to $\Ker(p)$ in $\nClincat$.
\item morphisms: Let $\phi:(p:\bC\to F(\bC))\to (p':\bC'\to F(\bC^{\prime}))$  be a morphism in $(\Clincat)_F$,  i.e. a commutative square
$$\xymatrix{\bC\ar[r]\ar[d]&\bC^{\prime}\ar[d]\\F(\bC)\ar[r]&F(\bC^{\prime})}\ .$$ 
Then functor  $\bC\to \bC^{\prime}$  restricts to a functor 
 $\beta(\phi):\Ker(p)\to \Ker(p^{\prime})$. 
\end{enumerate}

 Since the kernel of a morphism between $C^{*}$-categories is a $C^{*}$-category the functor $\beta$ restricts to a functor
$$\beta :(\Ccat)_F\to  \nCcat \ .$$
 
\begin{lem}\label{ewjiowebwerwerbwerbeb}\mbox{}
The functor $\beta :(\Clincat)_F\to  \nClincat$ is an equivalence. 
\end{lem}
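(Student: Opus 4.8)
The plan is to construct an explicit inverse functor $\gamma : \nClincat \to (\Clincat)_F$ and check that the two composites with $\beta$ are naturally isomorphic to the respective identities. Given $\bC$ in $\nClincat$, the natural candidate is to take the unitalization $\bC^{+}$ of $\bC$ together with the canonical morphism to $F(\bC^{+}) = \C[\Ob(\bC^{+})]$. More precisely, $\bC^{+}$ has the same objects as $\bC$, and there is a canonical functor $q_{\bC} : \bC^{+} \to \C[\Ob(\bC)]$ which is the identity on objects and which, on endomorphism spaces $\Hom_{\bC}(C,C)\oplus\C \to \C$, is the projection onto the scalar summand (and is zero on $\Hom_{\bC}(C,C')$ for $C \neq C'$). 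One checks $q_{\bC}$ is a morphism of $\C$-linear $*$-categories and that $\ker(q_{\bC}) = \bC$ (a wide subcategory of $\bC^{+}$), i.e.\ $\beta(q_{\bC}) = \bC$. So I set $\gamma(\bC) := (q_{\bC} : \bC^{+} \to F(\bC^{+}))$, and extend to morphisms via the functoriality of unitalization: a morphism $\phi:\bC\to\bC'$ induces $\phi^{+}:\bC^{+}\to\bC'^{+}$ which is compatible with the projections $q$, hence defines a morphism in $(\Clincat)_F$.

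The step $\beta \circ \gamma \cong \id_{\nClincat}$ is essentially the identity: $\beta(\gamma(\bC)) = \ker(q_{\bC}) = \bC$ on the nose, and this is compatible with morphisms. For the other composite, given an object $(p : \bD \to F(\bD))$ of $(\Clincat)_F$ with $\bD$ unital, I must produce a natural isomorphism $\gamma(\beta(p)) = (q_{\ker(p)} : \ker(p)^{+} \to F(\ker(p)^{+})) \;\cong\; (p : \bD \to F(\bD))$ in $(\Clincat)_F$. The key observation is that $F(\bD) = \C[\Ob(\bD)]$ is the "free diagonal" unital category on $\Ob(\bD)$, so a morphism $p : \bD \to \C[\Ob(\bD)]$ which is the identity on objects amounts to a choice, for each object $D$, of a scalar-valued functional on $\End_{\bD}(D)$ compatible with composition and $*$; since $\bD$ is unital and $p$ is a unital functor, $p$ necessarily sends $\id_D$ to the canonical generator $1_D$ of $\C$ at $D$. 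This forces $\bD \cong \ker(p) \oplus \C[\Ob(\bD)]$ as $\C$-linear $*$-categories in the appropriate block sense: the scalar directions split off canonically because $p$ provides a $*$-linear splitting. Concretely, $\Hom_{\bD}(C,C') = \Hom_{\ker(p)}(C,C')$ for $C\neq C'$, and $\End_{\bD}(C) = \End_{\ker(p)}(C)\oplus \C\cdot\id_C$ with the multiplication matching exactly the unitalization formula. Unwinding the definition of composition in $\bC^{+}$ (cases (a)--(c) in the proof of the first Proposition) against the composition in $\bD$, one sees the identification $\ker(p)^{+} \xrightarrow{\sim} \bD$ is an isomorphism of $\C$-linear $*$-categories, and it is compatible with the projections to $F(-)$, hence an isomorphism in $(\Clincat)_F$; naturality in $(p:\bD\to F(\bD))$ is a routine diagram chase.

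The main obstacle I expect is verifying that the canonical decomposition $\End_{\bD}(C) = \End_{\ker(p)}(C)\oplus\C\cdot\id_C$ really does reproduce the unitalization multiplication rule \emph{and} the $*$-structure on the nose, rather than merely up to isomorphism — in particular that the scalar $\lambda$ attached to a morphism $f + \lambda\,\id_C \in \End_{\bD}(C)$ is precisely $p(f+\lambda\,\id_C)$ read off in $\C$, and that $p$ being a $*$-functor makes this splitting $*$-compatible so that $(f+\lambda\,\id_C)^{*} = f^{*} + \bar\lambda\,\id_C$ matches the involution on $\ker(p)^{+}$. Once this bookkeeping is done, everything else (naturality, the behaviour on the category $(\Ccat)_F$ which is handled by the same formulas since unitalization preserves $C^{*}$-categories) is formal. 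Note also that one should record that $\beta$ does land in $\nClincat$ and that the functor $\gamma$ is well-defined on $(\Clincat)_F$, i.e.\ that $q_{\bC}$ is genuinely an object of that subcategory — both immediate from the constructions above.
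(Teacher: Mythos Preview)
Your approach is essentially the paper's: both build the inverse by sending $\bC$ to $(q_{\bC}:\bC^{+}\to\C[\Ob(\bC)])$ (the paper writes $\bC^{\dagger}$ and $\epsilon_{\bC}$), observe $\beta\gamma=\id$ on the nose, and for $\gamma\beta\cong\id$ exhibit the comparison $\Ker(p)^{+}\to\bD$ coming from the universal property of $(-)^{+}$. Your splitting argument for why this comparison is an isomorphism is more explicit than the paper's one-line assertion, but the content is the same.

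There is one genuine gap, shared with the paper. You assume that an arbitrary object $p:\bD\to\C[\Ob(\bD)]$ of $(\Clincat)_F$ is the identity on objects; but by Definition~\ref{wtiwegerwergergwreg} the category $(\Clincat)_F$ is the \emph{full} subcategory of $\Clincat^{\Delta^{1}}$ on all arrows $\bD\to F(\bD)$, with no constraint on the object map. If $p$ is allowed to identify objects, your splitting fails. For instance, let $\bD$ have objects $\{a,b\}$ with every $\Hom$-space equal to $\C$ (composition by multiplication, involution by conjugation), and let $p$ send both $a$ and $b$ to $a$ and act as the identity on each morphism space; then $\Ker(p)=0[\{a,b\}]$, so $\Ker(p)^{+}=\C[\{a,b\}]\not\cong\bD$, and in fact $(p)$ and $\gamma(\beta(p))$ are non-isomorphic in $(\Clincat)_F$. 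So $\beta$ is not an equivalence as literally stated. The statement and both proofs become correct once one restricts to the full subcategory on those $p$ which are the identity on objects; this subcategory is still closed under the (co)limits used in the proof of Theorem~\ref{riguhqwieufqewfeqfqewf} (since $\Ob$ and $\C[-]$ preserve the relevant colimits and connected limits, and each $p_i$ being the identity on objects forces the same for $\colim p_i$), so nothing downstream is affected.
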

\begin{proof} Let $\incl:\Clincat\to \nClincat$ be the inclusion.
We have  a natural transformation of functors 
$$\id\to  \incl(\C[\Ob(-)]) :\nClincat \to \nClincat $$ which sends 
  $\bC$   in $\nClincat$ to the     morphism $\bC\to \incl(\C[\Ob(\bC)])$   which is the identity on objects and sends all morphisms to zero. 
  
  Taking objectwise the  adjoints with respect to the adjunction \eqref{egrggqwwefeqwfr4r41} we obtain   the natural transformation  of functors  $$\epsilon: (-)^{+}\to \C[\Ob(-)]:\nClincat\to \Clincat\ .$$ 
 The inverse $$(-)^{\dagger}:  \nClincat\to  (\Clincat)_F$$  of $\beta $ is the natural transformation $\epsilon$ interpreted as a functor $ \nClincat\to \Clincat^{\Delta^{1}}$ which happens to take values in the subcategory   $(\Clincat)_F$.
 It sends $\bC$ in $\nClincat$ to $$\bC^{\dagger}:=(\epsilon_{\bC}:\bC^{+}\to \C[\Ob(\bC)])\ .$$

We have an obvious natural isomorphism of functors $\id\cong \beta  ((-)^{\dagger})$. The isomorphism 
$    (\beta(-))^{\dagger}  \stackrel{\cong}{\to} \id $ is given on the object $p:\bC\to \C[\Ob(\bC)]$  of $(\Clincat)_{F}$ by the commuting diagram
 $$\xymatrix{\Ker(p)^{+} \ar[r]^{!}\ar[d]& \bC \ar[d]\\ \C[\Ob(\bC)] \ar[r]^{=}& \C[\Ob(\bC)]}\ ,$$
where the  arrow marked by $!$ is induced by the embedding $\Ker(p)\to \bC$ and the universal property of the unitalization.
It is an isomorphism. This finishes the proof of 
Lemma \ref{ewjiowebwerwerbwerbeb}.
%
%
\end{proof}

\begin{proof}[Proof of Theorem \ref{riguhqwieufqewfeqfqewf}]
In a first step we show the assertion for $\nClincat$.
We first discuss colimits. We already know that 
 that $\Clincat$ is 
 cocomplete.  
The composition $F:= \C[-]\circ \Ob:\Clincat\to\Clincat$
is the composition of two left-adjoints \eqref{asdvasvdsvasvadvsv} and \eqref{qfewfq1}  and therefore preserves all small colimits.  
By Lemma \ref{efbwebwervervwev}.\ref{fvkfovavsdvasvadsvasdv1} it follows that
$(\Clincat)_{F}$ is 
cocomplete. 
Finally, by Lemma \ref{ewjiowebwerwerbwerbeb} the category 
$\nClincat$ is 
cocomplete. 
%
%
%

We now consider limits. 
The argument for colimits does note completely apply  to limits since the functor $\C[\Ob(-)]$ does only preserve limits of connected shape. It does not preserve   products in general.
Nevertheless, completeness of $\nClincat$  follows from the following assertions:
\begin{enumerate}
\item\label{rguiqegergwergergregw} existence of final objects,
\item\label{rguiqegergwergergregw1} existence of limits with connected shape,
\item\label{rguiqegergwergergregw2} existence of products.
\end{enumerate}

We start with Assertion \ref{rguiqegergwergergregw}.
The category $0[*]$ is a final object of $\nClincat$. 

We now consider Assertion \ref{rguiqegergwergergregw1}.
 A small category $\bI$ is called connected if 
its nerve is a connected simplicial set. Equivalently, $\bI$ is connected  iff every two objects in $\bI$ are connected by a composition of  zig-zags.

Assume now that $\bI$ is a non-empty connected small category. We claim that the functor 
$\C[-]:\Set\to \Clincat$ preserves $\bI$-shaped limits.

In order to see this claim we consider $X$ in $\Fun(\bI,\Set)$. We must show that  the canonical morphism $$\C[\lim_{\bI}X]\to \lim_{\bI}\C[X]$$ is an isomorphism. To this end we show that the morphism  \begin{equation}\label{dsvasdvqewfsdvad}
\underline{\C[\lim_{\bI}X]}\to \C[X]
\end{equation} 
induced by the canonical morphism $\underline{\lim_{\bI}X}\to X$
presents 
$\C[\lim_{\bI}X]$ as the limit of the diagram $\C[X]$. 
Hence   we must show that the post-composition with  the morphism in \eqref{dsvasdvqewfsdvad}  induces a bijection \begin{equation}\label{ewrgknhkwergergewrg}
\Hom_{\Clincat}(\bT,\C[\lim_{\bI}X])\to \Hom_{\Fun(\bI,\Clincat)}(\underline{\bT},\C[X])
\end{equation}
for every $\bT$ in $\Clincat$.  
In order to describe the inverse of \eqref{ewrgknhkwergergewrg} we view $\lim_{\bI}X$ as a subset of $\prod_{i\in I} X(i)$ in the canonical way.

Let $\phi:\underline{\bT}\to \C[X]$ be given. Note that $\phi$ is given by a compatible collection of functors $\phi(i):\bT\to \C[X(i)]$ for all $i$ in $\bI$.
If $f:T\to T'$ is a morphism in $\bT$ and  $i$ is in $\bI$ such that   $\phi(i)(T)=\phi(i)(T')$, then we get a number  $c_{\phi}(i,f)$ in $\C$ characterized by $$ c_{\phi}(i,f)\id_{\phi(i)(T)}=\phi(i)(f)\ .$$

The  inverse of \eqref{ewrgknhkwergergewrg} sends
$\phi:\underline{\bT}\to \C[X]$ to the functor
$\bT\to \C[\lim_{\bI}X]$ which has the following description:
\begin{enumerate}
\item objects: It sends the object $T$ of $\bT$ to the family $(\phi(i)(T))_{i\in \bI}$ in $\lim_{\bI}X$.
\item morphisms: It sends a morphism $f:T\to T'$ in $\bT$ to the morphism $(\phi(i)(T))_{i\in \bI}\to (\phi(i)(T'))_{i\in \bI}$ given by 
\begin{enumerate}
\item $0$ if $ (\phi(i)(T))_{i\in \bI}\not= (\phi(i)(T'))_{i\in \bI}$  \item $c_{\phi}(i,f)$  for some choice of $i$ in $\bI$ if $ (\phi(i)(T))_{i\in \bI}= (\phi(i)(T'))_{i\in \bI}$. Since we assume that $\bI$ is non-empty and connected 
 the number $ c_{\phi}(i,f)$ is defined and  does not depend on the choice of $i$. 
  \end{enumerate}
 \end{enumerate}
One easily checks that this describes an inverse to \eqref{ewrgknhkwergergewrg}.
 
Note that $\C[-]:\Set \to \Ccat$ preserves $\bI$-shaped limits  by the same argument.

  Since $\Ob$ is a right-adjoint in \eqref{asdvasvdsvasvadvsv} it preserves all limits. Hence the composition    $\C[\Ob(-)]$ preserves $\bI$-shaped limits.
  
 Since $\Clincat$ is 
  complete   we can use
Lemma \ref{efbwebwervervwev}.\ref{fvkfovavsdvasvadsvasdv2} to see that
 $(\Clincat)_{F}$ 
  admits  $\bI$-shaped limits.    Finally,  by Lemma \ref{ewjiowebwerwerbwerbeb} the category 
$\nClincat$ 
admits
  $\bI$-shaped limits. 

We finally show  Assertion \ref{rguiqegergwergergregw2}.
 Let $I$ be a set and $(\bC_{i})_{i\in I}$ be a family in $\nClincat$.
 Then we define $\bC$ in $\nClincat$ as follows:
 \begin{enumerate}
 \item objects: The set of objects of $\bC$ is the set $\prod_{i\in I} \Ob(\bC_{i})$.
 \item morphisms: The $\C$-vector space of morphisms between objects $(C_{i})_{i\in I}$ and $(C'_{i})_{i\in I}$ of $\bC$ is defined by
 $$\Hom_{\bC}((C_{i})_{i\in I},(C'_{i})_{i\in I}):=\prod_{i\in I}\Hom_{\bC_{i}}(C_{i},C_{i}')\ .$$
 \item composition and involution:
The composition and involution are  given by the corresponding componentwise operations.
 \end{enumerate}
For every $i$ in $I$ we have an obvious projection $p_{i}:\bC\to \bC_{i}$.
It is easy to check that $(\bC,(p_{i})_{i\in I})$ presents $\bC$ as the product of the family $(\bC_{i})_{i\in I}$ in $\nClincat$.

%

This finishes the proof of the theorem in the case of $\nClincat$. The remaining cases can be deduced in a completely formal way using the following general fact from category theory.

\begin{rem}\label{fefvwvdfvsdv}
 Assume that $$L:\cC\leftrightarrows \cD:R$$
is a reflective localization, i.e., an adjunction such that $R$ is fully faithful.
 \begin{prop}\label{roijgeqroifqewqfewf}\mbox{}\begin{enumerate}
 \item \label{feuegoergwegwegwerg}  If $\cC$ is complete, then so is $\cD$. The functor $R$ preserves and detects limits.
 \item \label{ergergerwf} If $\cC$ is cocomplete, then so is $\cD$. If $D:\bI\to \cD$ is a diagram in $\cD$, then
 $$\colim_{\bI} D \cong L (\colim_{\bI} R(D))\ .$$
 \end{enumerate}
 \end{prop}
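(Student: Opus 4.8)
The plan is to reduce everything to the Yoneda lemma plus three standard features of a reflective localization $L\dashv R$, which I would record at the outset: since $R$ is fully faithful the counit $\epsilon\colon LR\to\id_{\cD}$ is an isomorphism; for every $c$ in $\cC$ the morphism $L\eta_c\colon Lc\to LRLc$ is an isomorphism, being the inverse of $\epsilon_{Lc}$ by a triangle identity; and for every $d$ in $\cD$ the morphism $\eta_{Rd}\colon Rd\to RLRd$ is an isomorphism, being $(R\epsilon_d)^{-1}$ by the other triangle identity.

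For part \ref{ergergerwf} (cocompleteness) I would first note that $L$, being a left adjoint, preserves all colimits that exist in $\cC$. Given $D\colon\bI\to\cD$, form $\colim_{\bI}RD$ in $\cC$ and check that $L(\colim_{\bI}RD)$ corepresents cocones under $D$: for every $d$ in $\cD$,
\[
\Hom_{\cD}(L(\colim_{\bI}RD),d)\cong\Hom_{\cC}(\colim_{\bI}RD,Rd)\cong\lim_{\bI}\Hom_{\cC}(RD(i),Rd)\cong\lim_{\bI}\Hom_{\cD}(D(i),d),
\]
naturally in $d$, using the adjunction $L\dashv R$, the defining property of the colimit in $\cC$, and the full faithfulness of $R$. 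Hence $\colim_{\bI}D$ exists and is $L(\colim_{\bI}RD)$. This is the short, painless half.

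For part \ref{feuegoergwegwegwerg} (completeness) I would first note that $R$, being a right adjoint, preserves all limits. The substance is existence: given $D\colon\bI\to\cD$, put $Y:=\lim_{\bI}RD$ in $\cC$. Granting that the unit $\eta_Y\colon Y\to RLY$ is an isomorphism, the same Yoneda computation gives, for every $d$ in $\cD$,
\[
\Hom_{\cD}(d,LY)\cong\Hom_{\cC}(Rd,RLY)\cong\Hom_{\cC}(Rd,Y)\cong\lim_{\bI}\Hom_{\cC}(Rd,RD(i))\cong\lim_{\bI}\Hom_{\cD}(d,D(i)),
\]
naturally in $d$, so $\lim_{\bI}D$ exists and equals $LY$; then $R(\lim_{\bI}D)\cong Y=\lim_{\bI}RD$, reconfirming that $R$ preserves limits, and $R$ detects (reflects) limits because it is fully faithful (hence conservative): a cone over $D$ whose $R$-image is a limit cone maps by the comparison map into the limit cone $LY$ just constructed, and this comparison becomes an isomorphism after applying $R$, hence is already one.

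The main obstacle is exactly the claim that $\eta_Y$ is an isomorphism for $Y=\lim_{\bI}RD$, equivalently that the essential image of $R$ (the full subcategory of $c$ with $\eta_c$ invertible) is closed under limits in $\cC$. I would prove this by an elementary chase: the limit cone $\pi_i\colon Y\to RD(i)$, together with the isomorphisms $\eta_{RD(i)}$ and the naturality of $\eta$, yields via the universal property of $Y$ a morphism $\theta\colon RLY\to Y$ with $\theta\circ\eta_Y=\id_Y$; applying $L$ and using that $L\eta_Y$ is an isomorphism gives $L(\eta_Y\circ\theta)=\id_{LY}$, and then naturality of $\eta$ together with the invertibility of $\eta_{RLY}$ forces $\eta_Y\circ\theta=\id_{RLY}$ as well. (Alternatively one may simply invoke the classical fact that a reflective subcategory is closed under all limits existing in the ambient category.) This is the one point where reflectivity is genuinely used; the rest is formal.
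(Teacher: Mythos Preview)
Your proposal is correct. For part \ref{ergergerwf} your Yoneda computation is essentially identical to the paper's argument, just written more compactly.

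For part \ref{feuegoergwegwegwerg} the approaches diverge at the key point, namely showing that the essential image of $R$ is closed under limits in $\cC$. The paper takes a more conceptual route: it first characterizes the essential image as exactly the class of $W$-local objects, where $W$ is the class of morphisms inverted by $L$, and then closure under limits is immediate since $\Hom_{\cC}(w,\lim_{\bI}D)\cong\lim_{\bI}\Hom_{\cC}(w,D)$ and a limit of isomorphisms is an isomorphism. You instead argue directly at the level of the unit: you build a retraction $\theta$ of $\eta_Y$ from the limit cone and then upgrade it to a two-sided inverse using the triangle identities and the invertibility of $\eta_{RLY}$. Your argument is more hands-on and avoids introducing the auxiliary notion of $W$-locality; the paper's argument packages the same content in a form that makes the closure-under-limits step a one-liner and is perhaps more reusable. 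Both are standard and equally valid.
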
\begin{proof}\footnote{We think that the proposition is well-known in category theory, but we add the proof as a service for readers in other fields. The authors thanks G. Raptis for valuable hints.}
 Since $R$ is fully faithful, we can identify $\cD$ with the essential image of $R$. We will omit the inclusion from the notation.
 
 We first consider limits.  Let $W$ be the class of morphisms in $\cC$ which are send to isomorphisms by $L$.
 An object $C$ of $\cC$ is called $W$-local if for every $w$ in $W$ the morphism $\Hom_{\cC}(w,C)$ is an isomorphism.
 We claim that $\cD$ consists exactly of the $W$-local objects. Indeed, if $D$ is in $\cD$, then it is $W$-local since
 $\Hom_{\cC}(w,D)\cong  \Hom_{\cC}(L(w),D)$. 
 
 Assume now that $C$ is $W$-local. 
Let $\eta:C\to L(C)$ be the unit of the adjunction. We show that $\eta$ is an isomorphism. This implies that $D\in \cD$.
The map $\eta$ itself belongs to $W$ since $$L(C)\stackrel{L(\eta)}{\to} L(\incl(L(C)))   \stackrel{counit\circ L}{\to}  L(C)$$ 
is an isomorphism by the triple identity of the adjunction (here it is useful to write the inclusion), and the counit is an isomorphism since $R$ is fully faithful. Since we assume that $C$ is $W$-local 
$$\Hom_{\cC}(L(C),C)\stackrel{\eta^{*}}{\to} \Hom_{\cC}(C,C)$$ is an isomorphism.
We let $\kappa:L(C)\to C $ be the preimage of $\id_{C}$. Then by definition $\kappa\circ \eta=\id_{C}$.
This implies that $\kappa\in W$ since $L(\kappa)\circ L(\eta)=\id_{L(C)}$ and $L(\eta)$ is an isomorphism.
Furthermore 
$$\Hom_{\cC}(C,L(C))\stackrel{\kappa^{*}}{\to} \Hom_{\cC}(L(C), L(C))\  .$$
is an isomorphism. Hence
there exists $\delta:C\to L(C)$ such that $\delta\circ \kappa=\id_{L(C)}$. Both equalities togther imply
that $\delta=\eta$ and hence $\eta$ is invertible.

We now show that $\cD$ is closed under limits. Let $D:\bI\to \cD$ be a diagram.
Then for every $w$ in $W$ we have $\Hom_{\cC}(w,\lim_{\bI}D)\cong \lim_{\bI}\Hom_{\cC}(w,D)$.
Since a limit of isomorphisms is an isomorphism we conclude that $ \Hom_{\cC}(w,\lim_{\bI}D)$ is an isomorphism.
Since $w$ is arbitrary we conclude that $  \lim_{\bI}D$ is $W$-local and hence in $\cD$.

Since $\cD\to \cC$ is fully faithful, we can conclude that $\cD$ has all limits. They  are calculated in $\cC$.
 This finishes the proof of Assertion \ref{feuegoergwegwegwerg}.
 
Let $D:\bI\to \cD$ be a diagram in $\cD$.  
Since $\bC$ is cocomplete we can form the colimit $\colim_{\bI} R(D)$.
Its structure maps $(\iota_{i}:R(D_{i})\to\colim_{\bI} R(D))$
induce a bijection
$$\Hom_{\cC}(\colim_{\bI} R(D),R(D'))\cong \lim_{\bI^{\op}} \Hom_{\cD}( R(D),R(D'))\ .$$
 Using the adjunction and the fact that $R$ is fully faithful (and therefore that the counit  $L\circ R \stackrel{\cong}{\to} \id$ is an isomorphism)  we conclude that the structure maps 
 \begin{equation}\label{eqwrqwerewr}
  D_{i}\cong L(R(D_{i}))\stackrel{L(\iota_{i})}{\to} L(\colim_{\bI} R(D)) 
\end{equation}  induce a bijection 
 $$\Hom_{\cD}(L(\colim_{\bI} R(D)),D')\cong \lim_{\bI^{\op}} \Hom_{\cD}(  D,D')\ .$$
Hence the family of structure maps \eqref{eqwrqwerewr} for $i$ in $\bI$
presents $L(\colim_{\bI} R(D))$ as the colimit of the diagram $D$ in $\cD$.
 
 We can conclude that $\cD$ is cocomplete.
 \end{proof}

 By going over to opposite categories we obtain 
 an analogous statement of Proposition \ref{roijgeqroifqewqfewf} for colocalizations. \end{rem}

 Using the colocalization \eqref{wefqwefefefqwfqfeewfewfqewf} we can conclude from the opposite of Proposition \ref{roijgeqroifqewqfewf} and the case of $\nClincat$ already shown above that $\npClincat$ is complete and cocomplete.
 
We then use the localization  \eqref{oidfhbuihsfuibfefefesdfbsfb} and  Proposition \ref{roijgeqroifqewqfewf}  
in order to deduce the case of $\nCcat$ from the assertion for  $\npClincat$.
\end{proof}


In the following
  we provide  an explicit formula for  a filtered colimit in $\nCcat$  of a diagram  of subcategories of a  fixed  $C^{*}$-category.
 We consider a filtered poset $I$ and a diagram $\bC:I\to \nCcat$.
 We let furthermore $\bD$ be in $\nCcat$ and $\phi:\bC\to \underline{\bD}$ be a morphism in $\Fun(I,\nCcat)$,  {where $\underline{\bD}$ is the constant diagram.} We assume that for every $i$ in $I$ the map $\phi(i):\bC(i)\to \bD$ is injective on objects and morphisms. 
We can then define a $\C$-linear $*$-subcategory $\bE$ of $\bD$ as follows:
\begin{enumerate} 
\item objects: The set of objects of $\bE$ is given as a subset of $\Ob(\bD)$ by
$$\Ob(\bE):= \bigcup_{i\in I} \phi(i)(\Ob(\bC(i)))\ .$$
\item  morphisms:  The   morphism spaces  of $\bE$ are given as subspaces of the morphism spaces of $\bD$ by
$$\Hom_{\bE}(D,D')= \bigcup_{i\in I_{D,D'}} \phi(i)(\Hom_{\bC(i)}(D_{i},D_{i}'))\ ,$$
where $$I_{D,D'}:=\{i\in I\:|\: (\exists D_{i},D_{i}'\in \Ob(\bD(i))\:|\: \phi(i)(D_{i})=D\:\: \mbox{and}\:\:  \phi(i)(D_{i}')=D' )\}\ .$$
Note that for $i$ in $I_{D,D'}$ the objects $D_{i}$ and $D_{i}'$  are uniquely determined.
\end{enumerate}
The inclusion $\bE\to \bD$ (a morphism in $\nClincat$) induces a norm on
$\bE$. We let $\bar \bE$ be the closure of $\bE$ with respect to this norm. By construction the morphism $\phi$ factorizes over a   morphism $\bC\to \underline{\bar\bE}$ in $\Fun(I,\nCcat)$. By adjunction it induces a morphism \begin{equation}\label{dsvadsvalklkadsv}
\colim_{i\in I} \bC(i)\to \bar \bE\ . 
\end{equation} 

  \begin{lem}\label{ihergwiurergwggw}
  The morphism  \eqref{dsvadsvalklkadsv} is an isomorphism
   \end{lem}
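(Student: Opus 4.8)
The plan is to show that the natural map \eqref{dsvadsvalklkadsv} is an isomorphism in $\nCcat$ by identifying a convenient model for $\colim_{i\in I}\bC(i)$ and matching it against $\bar\bE$ level by level: first on objects, then on the (pre-completion) morphism spaces, and finally by passing to the completion. Recall from Proposition \ref{roijgeqroifqewqfewf}.\ref{ergergerwf} (applied to the reflective localizations $\Compl$ and, before that, to the functor $\C[\Ob(-)]$-presentation of $\nClincat$) that the colimit in $\nCcat$ is computed by first forming the colimit $\bE'$ in $\npClincat$ (equivalently in $\nClincat$, since filtered colimits of pre-$C^*$-categories are pre-$C^*$) and then applying $\Compl$. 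So it suffices to (a) construct an explicit model of $\bE'=\colim_{i\in I}\bC(i)$ in $\npClincat$, (b) show the canonical comparison functor $\bE'\to\bE$ (into the uncompleted subcategory of $\bD$ described in the statement) is an isomorphism of $\C$-linear $*$-categories, and (c) observe that $\Compl$ applied to an isomorphism is an isomorphism and that $\Compl(\bE)=\bar\bE$ by the very definition of the norm on $\bE$ induced from $\bD$.

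For step (a), since $I$ is a filtered poset and all the transition maps as well as the maps $\phi(i)$ are injective on objects and morphisms, the filtered colimit in $\nClincat$ is computed ``naively'': its object set is $\colim_{i\in I}\Ob(\bC(i))$ (using that $\Ob$ is a left adjoint, \eqref{qfewfq}, hence preserves this colimit), and for two objects represented by $i$-objects $C_i,C_i'$ the morphism space is the filtered colimit $\colim_{j\ge i}\Hom_{\bC(j)}(C_j,C_j')$ along the transition maps. Here I would use that filtered colimits of vector spaces are exact and commute with the relevant algebraic operations, so composition, the linear structure, and the involution are inherited termwise; this is the standard description and needs only a routine verification that the universal property holds. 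For step (b), the compatible family $\phi(i):\bC(i)\to\bD$ factors through $\bE$ by construction, inducing $\psi:\bE'\to\bE$. On objects $\psi$ is the map $\colim_i\Ob(\bC(i))\to\bigcup_i\phi(i)(\Ob(\bC(i)))$, which is a bijection precisely because each $\phi(i)$ is injective on objects and the index set is filtered (two representatives of the same $\bD$-object lie in a common $\bC(j)$, since $I_{D,D'}$ is nonempty and cofiltered in the relevant sense, and become equal there). On morphisms, surjectivity of $\psi$ is immediate from the definition $\Hom_{\bE}(D,D')=\bigcup_{i\in I_{D,D'}}\phi(i)(\Hom_{\bC(i)}(D_i,D_i'))$, and injectivity follows because each $\phi(i)$ is injective on morphisms and $I$ is filtered, so any element of the colimit mapping to $0$ in $\bD$ already vanishes at some finite stage.

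For step (c), $\psi:\bE'\to\bE$ is an isomorphism in $\nClincat$ whose source $\bE'$ is a pre-$C^*$-category (filtered colimit of $C^*$-categories, all of whose morphisms have finite maximal semi-norm — bounded by the $\bD$-norm via $\phi$), so applying the completion functor $\Compl$ of \eqref{oidfhbuihsfuibfefefesdfbsfb} gives an isomorphism $\Compl(\bE')\cong\Compl(\bE)$ in $\nCcat$; the left side is $\colim_{i\in I}\bC(i)$ by the colimit formula for the reflective localization, and the right side is $\bar\bE$ since the maximal norm on $\bE$ agrees with the norm induced from $\bD$ (because $\bD$ is a $C^*$-category, any $*$-representation of $\bE$ is dominated by the inclusion into $\bD$, and the inclusion itself realizes the supremum). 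Unwinding the identifications, one checks that this isomorphism is exactly the map \eqref{dsvadsvalklkadsv}. The main obstacle I anticipate is not any single step but the bookkeeping in (b)–(c): one must keep careful track of the difference between the abstract colimit, its explicit ``naive'' model, the uncompleted subcategory $\bE\subseteq\bD$, and its closure $\bar\bE$, and verify at each stage that the comparison maps one writes down are literally the canonical ones, so that the final isomorphism is \eqref{dsvadsvalklkadsv} and not merely \emph{some} isomorphism. The reliance on filteredness of $I$ and on injectivity of the $\phi(i)$ is essential and should be flagged at each use.
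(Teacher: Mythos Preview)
Your overall strategy matches the paper's exactly: compute the colimit in $\nClincat$, identify it with the uncompleted $\bE$, and then show that completing $\bE$ yields $\bar\bE$ by proving that the maximal semi-norm on $\bE$ coincides with the norm inherited from $\bD$. Steps (a) and (b) are fine and are precisely what the paper dismisses as ``easy to see by checking the universal property.''

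The gap is in step (c), in your justification for the norm equality $\|{-}\|_{\max}=\|{-}\|_{\bD}$ on $\bE$. You write that ``any $*$-representation of $\bE$ is dominated by the inclusion into $\bD$'' because $\bD$ is a $C^{*}$-category. That is not a valid argument as stated: a $\C$-linear $*$-subcategory of a $C^{*}$-category can in principle admit representations into $C^{*}$-algebras that are not bounded by the ambient norm; there is no automatic extension of such representations to $\bD$. What actually forces the inequality $\|f\|_{\max}\le\|f\|_{\bD}$ is the following chain, which the paper writes out explicitly. Pick $i$ with $f=\phi(i)(f_{i})$. For any morphism $\rho:\bE\to B$ to a $C^{*}$-algebra, the composite $\rho\circ\phi(i):\bC(i)\to B$ is a morphism out of a $C^{*}$-category and is therefore automatically contractive, so $\|\rho(f)\|_{B}\le\|f_{i}\|_{\bC(i)}$; taking the supremum gives $\|f\|_{\max}\le\|f_{i}\|_{\bC(i)}$. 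The crucial remaining step is that $\phi(i)$, being an \emph{injective} morphism between $C^{*}$-categories, is \emph{isometric}, so $\|f_{i}\|_{\bC(i)}=\|\phi(i)(f_{i})\|_{\bD}=\|f\|_{\bD}$. Combined with the trivial direction $\|f\|_{\bD}\le\|f\|_{\max}$ (the inclusion into $\bD$ followed by $\rho_{\bD}:\bD\to A(\bD)$ gives one of the representations in the supremum), the chain closes. You flagged injectivity of $\phi(i)$ repeatedly in (b) but never used the consequence ``injective $\Rightarrow$ isometric,'' which is exactly the missing ingredient here.
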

\begin{proof}
 Using Proposition \ref{roijgeqroifqewqfewf} we see that  the colimit in $\nCcat$ can be calculated by first forming the colimit in $\nClincat$, observing that the result is a pre-$C^{*}$-category, and then applying the completion.
 Thus let $\incl:\nCcat\to \nClincat$ be the inclusion.
 It is easy to see by checking the universal property that 
 $$  \colim_{i\in I}\incl(\bC(i))\cong   \bE \ . $$ We must show that the maximal norm on $\bE$ coincides with the norm induced from $\bD$. 
 This then implies that $\Compl(\bE)\cong \bar \bE$.

  Let $f$ be a morphism in $\bE$. Then there exists $i$ in $I$ such that $f =\phi(i)( f_{i})$ for a morphism $f_{i}$ in $\bC(i)$.
We then have
 $$\|f\|_{\bD}\le \|f\|_{\max}\le \|f_{i}\|_{ \bC(i)}= \|\phi(i)(f_{i})\|_{\bD}= \|f\|_{ \bD}$$
 showing that all  inequalities are   equalities.
 Here we use that the morphism $\phi(i)$ {for each $i$ in $I$} is isometric since     it is   injective. 
  \end{proof}

   \section{Crossed products}
 
Let $G$ be a group. In this section we generalize the notion of a crossed product with $G$ from algebras to categories. 
As similar construction for additive categories can be found in \cite{bartels_reich}.
Here we describe the construction of the crossed product and its universal property in an ad-hoc manner. A more conceptual interpretation of the construction will be given  in Section \ref{wgoijorgregwergwgrwreg}.

By $BG$ we denote the category with one object $*_{BG}$ and the monoid of endomorphisms $\End_{BG}(*_{BG})\cong G$.  For a category $\cC$  the category of $G$-objects in $\cC$ is the functor category $\Fun(BG,\cC)$. Its objects are the objects of $\cC$ equipped with a $G$-action, and its morphisms are  those   morphisms in $\cC$ which are  equivariant.
 
We consider   $\bC$ in $\Fun(BG,\nClincat)$.  We use 
the notation $(g,C)\mapsto gC$ and $(g,f)\mapsto gf$ for the $G$-action on objects and morphisms of $\bC$. 
\begin{ddd}\label{wergiojwergregregwergerg}
We define the crossed product   $\bC\rtimes^{\alg}G$ of $\bC$ with $G$    as an object of  $\nClincat$ as follows:
\begin{enumerate}
\item objects: The set of objects of  $ \bC\rtimes^{\alg}G$ is the set of objects of $\bC$.
\item \label{wtiogwerfgewrgwergwerg} morphisms: For any two objects $C,C^{\prime}$ of $\bC$ we define the $\C$-vector space
$$\Hom_{\bC\rtimes^{\alg}G}(C,C^{\prime}):=\bigoplus_{g\in G} \Hom_{\bC}(C,g^{-1}C^{\prime})\ .$$
An element $f$  in the summand  $\Hom_{\bC}(C,g^{-1}C')$ will be denoted by $(f,g)$.
\item composition: For $(f,g)$ in $\Hom_{\bC\rtimes^{\alg}G}(C,C^{\prime})$ and $(f',g')$ in $\Hom_{\bC\rtimes^{\alg}G}(C',C'')$  we set  $$(f',g')\circ (f,g):=(g^{-1}f'\circ f,g'g)\ .$$ For general elements the composition is defined by linear extension. 
\item $*$-operation: We define $(f,g)^{*}:=(gf^{*},g^{-1})$.
\end{enumerate}
 \end{ddd}
 One checks by  straightforward  calculations  that $\bC\rtimes^{\alg}G$ is well-defined.
  Note that if $f:C\to C^{\prime}$ is a morphism in $\bC$ and $g$ is in $G$, then we get a morphism $(f,g):C\to gC^{\prime}$ in $\bC\rtimes^{\alg}G$.
 
 The construction of the crossed product is functorial in $\bC$ in an obvious manner. 
 Let $\phi:\bC\to \bC^{\prime}$ be a morphism in $\Fun(BG,\nClincat)$. Then we get a morphism
 $$\phi\rtimes^{\alg}G:\bC\rtimes^{\alg}G\to \bC'\rtimes^{\alg}G$$ defined
 as follows:
 \begin{enumerate}
 \item objects: The action of $\phi\rtimes^{\alg}G$ on objects  is given by the action of $\phi$ on objects.
 \item morphisms: For a morphism $f$ in $\bC$ and $g$ in $G$ we set $(\phi\rtimes^{\alg}G)(f,g):=(\phi(f),g)$.
 \end{enumerate}
 We have thus defined a functor \begin{equation}\label{vfdqeiuhfviudfv} -\rtimes^{\alg}G:\Fun(BG,\nClincat)\to \nClincat\ .  \end{equation}
 In Proposition \ref{obgfbgbgfbsfdgbsdfb9} below we will extend this functoriality from equivariant to weakly invariant functors, and  we will also incorporate  natural transformations.

  The crossed product  functor preserves unitality of objects and morphisms and therefore restricts to a functor  \begin{equation}\label{rqgkjbejkqveqvqcdscasdc} -\rtimes^{\alg}G:\Fun(BG,\Clincat)\to \Clincat)\ .\end{equation}
 
\begin{rem}
The crossed product functor $-\rtimes^{\alg}G$   preserves the full subcategories  of
algebras  $\nAlgc$ of $\nClincat$ (in the possibly non-unital case) and $\Algc$ of $\Clincat$ (in the unital case).
 The restrictions of the crossed product to these subcategories recovers the    classical definitions. \hB
\end{rem}

We have a canonical  morphism \begin{equation}\label{}\iota^{\alg}_{\bC}:\bC\to \bC\rtimes^{\alg}G \end{equation}  in $\nClincat$ which is the identity on objects and sends a morphism $f$ in $\bC$ to the morphism  $(f,e)$ in $\bC\rtimes^{\alg}G$.
If $\bC$ is unital, then   $\iota_{\bC}^{\alg}
$ is unital. 

\begin{rem}
Note that in the domain of $\iota^{\alg}_{\bC}$ we omitted to write the functor which forgets the $G$-action. 
Below we will also omit the various inclusion functors from the notation. \hB
\end{rem}

Morphisms out of a crossed product are related with the notion of a covariant representation. 
Let $\bC$ be in $\Fun(BG,\nClincat)$, and let $\bD$ be in $\nClincat$. 
\begin{ddd}\label{regiowerjhgwergwergwregrg}
A covariant representation of $\bC$ on $\bD$ is  a pair $(\rho,\pi)$ where:
\begin{enumerate}
\item $\rho:\bC\to \bD$ is a  morphism in $\nClincat$.
\item \label{qrgqfwwefqef}$\pi=(\pi(g))_{g\in G}$ is a family of unitary natural transformations $\pi(g):\rho\to g^{*}\rho$ such that
$g^{*}\pi(h)\circ \pi(g) =\pi(hg)$ for all $h,g$ in $G$. 
 \end{enumerate}
 A unitary natural transformation $\kappa:(\rho,\pi)\to (\rho',\pi')$ between covariant representations is a unitary natural transformation $\kappa:\rho\to \rho'$ such that for every $g$ in $G$ we have \begin{equation}\label{qewfjfwefewfqwfwefqw}
\pi'(g)\circ \kappa= g^{*}\kappa\circ \pi(g) .
  \end{equation}
\end{ddd}

\begin{rem}Let $(\rho,\pi)$ be a covariant representation of $\bC$ on $\bD$ and $C$ be an object of $\bC$. Then
  $\pi(g)$ is given by  a family of unitary morphisms $(\pi(g)_{C}:\rho(C)\to \rho(gC))_{C\in \Ob(\bC) }$ in $\bD$
 such that $$\pi(g)_{C'}  \rho(f)=\rho(gf) \pi(g)_{C} $$
for all morphisms $f:C\to C^{\prime}$ in $\bC$ and $g$ in $G$. 
In particular, the objects in the image of $\rho$ must have identities since Condition \ref{qrgqfwwefqef} implies $\pi(e)_{C}=\id_{\rho(C)}$ for 
all $C$ in $\bC$.  In addition we need the identity $\id_{\rho(C)}$ in order to talk about unitary morphisms out of $\rho(C)$.

A unitary natural transformation $\kappa :(\rho,\pi)\to (\rho',\pi')$ is given by a family
of unitaries $(\kappa_{C})_{C\in \Ob(\bC)}$ with $\kappa_{C}:\rho(C)\to \rho'(C)$, and the condition \eqref{qewfjfwefewfqwfwefqw} translates to the relation 
$\pi'(g)_{C} \kappa_{C}=\kappa_{g(C)} \pi(g)_{C}$ for all $g$ in $G$ and $C$ in $\bC$.  \hB
\end{rem}

\begin{rem} \label{rgiojerogergergw}Assume  that $\bC$ and $\bD$ are in $\Fun(BG,\nCcat)$. Then
one could consider the groupoid with $G$-action $\Fun_{\nCcat}(\bC,\bD)^{+}$ in $\Fun(BG,\Groupoids)$. Its objects are morphisms from $\bC$ to $\bD$  in $\nCcat$. Its  morphisms are unitary isomorphisms. The $G$-action is induced from the $G$-actions on $\bC$ and $\bD$ by conjugation. Then the groupoid of unital covariant representations of $\bC$ on $\bD$ and unitary natural transformations  is equivalent to  the groupoid of  two-categorial $G$-invariants of $\Fun_{\nCcat}(\bC,\bD)^{+}$. \hB
\end{rem}

%

Let $\bC$ be in $\Fun(BG,\nClincat)$, and let $\bD$ be in $\nClincat$.
\begin{lem}\label{gbiojwogwerewgwreer}\mbox{}
\begin{enumerate}
\item \label{reogijwioegwergwgrw} A covariant representation $(\rho,\pi)$ of $\bC$ on $\bD$  naturally induces a morphism $$\sigma:\bC\rtimes^{\alg} G\to \bD$$ in $\nClincat$. A unitary natural transformation $ (\rho,\pi)\to (\rho',\pi')$ between  covariant representations naturally induces 
 a unitary isomorphism $\sigma \to \sigma'$ between the  corresponding functors   from $ \bC\rtimes^{\alg}  G $ to $\bD$.
\item \label{reogijwioegwergwgrw1} If $\bC$ is unital, then this correspondences determines an isomorphism between the groupoids of   covariant representations $(\rho,\pi)$  of $\bC$ on $\bD$  with unital $\rho$ and unitary natural transformations,    and unital morphisms   $\bC\rtimes^{\alg} G\to \bD$ and unitary isomorphisms. \end{enumerate}
\end{lem}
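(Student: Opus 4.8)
The plan is to construct the functor $\sigma$ from a covariant representation $(\rho,\pi)$ by an explicit formula on objects and morphisms, verify directly that it is a morphism of $\C$-linear $*$-categories, and then check that the assignment $(\rho,\pi)\mapsto\sigma$ is functorial in unitary natural transformations. For part \ref{reogijwioegwergwgrw}, given a covariant representation $(\rho,\pi)$, I would define $\sigma$ to agree with $\rho$ on objects, and on a generating morphism $(f,g)\colon C\to gC'$ of $\bC\rtimes^{\alg}G$ (with $f\colon C\to g^{-1}C'$ in $\bC$) I would set
$$\sigma(f,g):=\pi(g)_{g^{-1}C'}\circ\rho(f)\colon\rho(C)\to\rho(C')\ ,$$
extended $\C$-linearly over the direct sum $\bigoplus_{g\in G}\Hom_{\bC}(C,g^{-1}C')$. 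Here I use that $\pi(g)_{g^{-1}C'}\colon\rho(g^{-1}C')\to\rho(C')$ is one of the unitary components supplied by $\pi$. The first task is then to check multiplicativity: for $(f,g)\colon C\to C'$ and $(f',g')\colon C'\to C''$ one has $(f',g')\circ(f,g)=(g^{-1}f'\circ f,\,g'g)$ in $\bC\rtimes^{\alg}G$, and one must verify $\sigma((f',g')\circ(f,g))=\sigma(f',g')\circ\sigma(f,g)$. This reduces, after unwinding, to the cocycle/intertwining identities $g^{*}\pi(h)\circ\pi(g)=\pi(hg)$ and $\pi(g)_{C'}\rho(f)=\rho(gf)\pi(g)_{C}$ of Definition \ref{regiowerjhgwergwergwregrg}, together with the fact that $\rho$ is multiplicative; I expect this to be the main computational step, though it is entirely mechanical. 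Compatibility with the involution — that $\sigma((f,g)^{*})=\sigma(f,g)^{*}$ using $(f,g)^{*}=(gf^{*},g^{-1})$ — follows similarly from unitarity of the $\pi(g)$ (so $\pi(g)^{*}=\pi(g)^{-1}=\pi(g^{-1})$ up to the appropriate component) and the $*$-compatibility of $\rho$. $\C$-linearity is immediate from the definition.

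For the statement about unitary natural transformations, let $\kappa\colon(\rho,\pi)\to(\rho',\pi')$ be a unitary natural transformation, so $\kappa=(\kappa_{C})_{C\in\Ob(\bC)}$ with $\kappa_{C}\colon\rho(C)\to\rho'(C)$ unitary, $\kappa$ natural with respect to $\rho,\rho'$, and $\pi'(g)_{C}\kappa_{C}=\kappa_{gC}\pi(g)_{C}$. I claim the same family $(\kappa_{C})$ defines a unitary isomorphism $\sigma\to\sigma'$ between the induced functors $\bC\rtimes^{\alg}G\to\bD$. Since $\sigma$ and $\sigma'$ agree with $\rho,\rho'$ on objects, $\kappa_{C}$ has the right source and target; one must check naturality against the generating morphisms $(f,g)$, i.e.\ $\kappa_{C'}\circ\sigma(f,g)=\sigma'(f,g)\circ\kappa_{C}$, which unwinds to $\kappa_{C'}\pi(g)_{g^{-1}C'}\rho(f)=\pi'(g)_{g^{-1}C'}\rho'(f)\kappa_{C}$; this follows by combining the intertwining relation for $\kappa$ with the naturality of $\kappa$ for $f$. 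The assignments $(\rho,\pi)\mapsto\sigma$ and $\kappa\mapsto\kappa$ are then visibly functorial (identities to identities, composition to composition, since nothing is changed at the level of the underlying families).

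For part \ref{reogijwioegwergwgrw1}, assume $\bC$ is unital. I must show the correspondence above is an isomorphism of groupoids, so I need to produce the inverse: given a unital morphism $\sigma\colon\bC\rtimes^{\alg}G\to\bD$, set $\rho:=\sigma\circ\iota^{\alg}_{\bC}$ (which is unital since $\iota^{\alg}_{\bC}$ and $\sigma$ are) and $\pi(g)_{C}:=\sigma(\id_{C},g)\colon\rho(C)\to\rho(gC)$. One checks that $(\id_{C},g)$ is unitary in $\bC\rtimes^{\alg}G$ — its adjoint is $(\id_C,g)^{*}=(g\,\id_{C}^{*},g^{-1})=(\id_{gC},g^{-1})$ when we keep track of objects, and the composites give identities using unitality of $\bC$ — hence $\sigma$ being a $*$-functor forces $\pi(g)_{C}$ unitary; the relation $g^{*}\pi(h)\circ\pi(g)=\pi(hg)$ and naturality of $\pi(g)$ follow from multiplicativity of $\sigma$ applied to the relations $(\id,h)\circ(\id,g)=(\id,hg)$ and $(\id_{g^{-1}C'},g)\circ(f,e)=(f,g)=(gf,g)\circ(\id_C,g)$ in $\bC\rtimes^{\alg}G$ (using unitality of $\bC$ so that $\id$ makes sense and the direct-sum decomposition of $\Hom_{\bC\rtimes^{\alg}G}$ is spanned by elements $(f,g)=(f,e)$-times-$(\id,g)$). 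Similarly, a unitary isomorphism $\sigma\to\sigma'$ restricts along $\iota^{\alg}_{\bC}$ to a unitary natural transformation between the associated covariant representations. Finally one verifies the two round-trips are the identity: starting from $(\rho,\pi)$, forming $\sigma$, and restricting back recovers $\rho$ on the nose and $\pi(g)_{C}=\sigma(\id_C,g)=\pi(g)_{C}$ since $\sigma(\id_C,g)=\pi(g)_{g^{-1}(gC)}\circ\rho(\id_C)=\pi(g)_C$; conversely, starting from a unital $\sigma$, the reconstructed functor agrees with $\sigma$ because $\bC\rtimes^{\alg}G$ is generated (under $\C$-linear combinations and composition) by the images of $\iota^{\alg}_{\bC}$ and the unitaries $(\id_C,g)$, on which the two functors agree. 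The main obstacle throughout is purely bookkeeping: keeping the object-indices on the components $\pi(g)_{C}$ straight across the identifications $\rho(g^{-1}C')\cong$ source of the relevant unitary — there is no conceptual difficulty, but the index juggling in the multiplicativity check is where an error is most likely to creep in.
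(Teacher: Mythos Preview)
Your proposal is correct and follows essentially the same approach as the paper's proof: the paper defines $\sigma(f,g):=\pi(g)_{C'}\rho(f)$ (in the convention $f:C\to C'$ in $\bC$, so $(f,g):C\to gC'$ in the crossed product), reconstructs $(\rho,\pi)$ from a unital $\sigma$ via $\rho=\sigma\circ\iota^{\alg}_{\bC}$ and $\pi(g)_C=\sigma(\id_C,g)$, and uses the same family $(\kappa_C)$ for the natural transformation. Your only slips are the bookkeeping ones you yourself anticipated --- e.g.\ the factorization should read $(f,g)=(gf,e)\circ(\id_C,g)$ rather than $(gf,g)\circ(\id_C,g)$ --- but the argument is sound.
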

\begin{proof}
Let the covariant representation $(\rho,\pi)$ be given.
  Then we define  the associated morphism $\sigma:\bC\rtimes^{\alg}G\to \bD$ as follows:
  \begin{enumerate}
  \item objects: The action of $\sigma$ on objects is given by the action of $\rho$ on objects.
  \item morphisms: The action of $\sigma$ on morphisms is determined by linearity and \begin{equation}\label{wfqwefwefwefqwfqrevevervewfqwef} \sigma(f,g):=\pi(g)_{C'}  \rho(f):\rho(C)\to \rho(gC^{\prime})\end{equation} 
  for all $g$ in $G$ and morphisms $f:C\to C^{\prime}$ in $\bC$. 
  \end{enumerate}
  One easily checks that $\sigma$ is compatible with the composition and the involution. 
  
 If   $\kappa:(\rho,\pi)\to (\rho',\pi')$  is  a  unitary natural transformation  between covariant representations  given by a family $(\kappa_{C})_{C\in \Ob(\bC)}$, then the same family    can be interpreted in the canonical way as a natural transformation $  \sigma\to \sigma'$.     This finishes the proof of Assertion \ref{reogijwioegwergwgrw}.

In order to show Assertion \ref{reogijwioegwergwgrw1} we first observe that  
if $\bC$ and $\rho$ are unital, then the functor $\sigma$ constructed above  is unital.

Assume now that $\bC$ is unital and let  $\sigma:\bC\rtimes^{\alg}G\to \bD$ be a  given unital functor.
  Then we define the unital functor $\rho:=\sigma\circ\iota_{\bC}^{\alg}:\bC\to \bD$.
 Furthermore, for every $g$ in $G$ and object $C$ in $\bC$ we define \begin{equation}\label{wefvbekjnvvjfrsfvfdv} \pi(g)_{C}:=\sigma(\id_{C},g) \ .\end{equation}
  Then
$(\rho,\pi)$ is the desired covariant representation. One checks that $(\rho,\pi)$ satisfies the Condition \ref{regiowerjhgwergwergwregrg}.\ref{qrgqfwwefqef}, and that  the functor associated by \ref{reogijwioegwergwgrw}.
to this covariant representation is the original $\sigma$.

Let $\kappa:\sigma\to \sigma'$ be a unitary natural transformation. By restriction along $\iota_{\bC}^{\alg}$ we can interpret $\kappa$ as a natural transformation $\kappa:\rho \to \rho'$.  Naturality of  $\kappa$ and  \eqref{wefvbekjnvvjfrsfvfdv} together  imply the relations \eqref{qewfjfwefewfqwfwefqw}.
 \end{proof}

In the case of $C^{*}$-categories it is natural to consider a completed version of the crossed product. Our categorical perspective dictates to consider the completion with respect to the maximal norm.
The following lemma ensures that this completion exists. 
 Recall the Definition   \ref{ruighqregqrgqfewfqqf} of a pre-$C^{*}$-category.
\begin{lem} \label{rgioqwfqwefqefe}
If
 $\bC$  is 
in $\Fun(BG,\nCcat)$, then  $\bC\rtimes^{\alg}G$ is a pre-$C^{*}$-category. \end{lem}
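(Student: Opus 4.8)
The plan is to show that every morphism $(f,g)$ in $\bC \rtimes^{\alg} G$ has finite maximal semi-norm, and since every morphism in the crossed product is a finite $\C$-linear combination of elements of the form $(f,g)$, this suffices. So fix a morphism $f : C \to g^{-1}C'$ in $\bC$ and consider the element $(f,g) \in \Hom_{\bC\rtimes^{\alg}G}(C,C')$. I want a bound of the form $\|(f,g)\|_{\max} \le \|f\|_{\bC}$, where $\|f\|_{\bC}$ is the norm of $f$ in the $C^{*}$-category $\bC$ (which is finite, and in fact equals $\|f\|_{\max}$ computed inside $\bC$, since $\bC$ is a $C^{*}$-category).

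The key step is to estimate $\|\tau((f,g))\|_B$ for an arbitrary morphism $\tau : \bC\rtimes^{\alg}G \to B$ in $\nClincat$ with $B$ a $C^{*}$-algebra. The first move is to restrict $\tau$ along the canonical inclusion $\iota^{\alg}_{\bC} : \bC \to \bC\rtimes^{\alg}G$, obtaining a morphism $\rho := \tau \circ \iota^{\alg}_{\bC} : \bC \to B$ in $\nClincat$; by definition of the maximal norm on the $C^{*}$-category $\bC$ we have $\|\rho(h)\|_B \le \|h\|_{\bC}$ for every morphism $h$ in $\bC$. Next I observe that in $\bC\rtimes^{\alg}G$ one has the factorization $(f,g) = (\id_{g^{-1}C'},\,g)\circ(f,e)$ — here we must be slightly careful: $\id_{g^{-1}C'}$ exists because $\bC$ is unital on the relevant objects (it is a $C^{*}$-category, hence every endomorphism algebra, in particular that of $g^{-1}C'$, is a unital $C^{*}$-algebra, so all objects of $\bC$ carry identities). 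Actually it is cleaner to write $(f,g)=(\id_{C'}',g)\circ(\,{}^{g^{-1}}\!(\cdot)\,)$; in any case, applying $\tau$ and using that $\tau$ is a $*$-functor, $\tau((\id,g))$ is the image of a self-adjoint projection's partial isometry companion — more precisely $\tau((\id_{C},g))$ is a unitary (or partial isometry) in $B$ because $(\id,g)^{*}(\id,g) = (\id,g^{-1})(\id,g)$ computes via the composition rule to an identity element, so $\tau((\id,g))$ has norm $\le 1$. Hence $\|\tau((f,g))\|_B \le \|\tau((f,e))\|_B = \|\rho(f)\|_B \le \|f\|_{\bC}$. Taking the supremum over all such $\tau$ gives $\|(f,g)\|_{\max} \le \|f\|_{\bC} < \infty$.

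The main obstacle is the careful bookkeeping of identities and the unitarity of the ``group part''. One must verify, using the explicit composition and $*$-operation of Definition \ref{wergiojwergregregwergerg}, that $(\id_C,g)^{*}\circ(\id_C,g)$ and $(\id_C,g)\circ(\id_C,g)^{*}$ compute to the appropriate identity morphisms $(\id_C,e)$ and $(\id_{gC},e)$, so that $\tau((\id_C,g))$ really is a unitary between $\tau(C)$-cornered subspaces — equivalently, a partial isometry with $\|\tau((\id_C,g))\| \le 1$. Since $\bC$ need not be unital globally but is a $C^{*}$-category (so its endomorphism algebras are unital $C^{*}$-algebras and composition with identities behaves as expected on the objects appearing), this is legitimate; the non-unitality subtlety from the $\nClincat$ setting does not bite here because we only need identities of objects of $\bC$, which exist. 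With the norm estimate $\|(f,g)\|_{\max}\le\|f\|_{\bC}$ in hand, finiteness of the maximal semi-norm on all of $\bC\rtimes^{\alg}G$ follows from bilinearity and the triangle inequality for semi-norms, so $\bC\rtimes^{\alg}G$ is a pre-$C^{*}$-category.
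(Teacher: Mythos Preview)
Your argument has a genuine gap: you assume that the identity morphism $\id_{g^{-1}C'}$ exists in $\bC$, justifying this by the claim that ``$\bC$ is a $C^{*}$-category, hence every endomorphism algebra \ldots\ is a unital $C^{*}$-algebra, so all objects of $\bC$ carry identities.'' This is false. The category $\nCcat$ consists of \emph{possibly non-unital} $C^{*}$-categories; the endomorphism algebra of an object need only be a non-unital $C^{*}$-algebra. A single-object example is the $C^{*}$-algebra $K(H)$ of compact operators on an infinite-dimensional Hilbert space, which has no unit. Your factorization $(f,g)=(\id_{g^{-1}C'},g)\circ(f,e)$ is therefore unavailable in general, and the whole ``unitarity of the group part'' argument collapses.

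The paper's proof circumvents this issue entirely by exploiting the $C^{*}$-identity in the target algebra rather than any factorization inside $\bC\rtimes^{\alg}G$. Given $\lambda:\bC\rtimes^{\alg}G\to A$ with $A$ a $C^{*}$-algebra, one first bounds $\|\lambda(h,e)\|_{A}\le\|h\|_{\bC}$ for every morphism $h$ of $\bC$ (this part you also have, via $\rho=\lambda\circ\iota_{\bC}^{\alg}$). Then one computes directly from the composition and involution rules that
\[
(f,g)^{*}(f,g)=(gf^{*},g^{-1})(f,g)=(g^{-1}(gf^{*})\circ f,\,g^{-1}g)=(f^{*}f,e),
\]
which requires no identities whatsoever. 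Hence
\[
\|\lambda(f,g)\|_{A}^{2}=\|\lambda(f,g)^{*}\lambda(f,g)\|_{A}=\|\lambda(f^{*}f,e)\|_{A}\le\|f^{*}f\|_{\bC}=\|f\|_{\bC}^{2}.
\]
This yields $\|(f,g)\|_{\max}\le\|f\|_{\bC}$, and the rest of your argument (finite linear combinations, triangle inequality) then goes through. The fix is small but essential: replace your unitary-factor argument with this $C^{*}$-identity computation.
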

  \begin{proof} 
  Recall the Definition  \ref{egergwefqwefqwfqewf}  of the maximal semi-norm.  
 We first show that for every morphism $f$ in $\bC$ and $g$ in $G$ 
 we have  \begin{equation}\label{} \|(f,g)\|_{_{\max}}\le \|f\|_{\bC}\ .\end{equation}  
 
 Let $A$ be a $C^{*}$-algebra and 
  $\lambda:\bC\rtimes^{\alg}G\to A$ be a  morphism in $\nClincat$.
 Then the composition $\lambda\circ \iota_{\bC}^{\alg}:\bC\to A$ is a functor  between $C^{*}$-categories. This implies
 $$ \|\lambda(f,e)\|_{A}= \|\lambda(\iota_{\bC}^{\alg}(f))\|_{A}\le \|f\|_{\bC}\ .$$
   We now have
$$ \|\lambda(f,g)\|^{2}_{A}  =\|\lambda(f,g)^{*}\lambda(f,g)\|_{A}= \|\lambda(f^{*}f,e)\|_{A}  \le \|f^{*}f\|_{\bC} = \|f\|^{2}_{\bC}\ . $$
   Since $A$ and $\lambda$ are arbitrary this implies that $\|(f,g)\|_{\max}\le \|f\|_{\bC}$.

Since every  morphism of $\bC\rtimes^{\alg}G$ is a finite linear combination of elements of the form $(f,g)$ this implies that
$\|-\|_{\max}$ is finite. Hence $\bC\rtimes^{\alg}G$ is a pre-$C^{*}$-category. 
 \end{proof}

In view of Lemma \ref{rgioqwfqwefqefe} we can restrict the crossed product functor to a functor
$$-\rtimes^{\alg}:\Fun(BG,\nCcat)\to \npClincat\ .$$

Assume that $\bC$ is 
in $\Fun(BG,\nCcat)$. Recall the completion functor $\Compl$ from \eqref{oidfhbuihsfuibfefefesdfbsfb}.
\begin{ddd}\label{feivoevvewfvfevfdsv}
We define the crossed product  for $C^{*}$-categories  by $$\bC\rtimes G:=\Compl(\bC\rtimes^{\alg}G)\ .$$
\end{ddd}

Since the crossed-product for $C^{*}$-categories is obtained  by applying the algebraic crossed product functor \eqref{vfdqeiuhfviudfv}
(which sends $C^{*}$-categories to pre-$C^{*}$-categories by Lemma \ref{oidfhbuihsfuibfefefesdfbsfb}) and the completion functor \eqref{oidfhbuihsfuibfefefesdfbsfb} it is clear that we have defined a functor
\begin{equation}\label{efkjbjkdfviuqr3} -\rtimes G:\Fun(BG,\nCcat)\to \nCcat\ . \end{equation}
It again restricts to a functor
 \begin{equation}\label{efkjbjkdfviuqr3f} -\rtimes G:\Fun(BG,\Ccat)\to \Ccat\ . \end{equation}
We refer to Proposition \ref{obgfbgbgfbsfdgbsdfb9}  for an extension of  this functoriality from equivariant to weakly invariant functors, and  the  incorporation of   natural transformations.

We define the natural morphism  \begin{equation}\label{sfdvsq34gervv}
\iota_{\bC}:\bC\stackrel{\iota_{\bC}^{\alg}}{\to}\bC\rtimes^{\alg}G\stackrel{\eqref{qwefkjiqkwefqwefqewf}}{\to} \bC\rtimes G  
\end{equation} in $\nCcat$.
We will see later in Corollary \ref{wethgiowrthrthwtgregrweg} that $\iota_{\bC}$ is isometric.
 
 By construction the morphism $\iota_{\bC}$ has an obvious universal property.
 Let $\bC$ be in $\Fun(BG,\nCcat)$, and let $\bD$ be in $\nCcat$.
\begin{kor}\label{qergiowegwregergewgrg}\mbox{}
\begin{enumerate}
\item \label{oighgioergwergwerg} A covariant representation $(\rho,\pi)$ of $\bC$ on $\bD$  naturally induces a morphism $\sigma:\bC\rtimes  G\to \bD$. A unitary natural transformation $\kappa:(\rho,\pi)\to (\rho',\pi')$ between  covariant representations naturally induces 
 a unitary isomorphism $\sigma \to \sigma'$ between the  corresponding  morphisms  from $\bC\rtimes G$ to $ \bD$.
\item \label{trhiorghrethtrhrhetr} If $\bC$ is unital, then the correspondence between  covariant representations $(\pi,\rho)$   of $\bC$ on $\bD$ with unital $\rho$ and  unital morphisms $\bC\rtimes  G\to \bD$ is bijective.  If $\bC$ is unital, then this correspondences determines an isomorphism between the groupoids of   covariant representations $(\rho,\pi)$  of $\bC$ on $\bD$  with unital $\rho$ and unitary natural transformations,    and unital morphisms  $\bC\rtimes G\to \bD$ and unitary isomorphisms. 
\end{enumerate}
\end{kor}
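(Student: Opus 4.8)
The plan is to deduce this corollary directly from its algebraic analogue, Lemma \ref{gbiojwogwerewgwreer}, together with the universal property of the completion functor $\Compl$ encoded in the adjunction \eqref{oidfhbuihsfuibfefefesdfbsfb}. There is essentially nothing new to compute: the work has already been done at the algebraic level, and completion does the rest.

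First I would treat item \ref{oighgioergwergwerg}. Given a covariant representation $(\rho,\pi)$ of $\bC$ on $\bD$, Lemma \ref{gbiojwogwerewgwreer}.\ref{reogijwioegwergwgrw} produces a morphism $\sigma_{0}\colon \bC\rtimes^{\alg}G\to\bD$ in $\nClincat$. By Lemma \ref{rgioqwfqwefqefe} the source $\bC\rtimes^{\alg}G$ is a pre-$C^{*}$-category, and $\bD$ is a $C^{*}$-category, so $\sigma_{0}$ is a morphism in $\npClincat$. Applying the adjunction \eqref{oidfhbuihsfuibfefefesdfbsfb} --- concretely, the bijection \eqref{wevwervwervfvvsdfv} with the category there taken to be $\bC\rtimes^{\alg}G$ --- yields a unique morphism $\sigma\colon\bC\rtimes G\to\bD$ in $\nCcat$ with $\sigma\circ\alpha_{\bC\rtimes^{\alg}G}=\sigma_{0}$, where $\alpha$ is the unit \eqref{qwefkjiqkwefqwefqewf}; since $\alpha_{\bC\rtimes^{\alg}G}$ is the identity on objects and has dense image on the morphism spaces, $\sigma$ is determined by $\sigma_{0}$ and hence depends naturally on $(\rho,\pi)$. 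For the assertion on unitary natural transformations I would observe that a unitary natural transformation $\kappa\colon(\rho,\pi)\to(\rho',\pi')$ is carried, via Lemma \ref{gbiojwogwerewgwreer}, by a family $(\kappa_{C})_{C\in\Ob(\bC)}$ of unitaries in $\bD$ implementing a unitary isomorphism $\sigma_{0}\to\sigma'_{0}$; because $\bC\rtimes G$ has the same objects and identities as $\bC\rtimes^{\alg}G$ and $\bD$ is complete, the same family is a unitary isomorphism $\sigma\to\sigma'$, naturality over all of $\bC\rtimes G$ following from naturality over the dense subcategory $\alpha_{\bC\rtimes^{\alg}G}(\bC\rtimes^{\alg}G)$ by continuity.

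For item \ref{trhiorghrethtrhrhetr} I would use that $\Compl$ sends the unital pre-$C^{*}$-category $\bC\rtimes^{\alg}G$ to the unital $C^{*}$-category $\bC\rtimes G$ and that $\alpha_{\bC\rtimes^{\alg}G}$ preserves identities, so that a morphism out of $\bC\rtimes G$ is unital exactly when its composition with $\alpha_{\bC\rtimes^{\alg}G}$ is. Hence the adjunction bijection of \eqref{oidfhbuihsfuibfefefesdfbsfb} restricts to a bijection between unital morphisms $\bC\rtimes G\to\bD$ and unital morphisms $\bC\rtimes^{\alg}G\to\bD$, compatibly with unitary isomorphisms; composing this with the isomorphism of groupoids of Lemma \ref{gbiojwogwerewgwreer}.\ref{reogijwioegwergwgrw1} gives the asserted bijection and isomorphism of groupoids. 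The only steps requiring a little care --- and the closest thing to an obstacle here --- are the density and continuity arguments (that morphisms and unitary natural transformations out of $\bC\rtimes G$ are determined by, and extend uniquely from, their restrictions along $\alpha_{\bC\rtimes^{\alg}G}$, and that unitality and the unitarity condition transfer between the algebraic and completed levels). All of these are immediate from the explicit description of $\Compl$ given in the proof of the adjunction \eqref{oidfhbuihsfuibfefefesdfbsfb}, where the morphism spaces of $\bC\rtimes G$ are the metric completions of quotients of those of $\bC\rtimes^{\alg}G$ with objects unchanged, so no computation beyond Lemma \ref{gbiojwogwerewgwreer} is needed.
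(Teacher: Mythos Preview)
Your proposal is correct and follows exactly the same approach as the paper: the paper's proof consists of the single sentence ``This follows from Lemma \ref{gbiojwogwerewgwreer} and the universal property of the completion.'' Your write-up simply unpacks this, spelling out how the adjunction \eqref{oidfhbuihsfuibfefefesdfbsfb} transports the algebraic statement to the completed crossed product and why unitality and unitary isomorphisms pass through; the additional care you take with the density and continuity arguments is appropriate but not something the paper bothers to record.
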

\begin{proof}
This follows from Lemma \ref{gbiojwogwerewgwreer} and the universal property of the completion.
\end{proof}

We can apply  Definition \ref{feivoevvewfvfevfdsv} in the case where $\bC$ is in $\Fun(BG,\nCalg)$, i.e., a possibly non-unital $C^{*}$-algebra with an action of $G$. In this case we also have the classical maximal crossed product $\bC\rtimes^{C^{*}}G$ \cite[Lem. 2.27]{williams}, \cite[Def. 2.3.3]{celg}, see Definition \ref{qreugiqwgrefwqewfqewf}.
It is defined as a completion of $\bC\rtimes^{\alg}G$ with respect to a norm $\|-\|_{C^{*}}$ (see \eqref{dascadscdecsdcsadcasdcasdc} below) obtained as a supremum over covariant representations in the sense of Definition \ref{wrthiowgregwergwreg} below.  We clearly have
an inequality \begin{equation}\label{wefqwfwqefwef} \|-\|_{C^{*}}\le \|-\|_{\max} \end{equation}  and  therefore a natural homomorphism of $C^{*}$-algebras \begin{equation}\label{etbwerervev} \bC\rtimes G\to \bC\rtimes^{C^{*}}G \end{equation}
The following proposition says that the two definitions of crossed products actually coincide.

Let $\bC$ be in $\Fun(BG,\nCalg)$.
 \begin{prop}\label{egwergergwrgw}
 The canonical morphism  $\bC\rtimes G\to \bC\rtimes^{C^{*}}G$ is an isomorphism.
 \end{prop}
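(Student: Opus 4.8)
The plan is to show that the two $C^{*}$-norms $\|-\|_{\max}$ and $\|-\|_{C^{*}}$ on the algebraic crossed product $\bC\rtimes^{\alg}G$ of a possibly non-unital $C^{*}$-algebra $\bC$ with $G$-action coincide; since both crossed products are defined as the completion of $\bC\rtimes^{\alg}G$ with respect to the respective norm, this identification immediately gives that \eqref{etbwerervev} is an isomorphism. By \eqref{wefqwfwqefwef} we already have $\|-\|_{C^{*}}\le\|-\|_{\max}$, so the only thing to prove is the reverse inequality $\|x\|_{\max}\le\|x\|_{C^{*}}$ for every $x$ in $\bC\rtimes^{\alg}G$. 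Equivalently, I must show that every $*$-homomorphism $\rho:\bC\rtimes^{\alg}G\to B$ into a $C^{*}$-algebra $B$ factors (in norm) through the classical universal covariant representation, i.e. that $\|\rho(x)\|_B\le\|x\|_{C^{*}}$.

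The key step is to recover a classical covariant representation from an arbitrary $*$-homomorphism $\rho:\bC\rtimes^{\alg}G\to B$. First I would restrict $\rho$ along the canonical inclusion $\iota^{\alg}_{\bC}:\bC\to\bC\rtimes^{\alg}G$ to obtain a $*$-homomorphism $\pi_A:=\rho\circ\iota^{\alg}_{\bC}:\bC\to B$ of $C^{*}$-algebras. The subtlety in the non-unital case is that the elements $(1,g)$ need not live in $\bC\rtimes^{\alg}G$, so one cannot naively set $u_g:=\rho(1,g)$. The standard remedy is to pass to multiplier algebras: extend $\rho$ to the multiplier algebra $M(\bC\rtimes^{\alg}G)$, or rather use that $B$ can be assumed to act nondegenerately on a Hilbert space $H$ (replacing $B$ by its image and taking the universal representation), so that $\pi_A$ extends uniquely to a strictly continuous unital $*$-homomorphism $M(\bC)\to B(H)$; the multipliers $\delta_g\in M(\bC\rtimes^{\alg}G)$ implementing the $G$-action then map to unitaries $u_g\in B(H)$, and $(\pi_A,u)$ is a covariant representation of $(\bC,G)$ in the classical sense (Definition \ref{wrthiowgregwergwreg}) with $u_gu_h=u_{gh}$ and $u_g\pi_A(a)u_g^{*}=\pi_A(ga)$. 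One checks that the integrated form $\pi_A\rtimes u$ agrees with $\rho$ on $\bC\rtimes^{\alg}G$, using bi-linearity and that a general element is a finite sum $\sum_g(a_g,g)$. By the definition \eqref{dascadscdecsdcsadcasdcasdc} of $\|-\|_{C^{*}}$ as the supremum of $\|(\pi_A\rtimes u)(x)\|$ over all classical covariant representations, we get $\|\rho(x)\|=\|(\pi_A\rtimes u)(x)\|\le\|x\|_{C^{*}}$, and taking the supremum over $\rho$ yields $\|x\|_{\max}\le\|x\|_{C^{*}}$.

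I expect the main obstacle to be the bookkeeping around nondegeneracy and multiplier algebras in the genuinely non-unital situation: a $*$-homomorphism $\bC\rtimes^{\alg}G\to B$ in $\nClincat$ carries no a priori nondegeneracy hypothesis, so one must either restrict attention to the subrepresentation on the essential subspace $\overline{\rho(\bC\rtimes^{\alg}G)H}$ and handle the complementary zero part separately, or argue directly at the level of the maximal norm using approximate units of $\bC$ to define $u_g$ as a strict limit $u_g:=\mathrm{s\text{-}lim}_\lambda\,\rho(e_\lambda,g)$ for an approximate unit $(e_\lambda)$ of $\bC$. Verifying that this limit exists, is independent of the approximate unit, is unitary on the essential subspace, and intertwines correctly is the technical heart of the argument; all of this is classical for algebras and the point of the proposition is precisely that the categorical maximal norm, when specialized to one object, does not see anything beyond the classical covariant representations. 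Once the covariant representation is produced the remaining verifications — multiplicativity $u_gu_h=u_{gh}$, the covariance identity, and $\pi_A\rtimes u=\rho$ — are routine computations with the formulas in Definition \ref{wergiojwergregregwergerg}. Finally I would note that passing to completions is harmless since $\bC\rtimes^{\alg}G$ is dense in both $\bC\rtimes G$ and $\bC\rtimes^{C^{*}}G$ and the identity map between the dense subalgebras is isometric by the norm equality just established.
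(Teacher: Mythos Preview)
Your approach is correct and shares the paper's core idea: the reverse inequality $\|-\|_{\max}\le\|-\|_{C^{*}}$ follows once one produces enough classical covariant representations. The paper, however, organizes this more efficiently and avoids the nondegeneracy bookkeeping you flag. Rather than starting from an \emph{arbitrary} $*$-homomorphism $\rho:\bC\rtimes^{\alg}G\to B$ and laboring to extract a covariant pair $(\pi_A,u)$ via approximate units and essential subspaces, the paper applies Lemma~\ref{gbiojwogwerewgweeereEEEer}.\ref{efiijfoiofjeojgoifewfewfqwef} just once, to the identity $\id:\bC\rtimes G\to\bC\rtimes G$, which is trivially essential. The double-centralizer construction in that lemma yields a covariant representation $(\iota_{\bC},\nu)$ of $\bC$ on $\bC\rtimes G$ whose integrated form is $\id$, so this single covariant representation already realizes $\|-\|_{\max}$; plugging it into the supremum defining $\|-\|_{C^{*}}$ gives $\|-\|_{C^{*}}\ge\|-\|_{\max}$ immediately. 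Your route works too, but the paper's shortcut replaces ``every $\rho$ factors through a covariant pair'' by ``there is one covariant pair whose integrated form is the universal representation,'' which is both shorter and sidesteps the nondegeneracy issue entirely.
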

The proof of this proposition will be given at the end the present section. The following material serves as a preparation.

Let $\bC$ be in $\Fun(BG,\nCalg)$, and let $\bD$ be in $\nCalg$. By $M(\bD)$ we denote the multiplier algebra of $\bD$.

 \begin{ddd}\label{wrthiowgregwergwreg}
A covariant representation of $\bC$ on $\bD$ is a pair $(\rho,\mu)$ of a  homomorphism $\rho:\bC\to \bD$ in $\nCalg$ and a homomorphism of groups $G\to U(M(\bD))$ such that for every $g$ in $G$ and  element  $f$ of the algebra $\bC$ we have \begin{equation}\label{werglknkeglergwegwgwrg}
\mu(g) \rho(f)\mu(g^{-1})=\rho(gf)\ .
\end{equation}
\end{ddd}
Note that  elements of the algebra  $\bC$ are morphisms of the category $\bC$.

\begin{rem}  Note that the notion of a covariant representation in the case of $C^{*}$-algebras is more general then the one given in
Definition \ref{regiowerjhgwergwergwregrg}.  Both definitions  coincide if $\pi$ actually takes values in $U(\bD)$ (e.g. if $\bD$ is unital). 
But we still have the analogue of Corollary  \ref{qergiowegwregergewgrg} for these more general covariant representations, see Lemma \ref{gbiojwogwerewgweeereEEEer}.  \hB
  \end{rem}
%
%
%
%
%
%
%
%
%

Recall that a homomorphism $\rho:A\to B$ between $C^{*}$-algebras is called essential if the  linear span of $\rho(A)B$ is dense in $B$. An essential homomorphism has a unique extension $M(\rho):M(A)\to M(B)$ to the multiplier algebras.

Let $\bC$ be in $\Fun(BG,\nCalg)$, and let $\bD$ be in $\nCalg$.
\begin{lem}\label{gbiojwogwerewgweeereEEEer}\mbox{}
\begin{enumerate}
\item \label{qeojqoergefewfqwef} A  covariant representations $(\rho,\mu)$ (in the sense of Definition \ref{wrthiowgregwergwreg}) induces a homomorphism $\sigma:\bC\rtimes  G\to \bD$ in a canonical way. 
\item\label{efiijfoiofjeojgoifewfewfqwef} If $ \sigma:\bC\rtimes  G\to \bD$ is an essential homomorphism, then it is induced from a covariant representations $(\rho,\mu)$ 
 (in the sense of Definition \ref{wrthiowgregwergwreg}) as in \ref{qeojqoergefewfqwef}.
\end{enumerate}
%
%
 \end{lem}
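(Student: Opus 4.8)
Both assertions are the single-object special case of the classical dictionary between covariant representations of a $C^{*}$-dynamical system and non-degenerate representations of its crossed product, and the plan is to follow that classical argument; the only genuinely new ingredient is the bookkeeping with multiplier algebras forced by the possible non-unitality of $\bD$. For Assertion \ref{qeojqoergefewfqwef}, given $(\rho,\mu)$ I would first define a linear map $\sigma_{0}\colon \bC\rtimes^{\alg}G\to \bD$ on the spanning elements by $\sigma_{0}(f,g):=\mu(g)\rho(f)$; this lands in $\bD$ since $\rho(f)\in\bD$ and $\bD$ is an ideal in $M(\bD)$. A short computation with the product and the $*$-operation of Definition \ref{wergiojwergregregwergerg} shows $\sigma_{0}$ is a $*$-homomorphism: multiplicativity comes from $\mu(g)\mu(h)=\mu(gh)$ together with \eqref{werglknkeglergwegwgwrg} in the form $\mu(g)\rho(g^{-1}a)=\rho(a)\mu(g)$, and $*$-compatibility from $\mu(g)^{*}=\mu(g^{-1})$ and \eqref{werglknkeglergwegwgwrg}. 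Since $\sigma_{0}$ is itself a morphism into a $C^{*}$-algebra we have $\|\sigma_{0}(x)\|_{\bD}\le\|x\|_{\max}$ by Definition \ref{egergwefqwefqwfqewf}; as $\bC\rtimes^{\alg}G$ is a pre-$C^{*}$-category by Lemma \ref{rgioqwfqwefqefe}, the universal property of the completion, i.e.\ the adjunction \eqref{oidfhbuihsfuibfefefesdfbsfb}, produces the desired homomorphism $\sigma\colon\bC\rtimes G\to\bD$. Naturality in $(\rho,\mu)$ and the behaviour on unitary intertwiners are immediate from the formula for $\sigma_{0}$.

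\textbf{The harder direction.} For Assertion \ref{efiijfoiofjeojgoifewfewfqwef}, let $\sigma\colon\bC\rtimes G\to\bD$ be essential and set $\rho:=\sigma\circ\iota_{\bC}$ with $\iota_{\bC}$ as in \eqref{sfdvsq34gervv}. The point is to produce $\mu(g)\in U(M(\bD))$ when $\bC$, and hence $\bC\rtimes^{\alg}G$, need not be unital. I would check, directly from Definition \ref{wergiojwergregregwergerg}, that the pair of maps $u_{g}\cdot(f,h):=(f,gh)$ and $(f,h)\cdot u_{g}:=(g^{-1}f,hg)$ defines a multiplier $u_{g}$ of $\bC\rtimes^{\alg}G$, that $u_{g}$ is unitary, and that $u_{g}u_{h}=u_{gh}$, $u_{e}=1$, $u_{g}^{*}=u_{g^{-1}}$, $u_{g}\,\iota^{\alg}_{\bC}(f)=(f,g)$ and $u_{g}\,\iota^{\alg}_{\bC}(f)\,u_{g}^{*}=\iota^{\alg}_{\bC}(gf)$. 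Being unitary, the $u_{g}$ extend to unitary multipliers of the completion $\bC\rtimes G$. Because $\sigma$ is essential it extends to a unital homomorphism $M(\sigma)\colon M(\bC\rtimes G)\to M(\bD)$, and I would put $\mu(g):=M(\sigma)(u_{g})$; applying $M(\sigma)$ to the relations among the $u_{g}$ makes $\mu$ a unitary representation of $G$, and applying it to $u_{g}\iota_{\bC}(f)u_{g}^{*}=\iota_{\bC}(gf)$ (an identity already in $\bC\rtimes G$, where $M(\sigma)$ restricts to $\sigma$) gives the covariance relation \eqref{werglknkeglergwegwgwrg} for $(\rho,\mu)$. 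Finally, on the dense span of the elements $(f,g)=u_{g}\iota_{\bC}(f)$ the homomorphism attached to $(\rho,\mu)$ by Assertion \ref{qeojqoergefewfqwef} sends $(f,g)$ to $\mu(g)\rho(f)=M(\sigma)(u_{g})\,\sigma(\iota_{\bC}(f))=M(\sigma)(u_{g}\iota_{\bC}(f))=\sigma(f,g)$, so it agrees with $\sigma$; this is precisely the statement that $\sigma$ is induced from $(\rho,\mu)$.

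\textbf{Main obstacle.} The one place where something beyond unwinding Definitions \ref{wergiojwergregregwergerg} and \ref{wrthiowgregwergwreg} happens is the handling of the implementing unitaries: in the non-unital situation there is no element ``$(1,g)$'' of $\bC\rtimes^{\alg}G$, so $\mu$ must be built from the multipliers $u_{g}$, and pushing these into $M(\bD)$ is exactly what forces — and uses — the hypothesis that $\sigma$ be essential, via the extension $M(\sigma)$. Verifying carefully that the $u_{g}$ are genuine unitary multipliers and that $M(\sigma)$ is well behaved on them is the crux; everything else is routine.
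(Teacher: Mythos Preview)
Your proposal is correct and follows essentially the same route as the paper: in both directions the argument is identical in outline, and for Assertion~\ref{efiijfoiofjeojgoifewfewfqwef} your multipliers $u_{g}$ are exactly the paper's double centralizers $(L(g),R(g))$ with $L(g)(f,h)=(f,gh)$, $R(g)(f,h)=(g^{-1}f,hg)$, pushed forward via $M(\sigma)$. The only place the paper is more explicit is in verifying that $L(g)$ and $R(g)$ extend continuously from $\bC\rtimes^{\alg}G$ to $\bC\rtimes G$: rather than invoking ``unitary multipliers extend'', it computes $\|L(g)(a)\|_{\max}=\|a\|_{\max}$ from the identity $(L(g)a)^{*}L(g)b=a^{*}b$ and then bounds $R(g)$ via the centralizer relation; you should make sure your one-line ``being unitary, the $u_{g}$ extend'' really unpacks to this.
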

\begin{proof}
Let $(\rho,\mu)$ be a covariant representation of $\bC$ on $\bD$ in the sense of Definition \ref{wrthiowgregwergwreg}.  Then $\sigma:\bC\rtimes G\to \bD$ is determined by linearity, continuity, and the formula
$$\sigma(f,g):=\mu(g) \rho(f)$$ for all $g$ in $G$ and elements $f$ of the algebra $\bC$.  

Vice versa, assume that $\sigma:\bC\rtimes G\to \bD$ is given such that $\sigma$ is essential. We must reconstruct a covariant representation $(\rho,\mu)$. 
If $\bC$ and $\sigma$ were  unital, then we could appeal to Corollary \ref{qergiowegwregergewgrg}.\ref{trhiorghrethtrhrhetr}. But the general case is a little more involved.

 We set $\rho:=\sigma\circ \iota_{\bC}:\bC\to \bD$, where $\iota_{\bC}$ is as in \eqref{sfdvsq34gervv}.

In order to construct $\mu$ we first define a homomorphism $\nu:G\to M(\bC\rtimes G)$. 
To do so  we interpret  elements of the multiplier algebra as double centralizers \cite{busby}.

For every $g$ in $G$ we define   the  double centralizer   $(L(g),R(g))$ on $\bC\rtimes G $. We start with the definition of $L(g)$ and $R(g)$ as linear maps on $\bC\rtimes^{\alg}G$. They are then determined by 
the formulas 
\begin{equation}\label{wfqwefwefqwfqewfqewf} L(g)( f,h) := (f,gh)\ , \quad R(g)( f,h ):= (g^{-1}f,hg)\end{equation}
for all elements $f$ of $\bC$ and $h$ in en $G$.  
One easily verifies the relation 
\begin{equation}\label{wfqwefwefqwfqewfqewdwdwdwwdwdwf} R(g) (f',h') (f,h)= (f',h') L(g)( f,h) \end{equation}
for all $f,f^{\prime}$ in $\bC$ and $h,h'$ in $G$. 
Next we show that $R(g)$ and $L(g)$ extend by continuity to $\bC\rtimes G $.
One calculates that
$$  (L(g)(f',h'))^{*}L(g)(f,h)=(f',h')^{*}(f,h)$$
for all $(f,h)$, $(f',h')$ in $\bC\rtimes^{\alg} G $.
This implies that 
\begin{equation}\label{wfqwefwefqwfefefefefefffeffefefeffqewfqewdwdwdwwdwdwf}  \|L(g)(a)\|^{2}_{\max}=\|L(g)(a)^{*} L(g)(a)\|_{\max}=\|a^{*}a\|_{\max}=\|a\|_{\max}^{2}\end{equation}
for every $a$ in $\bC\rtimes^{\alg} G $.
Hence $L(g)$ extends by continuity to an isometry of $\bC\rtimes  G$.
We  next note that for $a$ in $\bC\rtimes^{\alg} G$ we have 
\begin{equation}\label{wfqwefwefqwfeffeffefefeffqewfqewdwdwdwwdwdwf} \|R(g)(a)\|_{\max}=\sup_{b\in \bC\rtimes^{\alg} G, \|b\|_{\max}\le 1} \| R(g)(a)b\|_{\max}\ .\end{equation}
In fact, the inequality $\le$ follows from the sub-multiplicativity of the maximal norm. In order to see the equality (in the non-trivial case that $\|R(g)(a)\|_{\max}\not=0$) we insert $b:=\|R(g)(a)\|_{\max}^{-1} R(g)(a)^{*}$
and use the $C^{*}$-identity.  

Applying the Relations \eqref{wfqwefwefqwfqewfqewdwdwdwwdwdwf} and  \eqref{wfqwefwefqwfefefefefefffeffefefeffqewfqewdwdwdwwdwdwf} to the right-hand side of \eqref{wfqwefwefqwfeffeffefefeffqewfqewdwdwdwwdwdwf} gives
$$\hspace{-0.5cm}\|R(g)(a)\|_{\max} =
\sup_{b\in \bC\rtimes^{\alg} G, \|b\|_{\max}\le 1} \| a L(g)(b)\|_{\max} \le 
\|a\|_{\max}  \sup_{b\in \bC\rtimes^{\alg} G, \|b\|_{\max}\le 1} \|L(g)(b)\|_{\max} \le 
\|a\|_{\max} \ .
$$
Hence also $R(g)$ extends by continuity to $\bC\rtimes  G$.
Consequently, the pair $(L(g),R(g))$ determines a multiplier $\nu(g)$ in $M(\bC\rtimes G)$ such that
  \begin{equation}\label{eqwfqwefewfwf42reee1}\nu(g)a=L(g)(a)\ ,\quad a\nu(g)=R(g)(a)  \end{equation}
  for arbitrary $a$ in $\bC\rtimes G$. 
Using the  formulas \eqref{wfqwefwefqwfqewfqewf}  and \eqref{eqwfqwefewfwf42reee1} we next check     that
$\nu(g)$ is unitary for every $g$ in $G$.  Let $(f,h)$ be in   $\bC\rtimes^{\alg} G $. Then we calculate
\begin{align*}
\nu(g)^{*}\nu(g)(f,h)&=\nu(g)^{*} L(g)(f,h)= \nu(g)^{*}(f,gh)\\
=&((f,gh)^{*}\nu(g))^{*}=((ghf^{*},h^{-1}g^{-1})\nu(g))^{*}=R(g)(ghf^{*},h^{-1}g^{-1})^{*}\\
=&(hf^{*},h^{-1})^{*}=(f,h)\ .
\end{align*}
This implies that $\nu(g)\nu(g)^{*}=1$.
Similarly we check that $\nu(g)\nu(g)^{*}=1$. For $g,g'$ in $G$ and $(f,h)$  in   $\bC\rtimes^{\alg} G $ we have
$$\nu(g)\nu(g')(f,h)=\nu(g) L(g')(f,h)=L(g)( f,gh)=(f,gg'h)=\nu(gg')(f,h)\ .$$
This implies that 
the map $\nu:G\to U(M(\bC\rtimes G))$ is a homomorphism of groups.

We now note   that $(\iota_{\bC},\nu)$ is a covariant representation of $\bC$ on $\bC\rtimes G$ in the sense of Definition \ref{wrthiowgregwergwreg}.

Since we assume that $\sigma$ is essential we can consider the extension  $M(\sigma):M(\bC\rtimes G)\to M(\bD)$  of $\sigma$  to the multiplier algebras.
Then we set $\mu:=M(\sigma)\circ \nu:G\to U(M(\bD))$.
The  pair $(\rho ,\mu)$ is a covariant representation   in the sense of Definition \ref{wrthiowgregwergwreg}.
The homomorphism $\bC\rtimes G\to \bD$ associated to  this covariant representation is clearly $\sigma$.
 \end{proof}

Let $\bC$ be in $\Fun(BG,\nCalg)$.    We define the $C^{*}$-semi-norm of an element $x$ of $\bC\rtimes^{\alg}G$ as
\begin{equation}\label{dascadscdecsdcsadcasdcasdc}\|x\|_{C^{*}}:=\sup_{(\rho,\mu)} \|\sigma(x)\|_{\bD} \ , \end{equation} 
where $(\rho,\mu)$ runs over all covariant representations of $\bC$ on $\bD$   in the sense of Definition \ref{wrthiowgregwergwreg}, and $\sigma:\bC\rtimes G\to \bD$ is the homomorphism associated to $(\rho,\mu)$ by \ref{gbiojwogwerewgweeereEEEer}.\ref{qeojqoergefewfqwef}.

\begin{ddd}\label{qreugiqwgrefwqewfqewf}
We define $\bC\rtimes^{C^{*}}G$ as the completion of $\bC\rtimes^{\alg} G$ with respect to the norm  $\|-\|_{C^{*}}$.
\end{ddd}

 \begin{proof}[Proof of Proposition \ref{egwergergwrgw}]
We must show the inequality $$\|-\|_{C^{*}}\ge \|-\|_{\max}$$ on
$\bC\rtimes^{\alg}G$. The identity $\id:\bC\rtimes G\to \bC\rtimes G$  is esssential and therefore provides by Lemma \ref{gbiojwogwerewgweeereEEEer}.\ref{efiijfoiofjeojgoifewfewfqwef}  
a covariant representation $(\rho,\mu)$  of $\bC$ on $\bC\rtimes G$  in the sense of Definition \ref{feivoevvewfvfevfdsv} .
In view of \eqref{dascadscdecsdcsadcasdcasdc} we see that $\|-\|_{C^{*}} \ge \|-\|_{\max} $. Because of \eqref{wefqwfwqefwef}
we conclude that $\|-\|_{C^{*}}=\|-\|_{\max}$. 
 \end{proof}

The following Lemma is a special case of   Corollary \ref{wethgiowrthrthwtgregrweg}, but it is actually  used in its proof and therefore needs an independent verification.
\begin{lem}\label{regiuwergrerwegweg}
If $\bC$ is in  $\Fun(BG,\nCalg)$,  then $\iota_{\bC}:\bC\to \bC\rtimes G$ is isometric.
\end{lem}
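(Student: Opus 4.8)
The plan is to deduce the lemma from the inequality already obtained in the proof of Lemma~\ref{rgioqwfqwefqefe} together with a matching lower bound coming from a regular covariant representation. Recall that $\iota_{\bC}$ is the composite $\bC\stackrel{\iota^{\alg}_{\bC}}{\to}\bC\rtimes^{\alg}G\to\bC\rtimes G$ of \eqref{sfdvsq34gervv}, and that by Definition~\ref{feivoevvewfvfevfdsv} the target is the completion of $\bC\rtimes^{\alg}G$ with respect to the maximal seminorm of Definition~\ref{egergwefqwefqwfqewf}. Hence for an element $f$ of the algebra $\bC$ we have $\|\iota_{\bC}(f)\|_{\bC\rtimes G}=\|(f,e)\|_{\max}$, where $(f,e)=\iota^{\alg}_{\bC}(f)$. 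The computation in the proof of Lemma~\ref{rgioqwfqwefqefe} (applied to $g=e$) gives $\|(f,e)\|_{\max}\le\|f\|_{\bC}$, so it remains only to produce the reverse inequality $\|(f,e)\|_{\max}\ge\|f\|_{\bC}$.

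To this end I would fix a faithful (hence isometric) $*$-representation $\pi_{0}\colon\bC\to B(H)$ on a Hilbert space $H$ and form $\ell^{2}(G,H)$, the Hilbert space of square-summable functions $\xi\colon G\to H$. Define the regular covariant representation $(\tilde\pi,\lambda)$ of $\bC$ by $(\tilde\pi(a)\xi)(g):=\pi_{0}(g^{-1}a)\xi(g)$ and $(\lambda(h)\xi)(g):=\xi(h^{-1}g)$. One checks directly that $\tilde\pi$ is a $*$-homomorphism with $\|\tilde\pi(a)\|\le\|a\|_{\bC}$, that $\lambda$ is a unitary representation of $G$, and that $\lambda(h)\tilde\pi(a)\lambda(h)^{-1}=\tilde\pi(ha)$, i.e.\ that $(\tilde\pi,\lambda)$ is a covariant representation in the sense of Definition~\ref{wrthiowgregwergwreg} (working inside the unital algebra $B(\ell^{2}(G,H))$ avoids any multiplier-algebra issues present in the non-unital case). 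Then $\sigma\colon\bC\rtimes^{\alg}G\to B(\ell^{2}(G,H))$, $\sigma(f,g):=\lambda(g)\tilde\pi(f)$, is a morphism in $\nClincat$ into a $C^{*}$-algebra (this is the integrated form, cf.\ Lemma~\ref{gbiojwogwerewgweeereEEEer}.\ref{qeojqoergefewfqwef}), and by construction $\sigma(f,e)=\tilde\pi(f)$.

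Finally I would check that $\|\tilde\pi(f)\|=\|f\|_{\bC}$: given $\varepsilon>0$, choose a unit vector $v\in H$ with $\|\pi_{0}(f)v\|>\|\pi_{0}(f)\|-\varepsilon=\|f\|_{\bC}-\varepsilon$, and let $\xi\in\ell^{2}(G,H)$ be supported at $g=e$ with $\xi(e)=v$; then $\|\tilde\pi(f)\xi\|=\|\pi_{0}(f)v\|>\|f\|_{\bC}-\varepsilon$, so $\|\tilde\pi(f)\|\ge\|f\|_{\bC}$ (the opposite inequality being automatic). Consequently $\|(f,e)\|_{\max}\ge\|\sigma(f,e)\|=\|\tilde\pi(f)\|=\|f\|_{\bC}$, and combined with the bound from Lemma~\ref{rgioqwfqwefqefe} we get $\|\iota_{\bC}(f)\|_{\bC\rtimes G}=\|f\|_{\bC}$, proving that $\iota_{\bC}$ is isometric. (Alternatively one may invoke Proposition~\ref{egwergergwrgw} and the classical fact that $A\to A\rtimes^{C^{*}}G$ is isometric, see \cite{williams}.) There is essentially no serious obstacle here; the only points needing a little care are the bookkeeping of the $G$-action in the covariance identity and the verification that $\sigma$ is compatible with composition and the involution.
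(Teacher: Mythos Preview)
Your proof is correct and follows essentially the same approach as the paper: both obtain the lower bound $\|(f,e)\|_{\max}\ge\|f\|_{\bC}$ from the (left) regular representation. The only cosmetic difference is that the paper works directly with the $\bC$-Hilbert $C^{*}$-module $L^{2}(G,\bC)$ and the reduced norm $\|-\|_{r}$, whereas you first pick a faithful Hilbert-space representation $\pi_{0}$ and then work on $\ell^{2}(G,H)$; the underlying computation is the same.
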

\begin{proof}
The representation of
$\bC \rtimes G$ on the  $\bC$-Hilbert $C^{*}$-module
$L^{2}(G,\bC)$ induces the reduced norm $\|-\|_{r}$ on $\bC\rtimes G$. 
One checks that   for $c$ in $\bC$ we have $\|c\|_{\bC}=\|\iota_{\bC}(c)\|_{r}$ since $\iota_{\bC}(c)$ acts on $L^{2}(G,\bC)$ as the multiplication operator with the constant function with value $c$.  We furthermore have a chain of inequalities
 $$\|c\|_{\bC}=\|\iota_{\bC}(c) \|_{r}\le \|\iota_{\bC}(c)\|_{\bC\rtimes G}\le \|c\|_{\bC}\ .$$
Consequently, the inequalities are equalities and therefore $\iota_{\bC}$ is isometric.
\end{proof}

 \section{From categories to algebras - the functor $A$}
 
 Sometimes one can show facts for $C^{*}$-categories using using known facts about $C^{*}$-algebras.  In this direction the functor 
 $$A:\nCcatinj \to \nCalg$$ introduced by    e.g. in \cite[Sec. 3]{joachimcat}
 is a helpful tool. The main application of this functor in the present section is Corollary \ref{wethgiowrthrthwtgregrweg} saying that the canonical functor $\bC \to \bC\rtimes G$ in  \eqref{sfdvsq34gervv} is an isometry.
 On the way we show in Theorem \ref{qrioqwfewfewfewfqef} that $A$ commutes with forming  crossed  products.

We let $\nClincatinj$ be the wide subcategory of $\nClincat$ whose morphisms are  only those  functors which are injective on objects.  

 \begin{ddd}\label{hiuwegregwergweg}
  The functor $A^{\alg}:\nClincatinj\to \nAlgc$ is defined as follows:
  \begin{enumerate}
  \item objects:  \begin{enumerate}
  \item The underlying $\C$-vector space of $A^{\alg}(\bC)$ is \begin{equation}\label{qwefj1ij4oi3fwef}
A^{\alg}(\bC):=\bigoplus_{C,C^{\prime}\in \Ob(\bC)}\Hom_{\bC}(C,C^{\prime})\ .
\end{equation} A morphism $f:C\to C^{\prime}$ in $\bC$ gives rise to an element  $[f]$  in $A^{\alg}(\bC)$.
\item The product in $A^{\alg}(\bC)$ is determined 
by linearity and
 $$ [f'][f]:=\left\{\begin{array}{cc}[f'\circ f]& C''=C'\\0&else \end{array}\right.$$
 for all pairs of morphisms $f:C\to C'$ and $f^{\prime}:C''\to C'''$ in $\bC$
\item The involution on $A^{\alg}(\bC)$ is determined by $[f]^{*}=[f^{*}]$.
  \end{enumerate}
  \item\label{igjogewgwergwerg} morphisms: The functor $A^{\alg}$ sends a morphism $\phi:\bC\to \bC^{\prime}$  in   $\nClincatinj$  to the homomorphism $A^{\alg}(\phi):A^{\alg}(\bC)\to A^{\alg}(\bC^{\prime})$  sending $[f]$ to $[\phi(f)]$ for every morphism $f$ in $\bC$. 
  \end{enumerate}
\end{ddd}
 
 \begin{rem}
 Note that in general $A^{\alg}(\phi)$   in \ref{igjogewgwergwerg}.  is only well-defined if $\phi$  is  injective on objects. Therefore  we need the restriction to 
 the subcategory of functors which are injective on objects. \hB
 \end{rem}
 
 We have a natural morphism $$\rho_{\bC}^{\alg}:\bC\to A^{\alg}(\bC)$$ which is uniquely determined
 by the condition that its sends a morphism $f$ of $\bC$ to the element $[f]$ in $A^{\alg}(\bC)$.

 \begin{lem}\label{wegiowegerregwergwerg}
 The morphism $\rho_{\bC}^{\alg}:\bC\to A^{\alg}(\bC)$ is initial for morphisms $\sigma:\bC\to A$ in $\nClincat$ from $\bC$ to $A$ in $\nAlgc$ with the property that $$\sigma(f')\sigma(f):=\left\{\begin{array}{cc}\sigma(f'\circ f)& \dom(f')=\codom(f)\\0&else \end{array}\right.\ .$$ 
 \end{lem}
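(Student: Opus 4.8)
The plan is to produce, for each admissible $\sigma$, a homomorphism out of $A^{\alg}(\bC)$ factoring it through $\rho_{\bC}^{\alg}$, and then to check uniqueness; the whole argument is formal, using only that $A^{\alg}(\bC)$ is, as a $\C$-vector space, the direct sum of the $\Hom$-spaces of $\bC$, spanned by the elements $[f]$.

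First I would record that $\rho_{\bC}^{\alg}$ itself satisfies the stated condition, so that the claim is meaningful: for morphisms $f\colon C\to C'$ and $f'\colon C''\to C'''$ in $\bC$ the definition of the product in $A^{\alg}(\bC)$ gives $\rho_{\bC}^{\alg}(f')\rho_{\bC}^{\alg}(f)=[f'][f]$, which equals $[f'\circ f]=\rho_{\bC}^{\alg}(f'\circ f)$ when $C''=C'$, i.e.\ when $\dom(f')=\codom(f)$, and equals $0$ otherwise. Now let $\sigma\colon\bC\to A$ be a morphism in $\nClincat$ with $A$ in $\nAlgc$ having the stated property. Since $\sigma$ restricts to a $\C$-linear map on each summand $\Hom_{\bC}(C,C')$ of $A^{\alg}(\bC)$, there is a unique $\C$-linear map $\bar\sigma\colon A^{\alg}(\bC)\to A$ with $\bar\sigma([f])=\sigma(f)$ for every morphism $f$ of $\bC$. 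I would then verify that $\bar\sigma$ is a homomorphism of $*$-algebras by checking the two structures on generators. Multiplicativity: $\bar\sigma([f'][f])$ equals $\bar\sigma([f'\circ f])=\sigma(f'\circ f)=\sigma(f')\sigma(f)=\bar\sigma([f'])\bar\sigma([f])$ in the composable case, and equals $\bar\sigma(0)=0=\sigma(f')\sigma(f)=\bar\sigma([f'])\bar\sigma([f])$ otherwise, both times invoking the hypothesis on $\sigma$. Compatibility with the involution: $\bar\sigma([f]^{*})=\bar\sigma([f^{*}])=\sigma(f^{*})=\sigma(f)^{*}=\bar\sigma([f])^{*}$, using that $\sigma$ preserves the $*$-operation. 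Finally $\bar\sigma\circ\rho_{\bC}^{\alg}=\sigma$: on objects both composites send every object of $\bC$ to the unique object of the target algebra, and on morphisms $\bar\sigma(\rho_{\bC}^{\alg}(f))=\bar\sigma([f])=\sigma(f)$.

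For uniqueness, suppose $\psi\colon A^{\alg}(\bC)\to A$ is a homomorphism in $\nAlgc$ with $\psi\circ\rho_{\bC}^{\alg}=\sigma$. Then $\psi([f])=\psi(\rho_{\bC}^{\alg}(f))=\sigma(f)=\bar\sigma([f])$ for every morphism $f$ of $\bC$, and since the elements $[f]$ span $A^{\alg}(\bC)$ as a complex vector space and both maps are $\C$-linear, $\psi=\bar\sigma$. I expect no real obstacle here; the only points that need care are keeping track of the ``$0$ else'' alternative when checking multiplicativity, and noting that the universal property is being asserted inside $\nClincat$ whereas $A^{\alg}$ as a \emph{functor} is only defined on $\nClincatinj$ — but this is harmless, since $\rho_{\bC}^{\alg}$ is a single morphism out of $\bC$, not the value of the functor $A^{\alg}$ on a morphism.
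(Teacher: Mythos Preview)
Your proof is correct and is exactly the argument the paper has in mind: the paper's own proof reads simply ``This is obvious from the definition,'' and what you have written is precisely the routine unpacking of that statement using the direct-sum description of $A^{\alg}(\bC)$.
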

 \begin{proof}
 This is obvious from the definition.
 \end{proof}

 We let $\nCcatinj$ be the full subcategory of $\nClincatinj$ consisting of  possibly non-unital $C^{*}$-categories.
Recall the definition \eqref{wqrefonwiefjqwefewqfw}  of the category
 $\npAlgc$   of pre-$C^{*}$-algebras.

 The following has been shown in the proof of \cite[Lemma 3.6]{joachimcat}.
 \begin{lem}
 If $\bC$ is in $\nCcatinj$, then $A^{\alg}(\bC)$ is a pre-$C^{*}$-algebra.
 \end{lem}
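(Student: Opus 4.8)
The plan is to show that for $\bC$ in $\nCcatinj$ every element of $A^{\alg}(\bC)$ has finite maximal seminorm, i.e. that there is a uniform bound on $\|\rho([f])\|_{B}$ as $\rho:A^{\alg}(\bC)\to B$ ranges over all $*$-homomorphisms into $C^{*}$-algebras. First I would reduce to the generators: since every element of $A^{\alg}(\bC)$ is, by \eqref{qwefj1ij4oi3fwef}, a finite sum $\sum_{k}[f_{k}]$ of classes of morphisms $f_{k}:C_{k}\to C_{k}'$ in $\bC$, subadditivity of any seminorm reduces the claim to showing $\|[f]\|_{\max}<\infty$ for a single morphism $f:C\to C'$ in $\bC$. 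So it suffices to produce, for each such $f$, a constant (I expect $\|f\|_{\bC}$ itself) dominating $\|\rho([f])\|_{B}$ for all $\rho$.

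The key step is to pass from a $*$-homomorphism $\rho:A^{\alg}(\bC)\to B$ to a morphism of $\C$-linear $*$-categories out of $\bC$. Precomposing $\rho$ with the universal morphism $\rho^{\alg}_{\bC}:\bC\to A^{\alg}(\bC)$ of Lemma \ref{wegiowegerregwergwerg} gives a morphism $\tilde\rho:=\rho\circ\rho^{\alg}_{\bC}:\bC\to B$ in $\nClincat$, where $B$ is viewed as a $\C$-linear $*$-category with a single object. By definition of the maximal seminorm on the morphism spaces of $\bC$ (Definition \ref{egergwefqwefqwfqewf}) and the fact that $\bC$ is a $C^{*}$-category (so this seminorm is the finite norm $\|-\|_{\bC}$), we get
$$\|\rho([f])\|_{B}=\|\tilde\rho(f)\|_{B}\le \|f\|_{\bC}\ .$$
Taking the supremum over all $\rho$ and $B$ yields $\|[f]\|_{\max}\le\|f\|_{\bC}<\infty$, and then subadditivity over finite sums finishes the proof. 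The only genuine point requiring care — and the one I expect to be the main (minor) obstacle — is verifying that $\tilde\rho$ is genuinely a morphism of $\C$-linear $*$-categories, in particular that a single-object target $B$ is an admissible target: here one uses that $\bC$ is injective on objects is irrelevant for $\rho^{\alg}_{\bC}$ itself (it collapses all objects to the unique object of $A^{\alg}(\bC)$, which is fine), and that $\rho^{\alg}_{\bC}$ sends a morphism $f$ to $[f]$ with the correct $*$- and composition behaviour, which is exactly the content of Lemma \ref{wegiowegerregwergwerg}. Given that the reference states this lemma was already established in the proof of \cite[Lemma 3.6]{joachimcat}, the argument above is essentially the transcription of that verification into the present notation, so no further obstacle is anticipated.
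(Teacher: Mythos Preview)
Your proof is correct and follows essentially the same approach as the paper: reduce to generators $[f]$, then bound $\|\rho([f])\|_{B}$ by $\|f\|_{\bC}$ using that the composite $\tilde\rho=\rho\circ\rho^{\alg}_{\bC}:\bC\to B$ is a morphism from a $C^{*}$-category into a $C^{*}$-algebra. The only cosmetic difference is that the paper first passes through the $C^{*}$-identity $\|[f]\|_{\max}^{2}=\|[f^{*}f]\|_{\max}$ and then bounds via the $C^{*}$-algebra homomorphism $\End_{\bC}(C)\to B$, whereas you invoke contractivity directly for $f$ itself; both yield the same estimate $\|[f]\|_{\max}\le\|f\|_{\bC}$.
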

\begin{proof}
Every element of $A^{\alg}(\bC)$ is a finite linear combination of elements of the form $ [f]$ for morphisms $f$ in $\bC$.

 Assume that $f:C\to C^{\prime}$ is a morphism in $\bC$. It suffices to show that $\| [f]\|_{\max}$ is finite.  Note that  $f^{*}f$ is an element of the $C^{*}$-algebra $\End_{\bC}(C)$. 
Consequently we have the middle inequality in the following chain $$\| [f]\|_{\max}^{2}=\|[f^{*}f]\|_{\max}\le \|f^{*}f\|_{\End_{\bC}(C)}=\|f\|^{2}_{\bC}\ .$$
 \end{proof}

 \begin{ddd}\label{zhioerhrthtrhehth}
We define the functor $A:\nCcatinj\to \nCalg$ as the composition
$$A:\nCcatinj\stackrel{A^{\alg}}{\to} \npAlgc\stackrel{\Compl, \eqref{ervevwevefvdsfvsdfvsdfv}}{\to} \nCalg\ .$$
\end{ddd}

We have a canonical morphism \begin{equation}\label{eqwfqewf124rwrefqfew}
\rho_{\bC}:\bC\stackrel{\rho_{\bC}^{\alg}}{\to} A^{\alg}(\bC)\stackrel{\eqref{qwefkjiqkwefqwefqewf}}{\to} A(\bC)\ .
\end{equation}
 
 \begin{lem}\label{egioerjgoergwegwergwregrwe} The morphism $\rho_{\bC} :\bC\to A (\bC)$ is initial for morphisms $\sigma:\bC\to A$ in $\nCcat$ from $\bC$ to $A$ in $\nCalg$ with the property that $$\sigma(f')\sigma(f):=\left\{\begin{array}{cc}\sigma(f'\circ f)& \dom(f')=\codom(f)\\0&else \end{array}\right.\ .$$  
 \end{lem}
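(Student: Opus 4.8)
The plan is to deduce this from the algebraic initiality statement of Lemma \ref{wegiowegerregwergwerg} together with the universal property of the completion functor supplied by the adjunction \eqref{ervevwevefvdsfvsdfvsdfv}. The whole proof is a formal concatenation of these two universal properties.

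First I would check that $\rho_{\bC}$ itself belongs to the class of morphisms under consideration. By Definition \ref{hiuwegregwergweg} the assignment $f\mapsto [f]$ satisfies $[f'][f]=[f'\circ f]$ whenever $f'$ and $f$ are composable and $[f'][f]=0$ otherwise; since $\rho_{\bC}$ is $\rho_{\bC}^{\alg}$ postcomposed with the canonical $*$-homomorphism $\alpha_{A^{\alg}(\bC)}$ appearing in \eqref{eqwfqewf124rwrefqfew}, the same identity holds for $\rho_{\bC}$.

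Next, given an arbitrary $\sigma:\bC\to A$ in $\nCcat$ with $A$ in $\nCalg$ and with the stated orthogonality property, I would regard $A$ as an object of $\nAlgc$ and apply Lemma \ref{wegiowegerregwergwerg} to get a unique $*$-homomorphism $\sigma^{\alg}:A^{\alg}(\bC)\to A$ in $\nClincat$ with $\sigma^{\alg}\circ\rho_{\bC}^{\alg}=\sigma$. As $A^{\alg}(\bC)$ is a pre-$C^{*}$-algebra and $A$ is complete, the adjunction \eqref{ervevwevefvdsfvsdfvsdfv} then produces a unique homomorphism $\bar\sigma:A(\bC)=\Compl(A^{\alg}(\bC))\to A$ with $\bar\sigma\circ\alpha_{A^{\alg}(\bC)}=\sigma^{\alg}$. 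Using the factorization $\rho_{\bC}=\alpha_{A^{\alg}(\bC)}\circ\rho_{\bC}^{\alg}$ from \eqref{eqwfqewf124rwrefqfew} one reads off $\bar\sigma\circ\rho_{\bC}=\sigma$, which gives existence.

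For uniqueness, suppose $\tau:A(\bC)\to A$ in $\nCcat$ also satisfies $\tau\circ\rho_{\bC}=\sigma$. Then $\tau\circ\alpha_{A^{\alg}(\bC)}:A^{\alg}(\bC)\to A$ precomposed with $\rho_{\bC}^{\alg}$ equals $\sigma$, so the uniqueness part of Lemma \ref{wegiowegerregwergwerg} forces $\tau\circ\alpha_{A^{\alg}(\bC)}=\sigma^{\alg}=\bar\sigma\circ\alpha_{A^{\alg}(\bC)}$, and then the uniqueness part of the adjunction \eqref{ervevwevefvdsfvsdfvsdfv} yields $\tau=\bar\sigma$. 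I do not expect any genuine obstacle: the only points needing (routine) care are that the orthogonality property is preserved under the relevant pre- and post-compositions, so that both Lemma \ref{wegiowegerregwergwerg} and the completion adjunction are applicable, and that $\rho_{\bC}$ really lies in the indexing class.
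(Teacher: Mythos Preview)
Your argument is correct and is exactly the approach the paper takes: the paper's proof is the single sentence ``This follows from Lemma \ref{wegiowegerregwergwerg} and the universal property of the completion functor,'' and you have simply spelled out the details of that deduction.
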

 \begin{proof}
 This follows from Lemma  \ref{wegiowegerregwergwerg} and the universal property of the completion functor.
 \end{proof}

  \begin{lem}\label{ergtegwergwergwrg}
 The morphism $\rho_{\bC}:\bC\to A(\bC)$ is isometric.
 \end{lem}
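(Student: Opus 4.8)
The plan is to establish the two inequalities $\|\rho_{\bC}(f)\|_{A(\bC)}\le\|f\|_{\bC}$ and $\|\rho_{\bC}(f)\|_{A(\bC)}\ge\|f\|_{\bC}$ for an arbitrary morphism $f\colon C\to C'$ of $\bC$; since $A(\bC)=\Compl(A^{\alg}(\bC))$, the left-hand side equals the maximal seminorm $\|[f]\|_{\max}$ of $[f]$ in $A^{\alg}(\bC)$. The $\le$-direction is already at hand: the assignment $g\mapsto[g]$ restricts to a $*$-homomorphism of the $C^{*}$-algebra $\End_{\bC}(C)$ into $A^{\alg}(\bC)$, hence is contractive for $\|-\|_{\max}$, and therefore $\|[f]\|_{\max}^{2}=\|[f^{*}f]\|_{\max}\le\|f^{*}f\|_{\End_{\bC}(C)}=\|f\|_{\bC}^{2}$ — exactly the computation already used to show that $A^{\alg}(\bC)$ is a pre-$C^{*}$-algebra.

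For the reverse inequality I would manufacture a $*$-representation of $A^{\alg}(\bC)$ on a Hilbert space that is isometric on $[f]$. First pick a faithful $*$-representation $\pi=(H_{D})_{D\in\Ob(\bC)}$ of the $C^{*}$-category $\bC$ on Hilbert spaces — this is the classical embedding theorem \cite{ghr,mitchc} — so that $\pi(g)\colon H_{D}\to H_{D'}$ for each $g\colon D\to D'$, $\pi$ is functorial and $*$-compatible, and $\|\pi(g)\|=\|g\|_{\bC}$. (If a self-contained argument is preferred, it is enough to take a faithful nondegenerate representation of the single $C^{*}$-algebra $\End_{\bC}(C)$ on $H_{C}$ and induce it up along the right Hilbert $\End_{\bC}(C)$-modules $\Hom_{\bC}(C,-)$; the resulting $\pi$ is already isometric on the endomorphism $f^{*}f$, which is all that is needed.) Set $H:=\bigoplus_{D\in\Ob(\bC)}H_{D}$, a Hilbert space because $\bC$ is small, with orthogonal projections $p_{D}\colon H\to H_{D}$ and inclusions $\iota_{D}\colon H_{D}\to H$, and define $\hat\pi\colon A^{\alg}(\bC)\to B(H)$ by $\hat\pi([g]):=\iota_{D'}\,\pi(g)\,p_{D}$ for $g\colon D\to D'$, extended linearly over each $\Hom$-space. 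Using $p_{D'}\iota_{D'}=\id_{H_{D'}}$ and $p_{D_{1}}\iota_{D_{2}}=0$ for $D_{1}\neq D_{2}$, a direct check shows that $\hat\pi$ respects the multiplication and the involution of $A^{\alg}(\bC)$, so it is a $*$-homomorphism into the $C^{*}$-algebra $B(H)$; moreover $\|\hat\pi([f])\|=\|\pi(f)\|=\|f\|_{\bC}$ since $\iota_{C'}$ is an isometry and $p_{C}$ a coisometry. By the definition of the maximal seminorm this gives $\|f\|_{\bC}=\|\hat\pi([f])\|\le\|[f]\|_{\max}=\|\rho_{\bC}(f)\|_{A(\bC)}$, the missing inequality, and combining the two finishes the proof.

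The only step with genuine content is securing, for each $f$, enough Hilbert-space representations of $A^{\alg}(\bC)$ to detect $\|f\|_{\bC}$ from below; this is exactly where the embedding theorem for $C^{*}$-categories (equivalently, the GNS construction for $\End_{\bC}(C)$ followed by induction along the $\Hom$-bimodules) is used. Everything after that — the verification that $\hat\pi$ is a $*$-homomorphism, the norm computation for a single off-diagonal block, and the bookkeeping with $\|-\|_{\max}$ and $\Compl$ — is routine. If one wishes to minimise the work, one can instead restrict at the outset to the full subcategory of $\bC$ on $\{C,C'\}$, identify its image under $A$ with the $C^{*}$-algebraic linking algebra of the bimodule $\Hom_{\bC}(C,C')$, and invoke the standard fact that the corner and off-diagonal embeddings into a linking algebra are isometric; but this reduction rests on the same GNS/induction input, so I would simply carry out the direct-sum construction above.
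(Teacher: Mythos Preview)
Your proof is correct and follows the same overall strategy as the paper --- construct a $*$-representation of $A^{\alg}(\bC)$ that detects $\|f\|_{\bC}$ --- but the implementation differs. You invoke the GLR embedding theorem to obtain a faithful Hilbert-space representation $(H_{D})_{D}$ of $\bC$ and then pass to the block-matrix representation on $\bigoplus_{D}H_{D}$. The paper instead works directly with Hilbert $C^{*}$-modules: for a fixed object $C$ it forms the ``column'' right $\End_{\bC}(C)$-module $M_{C}$ obtained by completing $\bigoplus_{C'}\Hom_{\bC}(C,C')$, lets $A^{\alg}(\bC)$ act by left matrix multiplication, and checks that this representation is isometric on $\End_{\bC}(C)$ (hence on all of $\rho_{\bC}(\bC)$ via the $C^{*}$-identity). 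This is precisely the ``induce along $\Hom_{\bC}(C,-)$'' alternative you sketch in parentheses, except that the paper never tensors down to a Hilbert space --- it stays at the level of Hilbert modules, which avoids the need to cite the embedding theorem as a black box. Your direct-sum construction is perfectly fine and perhaps more familiar to readers comfortable with concrete representations; the paper's version is more self-contained and sidesteps any size worries about $\bigoplus_{D}H_{D}$ when $\Ob(\bC)$ is large.
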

\begin{proof}
This has been observed in the proof of \cite[Lemma 3.6]{joachimcat}.  
As this fact is crucial for later applications we  recall the argument.

Let $C$ be an object of $\bC$. We  form the $\End_{\bC}(C)$-right module $$M^{\alg}_{C}:=\bigoplus_{C'\in \Ob(C)} \Hom_{\bC}(C,C' )\ .$$
A morphism $h:C\to C^{\prime}$ gives rise to an element $[h]$ in $M^{\alg}_{C}$. The $C^{*}$-algebra 
 $\End_{\bC}(C)$ acts by right composition such that $[h][f]=[h\circ f]$ for every $f:C\to C$. Furthermore,
the algebra $A^{\alg}(\bC)$ acts from the left  on $M_{C}^{\alg}$ by matrix multiplication such that
$$[f][h]=\left\{\begin{array}{cc}[f\circ h]&C'=C''\\0&else\end{array}\right. $$
for all morphisms $f:C''\to C'''$ in $\bC$.

  We define the $ \End_{\bC}(C)$-valued scalar product on $M_{C}^{\alg}$ such that
  $$\langle [g],[h]\rangle=\left\{\begin{array}{cc} g^{*}\circ h& C'=C'' \\0&else\end{array}\right. $$
  for all $g:C\to C''$ and $h:C\to C^{\prime}$.

   We then let $M_{C}$ be $\End_{\bC}(C)$ Hilbert-$C^{*}$-module  given by the completion of $M^{\alg}_{C}$
   with respect to the norm  $\|-\|_{M_{C}}$ defined by the scalar product.
 The representation
$A^{\alg}(\bC)\to \End_{\End_{\bC}(C)}(M)$ yields a $C^{*}$-norm $\|-\|_{C}$ on $A^{\alg}(\bC)$. 

We claim that $$ \|[f]\|_{C}=\|[f]\|_{\max}$$ for every $f$ in $\End_{\bC}(C)$. 
We first observe that  \begin{eqnarray*} 
\|[f]\|_{C}&=&\sup_{h\in M_{\bC}^{\alg}, \|h\|_{M_{C}}=1} \|[f]h\|_{M_{C}}\\
&\stackrel{!}{=}&\sup_{h\in  \Hom_{\bC}(C,C), \|h\|_{\End_{\bC}(C)}=1} \|f\circ h\|_{\bC}\\&\stackrel{!!}{=}& \| f \|_{\bC}\ .
\end{eqnarray*}
For the equality marked by $!$ we use 
 that left-multiplication by $[f]$ annihilates all summands of $M_{C}^{\alg}$ except the one with index $C$. Furthermore, for the equality marked by $!!$
  we use that the canonical morphism of a $C^{*}$-algebra into its multiplier algebra is isometric. 
 Since $\|f\|_{\bC}=\|[f]\|_{C}\le \|[f]\|_{\max}$ we can conclude that the homomorphism 
   $\End_{\bC}(C)\to A(\bC)$ of $C^{*}$-algebras is injective. It is therefore isometric which shows the claim. 

 Since we can choose  the object $C$ arbitrary 
we conclude that $\rho_{\bC}$ is isometric on the endomorphisms of every object of $\bC$. For $f:C\to C^{\prime}$ we then get
$$\|\rho_{\bC}(f)\|_{\max}^{2}=\|\rho_{\bC}(f^{*}f)\|_{\max}=\|f^{*}f\|_{\bC}=\|f\|_{\bC}^{2}\ .$$
 \end{proof}

\begin{lem}\label{ergkljewrogwergwregf}\mbox{}\begin{enumerate}
\item \label{weirgowergwergwerg}
The functor $A:\nCcatinj\to \nCalg$ preserves isometric inclusions.
\item  \label{weirgowergwergwerg1} 
For $\bC$ in $\nCcat$ every injective homomorphism $A^{\alg}(\bC)\to B$ into a $C^{*}$-algebra $B$ extends to an isometric homomorphism $A(\bC)\to B$.
\end{enumerate}
\end{lem}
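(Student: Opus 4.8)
\emph{The plan.} Both statements reduce to a single claim: \emph{if $\psi\colon A(\bC)\to B$ is a $*$-homomorphism into a $C^{*}$-algebra whose restriction to (the dense image of) $A^{\alg}(\bC)$ is injective, then $\psi$ is injective, hence isometric} --- recall that an injective $*$-homomorphism between $C^{*}$-algebras is automatically isometric. Granting this, part~\ref{weirgowergwergwerg1} follows at once: an injective homomorphism $\varphi\colon A^{\alg}(\bC)\to B$ satisfies $\|\varphi(x)\|_{B}\le\|x\|_{\max}$ for all $x$ --- it appears among the representations in Definition~\ref{egergwefqwefqwfqewf} --- so it extends by continuity to a $*$-homomorphism $\bar\varphi\colon A(\bC)\to B$ extending $\varphi$, and since $\bar\varphi$ restricts to the injective $\varphi$ on $A^{\alg}(\bC)$ the claim shows $\bar\varphi$ is isometric. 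For part~\ref{weirgowergwergwerg}, let $\phi\colon\bC\to\bD$ be an isometric inclusion in $\nCcatinj$. Then $A^{\alg}(\phi)$ is injective on \eqref{qwefj1ij4oi3fwef}, because $\phi$ is injective on objects (hence distinct summands go to distinct summands) and isometric, hence injective, on each morphism space; composing with $A^{\alg}(\bD)\hookrightarrow A(\bD)$ --- injective by the corner argument below --- yields an injective $*$-homomorphism $A^{\alg}(\bC)\to A(\bD)$, which is precisely the restriction of $\psi:=A(\phi)\colon A(\bC)\to A(\bD)$ to $A^{\alg}(\bC)$; so the claim shows $A(\phi)$ is isometric, in particular an isometric inclusion.

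\emph{Proof of the claim.} The key algebraic input is the corner identity: writing $[\,\cdot\,]$ for the image in $A(\bC)$ of a morphism space,
\[
[\End_{\bC}(C')]\cdot A(\bC)\cdot[\End_{\bC}(C)]\ \subseteq\ [\Hom_{\bC}(C,C')]\ \subseteq\ A^{\alg}(\bC)\qquad\text{for all objects }C,C'.
\]
I would verify this first on the dense subalgebra $A^{\alg}(\bC)$ via the block multiplication rule of Definition~\ref{hiuwegregwergweg} (a product of a morphism in the $(C',C')$-block, an arbitrary element, and a morphism in the $(C,C)$-block lies in the $(C',C)$-block), and then extend it to all of $A(\bC)$ by continuity, using that $[\Hom_{\bC}(C,C')]$ is a closed subspace of $A(\bC)$ --- it is isometrically the Banach space $\Hom_{\bC}(C,C')$ by Lemma~\ref{ergtegwergwergwrg}. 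Next, fixing contractive approximate units $(u^{C}_{\lambda})_{\lambda}$ of the $C^{*}$-algebras $\End_{\bC}(C)$, I would check that $v_{F,\lambda}:=\sum_{C\in F}[u^{C}_{\lambda}]$, indexed by finite sets $F$ of objects, is a self-adjoint contractive approximate unit of $A(\bC)$: each $v_{F,\lambda}$ is a positive contraction since the blocks $[\End_{\bC}(C)]$ are mutually orthogonal, and $v_{F,\lambda}[f]\to[f]$, resp.\ $[f]v_{F,\lambda}\to[f]$, for every morphism $f$ once $F\ni\codom f$, resp.\ $F\ni\dom f$, because $\Hom_{\bC}(\dom f,\codom f)$ is non-degenerate as a module over $\End_{\bC}(\codom f)$ and over $\End_{\bC}(\dom f)$. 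Now put $J:=\ker\psi$, a closed two-sided ideal with $J\cap A^{\alg}(\bC)=0$. For $j\in J$ and any $C,C'$ the corner identity puts $[u^{C'}_{\lambda}]\,(j^{*}j)\,[u^{C}_{\lambda}]$ into $J\cap A^{\alg}(\bC)=0$ (as $j^{*}j\in J$), hence $\|jv_{F,\lambda}\|^{2}=\|v_{F,\lambda}(j^{*}j)v_{F,\lambda}\|=\bigl\|\sum_{C,C'\in F}[u^{C'}_{\lambda}]\,(j^{*}j)\,[u^{C}_{\lambda}]\bigr\|=0$, and letting $(F,\lambda)$ run gives $j=\lim_{F,\lambda}jv_{F,\lambda}=0$. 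Thus $J=0$, so $\psi$ is injective and therefore isometric. The same corner/approximate-unit computation, applied to an element of $A^{\alg}(\bC)$ of zero maximal seminorm, shows its block components all vanish, i.e.\ $A^{\alg}(\bC)\to A(\bC)$ is injective --- the fact used in part~\ref{weirgowergwergwerg}.

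\emph{The main obstacle.} The only genuinely analytic content is the corner identity together with the blockwise approximate unit; these rely on standard $C^{*}$-facts (completeness of the morphism spaces, and their non-degeneracy as bimodules over the endomorphism algebras, via functional calculus or Cohen factorization) and on the isometry already recorded in Lemma~\ref{ergtegwergwergwrg}. The rest --- reducing both parts to the claim, the automatic $\|\cdot\|_{\max}$-contractivity of $\varphi$, and ``injective $\Rightarrow$ isometric'' for $*$-homomorphisms of $C^{*}$-algebras --- is purely formal.
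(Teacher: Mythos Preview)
Your argument is correct, and it takes a genuinely different route from the paper.

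The paper reduces both assertions to the case of a full subcategory $\bC'\subseteq\bC$ on finitely many objects: for such $\bC'$ the algebraic object $A^{\alg}(\bC')$ is a finite direct sum of complete morphism spaces, and (using Lemma~\ref{ergtegwergwergwrg}) its image in $A(\bC)$ is a closed $*$-subalgebra, so that the Banach topology from the $\Hom$-spaces agrees with the one induced from $A(\bC)$; one then argues that an injective $*$-homomorphism on this finite-dimensional-object piece is isometric, and passes to the union over all finite $\bC'$.

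Your approach bypasses the finite reduction entirely. You work directly inside $A(\bC)$ via the corner identity
\[
[\End_{\bC}(C')]\cdot A(\bC)\cdot[\End_{\bC}(C)]\subseteq[\Hom_{\bC}(C,C')]
\]
and the blockwise approximate unit $v_{F,\lambda}=\sum_{C\in F}[u^{C}_{\lambda}]$, turning the problem into the elementary observation that an ideal meeting the dense $*$-subalgebra $A^{\alg}(\bC)$ trivially must vanish. This is a clean, self-contained $C^{*}$-algebraic argument; its only external input is the isometry of $\rho_{\bC}$ (Lemma~\ref{ergtegwergwergwrg}), needed to know that each block $[\Hom_{\bC}(C,C')]$ is closed in $A(\bC)$. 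A pleasant by-product is that you obtain the injectivity of $A^{\alg}(\bC)\to A(\bC)$ explicitly, whereas the paper uses this implicitly in its finite-subcategory step. Conversely, the paper's reduction is arguably more conceptual --- it isolates the content as ``for finitely many objects, $A^{\alg}$ is already a $C^{*}$-algebra'' --- but the justification that the image of $A^{\alg}(\bC')$ in $A(\bC)$ is closed is only sketched there, and your argument makes no such appeal.
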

\begin{proof}
We start with the proof of Assertion \ref{weirgowergwergwerg}. 
Let $i:\bC\to \bD$ be an isometric inclusion in $\nCcatinj$.   Then we must show that 
 the morphism $A(\bC)\to A(\bD)$  in $\nCalg$ is isometric. To this end we consider the diagram 
$$\xymatrix{\bC\ar[r]^{i}\ar[d]^{\rho_{\bC}}&\bD\ar[d]^{\rho_{\bD}}\\A(\bC)\ar[r]^{A(i)}&A(\bD)}\ $$
By Lemma \ref{ergtegwergwergwrg} the vertical morphisms are isometric. 
Furthermore, $i$ is isometric by assumption.

Let $\bC^{\prime}$ be the full subcategory of $\bC$ on some finite set of objects of $\bC$. Then we get a morphism $A^{\alg}(\bC')\to A(\bC)$ whose image is a closed subalgebra (since the morphism $\rho_{\bC}$ is isometric). Hence the Banach space  topology on  $A^{\alg}(\bC')$ induced from the Banach space topology of the morphism spaces of $\bC'$ coincides with the topology induced from $A(\bC)$.
 We let $\bD^{\prime}$ be the full subcategory of $\bD$ on the objects $i(\Ob(\bC'))$. Then we have a diagram
$$\xymatrix{A(\bC')\ar[r]^{A(i')}\ar[d]&A(\bD')\ar[d]\\A(\bC)\ar[r]^{A(i)}&A(\bD)}\ .$$
The map $A(i')$ is a closed embedding for the Banach space topologies induced from $\bC^{\prime}$ and $\bD^{\prime}$, and therefore also a closed embedding for the topologies induced from the vertical arrows.  $A(i')$  is  furthermore  morphism of $C^{*}$-algebras with respect to the norms induced by the vertical arrows and hence an isometry with respect to these norms. 

Since $A(\bC)$ and $A(\bD)$ are by definition the closures of the union of the images of the vertical maps for all choices of $\bC'$
we conclude that $A(i)$ is also an isometry. 

We now show  Assertion \ref{weirgowergwergwerg1}.
 Let $A^{\alg}(\bC)\to B$ be an injective homomorphism into some $C^{*}$-algebra. Then for every full subcategory $\bC'$ of $\bC$ with finitely many objects $A(\bC')\to A^{\alg}(\bC)\to B$ is injective and hence isometric.
 As in the end of the argument above this implies that $A(\bC)\to B$ is isometric, too.
 \end{proof}

We next show that the functor $A$  in Definition \ref{zhioerhrthtrhehth} commutes with crossed products.

  We  assume that $\bC$ in $\Fun(BG,\nCcat)$.
Since $G$ acts by invertible functors we  actually have   $\bC$ in $\Fun(BG,\nCcatinj)$.
By functoriality of $A$ we can then consider  $A(\bC)$ in $\Fun(BG,\nCalg)$. 

We start with the construction  of a covariant representation which will eventually induce the comparison map \eqref{ewfqewfqewfqwefqwfeqwfqewfqewf}.
 We have a  morphism  $$\iota_{\bC}:\bC\to \bC\rtimes G$$ in $\nCcatinj$, see \eqref{sfdvsq34gervv}. By functoriality of $A$ it induces a morphism $$A(\iota_{\bC}):A(\bC)\to A( \bC\rtimes G)\ .$$

 \begin{lem}
We have a canonical homomorphism $\pi_{\bC}:G\to U(M(A(\bC\rtimes G)))$ such that $(A(\iota_{\bC}) ,\pi_{\bC}     )$ is a covariant representation of $A(\bC)$ on $A(\bC\rtimes G)$ in the sense of Definition \ref{wrthiowgregwergwreg}. 
\end{lem}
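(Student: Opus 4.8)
The plan is to define $\pi_{\bC}$ object-by-object, imitating the construction of the unitary multiplier $\nu(g)$ in the proof of Lemma \ref{gbiojwogwerewgweeereEEEer}. Recall from Definition \ref{wergiojwergregregwergerg} that a morphism of $\bC\rtimes^{\alg}G$ has the form $(f,h)\colon C\to hC'$, for a morphism $f\colon C\to C'$ of $\bC$ and $h\in G$, and that the natural map $A^{\alg}(\bC\rtimes^{\alg}G)\to A(\bC\rtimes G)$ has dense image. For $g\in G$ I would define linear endomorphisms $L(g),R(g)$ of $A^{\alg}(\bC\rtimes^{\alg}G)$ on the generators $[(f,h)]$ by the same formulas as in \eqref{wfqwefwefqwfqewfqewf},
\[
L(g)\big([(f,h)]\big):=[(f,gh)]\ ,\qquad R(g)\big([(f,h)]\big):=[(g^{-1}f,hg)]\ ,
\]
and extend by linearity. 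These are well defined because $G$ acts on $\bC$ by invertible $\ast$-functors, so the right-hand sides are legitimate morphisms of $\bC\rtimes^{\alg}G$ and the formulas are compatible with the direct-sum decomposition of $A^{\alg}$.

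Next I would carry out the object-indexed versions of the three computations from the proof of Lemma \ref{gbiojwogwerewgweeereEEEer}, each a short calculation with the composition and $\ast$-operation of $\bC\rtimes^{\alg}G$ (Definition \ref{wergiojwergregregwergerg}) and the multiplication of $A^{\alg}$ (Definition \ref{hiuwegregwergweg}). First, $L(g)(ab)=L(g)(a)b$, $R(g)(ab)=aR(g)(b)$ and the centralizer identity $R(g)(a)\,b=a\,L(g)(b)$ for all $a,b$, the analogue of \eqref{wfqwefwefqwfqewfqewdwdwdwwdwdwf}. Second, using $(f,h)^{\ast}=(hf^{\ast},h^{-1})$ one checks $L(g)(a)^{\ast}L(g)(b)=a^{\ast}b$ for all $a,b$; in particular $\|L(g)(a)\|_{\max}^{2}=\|L(g)(a)^{\ast}L(g)(a)\|_{\max}=\|a^{\ast}a\|_{\max}=\|a\|_{\max}^{2}$, as in \eqref{wfqwefwefqwfefefefefefffeffefefeffqewfqewdwdwdwwdwdwf}, so $L(g)$ descends and extends to an isometry of the $C^{\ast}$-algebra $A(\bC\rtimes G)$. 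Combining the centralizer identity with the $C^{\ast}$-identity exactly as in \eqref{wfqwefwefqwfeffeffefefeffqewfqewdwdwdwwdwdwf} gives $\|R(g)(a)\|_{\max}\le\|a\|_{\max}$, so $R(g)$ extends as well, and by continuity $(L(g),R(g))$ determines a multiplier $\pi_{\bC}(g)\in M(A(\bC\rtimes G))$ with $\pi_{\bC}(g)a=L(g)(a)$ and $a\pi_{\bC}(g)=R(g)(a)$. Third, the identities $L(g)L(h)=L(gh)$, $L(e)=R(e)=\mathrm{id}$ show $\pi_{\bC}(g)\pi_{\bC}(h)=\pi_{\bC}(gh)$ and $\pi_{\bC}(e)=1$, while $L(g)L(g^{-1})=R(g)R(g^{-1})=\mathrm{id}$ together with a direct computation of the adjoint multiplier give $\pi_{\bC}(g)^{\ast}=\pi_{\bC}(g^{-1})=\pi_{\bC}(g)^{-1}$; hence $\pi_{\bC}\colon G\to U(M(A(\bC\rtimes G)))$ is a homomorphism of groups.

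It remains to verify the covariance relation \eqref{werglknkeglergwegwgwrg} for the pair $(A(\iota_{\bC}),\pi_{\bC})$. By functoriality of $A$ the map $A(\iota_{\bC})\colon A(\bC)\to A(\bC\rtimes G)$ is a homomorphism in $\nCalg$ sending $[f]$ to $[(f,e)]$ for a morphism $f$ of $\bC$, and by Definition \ref{hiuwegregwergweg} the induced $G$-action on $A(\bC)$ is $g\cdot[f]=[gf]$. Thus, for $f\colon C\to C'$ in $\bC$,
\[
\pi_{\bC}(g)\,A(\iota_{\bC})([f])\,\pi_{\bC}(g^{-1})=L(g)\big(R(g^{-1})([(f,e)])\big)=L(g)\big([(gf,g^{-1})]\big)=[(gf,e)]=A(\iota_{\bC})\big(g\cdot[f]\big)\ .
\]
Since the elements $[f]$ span a dense subspace of $A(\bC)$ and both sides of $\pi_{\bC}(g)A(\iota_{\bC})(x)\pi_{\bC}(g^{-1})=A(\iota_{\bC})(gx)$ are norm-continuous in $x$, the identity holds for all $x\in A(\bC)$, which is exactly the condition of Definition \ref{wrthiowgregwergwreg}. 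I do not expect a real obstacle here: the argument is the object-indexed transcription of computations already done for $C^{\ast}$-algebras in the proof of Lemma \ref{gbiojwogwerewgweeereEEEer}; the only points requiring a little care are the well-definedness of $L(g),R(g)$ on $A^{\alg}(\bC\rtimes^{\alg}G)$ and their continuous extension to the completion, for which the $C^{\ast}$-identity and the centralizer relation do the work.
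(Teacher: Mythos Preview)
Your proposal is correct and follows essentially the same approach as the paper's proof: define the double centralizer $(L(g),R(g))$ on the dense subalgebra by the same formulas, use the algebraic identity $L(g)(a)^{*}L(g)(b)=a^{*}b$ together with the $C^{*}$-identity to get isometry, extend by continuity, and verify the covariance relation on generators. The only cosmetic difference is that the paper performs the continuous extension in two explicit steps --- first from $A^{\alg}(\bC\rtimes^{\alg}G)$ to $A^{\alg}(\bC\rtimes G)$ summand by summand, then to $A(\bC\rtimes G)$ via the maximal norm --- whereas you do it in one; your single step is fine because the identity $L(g)(a)^{*}L(g)(a)=a^{*}a$ holds purely algebraically on the dense subalgebra and hence forces isometry with respect to the $C^{*}$-norm of $A(\bC\rtimes G)$ (or any $C^{*}$-seminorm), which is all you need to extend.
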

\begin{proof} We repeat the corresponding argument from the proof of Lemma \ref{gbiojwogwerewgweeereEEEer}.\ref{efiijfoiofjeojgoifewfewfqwef}.
For every $g$ in $G$ we define   the  double centralizer   $(L(g),R(g))$ on $A(\bC\rtimes G) $. We start with the definition of $L(g)$ and $R(g)$ as linear maps on $A^{\alg}(\bC\rtimes^{\alg} G)$. They are then determined by 
the formulas 
\begin{equation}\label{wfqwefwefqwfqewfqewf4r4} L(g)([ f,h]):= [f,gh]\ , \quad R(g)([f,h ]):= [g^{-1}f,hg]\end{equation}
for all morphisms $f$ of $\bC$ and $h$ in $G$. Then 
one easily verifies the relation 
\begin{equation}\label{wfqwefwefqwfqewvfvdfvdvdfvfqewf4r4} R(g) ([f',h']) [f,h]= [f',h'] L(g)([ f,h])\end{equation}
for all morphisms $f,f^{\prime}$ in $\bC$ and $h,h'$ in $G$. 
 One calculates that \begin{equation}\label{fdspvojsdopfvsfdvsdfvsfdvsfdv}
 (L(g)([f',h']))^{*}L(g)([f,h])=[f',h']^{*}[f,h]
\end{equation} 
  for every $f,f'$ in $\bC$ and $h,h'$ in $G$.  We now show that 
   $R(g)$ and $L(g)$ extend by continuity to $A(\bC\rtimes G) $.  
We do this in two steps. We first extend them  to $A^{\alg}(\bC\rtimes G)$, and then to $A(\bC\rtimes G) $.  For the first step    
we observe that $L(g)$   maps the summand $\Hom_{\bC\rtimes^{\alg} G}(C,C^{\prime})$ of 
$A^{\alg}(\bC\rtimes^{\alg} G)$ (see \eqref{qwefj1ij4oi3fwef}) to the summand $ \Hom_{\bC\rtimes^{\alg} G}(C,gC^{\prime})$. 
  The equation \eqref{fdspvojsdopfvsfdvsdfvsfdvsfdv}   then   implies
  for arbitrary $a$ in $\Hom_{\bC\rtimes^{\alg} G}(C,C^{\prime})$ that 
$$\|L(g)([a])\|_{\Hom_{\bC\rtimes  G}(C,gC^{\prime})} = \|a\|_{\Hom_{\bC\rtimes  G}(C,C^{\prime})}
\ .$$
This provides the continuous extension of $L(g)$ to $A^{\alg}(\bC\rtimes G)$.
Again using  the equation  \eqref{fdspvojsdopfvsfdvsdfvsfdvsfdv} we now see that
$$\|L(g)(a)\|_{\max}\le \|a\|_{\max}
$$
for every $a$ in $A^{\alg}(\bC\rtimes G)$.  Hence $L(g)$ 
continuously extends further to $A(\bC\rtimes G)$. 
We now use \eqref{wfqwefwefqwfqewvfvdfvdvdfvfqewf4r4}   and  a similar argument as in the proof  of Lemma \ref{gbiojwogwerewgweeereEEEer}.\ref{efiijfoiofjeojgoifewfewfqwef} to show that also $R(g)$ extends.

Consequently, the pair $(L(g),R(g))$ determines a multiplier $\pi_{\bC}(g)$ in $M(A(\bC\rtimes G))$ such that
  \begin{equation}\label{eqwfqwefewfwf42r1}\pi_{\bC}(g)a=L(g)(a)\ ,\quad a\pi_{\bC}(g)=R(g)(a)  \end{equation}
  for arbitrary $a$ in $A(\bC\rtimes G)$. 
Using the  formulas \eqref{wfqwefwefqwfqewfqewf4r4}   one   checks as in the proof  of Lemma \ref{gbiojwogwerewgweeereEEEer}.\ref{efiijfoiofjeojgoifewfewfqwef} that
$\pi_{\bC}(g)$ is unitary for every $g$ in $G$, and that the map $\pi_{\bC}:G\to U(M(A(\bC\rtimes G)))$ is a homomorphism of groups.

  Hence we have obtained the desired homomorphism 
$$\pi_{\bC}:G\to U(M(A(\bC\rtimes G)))\ .$$
Using $A(\iota_{\bC})([f])=[f,e]$   and the formulas \eqref{eqwfqwefewfwf42r1} and  \eqref{wfqwefwefqwfqewfqewf4r4} one easily verifies the relation
\eqref{werglknkeglergwegwgwrg}, i.e., that
$$\pi_{\bC}(g)A(\iota_{\bC})([f]) \pi_{\bC}(g^{-1})=A(\iota_{\bC})([gf])$$
for all morphisms $f$ in $\bC$ and $g$ in $G$.
\end{proof}

 By Lemma \ref{gbiojwogwerewgweeereEEEer}.\ref{qeojqoergefewfqwef}
 the covariant representation $( A(\iota_{\bC}) ,\pi_{\bC}   )$  determines a  morphism of $C^{*}$-algebras \begin{equation}\label{ewfqewfqewfqwefqwfeqwfqewfqewf}
\nu_{\bC}:A(\bC)\rtimes  G\to A(\bC\rtimes G)\ .
\end{equation}

 Recall that we  assume that $\bC$ is  in $\Fun(BG,\nCcat)$.
\begin{theorem} \label{qrioqwfewfewfewfqef}
The  morphism $\nu_{\bC}:A(\bC)\rtimes  G\to A(\bC\rtimes G)$ is an isomorphism.
\end{theorem}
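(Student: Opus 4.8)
The plan is to exhibit an explicit two-sided inverse of $\nu_{\bC}$ using the universal property of the functor $A$ from Lemma \ref{egioerjgoergwegwergwregrwe}, rather than comparing norms. Since $G$ acts on $\bC$ by invertible functors, $\bC$ lies in $\Fun(BG,\nCcatinj)$ and, as already recorded above, $A(\bC)$ inherits a $G$-action; moreover the morphism $\rho_{\bC}$ is natural in $\bC$, in particular $G$-equivariant, so $\rho_{\bC}\colon\bC\to A(\bC)$ is a morphism in $\Fun(BG,\nCcat)$. Applying the crossed product functor \eqref{efkjbjkdfviuqr3} we obtain a morphism $\rho_{\bC}\rtimes G\colon\bC\rtimes G\to A(\bC)\rtimes G$ in $\nCcat$, the target being the crossed product of the $C^{*}$-algebra $A(\bC)$ (cf.\ Proposition \ref{egwergergwrgw}).

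The key step is to verify that $\rho_{\bC}\rtimes G$ has the matrix-unit compatibility required by Lemma \ref{egioerjgoergwegwergwregrwe}: for morphisms $h,h'$ of $\bC\rtimes G$ one needs $(\rho_{\bC}\rtimes G)(h')(\rho_{\bC}\rtimes G)(h)$ to equal $(\rho_{\bC}\rtimes G)(h'\circ h)$ when $h,h'$ are composable and to vanish otherwise. This is a direct computation with the defining formulas. Writing $h=(f,g)$ and $h'=(f',g')$ and abbreviating $[\,\cdot\,]=\rho_{\bC}(-)$, equivariance of $\rho$ gives $(\rho_{\bC}(f'),g')(\rho_{\bC}(f),g)=([g^{-1}f'][f],\,g'g)$ in $A(\bC)\rtimes^{\alg}G$; and $[g^{-1}f'][f]$ equals $[(g^{-1}f')\circ f]$ exactly when $g^{-1}f'$ and $f$ are composable in $\bC$, which, since $G$ acts on objects by bijections, happens exactly when $h$ and $h'$ are composable in $\bC\rtimes^{\alg}G$. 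Comparing with $(\rho_{\bC}\rtimes G)(g^{-1}f'\circ f,g'g)$ yields the claim, so Lemma \ref{egioerjgoergwegwergwregrwe} furnishes a unique $*$-homomorphism $\mu_{\bC}\colon A(\bC\rtimes G)\to A(\bC)\rtimes G$ with $\mu_{\bC}\circ\rho_{\bC\rtimes G}=\rho_{\bC}\rtimes G$.

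It then remains to check that $\mu_{\bC}$ and $\nu_{\bC}$ are mutually inverse. For $\nu_{\bC}\circ\mu_{\bC}=\id_{A(\bC\rtimes G)}$ I would again use uniqueness in Lemma \ref{egioerjgoergwegwergwregrwe}: it suffices to show $\nu_{\bC}\circ(\rho_{\bC}\rtimes G)=\rho_{\bC\rtimes G}$ as morphisms $\bC\rtimes G\to A(\bC\rtimes G)$, and by continuity it is enough to evaluate on the generating morphisms $(f,g)$ of $\bC\rtimes^{\alg}G$. There $\nu_{\bC}$ sends $(\rho_{\bC}(f),g)=([f],g)$ to $\pi_{\bC}(g)\,A(\iota_{\bC})([f])=\pi_{\bC}(g)[f,e]=[f,g]$ (using \eqref{wfqwefwefqwfqewfqewf4r4} and \eqref{eqwfqwefewfwf42r1}), which is precisely $\rho_{\bC\rtimes G}(f,g)$. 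For $\mu_{\bC}\circ\nu_{\bC}=\id_{A(\bC)\rtimes G}$ I would check the identity on the dense $*$-subalgebra $A^{\alg}(\bC)\rtimes^{\alg}G$, where $\mu_{\bC}(\nu_{\bC}([f],g))=\mu_{\bC}([f,g])=\mu_{\bC}(\rho_{\bC\rtimes G}(f,g))=(\rho_{\bC}\rtimes G)(f,g)=([f],g)$; linearity and continuity then give the identity on all of $A(\bC)\rtimes G$, using that the $[f]$ span a dense subspace of $A(\bC)$ (Lemma \ref{ergtegwergwergwrg}) and that $\|(a,g)\|_{\max}\le\|a\|_{A(\bC)}$.

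The main obstacle is the careful bookkeeping in the key step: one must keep straight the identifications $A(\iota_{\bC})([f])=[f,e]$, $\pi_{\bC}(g)[f,e]=[f,g]$, and $\rho_{\bC\rtimes G}((f,g))=[f,g]$, and --- most importantly --- verify that composability in $\bC$ of the $G$-twisted morphisms matches composability in $\bC\rtimes G$. Once the matrix-unit compatibility is in place, the two triangle identities follow purely formally from the universal property of $A$ together with a density argument, with no analytic input beyond the isometry of $\rho_{\bC}$ already established in Lemma \ref{ergtegwergwergwrg}.
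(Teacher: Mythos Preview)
Your proof is correct and follows essentially the same strategy as the paper: construct a morphism $\bC\rtimes G\to A(\bC)\rtimes G$ sending $(f,g)$ to $([f],g)$, verify the matrix-unit compatibility, invoke Lemma~\ref{egioerjgoergwegwergwregrwe} to obtain the inverse, and check on generators. The only difference is in how that morphism is produced: the paper builds it from the covariant representation $(\iota_{A(\bC)}\circ\rho_{\bC},\nu)$ of $\bC$ on $A(\bC)\rtimes G$ (with $\nu$ taking values in the multiplier algebra), whereas you obtain it more directly as $\rho_{\bC}\rtimes G$ from the $G$-equivariance of $\rho_{\bC}$ and the functoriality of the crossed product. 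Your route is slightly cleaner and avoids the multiplier machinery. One small remark: the density of the $[f]$ in $A(\bC)$ is immediate from Definition~\ref{zhioerhrthtrhehth} (completion of $A^{\alg}(\bC)$), not from Lemma~\ref{ergtegwergwergwrg}; the latter is not needed at that point.
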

\begin{proof}
We have a canonical functor $$\iota_{A(\bC)}\circ \rho_{\bC}:\bC\to A(\bC)\rtimes G\ ,$$ see \eqref{sfdvsq34gervv} (applied to $A(\bC)$ in place of $\bC$)  for $\iota_{A(\bC)}$ and \eqref{eqwfqewf124rwrefqfew} for $\rho_{\bC}$.
As seen in the proof of Lemma \ref{gbiojwogwerewgweeereEEEer}.\ref{efiijfoiofjeojgoifewfewfqwef}.
 (again applied to $A(\bC)$ in place of $\bC$), we furthermore have a homomorphism $$\nu: G\to U(M(A(\bC)\rtimes G))\ .$$ The pair $(\iota_{A(\bC)}\circ \rho_{\bC},\nu)$ is a covariant representation of $\bC$ on $A(\bC)\rtimes G$.
By Lemma   \ref{gbiojwogwerewgweeereEEEer}.\ref{qeojqoergefewfqwef}
 it induces a functor \begin{equation}\label{qwfwefqwefeqwfqvcdewewcwqec}
\bC\rtimes G\to A(\bC)\rtimes G
\end{equation}   sending $(f,g)$  to $([f],g)$ for all morphisms $f$ of $\bC$ and $g$ in $G$. One easily checks that for  morphisms $f:C\to C'$ and $f'':C''\to C'''$ of $\bC$ and $g,g'$ in $G$ we have
$$([f'],g')([f],g)=\left\{  \begin{array}{cc}
([gf'\circ f],g'g)&gC'=C''\\0&else
\end{array}
 \right. \ .$$
We now use the universal property stated in Lemma \ref{egioerjgoergwegwergwregrwe} in order to extend the functor \eqref{qwfwefqwefeqwfqvcdewewcwqec} to a  homomorphism of $C^{*}$-algebras 
$A(\bC\rtimes G)\to A(\bC)\rtimes G$.
One checks by a calculation with generators that this homomorphism is an inverse of 
  $\nu_{\bC}$.
\end{proof}

Let $\bC$ be in $\Fun(BG,\nCcat)$.

 \begin{kor}\label{wethgiowrthrthwtgregrweg}
 The morphism $\iota_{\bC}:\bC\to \bC\rtimes G$ is isometric.
 \end{kor}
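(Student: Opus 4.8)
The plan is to deduce the statement from the $C^{*}$-algebra case, Lemma \ref{regiuwergrerwegweg}, by transporting it along the functor $A$ of Definition \ref{zhioerhrthtrhehth}. Since $\iota_{\bC}$ is the identity on objects it is a morphism in $\nCcatinj$, so $A(\iota_{\bC}):A(\bC)\to A(\bC\rtimes G)$ is defined and, by naturality of the canonical morphism $\rho$ of \eqref{eqwfqewf124rwrefqfew}, the square with vertical arrows $\rho_{\bC}$ and $\rho_{\bC\rtimes G}$ and horizontal arrows $\iota_{\bC}$ and $A(\iota_{\bC})$ commutes. By Lemma \ref{ergtegwergwergwrg} both vertical arrows are isometric, so it will suffice to prove that $A(\iota_{\bC})$ is isometric; the desired equality $\|\iota_{\bC}(f)\|_{\bC\rtimes G}=\|f\|_{\bC}$ for every morphism $f$ of $\bC$ then follows by chasing the square.

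The first step is to unwind the definition of the comparison map $\nu_{\bC}$ of \eqref{ewfqewfqewfqwefqwfeqwfqewfqewf}: it is the homomorphism associated by Lemma \ref{gbiojwogwerewgweeereEEEer}.\ref{qeojqoergefewfqwef} to the covariant representation $(A(\iota_{\bC}),\pi_{\bC})$, hence is given on elements of $A(\bC)\rtimes^{\alg}G$ by $\nu_{\bC}(a,g)=\pi_{\bC}(g)\,A(\iota_{\bC})(a)$. Evaluating at $g=e$ and using that $\pi_{\bC}$ is a group homomorphism (so $\pi_{\bC}(e)=1$) yields the factorisation $A(\iota_{\bC})=\nu_{\bC}\circ\iota_{A(\bC)}$, where $\iota_{A(\bC)}$ is the canonical inclusion of \eqref{sfdvsq34gervv} for the $G$-$C^{*}$-algebra $A(\bC)$. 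Now $\iota_{A(\bC)}$ is isometric by Lemma \ref{regiuwergrerwegweg} applied to $A(\bC)$, and $\nu_{\bC}$ is an isomorphism of $C^{*}$-algebras by Theorem \ref{qrioqwfewfewfewfqef}, hence isometric; therefore the composite $A(\iota_{\bC})$ is isometric.

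The only points needing care are bookkeeping rather than analysis: verifying that the naturality square for $\rho$ really commutes at $\iota_{\bC}$ (which is immediate, since $\iota_{\bC}$ is injective on objects and $\rho$ is natural on $\nCcatinj$), and verifying the identity $A(\iota_{\bC})=\nu_{\bC}\circ\iota_{A(\bC)}$ (which is just the $g=e$ instance of the defining formula for $\nu_{\bC}$ together with $\pi_{\bC}(e)=1$). No genuine obstacle remains once Theorem \ref{qrioqwfewfewfewfqef} and Lemma \ref{regiuwergrerwegweg} are available; in effect all the real work has been pushed into those two results, and the corollary is a short diagram chase combining them with the isometry of $\rho$.
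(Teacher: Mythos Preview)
Your proof is correct and follows essentially the same approach as the paper: both arguments use the isometry of $\rho_{\bC}$ and $\rho_{\bC\rtimes G}$ (Lemma \ref{ergtegwergwergwrg}), the isometry of $\iota_{A(\bC)}$ (Lemma \ref{regiuwergrerwegweg}), and the isomorphism $\nu_{\bC}$ (Theorem \ref{qrioqwfewfewfewfqef}) to deduce the result by a diagram chase. The only cosmetic difference is that the paper draws a single commuting pentagon, whereas you decompose it into the naturality square for $\rho$ together with the factorisation $A(\iota_{\bC})=\nu_{\bC}\circ\iota_{A(\bC)}$; these are two descriptions of the same diagram.
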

\begin{proof}
We have a commuting square$$\xymatrix{\bC\ar[rr]^{\rho_{\bC}}\ar[d]^{\iota_{\bC}}&&A(\bC)\ar[d]^{\iota_{A(\bC)}}\\ \bC\rtimes G\ar[r]^{\rho_{\bC\rtimes G}}&A(\bC\rtimes G) &\ar[l]^{\nu_{\bC}}_{\cong}A(\bC)\rtimes G}\ .$$
Since $\rho_{\bC}$  and $\rho_{\bC\rtimes G}$ are isometric by Lemma \ref{ergtegwergwergwrg},
and $\iota_{A(\bC)}$ is isometric by Lemma \ref{regiuwergrerwegweg},
 this diagram implies the assertion.
\end{proof}

 \section{Colimits and Crossed products}\label{wgoijorgregwergwgrwreg}

In Definitions  \ref{wergiojwergregregwergerg} and \ref{feivoevvewfvfevfdsv} the crossed product  of a $\C$-linear $*$-category or a $C^{*}$-category with $G$-action with $G$     was introduced in an ad-hoc manner. The goal of the present section is to relate the crossed product with the formation of   colimits over the $G$-action in the respective large categories of small $\C$-linear $*$-categories or   small $C^{*}$-categories, see Proposition \ref{regiueqrhgiqwgewgqgfewrg}.  
In the unital case we have a well-developed homotopy theory of  $\C$-linear $*$-categories or   $C^{*}$-categories \cite{DellAmbrogio:2010aa}, \cite{startcats}. In this case the 
crossed product represents the homotopy $G$-orbits of the category. The technically precise formulation of this fact using the language of $\infty$-categories will be given in Theorem \ref{weiogwgwerrewgwregwreg}. 
 
\begin{rem}
The crossed product of a $*$-algebra over $\C$ or a $C^{*}$-algebra with $G$-action  with  $G$ is also classically considered as a sort of homotopy $G$-orbits of the algebra.  As far as we can see this can only made precise by considering these algebras as categories and forming the homotopy orbits in the realm of categories. 
Thereby it looks like an accident that the operation of taking  homotopy orbits preserves algebras. \hB
\end{rem}

 
 We start with describing an endofunctor   \begin{equation}\label{wervwervnwkjrvfvdfsv}
L: \Fun(BG,\nClincat)\to  \Fun(BG,\nClincat)\ ,
\end{equation} see Remark \ref{wegjwiergoergrefgwrefer} below for motivation.

  Let $\bC$ be in $\Fun(BG,\nClincat)$. The object $L(\bC) $ of $ \Fun(BG,\nClincat)$ has the following description.
  \begin{enumerate}
  \item objects: The set of objects of $L(\bC )$ is the set $\Ob(\bC)\times G$ with the  diagonal $G$-action 
 $h(C,g):=(hC,hg)$ for all $h$ in $G$ and objects $(C,g)$ of $L(\bC)$. 
  \item morphisms:  For two objects $(C,g),(C^{\prime},g')$ in $L(\bC)$   the $\C$-vector space of morphisms is defined by $$\Hom_{L(\bC)}((C,g),(C^{\prime},g^{\prime})) :=\Hom_{\bC }(C,C^{\prime})\ .$$ The element corresponding to  $f$  in $\Hom_{\bC }(C,C^{\prime})$  will be denoted by $(f,g\to g')$.   The group $G$ acts by
  $h(f,g\to g^{\prime}):=(hf,hg\to hg^{\prime})$ for all $h$ in $G$.
  \item If $(f',g'\to g'')$ is a second morphism in $L(\bC)$ with $f:C'\to C''$, then the composition is given by $$(f',g'\to g^{\prime\prime})\circ (f,g\to g'):=(f'\circ f,g\to g'')\ .$$
     \item The $*$-operation is given by $(f,g\to g')^{*}:=(f^{*},g'\to g)$.
  \end{enumerate}

  Let now $\phi:\bC\to \bC^{\prime}$ be a  morphism in $\Fun(BG,\nClincat)$. 
  Then the morphism $L(\phi):L(\bC)\to L(\bC^{\prime})$  has the following description:
  \begin{enumerate}
  \item objects: For an object $(C,g)$ in $L(\bC)$   we set $ L(\phi)(C,g):=(\phi(C),g)$.
  \item  morphisms: For a morphism $(f,g\to g')$ in $L(\bC)$ we set $L(\phi)(f,g\to g'):=(\phi(f),g\to g')$.
  \end{enumerate}

  The functor $L$  preserves $C^{*}$-categories and unitality and therefore induces endofunctors on the corresponding subcategories 
  $\Fun(BG,\Clincat)$, $\Fun(BG,\nCcat)$ and $\Fun(BG,\Ccat)$ of $\Fun(BG,\nClincat)$.

     \begin{rem}\label{wegjwiergoergrefgwrefer}
  The restriction of $L$ to 
 $\Fun(BG,\Clincat)$  or $\Fun(BG,\Ccat)$  is the cofibrant replacement functor considered in 
 \cite[Lemma 14.5]{startcats}
 for the projective model category structure on $\Fun(BG,\Clincat)$ or $\Fun(BG,\Ccat)$. This will be employed below. In the present non-unital case it is just an ad-hoc construction going into the formulation of Proposition \ref{regiueqrhgiqwgewgqgfewrg} below. \hB
   \end{rem}

 
 For $\bD$ in $\nClincat $ we let $\underline{\bD}$ denote the object of $\Fun(BG,\nClincat)$ given by $\bD$ with the trivial $G$-action. 
 We have a canonical  morphism  \begin{equation}\label{qrqfewfqwefqewf}
c_{\bC}^{\alg} : L(\bC)\to \underline{ \bC\rtimes^{\alg} G}
\end{equation}
  in $\Fun(BG,\nClincat)$ given as follows:
\begin{enumerate}
\item objects: The functor $c_{\bC}^{\alg}$ sends the object $(C,g)$ of $L(\bC)$  to the object $g^{-1}C$ of $\bC$.
\item morphisms: The functor $c_{\bC}^{\alg}$ sends the morphism $(f,g\to g')$ of $L(\bC)$  to the morphism $(g^{-1}f,g^{\prime,-1}g)$ of $\underline{ \bC\rtimes^{\alg} G}$, see Definition \ref{wergiojwergregregwergerg}.\ref{wtiogwerfgewrgwergwerg} for notation.
\end{enumerate}
 
 In the case of $C^{*}$-categories we consider the  morphism \begin{equation}\label{wdvqwecqwdcasdc}
c_{\bC}: L(\bC)\stackrel{c^{\alg}_{\bC}}{\to}  \underline{\bC\rtimes^{\alg}  G}\stackrel{\eqref{qwefkjiqkwefqwefqewf}}{\to} \underline{\bC\rtimes  G}
\end{equation}
 in $\Fun(BG,\nCcat)$.
 
 Recall Theorem \ref{riguhqwieufqewfeqfqewf} stating that the  categories $\nClincat$ and $\nCcat$ are cocomplete. 
 Hence for $\bC$ in $\Fun(BG,\nClincat)$ (resp.  $\Fun(BG,\nCcat)$)  the object $\colim_{BG}L(\bC)$ exists in $\nClincat$ (resp. $\nCcat$).  
 The following proposition states that this colimit is given by the crossed products.
 
 \begin{prop}\label{regiueqrhgiqwgewgqgfewrg}\mbox{}
 \begin{enumerate}
 \item\label{werthgwterwrg} If $\bC$ is in $\Fun(BG,\nClincat)$, then the morphism $c_{\bC}^{\alg}$ in \eqref{qrqfewfqwefqewf} presents $\bC\rtimes^{\alg} G$ as the colimit of $L(G)$ in $\nClincat$.
  \item\label{werthgwterwrg1}   If $\bC$ is in $\Fun(BG,\nCcat)$, then the morphism $c_{\bC}$ in \eqref{wdvqwecqwdcasdc} presents $\bC $ as the colimit of $L(\bC) $ in $\nCcat$.
\end{enumerate}
 \end{prop}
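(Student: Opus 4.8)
The plan is to prove part \ref{werthgwterwrg} by directly verifying the universal property of $\colim_{BG}L(\bC)$, and then to deduce part \ref{werthgwterwrg1} from it by a formal argument using the adjunctions relating $\nClincat$, $\npClincat$ and $\nCcat$ together with Theorem \ref{riguhqwieufqewfeqfqewf}.

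For part \ref{werthgwterwrg}, recall that the constant-diagram functor $\underline{(-)}\colon\nClincat\to\Fun(BG,\nClincat)$ is right adjoint to $\colim_{BG}$, so it suffices to show that pre-composition with $c^{\alg}_{\bC}$ induces, for every $\bD$ in $\nClincat$, a bijection $\Hom_{\nClincat}(\bC\rtimes^{\alg}G,\bD)\to\Hom_{\Fun(BG,\nClincat)}(L(\bC),\underline{\bD})$, natural in $\bD$. Given an equivariant $\psi\colon L(\bC)\to\underline{\bD}$, the equivariance relations $\psi(hC,hg)=\psi(C,g)$ and $\psi(hf,hg\to hg')=\psi(f,g\to g')$ (valid since $\bD$ carries the trivial action) show that $\psi$ is already determined on objects by $C\mapsto\psi(C,e)$ and on morphisms by the elements $\psi(f,e\to g^{-1})$ for $f\colon C\to g^{-1}C'$, which, again by equivariance, are morphisms $\psi(C,e)\to\psi(C',e)$ in $\bD$. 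I would then define $\sigma_\psi\colon\bC\rtimes^{\alg}G\to\bD$ by $\sigma_\psi(C):=\psi(C,e)$ and $\sigma_\psi(f,g):=\psi(f,e\to g^{-1})$, extended $\C$-linearly on each morphism space $\bigoplus_{g}\Hom_\bC(C,g^{-1}C')$. A short computation with the composition and $*$-formulas of Definition \ref{wergiojwergregregwergerg} and of $L(\bC)$, combined with functoriality and equivariance of $\psi$ (rewriting a composite such as $\psi(f',e\to (g')^{-1})\circ\psi(f,e\to g^{-1})$ as the image of a genuinely composable pair of morphisms of $L(\bC)$, e.g.\ by acting with $g^{-1}$), shows that $\sigma_\psi$ is a morphism in $\nClincat$. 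Conversely, since $c^{\alg}_{\bC}(f,e\to g^{-1})=(f,g)$ and the elements $(f,g)$ span the morphism spaces of $\bC\rtimes^{\alg}G$, any $\sigma$ with $\sigma\circ c^{\alg}_{\bC}=\psi$ must equal $\sigma_\psi$; and one checks directly, using $c^{\alg}_{\bC}(f,g\to g')=(g^{-1}f,(g')^{-1}g)$ together with the equivariance of $\psi$, that $\sigma_\psi\circ c^{\alg}_{\bC}=\psi$. Naturality in $\bD$ is immediate. This establishes part \ref{werthgwterwrg}.

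For part \ref{werthgwterwrg1}, note first that $L(\bC)$ is a $BG$-diagram in $\nCcat\subseteq\npClincat$, since the functor $L$ preserves $C^{*}$-categories, and that $\bC\rtimes^{\alg}G$ lies in $\npClincat$ by Lemma \ref{rgioqwfqwefqefe}. The inclusion $\incl\colon\npClincat\to\nClincat$ of \eqref{wefqwefefefqwfqfeewfewfqewf} is a left adjoint, hence preserves colimits, and it is fully faithful, hence reflects isomorphisms; since $\npClincat$ is cocomplete (Theorem \ref{riguhqwieufqewfeqfqewf}), part \ref{werthgwterwrg} then shows that $c^{\alg}_{\bC}$ also exhibits $\bC\rtimes^{\alg}G$ as $\colim_{BG}L(\bC)$ computed in $\npClincat$. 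Now apply Proposition \ref{roijgeqroifqewqfewf}.\ref{ergergerwf} to the reflective localization $\Compl\colon\npClincat\leftrightarrows\nCcat\colon\incl$ of \eqref{oidfhbuihsfuibfefefesdfbsfb}: the colimit of $L(\bC)$ in $\nCcat$ is $\Compl$ of its colimit in $\npClincat$, i.e.\ $\Compl(\bC\rtimes^{\alg}G)=\bC\rtimes G$ by Definition \ref{feivoevvewfvfevfdsv}, with structure cocone obtained by applying $\Compl$ to $c^{\alg}_{\bC}$ and using that $\Compl$ is (naturally isomorphic to) the identity on $C^{*}$-categories; by naturality of the unit of \eqref{oidfhbuihsfuibfefefesdfbsfb} this cocone is precisely $c_{\bC}$ from \eqref{wdvqwecqwdcasdc}. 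This proves part \ref{werthgwterwrg1}.

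The main obstacle I anticipate is the bookkeeping in part \ref{werthgwterwrg}: checking that the candidate functor $\sigma_\psi$ respects composition and the involution requires carefully tracking the $G$-twists in the formulas $(f',g')\circ(f,g)=(g^{-1}f'\circ f,g'g)$ and $(f,g)^{*}=(gf^{*},g^{-1})$ against the untwisted composition and involution in $L(\bC)$ and the equivariance relations for $\psi$. There is no conceptual difficulty, but the computation is easy to get wrong; once it is in place, part \ref{werthgwterwrg1} is purely formal.
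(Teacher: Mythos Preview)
Your proof is correct. For part \ref{werthgwterwrg} you do exactly what the paper does: verify the universal property by constructing the explicit inverse $\sigma_\psi$ with $\sigma_\psi(C)=\psi(C,e)$ and $\sigma_\psi(f,g)=\psi(f,e\to g^{-1})$; you simply spell out more of the bookkeeping that the paper leaves as ``straightforward to check.''

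For part \ref{werthgwterwrg1} you take a more formal route than the paper. The paper re-runs the universal-property argument directly in $\nCcat$: given $\psi\colon L(\bC)\to\underline{\bD}$ with $\bD$ a $C^{*}$-category, build $\sigma^{\alg}\colon\bC\rtimes^{\alg}G\to\bD$ as in part \ref{werthgwterwrg} and then invoke the universal property of completion to extend it uniquely to $\sigma\colon\bC\rtimes G\to\bD$. You instead package this into the categorical machinery: pull the colimit back along the fully faithful left adjoint $\incl\colon\npClincat\to\nClincat$, and then push it forward along the reflector $\Compl$ via Proposition \ref{roijgeqroifqewqfewf}.\ref{ergergerwf}. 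Both arguments have the same content---yours makes explicit that the ``extend by completion'' step is precisely an instance of computing colimits in a reflective subcategory---but the paper's version is shorter since it avoids invoking Theorem \ref{riguhqwieufqewfeqfqewf} and Proposition \ref{roijgeqroifqewqfewf}.
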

\begin{proof}
In order to show Assertion \ref{werthgwterwrg}. we must show that the map
 \begin{equation}\label{fwvevwefvwevrvwere}
 \Hom_{\nClincat}(\bC\rtimes^{\alg}  G,\bD) \to\Hom_{\Fun(BG,\nClincat)}(L(\bC) ,\underline{\bD})  \ , \end{equation}
 $$(\phi:\bC\rtimes^{\alg}  G\to \bD)\mapsto (L(\bC)\stackrel{c_{\bC}^{\alg}}{\to} \underline{\bC\rtimes^{\alg}G}\stackrel{\underline{\phi}}{\to} \underline{\bD})$$ is a bijection
for any $\bD$ in $\nClincat$.  To this end we describe the construction of the inverse of \eqref{fwvevwefvwevrvwere}.
Let $\phi:L(\bC)\to \underline{\bD}$  be in $\Hom_{\Fun(BG,\nClincat)}(L(\bC) ,\underline{\bD})$.  Then the inverse of  \eqref{fwvevwefvwevrvwere} sends $\phi$ to the functor     $\sigma^{\alg}:\bC\rtimes^{\alg}  G\to \bD$ given as follows.
\begin{enumerate}
\item objects: The functor $\sigma^{\alg}$ sends the object $C$  of $\bC\rtimes^{\alg}  G $ to the object $\phi(C,e)$ of $L(\bC)$.
\item morphisms: The functor $\sigma^{\alg}$ sends the morphism $(f,g)$ in $\bC\rtimes^{\alg}  G$ to the morphism 
$\phi(f,e\to g^{-1})$ in $L(\bC)$.
\end{enumerate}
It is straightforward to check that this construction provides an
inverse of \eqref{fwvevwefvwevrvwere}. This finishes the verification of Assertion \ref{werthgwterwrg}.
 
In order to show Assertion   \ref{werthgwterwrg1}.-  we argue similarly. 
We must show that    \begin{equation}\label{fwvevwefvwevrvwere1}
 \Hom_{\nCcat}(\bC\rtimes   G,\bD) \to\Hom_{\Fun(BG,\nCcat)}(L(\bC) ,\underline{\bD})  
\end{equation}$$(\phi:\bC\rtimes  G\to \bD)\mapsto (L(\bC)\stackrel{c_{\bC}}{\to} \underline{\bC\rtimes G}\stackrel{\underline{\phi}}{\to} \underline{\bD})$$ is a bijection 
for every $\bD$ in $\nCcat$.
For the inverse of \eqref{fwvevwefvwevrvwere1},
given $\phi :L(\bC)\to \underline{\bD}$  we first construct  $\sigma^{\alg}:
 \bC\rtimes^{\alg}  G\to \bD$ as above. By the universal property of the completion functor it uniquely extends to a functor 
$\sigma:\bC\rtimes G\to \bD$.
\end{proof}

We will use the language of $\infty$-categories\footnote{More precisely we mean $(\infty,1)$-categories.} \cite{htt},\cite{Cisinski:2017} in order to capture the homotopy theory of unital $\C$-linear $*$-categories or unital $C^{*}$-categories. Recall that morphisms in $\Clincat$ and $\Ccat$
are functors. Up to this point\footnote{with the exception of Remark \ref{rgiojerogergergw}} we have neglected their $2$-categorical structure, namely that we have the notion of  natural transformations between functors. A natural transformation   which is implemented by unitaries is called a 
unitary isomorphism    \cite[Eef. 2.4]{DellAmbrogio:2010aa}, \cite[Def. 5.1]{startcats}. A morphism itself is a unitary equivalence if it can be inverted up to unitary isomorphism \cite[Def. 5.2]{startcats}. We let $W_{\Clincat}$ and $W_{\Ccat}$ denote the (large) sets 
 of unitary  equivalences   in $\Clincat$ or $\Ccat$.
 
 If $\cC$ is any category with a set  of morphisms $W$, then we can form the $\infty$-category $\cC[W^{-1}]$\footnote{If we model $\infty$-categories by quasi-categories, then we should   more precisely write $\Nerve(\cC)[W^{-1}]$, where $\Nerve(\bC)$ is the nerve of $\cC$} called the Dwyer-Kan localization \cite[Def. 1.3.4.15]{HA}, \cite[7.1.2]{Cisinski:2017}. The Dwyer-Kan localization comes with a canonical functor $\ell:\cC\to \cC[W^{-1}]$ which is universal for functors from $\cC$ to $\infty$-categories which send the morphisms in $W$ to equivalences.

\begin{ddd}[{\cite[Def. 5.7]{startcats}}]\label{erkgowergerwgwergergwerg}
We define the $\infty$-categories
$$ \Clincat_{\infty}:=\Clincat[W_{\Clincat}^{-1}] \ , \quad \Ccat_{\infty}:=\Ccat[W^{-1}_{\Ccat}]\ . $$
\end{ddd}

\begin{rem}
 In the reference \cite{startcats} we used a slightly different notation. Since the main emphasis there was put on  the $\infty$-categories they were denoted by the shorter symbols $\Clincat$ and $\Ccat$ while the ordinary categories  were denoted by the symbols $\Clincat_{1}$ and $\Ccat_{1}$.   \hB \end{rem}

Let
$$\ell^{\alg}:\Clincat\to \Clincat_{\infty}\ , \quad \ell:\Ccat\to \Ccat_{\infty}$$
denote the localization functors.
\begin{ddd}\label{wtihowergergwegergwreg}
A morphism in $\Fun(BG,\Clincat)$ or $\Fun(BG,\Ccat)$ is called a weak equivalence if it becomes a unitary equivalence after forgetting the $G$-action.
\end{ddd}

\begin{rem}
Note that for a weak equivalence between $G$-categories we do not require the existence of an equivariant inverse up to unitary equivalence.  But by Remark \ref{ewroigjwegergwerg9} below there is always a weakly invariant inverse.

We let $W_{\Fun(BG,\Clincat)}$ and $W_{\Fun(BG,\Ccat)}$ denote the classes of weak equivalences in the respective categories. 
By  \cite[7.9.2]{Cisinski:2017} we have canonical equivalences \begin{equation}\label{fdsvndjkvnksdfvsdfvsdfvsfvsfdv}
\Fun(BG,\Clincat)[W^{-1}_{\Fun(BG,\Clincat)}]\simeq \Fun(BG, \Clincat_{\infty})
\end{equation}
 and
$$\Fun(BG,\Ccat)[W^{-1}_{\Fun(BG,\Ccat)}]\simeq \Fun(BG, \Ccat_{\infty})\ .$$
\hB
\end{rem}

We will use the notation (compare with \cite[(47)]{startcats})
$$\ell_{BG}^{\alg}:\Fun(BG,\Clincat)\to \Fun(BG,\Clincat_{\infty})$$ and  $$ \ell_{BG}:\Fun(BG,\Ccat)\to \Fun(BG,\Ccat_{\infty})$$ for the localization functors.

The following theorem generalizes \cite[Theorem 14.6]{startcats}
from categories with trivial actions to arbitrary actions.
\begin{theorem}\label{weiogwgwerrewgwregwreg}
\mbox{}
\begin{enumerate}
\item\label{ergiuergergwerg} For $\bC$ in $\Fun(BG,\Clincat)$ we have an equivalence
$$\colim_{BG}\ell_{BG}^{\alg} (\bC)\simeq \ell^{\alg}(\bC\rtimes^{\alg}G)\ .$$
\item For $\bC$ in $\Fun(BG,\Ccat)$ we have an equivalence
$$\colim_{BG}\ell_{BG} (\bC)\simeq \ell (\bC\rtimes G)\ .$$
\end{enumerate}
\end{theorem}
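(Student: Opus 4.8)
The plan is to run the argument of \cite[Theorem~14.6]{startcats}, which treats the trivial-action case, with the functor $L$ of \eqref{wervwervnwkjrvfvdfsv} taking over the role of the cofibrant replacement, and then to identify the resulting ordinary colimit with the crossed product by means of Proposition~\ref{regiueqrhgiqwgewgqgfewrg}. I will spell out the $\C$-linear case; the $C^{*}$-case is obtained verbatim by reading $\Ccat$, $\nCcat$, $-\rtimes G$, \eqref{wdvqwecqwdcasdc} and Proposition~\ref{regiueqrhgiqwgewgqgfewrg}.\ref{werthgwterwrg1} in place of $\Clincat$, $\nClincat$, $-\rtimes^{\alg}G$, \eqref{qrqfewfqwefqewf} and Proposition~\ref{regiueqrhgiqwgewgqgfewrg}.\ref{werthgwterwrg}.

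First I would record that the forgetful morphism $q_{\bC}\colon L(\bC)\to\bC$, given on objects by $(C,g)\mapsto C$ and on morphisms by $(f,g\to g')\mapsto f$, is a morphism in $\Fun(BG,\Clincat)$ which, after forgetting the $G$-action, is fully faithful and essentially surjective — with $C\mapsto(C,e)$ an inverse up to unitary isomorphism — hence a unitary equivalence. Thus $q_{\bC}$ is a weak equivalence in the sense of Definition~\ref{wtihowergergwegergwreg}, and by Remark~\ref{wegjwiergoergrefgwrefer} it is a cofibrant replacement of $\bC$ for the projective model structure on $\Fun(BG,\Clincat)$.

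Next I would invoke the standard model-categorical computation of homotopy colimits, exactly as in the proof of \cite[Theorem~14.6]{startcats}: the colimit functor $\colim_{BG}\colon\Fun(BG,\Clincat)\to\Clincat$ is left adjoint to the constant-diagram functor, which is right Quillen for the projective model structure; hence $\colim_{BG}$ is left Quillen, and under the identification \eqref{fdsvndjkvnksdfvsdfvsdfvsfvsfdv} its total left derived functor is the colimit functor $\colim_{BG}\colon\Fun(BG,\Clincat_{\infty})\to\Clincat_{\infty}$, which is therefore computed on the cofibrant object $L(\bC)$. This yields
\begin{equation*}
\colim_{BG}\ell_{BG}^{\alg}(\bC)\ \simeq\ \colim_{BG}\ell_{BG}^{\alg}\!\big(L(\bC)\big)\ \simeq\ \ell^{\alg}\!\big(\textstyle\colim_{BG}^{\Clincat}L(\bC)\big)\ ,
\end{equation*}
the first equivalence because $q_{\bC}$ is a weak equivalence. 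Finally I would identify $\colim_{BG}^{\Clincat}L(\bC)$ with $\bC\rtimes^{\alg}G$: by Proposition~\ref{regiueqrhgiqwgewgqgfewrg}.\ref{werthgwterwrg} the morphism $c_{\bC}^{\alg}$ of \eqref{qrqfewfqwefqewf} exhibits $\bC\rtimes^{\alg}G$ as $\colim_{BG}L(\bC)$ in $\nClincat$ via the bijection \eqref{fwvevwefvwevrvwere}, and one checks that for $\bD$ in $\Clincat$ this bijection restricts to one between unital functors on the two sides (the identity of $C$ in $\bC\rtimes^{\alg}G$ is $(\id_{C},e)$, sent by $\phi\circ c_{\bC}^{\alg}$ to $\phi(\id_{C},e\to e)$, while the explicit inverse $\sigma^{\alg}$ of \eqref{fwvevwefvwevrvwere} sends $(\id_{C},e)$ to $\phi(\id_{C},e\to e)$, so the two functors are unital simultaneously). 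Hence $c_{\bC}^{\alg}$ also exhibits $\bC\rtimes^{\alg}G$ as $\colim_{BG}^{\Clincat}L(\bC)$, and the displayed chain becomes the asserted $\colim_{BG}\ell_{BG}^{\alg}(\bC)\simeq\ell^{\alg}(\bC\rtimes^{\alg}G)$.

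The step I expect to be the main obstacle is the passage from the homotopy colimit in $\Fun(BG,\Clincat_{\infty})$ to $\ell^{\alg}$ of the $1$-categorical colimit of the cofibrant model $L(\bC)$: this rests on the whole model-categorical package from \cite{startcats} (the projective model structure on $\Fun(BG,\Clincat)$ and $\Fun(BG,\Ccat)$, the fact from Remark~\ref{wegjwiergoergrefgwrefer} that $L$ is a cofibrant-replacement functor, the Quillen adjunction $\colim_{BG}\dashv\mathrm{const}$, and the compatibility of total left derived functors with the Dwyer--Kan localization \eqref{fdsvndjkvnksdfvsdfvsdfvsfvsfdv}). Once that is granted, the remaining identification of the $1$-categorical colimit with the crossed product is formal, modulo the small unital/non-unital bookkeeping indicated above.
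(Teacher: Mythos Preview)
Your proposal is correct and follows essentially the same route as the paper's proof: both identify $\lambda_{\bC}=q_{\bC}\colon L(\bC)\to\bC$ as a weak equivalence making $L$ a cofibrant replacement for the projective model structure (citing \cite[Lemma~14.5]{startcats}), invoke the model-categorical identification $\colim_{BG}\ell_{BG}^{\alg}(\bC)\simeq\ell^{\alg}(\colim_{BG}L(\bC))$ (the paper cites \cite[Prop.~14.3]{startcats} rather than spelling out the Quillen adjunction), and then apply Proposition~\ref{regiueqrhgiqwgewgqgfewrg}. The only difference is in the last bookkeeping step: where you check by hand that the bijection \eqref{fwvevwefvwevrvwere} restricts to unital functors, the paper instead observes that the inclusion $\Clincat\hookrightarrow\nClincat$ is a left adjoint \eqref{qewfoihqoiewfqwfqwefqewf} and hence preserves colimits, so the $\nClincat$-colimit of Proposition~\ref{regiueqrhgiqwgewgqgfewrg} is automatically the $\Clincat$-colimit.
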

\begin{proof}
  Let $\bC$ be in $\Fun(BG,\nClincat)$. 
 We then have a natural transformation \begin{equation}\label{}
\lambda_{\bC}:L(\bC)\to \bC
\end{equation}  with the following description:
   \begin{enumerate}
 \item objects: The functor $ \lambda_{\bC}$ sends the object $(C,g)$ of $L(\bC)$ to the object $C$ of $\bC$.
 \item morphisms: The functor $\lambda_{\bC}$ sends the morphism $(f,g\to g')$  of $L(\bC)$  to the morphism  $f$ of $\bC$.
 \end{enumerate}

If $\bC$ is unital, then   the unital  functor $\lambda_{\bC}:L(\bC)\to  \bC$  is a weak equivalence in the sense of Definition \ref{wtihowergergwegergwreg}, see the proof of \cite[Lemma 14.5]{startcats}.

By \cite[Thm. 1.4]{startcats} and  \cite[Rem. 1.6]{startcats} the categories $\Clincat$  and $\Ccat$ have combinatorial model category structures whose weak equivalences are the unitary equivalences. These model category structures model 
 the corresponding $\infty$-categories.
As explained in \cite[Rem. 14.4]{startcats} the categories
$\Fun(BG,\Clincat)$ and $\Fun(BG,\Ccat)$ have projective model category structures whose   weak equivalences  are as in Definition \ref{wtihowergergwegergwreg}.

By \cite[Lemma 14.5]{startcats}
the transformation $\lambda:L\to \id$ of endofunctors of $\Fun(BG,\Clincat)$ or $\Fun(BG,\Ccat)$ is a cofibrant replacement functor in both cases. By \cite[Prop. 14.3]{startcats}
we have the equivalences 
$$\colim_{BG}\ell_{BG}^{\alg} (\bC)\simeq \ell^{\alg}( \colim_{BG}L (\bC)  )$$ for $\bC$ in $  \Fun(BG,\Clincat)$ 
and 
$$\colim_{BG}\ell_{BG}  (\bC)\simeq \ell ( \colim_{BG} L(\bC) )$$ if $\bC$ is in $\Fun(BG,\Ccat)$.
The assertions of the theorem now follow from Proposition \ref{regiueqrhgiqwgewgqgfewrg} and the fact the inclusion functors in \eqref{qewfoihqoiewfqwfqwefqewf} and \eqref{qewfoihqoiewfqwfqwefqewf1} are left-adjoints and therefore preserve colimits.
\end{proof}

If a morphism  $\bC\to \bD$  in $\Fun(BG,\Clincat)$ or $\Fun(BG,\Ccat)$ is a weak equivalence according to Definition \ref{wtihowergergwegergwreg},
then it  does not necessarily   have 
  an
%
equivariant inverse.  From this point of view the conclusion of the next proposition might seem surprising.

\begin{prop}\mbox{}\label{efiobgebgewrverbvewvbev}
\begin{enumerate}
\item \label{wthiorthrherherthetrhtrh}
If $ \bC\to \bD$  is a weak equivalence in $\Fun(BG,\Clincat)$, then the induced morphism
$\bC\rtimes^{\alg}G\to \bD\rtimes^{\alg}G$
is a unitary equivalence.
\item  \label{wthiorthrherherthetrhtrh1}
If $ \bC\to \bD$  is a weak equivalence in $\Fun(BG,\Ccat)$, then  the induced morphism
$\bC\rtimes G\to \bD\rtimes G $
is a unitary equivalence. \end{enumerate}
\end{prop}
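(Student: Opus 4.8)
The plan is to leverage Theorem~\ref{weiogwgwerrewgwregwreg} together with the equivalences \eqref{fdsvndjkvnksdfvsdfvsdfvsfvsfdv} that identify the Dwyer--Kan localizations of the equivariant categories at weak equivalences with functor categories into the localized categories. First I would treat the $\C$-linear case \ref{wthiorthrherherthetrhtrh}; the $C^{*}$-case \ref{wthiorthrherherthetrhtrh1} is entirely parallel with $\ell^{\alg}$, $\ell_{BG}^{\alg}$, $\rtimes^{\alg}$ replaced by $\ell$, $\ell_{BG}$, $\rtimes$.

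So let $\phi\colon\bC\to\bD$ be a weak equivalence in $\Fun(BG,\Clincat)$. By definition $\phi$ becomes a unitary equivalence after forgetting the $G$-action, hence it is sent to an equivalence by the localization functor $\ell_{BG}^{\alg}\colon\Fun(BG,\Clincat)\to\Fun(BG,\Clincat_{\infty})$: indeed, under the identification \eqref{fdsvndjkvnksdfvsdfvsdfvsfvsfdv} a morphism of $\Fun(BG,\Clincat_{\infty})$ is an equivalence if and only if its underlying morphism of $\Clincat_{\infty}$ is an equivalence, and forgetting the $G$-action followed by $\ell^{\alg}$ sends unitary equivalences to equivalences by construction of $\Clincat_{\infty}$. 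Applying the colimit functor $\colim_{BG}\colon\Fun(BG,\Clincat_{\infty})\to\Clincat_{\infty}$, which preserves equivalences, we conclude that $\colim_{BG}\ell_{BG}^{\alg}(\phi)$ is an equivalence in $\Clincat_{\infty}$. Now Theorem~\ref{weiogwgwerrewgwregwreg}.\ref{ergiuergergwerg} provides a natural equivalence $\colim_{BG}\ell_{BG}^{\alg}(-)\simeq\ell^{\alg}((-)\rtimes^{\alg}G)$ of functors $\Fun(BG,\Clincat)\to\Clincat_{\infty}$, so naturality gives a commuting square identifying $\colim_{BG}\ell_{BG}^{\alg}(\phi)$ with $\ell^{\alg}(\phi\rtimes^{\alg}G)$. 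Hence $\ell^{\alg}(\phi\rtimes^{\alg}G)$ is an equivalence in $\Clincat_{\infty}$.

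It remains to upgrade ``equivalence in $\Clincat_{\infty}$'' to ``unitary equivalence in $\Clincat$''. This is exactly the statement that the localization functor $\ell^{\alg}\colon\Clincat\to\Clincat_{\infty}$ reflects equivalences, i.e.\ that a morphism of $\Clincat$ whose image in $\Clincat_{\infty}$ is an equivalence is already a unitary equivalence. For a Dwyer--Kan localization at a class $W$ of morphisms, a morphism becomes an equivalence in $\cC[W^{-1}]$ precisely when it lies in the saturation of $W$; but for the localization of $\Clincat$ (resp.\ $\Ccat$) at unitary equivalences the class $W_{\Clincat}$ is already saturated in the relevant sense --- a $2$-categorical weak equivalence between objects of a $2$-category is detected by the homotopy category, and here the homotopy category is obtained simply by identifying unitarily isomorphic functors, so a functor inverting to a unitary equivalence is one. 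I would cite \cite[Def. 5.2, Rem. 5.9]{startcats} (or the analogous statement there) for the fact that $\ell^{\alg}$ and $\ell$ reflect equivalences; this is the one point that is not purely formal and is the main obstacle, so I would state it carefully and refer to the established homotopy theory of $\C$-linear $*$-categories and $C^{*}$-categories in \cite{startcats}. Applying this to $\phi\rtimes^{\alg}G$ (resp.\ $\phi\rtimes G$) yields the claim.

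Finally, for \ref{wthiorthrherherthetrhtrh1} the same argument runs verbatim using Theorem~\ref{weiogwgwerrewgwregwreg}.2, the equivalence $\Fun(BG,\Ccat)[W^{-1}_{\Fun(BG,\Ccat)}]\simeq\Fun(BG,\Ccat_{\infty})$, and the fact that $\ell$ reflects unitary equivalences; no new ingredient is needed.
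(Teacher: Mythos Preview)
Your argument is correct and follows essentially the same route as the paper: use \eqref{fdsvndjkvnksdfvsdfvsdfvsfvsfdv} to see that $\ell_{BG}^{\alg}(\phi)$ is an equivalence, apply $\colim_{BG}$, and invoke Theorem~\ref{weiogwgwerrewgwregwreg} to conclude that $\ell^{\alg}(\phi\rtimes^{\alg}G)$ is an equivalence in $\Clincat_{\infty}$. The only difference is in the final ``reflection'' step: the paper appeals to the model category structure on $\Clincat$ from \cite{startcats}, noting that all objects are fibrant and cofibrant, so a morphism becoming an equivalence after localization is already a unitary equivalence; you instead argue via saturation of $W_{\Clincat}$. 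Both justifications are valid and amount to the same fact, but the model-categorical phrasing is crisper and avoids your somewhat informal discussion of saturation.
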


\begin{proof}

We will give a short argument based on Theorem \ref{weiogwgwerrewgwregwreg}.  We leave it as an instructive excercise to provide a direct proof using Proposition \ref{obgfbgbgfbsfdgbsdfb9} and Remark \ref{ewroigjwegergwerg9} below.

We give the argument in the case \ref{wthiorthrherherthetrhtrh}. The case \ref{wthiorthrherherthetrhtrh1}. is analoguous. 
In view of the equivalence \eqref{fdsvndjkvnksdfvsdfvsdfvsfvsfdv}
the morphism  $$\ell_{BG}^{\alg}(\bC)\to \ell_{BG}^{\alg}(\bD)$$ is an equivalence   in $\Fun(BG,\Clincat_{\infty})$. It follows that   the morphism
$$\colim_{BG}\ell_{BG}^{\alg}(\bC)\to \colim_{BG}\ell_{BG}^{\alg}(\bD)$$ is an equivalence in $\Clincat_{\infty}$. 
By Theorem \ref{weiogwgwerrewgwregwreg} 
the morphism $$\ell^{\alg}(\bC\rtimes^{\alg}G)\to \ell^{\alg}(\bD\rtimes^{\alg} G)$$ is an equivalence in $\Clincat_{\infty}$. 
Since all objects in $\Clincat$ are fibrant and cofibrant this implies that $\bC\rtimes^{\alg}G\to \bD\rtimes^{\alg} G$ is a unitary equivalence.
\end{proof}

It was essentially obvious from the construction that the crossed product is a functor on $\Fun(BG,\nClincat)$ or
$\Fun(BG,\nCcat)$, see \eqref{vfdqeiuhfviudfv} and \eqref{efkjbjkdfviuqr3}. Note that morphisms in these categories are strictly $G$-invariant functors.
In the unital case  we have seen in Theorem \ref{weiogwgwerrewgwregwreg} that  the crossed product represents a  colimit  over $BG$  of a diagram in the $\infty$-category 
$\Clincat_{\infty}$ or $\Ccat_{\infty}$. This is the conceptual explanation for the fact that the crossed product is functorial for functors which only satisfy a weaker form of equivariance. 

Let $\bC, \bC'$ be in $\Fun(BG,\Clincat)$.
\begin{ddd} \mbox{}
\begin{enumerate}
\item
A  weakly equivariant functor from $\bC$ to $\bC^{\prime}$ is a pair
$(\phi,\rho)$ consisting of: \begin{enumerate} \item  a not necessarily equivariant functor  $\phi: \bC \to  \bC^{\prime} $ \item 
 and a family $\rho:=(\rho(g))_{g\in G}$ of unitary isomorphisms  of functors $\rho(g):\phi\to g^{-1}\phi g$ \end{enumerate} such that for all $g,h$ in $G$ we have
$$(g^{-1}\rho(h)g) \rho(g)=\rho(hg)\ .$$ 
\item A unitary natural transformation $\kappa:(\phi,\rho)\to (\phi',\rho')$  between weakly equivariant functors is a unitary natural transformation 
$\kappa:\phi\to \phi'$ such that  $g^{-1}\kappa g\circ \rho(g)=\rho'(g)\circ \kappa$ 
for every $g$ in $G$.
\end{enumerate}
\end{ddd}
Note that weak equivariance is an additional structure on a morphism, not merely a property.
The similarity with Definition \ref{regiowerjhgwergwergwregrg}.\ref{qrgqfwwefqef} is not an accident, see also Remark \ref{rgiojerogergergw}.  

If $(\phi',\rho' ):\bC'\to \bC''$ is a second    weakly equivariant morphism, then the composition is the weakly equivariant morphism defined by
\begin{equation}\label{sdfbkjwobfvsfdvsfv}(\phi',\rho' )\circ (\phi,\rho):=(\phi'\circ \phi, \rho'\circ \rho)\ , \end{equation}   where
$(\rho'\circ \rho)(g):=(\rho'(g)\circ g^{-1}\phi g)\circ (\phi'\circ \rho(g))$.  \begin{ddd} We let $\widetilde\Fun(BG,\Clincat)$ be the following $2$-category:
\begin{enumerate} \item objects: The objects of $ \widetilde\Fun(BG,\Clincat)$  are the objects of 
$\Fun(BG,\Clincat)$.
\item morphisms: The $1$-morphisms are  the weakly equivariant functors.
\item $2$-morphisms: The $2$-morphisms are the unitary natural transformations between  weakly equivariant functors.
\item composition: The composition of $1$-morphisms  is given by \eqref{sdfbkjwobfvsfdvsfv}.
  \end{enumerate}
  \end{ddd}
  We have a canonical inclusion of a wide subcategory
  $$\Fun(BG,\Clincat)\to \widetilde \Fun(BG,\Clincat)$$
  which is the identity on objects and sends the equivariant functor  $\phi$ to the 
  weakly equivariant functor  $(\phi,\id)$, where $\id$ is the family consisting of identities of $\phi$. Note that it is    well-defined since by equivariance $g^{-1}\phi g=\phi$ for all $g$ in $G$.
  We let $\widetilde\Fun(BG,\Ccat)$ denote the full $2$-subcategory of $\widetilde\Fun(BG,\Clincat)$ consisting of $C^{*}$-categories. 
 We let  $ \Clincat_{2,1}$ and $\Ccat_{2,1}$ denote the $2$-categories obtained from $\Clincat$ and $\Ccat$ by 
 adding unitary natural transformations as $2$-morphisms.
  \begin{prop}\label{obgfbgbgfbsfdgbsdfb9}\mbox{}
  \begin{enumerate}
  \item \label{rhlhjkblfvbsfdbewrsdbs}
  The crossed product functor  \eqref{rqgkjbejkqveqvqcdscasdc} extends to a $2$-functor
  $$-\rtimes^{\alg}G:\widetilde\Fun(BG,\Clincat)\to \Clincat_{2,1}\ .$$
  \item\label{rhlhjkblfvbsfdbewrsdbs1} The crossed product functor  \eqref{efkjbjkdfviuqr3f} extends to a $2$-functor
  $$-\rtimes G:\widetilde\Fun(BG,\Ccat)\to \Ccat_{2,1}\ .$$
\end{enumerate}
  \end{prop}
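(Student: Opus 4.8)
The plan is to define the $2$-functor by explicit formulas and to route the well-definedness through the dictionary of Lemma~\ref{gbiojwogwerewgwreer} (resp. Corollary~\ref{qergiowegwregergewgrg}) between covariant representations and morphisms out of a crossed product, which keeps the verification short. Given a weakly equivariant functor $(\phi,\rho)\colon\bC\to\bC'$ in $\widetilde\Fun(BG,\Clincat)$, for an object $C$ of $\bC$ the component $\rho(g)_{C}$ is a unitary morphism $\phi(C)\to g^{-1}\phi(gC)$ of $\bC'$, hence lies in $\Hom_{\bC'}(\phi(C),g^{-1}\phi(gC))$, and placed in the summand indexed by $g$ it yields a morphism $\pi(g)_{C}:=(\rho(g)_{C},g)$ in $\Hom_{\bC'\rtimes^{\alg}G}(\phi(C),\phi(gC))$. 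First I would check, using the composition and $*$ rules of Definition~\ref{wergiojwergregregwergerg}, that each $\pi(g)_{C}$ is unitary, that $\pi(g)$ is a natural transformation $\iota^{\alg}_{\bC'}\circ\phi\to g^{*}(\iota^{\alg}_{\bC'}\circ\phi)$, and that the cocycle identity $g^{*}\pi(h)\circ\pi(g)=\pi(hg)$ of Definition~\ref{regiowerjhgwergwergwregrg}.\ref{qrgqfwwefqef} is \emph{equivalent} to the cocycle identity $(g^{-1}\rho(h)g)\circ\rho(g)=\rho(hg)$ defining a weakly equivariant functor --- this is a one-line computation, since both sides reduce componentwise to $g^{-1}(\rho(h)_{gC})\circ\rho(g)_{C}=\rho(hg)_{C}$. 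Thus $(\iota^{\alg}_{\bC'}\circ\phi,\pi)$ is a covariant representation of $\bC$ on $\bC'\rtimes^{\alg}G$, and I would \emph{define} $\phi\rtimes^{\alg}G\colon\bC\rtimes^{\alg}G\to\bC'\rtimes^{\alg}G$ to be the morphism associated to it by Lemma~\ref{gbiojwogwerewgwreer}.\ref{reogijwioegwergwgrw}. Since that lemma produces an honest morphism in $\nClincat$, compatibility of $\phi\rtimes^{\alg}G$ with composition, the linear structure and the involution comes for free; by \eqref{wfqwefwefwefqwfqrevevervewfqwef} its effect on generators is $(\phi\rtimes^{\alg}G)(f,g)=(\rho(g)_{C'}\circ\phi(f),g)$ for $f\colon C\to C'$ in $\bC$, which specializes to $(\phi(f),g)$ when $\rho$ is the identity family, so the assignment extends \eqref{rqgkjbejkqveqvqcdscasdc}.

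On $2$-morphisms I would send a unitary natural transformation $\kappa\colon(\phi,\rho)\to(\phi',\rho')$ to the family with components $(\kappa\rtimes^{\alg}G)_{C}:=(\kappa_{C},e)$ in $\Hom_{\bC'\rtimes^{\alg}G}(\phi(C),\phi'(C))$; these are unitaries, and the verification that the family is a natural transformation $\phi\rtimes^{\alg}G\to\phi'\rtimes^{\alg}G$ reduces, after expanding the naturality square on a generator $(f,g)$ by the crossed-product composition rule, to exactly the two relations characterizing $\kappa$: its naturality as a transformation $\phi\to\phi'$ and the intertwining identity $g^{-1}\kappa g\circ\rho(g)=\rho'(g)\circ\kappa$, each used once.

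Next I would verify $2$-functoriality. Preservation of identity objects and identity $1$-morphisms is immediate. For composition of $1$-morphisms one evaluates both $((\phi',\rho')\circ(\phi,\rho))\rtimes^{\alg}G$ and $(\phi'\rtimes^{\alg}G)\circ(\phi\rtimes^{\alg}G)$ on a generator $(f,g)$: using the composition law \eqref{sdfbkjwobfvsfdvsfv} for weakly equivariant functors on one side, and the displayed formula together with the crossed-product composition rule on the other, both produce the morphism with degree $g$ and underlying $\bC''$-morphism $\rho'(g)_{g^{-1}\phi(gC')}\circ\phi'(\rho(g)_{C'})\circ\phi'(\phi(f))$. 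Vertical composition of $2$-morphisms and whiskering of a $2$-morphism by a $1$-morphism on either side are then immediate, since $x\mapsto(x,e)$ is a functor $\bC'\to\bC'\rtimes^{\alg}G$ and $\rho(e)=\id$ for every weakly equivariant functor. This settles part~\ref{rhlhjkblfvbsfdbewrsdbs}.

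For part~\ref{rhlhjkblfvbsfdbewrsdbs1} I would pass to completions: if $\bC,\bC'$ lie in $\Fun(BG,\Ccat)$, then $\bC\rtimes^{\alg}G$ and $\bC'\rtimes^{\alg}G$ are pre-$C^{*}$-categories by Lemma~\ref{rgioqwfqwefqefe}, so the $*$-functor $\phi\rtimes^{\alg}G$ is automatically bounded and, together with the unitary natural transformations above, descends under the completion functor of \eqref{oidfhbuihsfuibfefefesdfbsfb}; since $\Compl$ is a functor preserving unitaries and natural transformations and $\bC\rtimes G=\Compl(\bC\rtimes^{\alg}G)$, the resulting assignment is a $2$-functor $\widetilde\Fun(BG,\Ccat)\to\Ccat_{2,1}$ extending \eqref{efkjbjkdfviuqr3f} (alternatively one runs the same argument with Corollary~\ref{qergiowegwregergewgrg}.\ref{oighgioergwergwerg} in place of Lemma~\ref{gbiojwogwerewgwreer} throughout). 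I expect the only real difficulty to be bookkeeping --- keeping the left/right and $g$ versus $g^{-1}$ conventions for the two $G$-actions and for the $2$-categorical whiskerings consistent --- and the reformulation through covariant representations is precisely what keeps the first two steps short.
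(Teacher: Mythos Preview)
Your proposal is correct and follows essentially the same route as the paper: both construct $(\phi,\rho)\rtimes^{\alg}G$ by producing the covariant representation $(\iota^{\alg}_{\bC'}\circ\phi,\pi)$ of $\bC$ on $\bC'\rtimes^{\alg}G$ and invoking Lemma~\ref{gbiojwogwerewgwreer}, arriving at the same explicit formula $(f,g)\mapsto(\rho(g)_{C'}\circ\phi(f),g)$, and both deduce the $C^{*}$-case by applying the completion functor. The only cosmetic difference is that the paper writes $\pi(g)_{C}=\mu(g)_{g^{-1}\phi(gC)}\circ(\rho(g)_{C},e)$ using the covariant representation $(\iota^{\alg}_{\bC'},\mu)$ attached to the identity of $\bC'\rtimes^{\alg}G$, whereas you write $\pi(g)_{C}=(\rho(g)_{C},g)$ directly; these are equal by the composition rule, and your explicit check of $2$-functoriality on compositions and whiskerings spells out what the paper leaves as ``straightforward''.
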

\begin{proof}
We first show Assertion  \ref{rhlhjkblfvbsfdbewrsdbs}.
Assume that $\bC$ and $\bC^{\prime}$ are objects of $\Fun(BG,\Clincat)$ and that
 $(\phi,\rho):\bC\to \bC'$ is a weakly equivariant functor. Then we must define a functor
 \begin{equation}\label{fdrhiur9rq0} (\phi,\rho)\rtimes^{\alg} G:\bC\rtimes^{\alg}G\to \bC'\rtimes^{\alg}G \end{equation} in $\Clincat$ 
 in a functorial way. We consider the diagram
 $$\xymatrix{\bC\ar[r]^{\phi}\ar[d]^{\iota_{\bC}^{\alg}}&\bC'\ar[d]^{\iota_{\bC'}^{\alg}}\\ \bC\rtimes^{\alg}G\ar@{..>}[r]^{(\phi,\rho)\rtimes^{\alg} G}& \bC'\rtimes^{\alg}G}\ .$$
 Our plan is to apply Lemma \ref{gbiojwogwerewgwreer}.\ref{reogijwioegwergwgrw1} in order to construct the dotted arrow. 
 To this end we must extend the right-down composition to a covariant representation
 $(\iota_{\bC'}^{\alg}\circ \phi,\pi)$ of $\bC$ on $ \bC'\rtimes^{\alg}G $. The identity of $ \bC'\rtimes^{\alg}G$ corresponds by the same lemma to a covariant representation $(\iota_{\bC'}^{\alg},\mu)$  of $\bC^{\prime}$ on $ \bC'\rtimes^{\alg}G $, where   $\mu=(\mu(g))_{g\in G}$ is given in view of  \eqref{wefvbekjnvvjfrsfvfdv}  by $\mu(g)_{C'}=(\id_{C'},g)$  for every $g$ in $G$ and object $C'$ of $\bC'$. 
 We define for every object $C$ of $\bC$ and $g$ in $G$.
 $$\pi(g)_{C}:=\mu(g)_{g^{-1}\phi(gC)}\circ (\rho(g)_{C},e):\phi(C)\to \phi(gC)\ .$$ One checks that $\pi:=(\pi(g))_{g\in G}$ satisfies the Condition \ref{regiowerjhgwergwergwregrg}.\ref{qrgqfwwefqef}. This finishes the construction of $$(\phi,\rho)\rtimes^{\alg} G:\bC\rtimes^{\alg}G\to \bC'\rtimes^{\alg}G\ .$$
 The explicit description of the morphism $(\phi,\rho)\rtimes^{\alg}G$ is a follows:
 \begin{enumerate}
 \item objects: Its action on objects is given by the action of $\phi$.
 \item morphisms: It sends the morphism $(f,g)$ in $\bC\rtimes^{\alg} G$ with $f:C\to C^{\prime}$
  to $$\pi(g)_{C} (\phi(f),e)=(\rho(g)_{C'}\phi(f),g):\phi(C)\to g^{-1}\phi(gC')$$ (here we used \eqref{wfqwefwefwefqwfqrevevervewfqwef} and \eqref{wefvbekjnvvjfrsfvfdv}).
 \end{enumerate}
One checks in a straightforward manner that construction is compatible with the composition
 and the involution. 
 
 Assume now that $\kappa:(\phi,\rho)\to (\phi',\rho')$ is a unitary natural transformation between weakly equivariant functor.
 Then $\kappa$ gives rise to a unitary natural transformation of covariant representations
 $(\iota^{\alg}_{\bC'}\circ \phi,\pi)\to (\iota^{\alg}_{\bC'}\circ \phi',\pi')$.
 It in turn induces a unitary natural transformation
 $\kappa \rtimes G: (\phi,\rho)\rtimes G\to (\phi',\rho')\rtimes G$.
 This finishes the proof of Assertion \ref{rhlhjkblfvbsfdbewrsdbs}.
 
 In order to get   Assertion \ref{rhlhjkblfvbsfdbewrsdbs1} we postcompose the functor from
  Assertion \ref{rhlhjkblfvbsfdbewrsdbs} with the completion functor  \eqref{oidfhbuihsfuibfefefesdfbsfb} taking into account Lemma \ref{rgioqwfqwefqefe}.
   \end{proof}

  \begin{rem}\label{ewroigjwegergwerg9}
  Let $\bC$, $\bD$ be in $\Fun(BG,\Ccat)$ or $\Fun(BG,\Clincat)$. Let furthermore $(\phi,\rho):\bC\to \bD$ be a weakly equivariant functor
  such that the underlying functor $\phi:\bC\to \bD$ is a unitary equivalence (after forgetting the $G$-actions). 
  Then there exists a weakly equivariant functor  $(\psi,\lambda)$ and 
  unitary natural  isomorphisms $(\phi,\rho)\circ (\psi,\lambda)\cong (\id_{\bC},\id)$ and
  $  (\phi,\lambda)\circ (\phi,\rho)\cong (\id_{\bD},\id)$ of weakly equivariant functors. 
  In fact, if we choose a functor $\psi:\bD\to \bC$ (without any equivariance condition)
  and an isomorphism $\kappa:\phi\circ \psi\cong \id_{\bD}$, then there exists a unique choice for the cocycle $\lambda$ 
  such that $\kappa$ becomes a unitary natural  isomorphism
  $\kappa:(\phi,\rho)\circ (\psi,\lambda)\cong (\id_{\bC},\id)$  of weakly equivariant functors.
\hB  
   \end{rem}

\section{Exactness of crossed products}

The main results of the present section are    Theorem   \ref{fbgbgrbsfbsdfbs} and Theorem \ref{rhioohwhtwergergergwergweg} stating that the crossed product functor  preserves exact sequences and excisive squares.
On the way we show in Proposition \ref{qwefewfewfqewfc} that the functors $A^{\alg}$ and $A$
defined in Definitions \ref{hiuwegregwergweg} and \ref{zhioerhrthtrhehth}
  preserve exact sequences.

An exact sequence  in $\nClincat$ or $\nCcat$ is a sequence
$$\bC\stackrel{i}{\to} \bD\stackrel{\phi}{\to}  \bQ$$
of morphisms   which both induce bijections on the level of objects, and which induce  exact sequences on the level of morphisms spaces.  The morphism $\phi$ will be called a quotient morphism, and $i$ is the inclusion of an ideal. As $i$ is the inclusion of the kernel of $\phi$, and $\phi$ represents  the quotient of $\bD$ be the ideal $\bC$,
these two morphisms determine each other. Since   we do not  always  name the whole data 
of the exact sequence we will characterize quotient morphisms   and inclusions of ideals separately.

 \begin{ddd}\label{wekotegergwergwerg}\mbox{}
 \begin{enumerate}\item
  A morphism $ \phi:\bD\to \bQ$  in $\nClincat$ is a quotient morphism if it satisfies the following conditions:
   \begin{enumerate}
 \item The induced map $\phi:\Ob(\bD)\to \Ob(\bQ)$ is a  bijection  between the sets of objects. 
 \item\label{wergiowergwergegwergergergergrewgwreg} For every pair of objects $D,D^{\prime}$
 in $\bD$  the  induced map
 of $\C$-vector spaces $\Hom_{\bD}(D,D^{\prime})\to \Hom_{\bQ}(\phi(D),\phi(D^{\prime}))$ is surjective.
 \end{enumerate}
 \item A morphism $ \phi:\bD\to \bQ$  in $ \nCcat$ is a quotient morphism if it is one in $\nClincat$.\end{enumerate}
 \end{ddd}


\begin{ddd}\label{weigwioegerww}\mbox{}
 \begin{enumerate}\item
A morphism $i:\bC\to \bD$  in $\nClincat$ is the inclusion of an ideal if it satisfies the following conditions:

\begin{enumerate}
\item \label{werijogwegfrewwf0}The induced map $i:\Ob(\bC)\to \Ob(\bD)$ is a  bijection  between the sets of objects. \item  \label{werijogwegfrewwf1} The morphism $i$ induces   injective  maps on the level of morphism spaces.
\item\label{werijogwegfrewwf} If $f$ and $g$ are composable morphisms in $\bD$ such that $f$ or $g$ are in $\bC$, then $g\circ f$ belongs to $\bC$.
 \end{enumerate}
 \item A morphism $i:\bC\to \bD$  in $\nCcat$ is   the inclusion of an ideal if   it satisfies the following conditions:\begin{enumerate}
 \item $i$ is an ideal inclusion in $\nClincat$.
 \item For every two objects $C,C'$ in $\bC$ the subspace $i(\Hom_{\bC}(C,C'))$ in $\Hom_{\bD}(i(C),i(C'))$ is closed.
  \end{enumerate}
  \end{enumerate}
\end{ddd}


Above we  we gave  explicit ad-hoc definitions of quotients morphisms and ideal inclusions.
 Since we are interested how these notions interact with the formation of crossed products, i.e., with forming certain colimits,  it is useful to have characterizations in categorical terms.
  We consider a square 
   \begin{equation}\label{grgqefeqwfwerwerwerweqef}
 \xymatrix{\bC\ar[r]^{i}\ar[d]&\bD\ar[d]^{\phi}\\ 0[\Ob(\bQ)]\ar[r]&\bQ} 
\end{equation} 
  in $\nClincat$,
 where the lower horizontal map is the counit of the adjunction in \eqref{qfewfewfwefwefewfefwefwffq}.   
  \begin{lem}\label{wkgijowergerferfw}\mbox{}
  \begin{enumerate}
  \item \label{qruiefqwfweqfd} If the square in \eqref{grgqefeqwfwerwerwerweqef} is cartesian, then 
   $i$ is the inclusion of an ideal.
   \item \label{weigowergferfwef} If $\phi$ is in addition  a morphism in 
 $\nCcat$, then  $i$ is an inclusion of an ideal   in $ \nCcat$, and the square is cartesian in $ \nCcat$. \end{enumerate}
  \end{lem}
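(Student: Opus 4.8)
The plan is to identify the pullback occurring in the square with a kernel and then read off the stated properties. The first step is to record that the counit $\epsilon\colon 0[\Ob(\bQ)]\to\bQ$ of the adjunction \eqref{qfewfewfwefwefewfefwefwffq} is the identity on objects and sends every morphism to $0$. Next I would exhibit a pullback of $\phi$ along $\epsilon$ in $\nClincat$ explicitly: take the wide $\C$-linear $*$-subcategory $\Ker(\phi)$ of $\bD$ with $\Ob(\Ker(\phi))=\Ob(\bD)$ and $\Hom_{\Ker(\phi)}(D,D')=\ker\big(\Hom_{\bD}(D,D')\to\Hom_{\bQ}(\phi(D),\phi(D'))\big)$ (composition and involution inherited from $\bD$, which is legitimate since $\phi$ is a $*$-functor with $\phi(g\circ f)=\phi(g)\circ\phi(f)$), together with the inclusion $j\colon\Ker(\phi)\to\bD$ and the functor $\Ker(\phi)\to 0[\Ob(\bQ)]$ given by $\phi$ on objects and $0$ on morphisms. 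Checking the universal property is routine: for a test object $\bT$ with compatible $a\colon\bT\to\bD$ and $b\colon\bT\to 0[\Ob(\bQ)]$, commutativity forces $b$ to be $\phi\circ a$ on objects and $0$ on morphisms, hence forces $\phi(a(h))=0$ for every morphism $h$ of $\bT$, so $a$ factors uniquely through $\Ker(\phi)$.

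For part \ref{qruiefqwfweqfd}: since the square \eqref{grgqefeqwfwerwerwerweqef} is assumed cartesian, uniqueness of pullbacks yields an isomorphism $\bC\cong\Ker(\phi)$ under which $i$ becomes $j$. Then the three conditions of Definition \ref{weigwioegerww} are immediate and all invariant under precomposition with an isomorphism: $i$ is a bijection on objects and injective on morphism spaces because $j$ is, and if $f,g$ are composable in $\bD$ with $f$ or $g$ in the image of $i=\Ker(\phi)$, then $\phi(g\circ f)=\phi(g)\circ\phi(f)=0$, so $g\circ f$ again lies in $\Ker(\phi)$.

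For part \ref{weigowergferfwef}: assuming additionally $\phi\in\nCcat$, I would first invoke that the kernel of a morphism of $C^{*}$-categories is again a $C^{*}$-category (recalled in the excerpt just before Lemma \ref{ewjiowebwerwerbwerbeb}); thus $\bC\cong\Ker(\phi)$ lies in $\nCcat$, all remaining corners and edges of the square are morphisms between $C^{*}$-categories, so the square is a square in $\nCcat$. That $i$ is an ideal inclusion in $\nClincat$ is part \ref{qruiefqwfweqfd}; the only additional point required by Definition \ref{weigwioegerww} is that $i(\Hom_{\bC}(C,C'))$ be closed in $\Hom_{\bD}(i(C),i(C'))$, and under the identification with $\Ker(\phi)$ this subspace is the kernel of the map induced by $\phi$ on those morphism spaces, which is closed because that map is continuous (a morphism of $\C$-linear $*$-categories between $C^{*}$-categories being automatically continuous, indeed norm-decreasing, on morphism spaces). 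Finally, that the square is cartesian in $\nCcat$ is formal: $\nCcat$ is a full subcategory of $\nClincat$ containing the entire diagram $\bD\to\bQ\leftarrow 0[\Ob(\bQ)]$ as well as the apex $\bC$, so the limit cone computed in $\nClincat$ remains a limit cone in $\nCcat$.

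I do not expect a real obstacle; the two things that need care are pinning down the counit $\epsilon$ precisely (so that the pullback is exactly $\Ker(\phi)$ and nothing larger) and observing that all the defining properties of an ideal inclusion transport along the pullback isomorphism, so that no separate computation is needed once $\bC$ has been identified with $\Ker(\phi)$.
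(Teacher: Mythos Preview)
Your proposal is correct and follows essentially the same approach as the paper: identify the pullback with $\Ker(\phi)$ (the paper does this implicitly by noting that cartesian squares in $\nClincat$ induce cartesian squares on object sets and on morphism sets), then read off the ideal conditions and, in the $C^{*}$-case, use continuity of $\phi$ to get closedness. Your final step in part~\ref{weigowergferfwef} is actually slightly more direct than the paper's: you use that $\nCcat\hookrightarrow\nClincat$ is full, whereas the paper detours through $\npClincat$ via the colocalization \eqref{wefqwefefefqwfqfeewfewfqewf} before invoking fullness of $\nCcat\hookrightarrow\npClincat$; both arguments are valid, and yours avoids an unnecessary intermediate step.
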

\begin{proof} We start with the verification of Assertion \ref{qruiefqwfweqfd}.
A cartesian square  in $\nClincat$ induces a cartesian  square on the level of
sets of objects. Since the lower horizontal morphism induces a bijection on objects, so does $i$. This verifies Condition \ref{weigwioegerww}.\ref{werijogwegfrewwf0}.
For any pair of objects $C,C'$ in $\bC$ we furthermore have an induced   cartesian square of morphism  sets
\begin{equation}\label{fvsdvsfdvqrfefefewe}
\xymatrix{\Hom_{\bC}(C,C')\ar[r]\ar[d]&\Hom_{\bD}(C,C')\ar[d]\\0 \ar[r]&\Hom_{\bQ}(C,C')}\ .
\end{equation} 
 Since the lower horizontal map is injective, we can conclude that the upper map is injective, too. Hence $i$ induces an injection on morphism sets. This is  Condition \ref{weigwioegerww}.\ref{werijogwegfrewwf1}.  We finally check Condition \ref{weigwioegerww}.\ref{werijogwegfrewwf} by a  straightforward calculation.
 
We now consider Assertion \ref{weigowergferfwef} and assume that $\phi$ is a morphism in $\nCcat$.
Then $\phi$ is continuous on morphism spaces.
 In view of the cartesian squares \eqref{fvsdvsfdvqrfefefewe} we conclude that the $i$ sends the morphism spaces of $\bC$ injectively to closed subspaces of the morphism spaces of $\bD$.  We can conclude that $i$ is an inclusion of an ideal in $\nCcat$. We furthermore see  that 
 $\bC$  is a $C^{*}$-category, witnessed by  the norm induced from the inclusion into $\bD$ via $i$. In particular, it is already  in $\npClincat$. 
Applying the $\op$-version of Proposition \ref{roijgeqroifqewqfewf}.\ref{ergergerwf} to the colocalization  
 in \eqref{wefqwefefefqwfqfeewfewfqewf} we can conclude that the square in \eqref{grgqefeqwfwerwerwerweqef} is cartesian in $\npClincat$. 
 Since $\nCcat\to \npClincat$ is the inclusion of a full subcategory  the square 
 is   also  cartesian in $\nCcat$. 
 \end{proof}

Note that   $\nClincat$ admits fibre products by Theorem \ref{riguhqwieufqewfeqfqewf}.  
Hence given a morphism $\phi:\bD\to \bQ$ in $\nClincat$ we can construct its kernel $i:\bC\to \bD$ by forming the pull-back \eqref{grgqefeqwfwerwerwerweqef}. Furthermore, if $\phi$ is a morphism in $\nCcat$, then by Lemma \ref{grgqefeqwfwerwerwerweqef}  its kernel automatically belongs to $\nCcat$.

 Let $\phi:\bD\to \bQ$ be a morphism in $\nClincat$ and form the pull-back \eqref{grgqefeqwfwerwerwerweqef}. 
  

\begin{lem}\label{woihbjowegeerwvfevwfv} \mbox{} \begin{enumerate} \item \label{qfrfeewfewfq} The following assertions are equivalent:
\begin{enumerate}
\item
$\phi:\bD\to \bQ$ is a quotient morphism. \item 
 $\phi$  is a bijection on the level of objects and the square in \eqref{grgqefeqwfwerwerwerweqef} is a  push-out square in $\nClincat$.
 \end{enumerate}
 \item \label{qfrfeewfewfq1}  If $\phi$ is a morphism in $\nCcat$,  then  following assertions are equivalent:
 \begin{enumerate}
\item
$\phi:\bD\to \bQ$ is a quotient morphism. \item 
 $\phi$  is a bijection on the level of objects and the square in \eqref{grgqefeqwfwerwerwerweqef} is  a push-out square in $\nCcat$. \end{enumerate}
  
 \end{enumerate}
\end{lem}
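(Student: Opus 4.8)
The plan is to prove both parts by the same mechanism: a quotient morphism is exactly a morphism which is bijective on objects and which, on every pair of $\Hom$-spaces, is an epimorphism of $\C$-vector spaces (resp.\ a surjection of Banach spaces) with kernel $\Hom_{\bC}$; and a push-out of the shape \eqref{grgqefeqwfwerwerwerweqef} can be computed very explicitly because the lower-left corner $0[\Ob(\bQ)]$ is so simple. So I would first establish what the push-out of \eqref{grgqefeqwfwerwerwerweqef} is, compute it as a concrete category, and then compare it with $\bQ$ via the canonical comparison morphism; the two statements to be proved then amount to: the comparison morphism is an isomorphism iff $\phi$ is a quotient morphism.

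For part \ref{qfrfeewfewfq}: by Theorem \ref{riguhqwieufqewfeqfqewf} the category $\nClincat$ is cocomplete, so the push-out $\bP$ of $0[\Ob(\bQ)]\leftarrow\bC\xrightarrow{i}\bD$ exists. First I would note that since $i$ is already (by Lemma \ref{wkgijowergerferfw}.\ref{qruiefqwfweqfd}, using that \eqref{grgqefeqwfwerwerwerweqef} being a square with $\bC$ its pull-back makes $i$ an ideal inclusion) the inclusion of the kernel ideal of $\phi$, and since all four corners have the same underlying object set $\Ob(\bQ)$ (the object map of $\phi$ being bijective), the push-out $\bP$ is computed on objects as $\Ob(\bQ)$ and on morphism spaces as the pushout of $\C$-vector spaces
$$\Hom_{\bP}(D,D')=\Hom_{\bD}(D,D')\big/\, i(\Hom_{\bC}(D,D'))$$
with the composition and involution induced from $\bD$ (one must check these descend to the quotient, which is exactly the ideal property of $i$ from Lemma \ref{wkgijowergerferfw}); the universal property of this concrete gadget is routine to verify against the two structure maps. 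The canonical comparison morphism $\bP\to\bQ$ is then the identity on objects and on each $\Hom$-space is the map induced by $\phi$ on the quotient $\Hom_{\bD}/i(\Hom_{\bC})$. This comparison map is an isomorphism precisely when $\phi$ induces on each pair $D,D'$ a surjection $\Hom_{\bD}(D,D')\to\Hom_{\bQ}(\phi D,\phi D')$ with kernel exactly $i(\Hom_{\bC}(D,D'))$. The kernel condition is automatic from the construction of $\bC$ as the pull-back \eqref{grgqefeqwfwerwerwerweqef}, so the remaining content is that $\bP\to\bQ$ is an isomorphism iff $\phi$ is surjective on $\Hom$-spaces, i.e.\ iff $\phi$ is a quotient morphism in the sense of Definition \ref{wekotegergwergwerg}. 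That gives the equivalence.

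For part \ref{qfrfeewfewfq1}, when $\phi$ is a morphism in $\nCcat$: by Lemma \ref{wkgijowergerferfw}.\ref{weigowergferfwef} the pull-back $\bC$ automatically lies in $\nCcat$ and \eqref{grgqefeqwfwerwerwerweqef} is cartesian already there. The push-out in $\nCcat$ is obtained, via Proposition \ref{roijgeqroifqewqfewf}.\ref{ergergerwf} applied to the reflective localizations \eqref{wefqwefefefqwfqfeewfewfqewf} and \eqref{oidfhbuihsfuibfefefesdfbsfb}, by first forming the push-out $\bP$ in $\nClincat$ and then applying $\Compl\circ\Bd^{\infty}$; concretely the $\Hom$-spaces of the push-out in $\nCcat$ are the completions (with respect to the maximal norm) of the algebraic quotients $\Hom_{\bD}/i(\Hom_{\bC})$, with composition and involution extended by continuity. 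If $\phi$ is a quotient morphism in $\nCcat$ (hence in $\nClincat$ by Definition \ref{wekotegergwergwerg}), the algebraic comparison $\Hom_{\bD}/i(\Hom_{\bC})\to\Hom_{\bQ}(\phi D,\phi D')$ is already a bijection by part \ref{qfrfeewfewfq}; since $\bQ$ is a $C^{*}$-category its $\Hom$-spaces are complete, so passing to completions the comparison $C^{*}$-category map is an isomorphism, i.e.\ \eqref{grgqefeqwfwerwerwerweqef} is a push-out in $\nCcat$. Conversely, if the square is a push-out in $\nCcat$, then $\bQ$ receives a surjection (up to the completion step, which only enlarges the image to a dense subspace — but a morphism of $C^{*}$-categories has closed image on $\Hom$-spaces, being a $*$-homomorphism of $C^{*}$-algebras on endomorphisms and using the $C^{*}$-identity as in the proof of Lemma \ref{ergtegwergwergwrg}) from $\Hom_{\bD}$ with the required kernel, and $\phi$ is a bijection on objects by hypothesis, so $\phi$ is a quotient morphism.

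The main obstacle I anticipate is the care needed in part \ref{qfrfeewfewfq1} with the interaction between the completion functor and surjectivity: a push-out of $C^{*}$-categories has $\Hom$-spaces that are \emph{completions} of algebraic quotients, so showing that the comparison map to $\bQ$ is surjective (not merely dense-image) requires invoking that $*$-homomorphisms between $C^{*}$-algebras have closed range — equivalently that the induced $*$-homomorphism on endomorphism algebras is an isomorphism onto its (closed) image — and then bootstrapping from endomorphisms to general $\Hom$-spaces via the polar-type identity $\|f\|^{2}=\|f^{*}f\|$ exactly as in Lemma \ref{ergtegwergwergwrg}. Everything else is a bookkeeping exercise comparing the explicit concrete push-out with $\bQ$ object-by-object and $\Hom$-space-by-$\Hom$-space.
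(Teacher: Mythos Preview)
Your Part~\ref{qfrfeewfewfq} is correct and essentially the paper's argument, just phrased via an explicit model for the push-out rather than the universal property directly.

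For Part~\ref{qfrfeewfewfq1} the paper takes a shorter route. In the forward direction it simply notes that a push-out square in $\nClincat$ whose four corners lie in the full subcategory $\nCcat$ is automatically a push-out in $\nCcat$ --- no reflector bookkeeping needed. In the backward direction it forms the $\nClincat$-push-out $\bQ'=\bD/\bC$, invokes the non-formal fact that this is already a $C^{*}$-category with the quotient norm (citing \cite[Cor.~4.8]{mitchc}), so $\bQ'$ is also the $\nCcat$-push-out, hence $\bQ'\cong\bQ$, and applies Part~\ref{qfrfeewfewfq}.

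Your backward argument has a gap you half-acknowledge. The claim that a morphism of $C^{*}$-categories has closed image on $\Hom$-spaces does not follow from ``closed image on endomorphism $C^{*}$-algebras plus the $C^{*}$-identity'' alone: knowing $\phi(f_{n}^{*}f_{n})$ converges inside $\phi(\End_{\bD}(D))$ does not produce a preimage of $\lim\phi(f_{n})$ in $\Hom_{\bD}(D,D')$. The standard proof forms $\bD/\ker(\phi)$, uses that it is a $C^{*}$-category (precisely the Mitchener fact above), and then notes the induced injective morphism into $\bQ$ is isometric. So your route secretly rests on the same non-formal input as the paper's, but uses it less directly. A minor additional point: \eqref{wefqwefefefqwfqfeewfewfqewf} is \emph{co}reflective (the inclusion is the left adjoint), so Proposition~\ref{roijgeqroifqewqfewf}.\ref{ergergerwf} does not literally apply there; what you actually need is that this particular $\nClincat$-push-out is already a pre-$C^{*}$-category, which is immediate since every morphism in it is the image of one from $\bD$.
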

\begin{proof}
We start with  Assertion  \ref{qfrfeewfewfq}.
Assume that 
 $\phi:\bD\to \bQ$ is a quotient morphism. Then it is a bijection on the level of objects. We show that \eqref{grgqefeqwfwerwerwerweqef} is a push-out diagram by 
  checking the universal property. Let $\bT$ be in $\nClincat$. Assume that the bold part of the following commuting diagram is given
\begin{equation}\label{grgqefeqwfqefrgergerg}
 \xymatrix{\bC\ar[r]\ar[d]&\bD\ar[d]^{\phi}\ar[ddr]^{\theta}&\\ 0[\Ob(\bQ)]\ar[r]\ar[drr]&\bQ\ar@{..>}[dr]&\\&&\bT} \ .
\end{equation} 
We can define the dotted arrow as follows:
\begin{enumerate}
\item objects: On objects the dotted arrow is defined as $\theta\circ \phi^{-1}:\Ob(\bQ)\to \bT$.
\item morphisms: For objects $Q,Q^{\prime}$ of $\bQ$ we define a map   $$\Hom_{\bQ}(Q,Q^{\prime})\to \Hom_{\bT}(\theta(\phi^{-1}(Q)),\theta(\phi^{-1}(Q^{\prime}))$$  such that it sends a morphism $f$ in $
\Hom_{\bQ}(Q,Q^{\prime})$ to $\theta(\tilde f)$, where $\tilde f$ is any choice of a morphism in $\bD$ such that $\phi(\tilde f)=f$. Note $\tilde f$ exists  by condition \ref{wekotegergwergwerg}.\ref{wergiowergwergegwergergergergrewgwreg}.
\end{enumerate}
It is clear  from the cartesian square \eqref{fvsdvsfdvqrfefefewe} that the dotted arrow is well-defined and unique.

Assume now that $\phi$ induces a bijection on the level of objects, and that  \eqref{grgqefeqwfwerwerwerweqef} is a push-out diagram.  We have a factorization $$\bD\stackrel{\phi'}{\to} \bQ'\to \bQ$$ of $\phi$, where  $\bQ'$ in $\nClincat$ is a wide subcategory  of $\bQ$ given by the image of $\phi$ on the level of morphism spaces.  Using the universal property of the push-out we get the dotted arrow in \begin{equation}\label{grgqefeqwfqefrgerfewewfwegerg}
 \xymatrix{\bC\ar[r]\ar[d]&\bD\ar[d]^{\phi}\ar[ddr]^{\phi'}&\\ 0[\Ob(\bQ)]\ar[r]\ar[drr]&\bQ\ar@{..>}[dr]&\\&&\bQ'} \ .
\end{equation} 
Its existence implies that it is an isomorphism. Consequently $\phi$ is surjective on morphism  spaces and hence a quotient morphism.

We now show Assertion \ref{qfrfeewfewfq1}. Again assume first that  $\phi:\bD\to \bQ$ is a quotient morphism in $\nCcat$.  In view of Definition \ref{wekotegergwergwerg}  it is   a  quotient morphism in $\nClincat$.  By Assertion \ref{qfrfeewfewfq} the square \eqref{grgqefeqwfwerwerwerweqef} is a push-out in $\nClincat$.  By Lemma \ref{wkgijowergerferfw}.\ref{weigowergferfwef} it is a 
 cartesian square in $\nCcat$, so in particular a  commutative square in this category. Since  $\nCcat\to \nClincat$ is an   inclusion  of a 
 full subcategory   the square in 
 \eqref{grgqefeqwfwerwerwerweqef} is a push-out square in $\nCcat$.  Of course, $\phi$ is also bijective on objects.  
  
We now assume that $\phi$ is bijective in objects and that the square  in  \eqref{grgqefeqwfwerwerwerweqef} is a push-out diagram in $\nCcat$. We claim that it is then also a push-out square in $\nClincat$. Assuming the claim we can apply Assertion \ref{qfrfeewfewfq} and conclude that $\phi$ is a quotient map in $\Clincat$, hence also a quotient map in $\nCcat$.

In order to see the claim we form the push-out diagram
   \begin{equation}\label{grgqefeqwfwerwerwerweqef10}
 \xymatrix{\bC\ar[r]^{i}\ar[d]&\bD\ar[d]^{\phi}\\ 0[\Ob(\bQ)]\ar[r]&\bQ'} 
\end{equation}  in $\nClincat$. We then use the non-formal fact that $\bQ'$ is already a $C^{*}$-category. 
This fact  is witnessed by the norm on $\bQ'$ given by  $\|f\|_{\bQ'}:=\inf_{\tilde f\in \phi^{-1}(f)} \|\tilde f\|_{\bD}$, see e.g.  \cite[Cor. 4.8]{mitchc}. 
 Since   $\nCcat\to \nClincat$  is an inclusion of a full subcategory, the square is also
a push-out square in $\nCcat$. Consequently, 
  the canonical morphism $\bQ'\to \bQ$  determined by the universal property in $\nClincat$ of the push-out in \eqref{grgqefeqwfwerwerwerweqef10}    is an isomorphism. Hence  \eqref{grgqefeqwfwerwerwerweqef} is also a push-out square in $\nClincat$ as claimed.  
%
%
%
%
%
%
%
%
\end{proof}

%
 

%
%
%

  Let \begin{equation}\label{sdfvsdfvfsdvfvsdfvsdfvsfdv}
\bC\stackrel{i}{\to} \bD\stackrel{\phi}{\to}\bQ
\end{equation} 
  be a sequence of morphisms in $\nClincat$.
\begin{ddd} \label{ergiowejogergergwergwergwregw}\mbox{}\begin{enumerate}
\item\label{wthregwwerggewrgwerg} The sequence \eqref{sdfvsdfvfsdvfvsdfvsdfvsfdv}  is called exact if:\begin{enumerate}
\item $\phi:\bD\to \bQ$ is a quotient map.
\item $i$ fits into a cartesian square \eqref{grgqefeqwfwerwerwerweqef}.
\end{enumerate}
\item 
The sequence \eqref{sdfvsdfvfsdvfvsdfvsdfvsfdv} is called an exact sequence of $C^{*}$-categories, if:
\begin{enumerate}
\item $\bC,\bD,\bQ$ are $C^{*}$-categories.
\item The sequence is exact in the sense of \ref{wthregwwerggewrgwerg}.
\end{enumerate}
\end{enumerate}
\end{ddd}
We will use the notation  
 $$0\to \bC\stackrel{i}{\to}  \bD \stackrel{\phi}{\to} \bQ\to 0$$
in order to visualize exact sequences.

  A sequence in $\Fun(BG,\nClincat)$ of the shape \eqref{sdfvsdfvfsdvfvsdfvsdfvsfdv} will be called exact if it becomes an exact sequence after forgetting the $G$-action. Similarly, a sequence in 
  $\Fun(BG,\nCcat)$ will be called an exact sequence of $C^{*}$-categories, if it becomes an  exact sequence of $C^{*}$-categories after forgetting the $G$-action.

  \begin{theorem} \label{fbgbgrbsfbsdfbs}\mbox{}
  \begin{enumerate}
  \item \label{sivijaoddsvadsv} If  $$0\to \bC\to  \bD \to \bQ\to 0$$ is an exact sequence  in $\Fun(BG,\nClincat)$, then \begin{equation}\label{sfbsfbfbfbsfdbsdfb}
0\to \bC\rtimes^{\alg} G\to \bD\rtimes^{\alg} G\to \bQ\rtimes^{\alg} G\to 0
\end{equation}
 is an exact sequence in $\nCcat$.
  \item \label{giowejgoeregwegergrg}
 If  $$0\to \bC\to  \bD \to \bQ\to 0$$ is an exact sequence of $C^{*}$-categories  in $\Fun(BG,\nCcat)$ such that $\bD $ is  unital, then \begin{equation}\label{sfbsfbfbfbsfdbsdfb}
0\to \bC\rtimes G\to \bD\rtimes G\to \bQ\rtimes G\to 0
\end{equation}
 is an exact sequence  of $C^{*}$-categories   in $\nCcat$.\end{enumerate} 
\end{theorem}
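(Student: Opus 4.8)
The plan is to handle \ref{sivijaoddsvadsv} by a direct computation, and to reduce \ref{giowejgoeregwegergrg} to the classical exactness of the maximal crossed product of $C^{*}$-algebras via the functor $A$ of Definition \ref{zhioerhrthtrhehth} and Theorem \ref{qrioqwfewfewfewfqef}.

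For \ref{sivijaoddsvadsv}, recall that the algebraic crossed product leaves the set of objects unchanged and, by Definition \ref{wergiojwergregregwergerg}, computes morphism spaces as $\Hom_{\bE\rtimes^{\alg}G}(C,D)=\bigoplus_{g\in G}\Hom_{\bE}(C,g^{-1}D)$. After forgetting the $G$-action, exactness of the given sequence means (Definitions \ref{wekotegergwergwerg}, \ref{weigwioegerww}, \ref{ergiowejogergergwergwergwregw}) that all maps are bijective on objects and that $0\to\Hom_{\bC}(C,D)\to\Hom_{\bD}(C,D)\to\Hom_{\bQ}(C,D)\to 0$ is exact for all objects $C,D$. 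Since a direct sum of short exact sequences of $\C$-vector spaces is again short exact, forming $\bigoplus_{g\in G}$ of the sequences indexed by the objects $C$ and $g^{-1}D$ shows that $\bD\rtimes^{\alg}G\to\bQ\rtimes^{\alg}G$ is bijective on objects and surjective on morphism spaces (hence a quotient morphism), and that an element $(f,g)$ in the $(C,D)$-component of $\Hom_{\bD\rtimes^{\alg}G}$ lies in its kernel iff $f\in\Hom_{\bC}(C,g^{-1}D)$, i.e.\ the kernel is $\bC\rtimes^{\alg}G$. By Definition \ref{ergiowejogergergwergwergwregw} (together with Lemma \ref{wkgijowergerferfw}) this is the asserted exact sequence.

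For \ref{giowejgoeregwegergrg} I would first apply $A$. Since $\bD$ is unital, so is the quotient $\bQ$, and since every morphism occurring in the exact sequence and every structure map of the $G$-actions is bijective on objects, $A$ is applicable and $G$-equivariantly so; by Proposition \ref{qwefewfewfqewfc} the sequence $0\to A(\bC)\to A(\bD)\to A(\bQ)\to 0$ is exact in $\Fun(BG,\nCalg)$. Because we use the maximal norm, the crossed product of $C^{*}$-algebras with $G$-action preserves exact sequences (a classical fact; see e.g.\ \cite{williams}), so $0\to A(\bC)\rtimes G\to A(\bD)\rtimes G\to A(\bQ)\rtimes G\to 0$ is exact, and applying the natural isomorphism $\nu$ of Theorem \ref{qrioqwfewfewfewfqef} turns this into an exact sequence of $C^{*}$-algebras
$$0\to A(\bC\rtimes G)\to A(\bD\rtimes G)\to A(\bQ\rtimes G)\to 0\ .$$
In particular $A(\phi\rtimes G)$ is surjective with closed kernel ideal $A(\bC\rtimes G)$.

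It remains to descend from the algebras $A(-\rtimes G)$ to the morphism spaces of $-\rtimes G$, and this is the step where unitality of $\bD$ is used. Since $\bD$ is unital, $p_{C}:=\rho_{\bD\rtimes G}(\iota_{\bD}(\id_{C}))=[(\id_{C},e)]$ is a projection in $A(\bD\rtimes G)$, and because $\rho_{\bD\rtimes G}$ is isometric (Lemma \ref{ergtegwergwergwrg}) and $A^{\alg}$ is built as a direct sum of morphism spaces, $\rho_{\bD\rtimes G}$ identifies $\Hom_{\bD\rtimes G}(C,D)$ with the corner $p_{D}A(\bD\rtimes G)p_{C}$; similarly for $\bQ$ with $q_{\phi C}:=[(\id_{\phi C},e)]=A(\phi\rtimes G)(p_{C})$, and $\Hom_{\bC\rtimes G}(C,D)$ is identified with $p_{D}A(\bC\rtimes G)p_{C}$ (inclusion $\supseteq$ because the $(C,D)$-summand of $A^{\alg}(\bC\rtimes^{\alg}G)$ is fixed under $p_{D}(-)p_{C}$, inclusion $\subseteq$ by density of $A^{\alg}(\bC\rtimes^{\alg}G)$ in $A(\bC\rtimes G)$ and continuity of cutting by projections). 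Cutting the exact sequence of the previous paragraph by $p_{D}$ on the left and $p_{C}$ on the right then produces, for all objects $C,D$, a short exact sequence $0\to\Hom_{\bC\rtimes G}(C,D)\to\Hom_{\bD\rtimes G}(C,D)\to\Hom_{\bQ\rtimes G}(\phi C,\phi D)\to 0$; combined with bijectivity on objects this says exactly that $\bD\rtimes G\to\bQ\rtimes G$ is a quotient morphism and $\bC\rtimes G$ is its kernel, so by Definition \ref{ergiowejogergergwergwergwregw} and Lemma \ref{wkgijowergerferfw} the sequence $0\to\bC\rtimes G\to\bD\rtimes G\to\bQ\rtimes G\to 0$ is an exact sequence of $C^{*}$-categories. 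The only genuinely hard input is the classical exactness of the maximal $C^{*}$-crossed product, which we cite; everything else is the bookkeeping of transporting it along $A$, where the hypothesis that $\bD$ is unital is precisely what provides the corner projections $p_{C}$ needed for the descent.
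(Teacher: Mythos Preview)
Your argument is correct and takes a genuinely different route from the paper's.

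For part~\ref{giowejgoeregwegergrg} the paper first shows that $\bD\rtimes G\to\bQ\rtimes G$ is a quotient morphism by identifying the crossed product with $\colim_{BG}L(-)$ (Proposition~\ref{regiueqrhgiqwgewgqgfewrg}) and invoking the general fact that colimits preserve quotient morphisms (Proposition~\ref{iuerwhgiuregwegergwerg}). The remaining step, identifying the kernel with $\bC\rtimes G$, is split in two: the commuting diagram with $A$ and Theorem~\ref{qrioqwfewfewfewfqef} shows that $\bC\rtimes G\to\bD\rtimes G$ is an isometric embedding; then, using the unitality of $\bD$, the paper produces a covariant representation of $\bQ$ on $(\bD\rtimes G)/(\bC\rtimes G)$ and hence an inverse to the comparison map $(\bD\rtimes G)/(\bC\rtimes G)\to\bQ\rtimes G$. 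Your approach bypasses the colimit machinery and the covariant-representation step entirely: you go straight through $A$, use classical exactness of the maximal crossed product for $C^{*}$-algebras together with Theorem~\ref{qrioqwfewfewfewfqef} to obtain an exact sequence of $C^{*}$-algebras $0\to A(\bC\rtimes G)\to A(\bD\rtimes G)\to A(\bQ\rtimes G)\to 0$, and then descend to morphism spaces by cutting with the corner projections $p_{C}=[(\id_{C},e)]$ supplied by unitality of $\bD$. This is more elementary and exploits the same hypothesis (unitality of $\bD$) in a more direct way; the paper's argument, on the other hand, illustrates how the categorical description of the crossed product as a $BG$-colimit interacts with exactness, which is of independent interest for the surrounding material.
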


Note that Assertion \ref{fbgbgrbsfbsdfbs}.\ref{sivijaoddsvadsv}  is obvious from the Definition \ref{wergiojwergregregwergerg} and the fact that a direct sum of  a family of exact sequences of $\C$-vector spaces is again an exact sequence  of $\C$-vector spaces.  So in the following we will concentrate on the case of $C^{*}$-categories.

\begin{rem}
In view of Proposition \ref{egwergergwrgw}
the Assertion \ref{fbgbgrbsfbsdfbs}.\ref{giowejgoeregwegergrg} contains as a special case the assertion that the $C^{*}$-algebraic crossed product described in Definition \ref{feivoevvewfvfevfdsv}  preserves exact sequences of $C^{*}$-algebras in which the middle algebra is unital. This is a well-known fact \cite[Prop. 3.19]{williams} , \cite[Prop. 2.4.8]{celg} which will be used in the proof of  \ref{fbgbgrbsfbsdfbs}.\ref{giowejgoeregwegergrg}. \hB
\end{rem}

  We start with the observation that colimits preserve quotient morphisms.   Let $\bI$ be a small category.
 A morphism in 
 $\Fun(\bI,\nCcat)$  is called a quotient morphism if its evaluation at every object of $\bI$ is a quotient morphism in the sense of Definition \ref{wekotegergwergwerg}.
  
 \begin{prop}\label{iuerwhgiuregwegergwerg}
 If $\phi:\bD\to \bQ$ is a quotient morphism in $\Fun(\bI,\nCcat)$, then $\colim_{\bI}\phi:\colim_{\bI} \bD\to \colim_{\bI}\bQ $ is a quotient morphism.
   \end{prop}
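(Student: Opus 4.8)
The plan is to reduce the statement to the characterization of quotient morphisms in terms of pushout squares established in Lemma \ref{woihbjowegeerwvfevwfv}.\ref{qfrfeewfewfq1}, together with the fact that colimits commute with colimits. Concretely, I would argue as follows.

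First I would fix an object $i$ of $\bI$ and note that applying the evaluation functor $\mathrm{ev}_i:\Fun(\bI,\nCcat)\to \nCcat$, which preserves colimits (being a left adjoint, or just by the pointwise formula for colimits in functor categories), we need to show that $(\colim_\bI\phi)$ evaluated at $i$ is a quotient morphism. However, colimits in $\Fun(\bI,\nCcat)$ are not computed pointwise, so this is not immediate; instead I would work directly with $\colim_\bI\bD$ and $\colim_\bI\bQ$ as objects of $\nCcat$. For the object level: the functor $\Ob:\nCcat\to \Set$ is a left adjoint (adjunction \eqref{1qfewfq}), hence preserves colimits, so $\Ob(\colim_\bI\bD)=\colim_\bI\Ob(\bD)$ and similarly for $\bQ$; since $\phi(j):\bD(j)\to\bQ(j)$ is a bijection on objects for every $j$ in $\bI$, the induced map on the colimit diagrams of sets is an isomorphism, hence $\colim_\bI\phi$ is a bijection on objects.

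For the key step I would use Lemma \ref{woihbjowegeerwvfevwfv}.\ref{qfrfeewfewfq1}: for each $j$ in $\bI$, letting $\bC(j)\to\bD(j)$ be the kernel (which is in $\nCcat$ by Lemma \ref{wkgijowergerferfw}.\ref{weigowergferfwef}, and depends functorially on $j$), we have a pushout square
$$\xymatrix{\bC(j)\ar[r]\ar[d]&\bD(j)\ar[d]^{\phi(j)}\\ 0[\Ob(\bQ(j))]\ar[r]&\bQ(j)}$$
in $\nCcat$ for every $j$. Taking $\colim_\bI$ of this diagram of squares and using that colimits commute with pushouts (a pushout is itself a colimit, so $\colim_\bI$ of a diagram of pushout squares is a pushout square), we obtain a pushout square
$$\xymatrix{\colim_\bI\bC\ar[r]\ar[d]&\colim_\bI\bD\ar[d]^{\colim_\bI\phi}\\ \colim_\bI 0[\Ob(\bQ)]\ar[r]&\colim_\bI\bQ}$$
in $\nCcat$. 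Since $0[-]:\Set\to\nCcat$ is a left adjoint (adjunction \eqref{1qfevdfvdfvfdvfdwfq}) it preserves colimits, and combined with $\Ob$ preserving colimits we get $\colim_\bI 0[\Ob(\bQ)]\cong 0[\Ob(\colim_\bI\bQ)]$, and one checks the bottom horizontal arrow is the counit $0[\Ob(\colim_\bI\bQ)]\to\colim_\bI\bQ$. Then Lemma \ref{woihbjowegeerwvfevwfv}.\ref{qfrfeewfewfq1} applied to this square, together with the bijectivity on objects established above, shows that $\colim_\bI\phi$ is a quotient morphism.

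The main obstacle I anticipate is the bookkeeping needed to identify the colimit of the bottom-left corner $\colim_\bI 0[\Ob(\bQ(-))]$ with $0[\Ob(\colim_\bI\bQ)]$ and to check that the resulting horizontal map is indeed the canonical counit morphism, so that Lemma \ref{woihbjowegeerwvfevwfv}.\ref{qfrfeewfewfq1} applies verbatim; this is a diagram-chase with adjunctions but requires care since $\colim_\bI$, $\Ob$, and $0[-]$ are being interchanged. A minor subtlety is verifying that the kernels $\bC(j)$ assemble into a functor $\bI\to\nCcat$ whose colimit appears as the top-left corner, but this is automatic from the functoriality of the pullback \eqref{grgqefeqwfwerwerwerweqef} in $\phi$. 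Everything else is formal once one invokes that left adjoints preserve colimits and that colimits commute among themselves.
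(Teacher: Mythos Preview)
Your approach is essentially the same as the paper's, and the bookkeeping you flag about identifying $\colim_{\bI}0[\Ob(\bQ)]\cong 0[\Ob(\colim_{\bI}\bQ)]$ via the left adjoints is exactly what the paper does. However, there is one genuine gap that you do not anticipate, and which the paper is careful about.

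Lemma \ref{woihbjowegeerwvfevwfv}.\ref{qfrfeewfewfq1} is stated for the specific square \eqref{grgqefeqwfwerwerwerweqef} obtained as a \emph{pull-back}: it says that $\phi$ is a quotient morphism iff $\phi$ is bijective on objects and \emph{that} pull-back square is also a push-out. Your colimit square has $\colim_{\bI}\bC$ in the upper-left corner, and while you have shown it is a push-out, you have not shown that $\colim_{\bI}\bC$ is the kernel of $\colim_{\bI}\phi$, i.e.\ that your square is a pull-back. In general it need not be: the canonical morphism $\colim_{\bI}\bC\to \colim_{\bI}\bD$ need not even be an ideal inclusion (colimits do not preserve pull-backs). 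So you cannot invoke the lemma directly on your square.

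The paper handles this by forming the actual pull-back square with the genuine kernel $\bK$ of $\colim_{\bI}\phi$ in the upper-left, and then observing that since there is a factorization $\colim_{\bI}\bC\to\bK\to\colim_{\bI}\bD$, the new square is \emph{still} a push-out (any cocone under the $\bK$-span restricts along $\colim_{\bI}\bC\to\bK$ to a cocone under your span, whence the unique induced map from $\colim_{\bI}\bQ$; uniqueness also transfers). Only then does Lemma \ref{woihbjowegeerwvfevwfv}.\ref{qfrfeewfewfq1} apply. You should insert this step; once you do, your argument matches the paper's.
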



 \begin{proof}
 By Theorem \ref{riguhqwieufqewfeqfqewf} the pull-backs and push-outs considered below exist.
 Since colimits and limits in functor categories are formed object wise we have a pull-back and push-out square of the shape \eqref{grgqefeqwfwerwerwerweqef} in $\Fun(\bI,\nCcat)$. Applying $\colim_{\bI}$ and using that $0[\Ob(-)]$ preserves colimits (since it is the composition of two left-adjoints \eqref{1qfewfq} and \eqref{1qfevdfvdfvfdvfdwfq}) we get a push-out square in $\nCcat$
\begin{equation}\label{grgqefeqwfqeef}
 \xymatrix{\colim_{\bI}\bC\ar[r]^{!}\ar[d]&\colim_{\bI}\bD\ar[d]^{\colim_{\bI }\phi}\\ 0[\Ob(\colim_{\bI}\bQ)]\ar[r]&\colim_{\bI}\bQ} \ .
\end{equation} 
Furthermore, since $\Ob$ commutes with colimits and $\phi$ was a quotient map 
all morphisms are bijections on objects. It is not clear that the marked morphism is an ideal inclusion so that
this square might not be a pull-back square. But we  
can consider the pull-back square in $\nCcat$
\begin{equation}\label{grgqefeqwf3r43ferfqeef}
 \xymatrix{\bK\ar[r]\ar[d]&\colim_{\bI}\bD\ar[d]^{\colim_{\bI}\phi}\\ 0[\Ob(\colim_{\bI}\bQ)]\ar[r]&\colim_{\bI}\bQ} 
\end{equation} 
defining  the $C^{*}$-category $\bK$ (the kernel of $\colim_{\bI}\phi$). 
We have a  canonical morphism $\colim_{\bI}\bC\to \bK$.  
We use  the comparison with \eqref{grgqefeqwfqeef} in order 
to show by checking the  universal property  that \eqref{grgqefeqwf3r43ferfqeef} is still a push-out square.
This implies by Lemma \ref{woihbjowegeerwvfevwfv}.\ref{qfrfeewfewfq1} that $\colim_{\bI}\phi:\colim_{\bI}\bD\to \colim_{\bI}\bQ$ is a quotient map.  
\end{proof}

  An exact sequence of $C^{*}$-categories with a single object is an exact sequence of $C^{*}$-algebras in the usual sense. 
  
 Note that the morphisms in an exact sequence of $\C$-linear $*$-categories or $C^{*}$-categories belong to $\nClincatinj$ or  $\nCcatinj$ (i.e., they are injective, in fact bijective, on the level of objects) so that we can apply the functors $A^{\alg}$ or  $A$ from Definitions \ref{hiuwegregwergweg} and  \ref{zhioerhrthtrhehth}.
 
 \begin{prop} \label{qwefewfewfqewfc} \mbox{}
 \begin{enumerate}
 \item \label{ergieogrrgrergwergwreg} If  $$0\to \bC\to  \bD \to \bQ\to 0$$ is an exact sequence   in $\nClincat$, then \begin{equation}\label{ewqfqwkjuhqkjwefweqfeqf}
0\to A^{\alg}(\bC)\to A^{\alg}(\bD)\to A^{\alg}(\bQ)\to 0
\end{equation}
 is an exact sequence  in $\nAlgc$.
 \item \label{ergieogrrgrergwergwreg1}
 If  $$0\to \bC\to  \bD \to \bQ\to 0$$ is an exact sequence of $C^{*}$-categories, then  \begin{equation}\label{adfafadfadsfdasfadsfdasf}
0\to A(\bC)\to A(\bD)\to A(\bQ)\to 0
\end{equation}
 is an exact sequence  in $\nCalg$. \end{enumerate}
 \end{prop}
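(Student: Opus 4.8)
The plan is to prove part \ref{ergieogrrgrergwergwreg} directly and then to deduce part \ref{ergieogrrgrergwergwreg1} from it by exploiting the initiality of $\rho_{\bC}$ in Lemma \ref{egioerjgoergwegwergwregrwe}. For part \ref{ergieogrrgrergwergwreg} I would first use that $i$ and $\phi$ are bijective on objects (Definitions \ref{wekotegergwergwerg} and \ref{weigwioegerww}) to identify $\Ob(\bC)\cong\Ob(\bD)\cong\Ob(\bQ)=:X$. Under this identification $A^{\alg}(-)=\bigoplus_{x,y\in X}\Hom_{(-)}(x,y)$ on underlying $\C$-vector spaces, and for each pair $(x,y)$ the sequence $0\to\Hom_{\bC}(x,y)\to\Hom_{\bD}(x,y)\to\Hom_{\bQ}(x,y)\to0$ is short exact by the definition of exactness (Definition \ref{ergiowejogergergwergwergwregw}). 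Since a direct sum of short exact sequences of vector spaces is again short exact, \eqref{ewqfqwkjuhqkjwefweqfeqf} is exact on underlying vector spaces; the two maps are $*$-algebra homomorphisms by the construction of $A^{\alg}$ on morphisms, and the image of $A^{\alg}(\bC)$ is a two-sided $*$-ideal of $A^{\alg}(\bD)$ by the formula for the product in $A^{\alg}(\bD)$ together with the ideal condition Definition \ref{weigwioegerww}.\ref{werijogwegfrewwf} (closure of $\bC$ under pre- and post-composition by morphisms of $\bD$). Hence \eqref{ewqfqwkjuhqkjwefweqfeqf} is an exact sequence in $\nAlgc$.

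For part \ref{ergieogrrgrergwergwreg1}, write $A=\Compl\circ A^{\alg}$ and abbreviate $i\colon\bC\to\bD$ and $\phi\colon\bD\to\bQ$. An ideal inclusion of $C^{*}$-categories is isometric, so $A(i)$ is isometric by Lemma \ref{ergkljewrogwergwergwrg}.\ref{weirgowergwergwerg}; in particular $J:=\operatorname{im}A(i)$ is closed, and it is a two-sided ideal of $A(\bD)$ since it is the closure of the image of the ideal $A^{\alg}(\bC)$ from part \ref{ergieogrrgrergwergwreg}. The homomorphism $A(\phi)$ is surjective, because its image is closed (being the image of a $*$-homomorphism of $C^{*}$-algebras) and contains the dense image of $A^{\alg}(\phi)(A^{\alg}(\bD))=A^{\alg}(\bQ)$ in $A(\bQ)$. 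Since $\phi\circ i$ sends every morphism to zero we have $A(\phi)\circ A(i)=0$, so $J\subseteq\ker A(\phi)$; the remaining work is the reverse inclusion.

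To prove $\ker A(\phi)\subseteq J$ I would show that the induced $*$-homomorphism $\overline{A(\phi)}\colon A(\bD)/J\to A(\bQ)$ is an isomorphism by constructing its inverse. Using surjectivity of $\phi$ on morphism spaces, define a functor $\psi\colon\bQ\to A(\bD)/J$ sending a morphism $q\colon x\to y$ of $\bQ$ to the class of $[\tilde q]$ for any lift $\tilde q$ of $q$ along $\phi$; this is well defined because two lifts differ by a morphism of $\bC$, whose image in $A(\bD)$ lies in $J$, and $\psi$ satisfies the ``matrix unit'' condition of Lemma \ref{egioerjgoergwegwergwregrwe} since lifts may be chosen compatibly with composition, so by that lemma $\psi$ extends uniquely to a $*$-homomorphism $\bar\psi\colon A(\bQ)\to A(\bD)/J$ with $\bar\psi\circ\rho_{\bQ}=\psi$. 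A chase on generators gives $\overline{A(\phi)}\circ\bar\psi\circ\rho_{\bQ}=\rho_{\bQ}$, hence $\overline{A(\phi)}\circ\bar\psi=\operatorname{id}$ by the uniqueness in Lemma \ref{egioerjgoergwegwergwregrwe}; and $\bar\psi\circ A(\phi)$ coincides with the quotient map $A(\bD)\to A(\bD)/J$ on every generator $[f]$, hence on all of $A(\bD)$ by linearity and continuity, so $\bar\psi\circ\overline{A(\phi)}=\operatorname{id}$. Therefore $\ker A(\phi)=J=\operatorname{im}A(i)$, and \eqref{adfafadfadsfdasfadsfdasf} is an exact sequence in $\nCalg$. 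The main obstacle is exactly this last step, since completion with respect to the maximal norm does not obviously commute with passage to kernels; the device that avoids the usual estimates on quotient norms is to produce the inverse $\bar\psi$ directly from the initiality of $\rho_{\bQ}$ in Lemma \ref{egioerjgoergwegwergwregrwe}.
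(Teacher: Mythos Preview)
Your proof is correct, but for part \ref{ergieogrrgrergwergwreg1} it follows a genuinely different route from the paper. The paper argues categorically: the exact sequence \eqref{ewqfqwkjuhqkjwefweqfeqf} is equivalently a bicartesian square in $\nClincat$ with corners $A^{\alg}(\bC),A^{\alg}(\bD),0[*],A^{\alg}(\bQ)$; since all corners are pre-$C^{*}$-algebras it is bicartesian in $\npClincat$; applying the completion functor (a left adjoint) yields a push-out square with corners $A(\bC),A(\bD),0,A(\bQ)$ in $\nCcat$; finally, since $A(\bC)\to A(\bD)$ is isometric by Lemma \ref{ergkljewrogwergwregf} this push-out is also a pull-back, which is exactness. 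Your approach instead builds an explicit inverse to $\overline{A(\phi)}$ via the universal property in Lemma \ref{egioerjgoergwegwergwregrwe}. Both arguments rest on the same nontrivial input, namely that $A(i)$ is isometric (Lemma \ref{ergkljewrogwergwregf}); the paper's route is shorter and makes the role of adjunctions transparent, while yours is more elementary and self-contained, avoiding the localization machinery of Remark \ref{fefvwvdfvsdv}. One small wording correction: when you say $\psi$ satisfies the ``matrix unit'' condition ``since lifts may be chosen compatibly with composition,'' the actual reason is that for non-composable $q,q'$ any lifts $\tilde q,\tilde q'$ are already non-composable in $\bD$ (the object bijection forces this), so $[\tilde q'][\tilde q]=0$ in $A(\bD)$; compatibility of lifts with composition is irrelevant for that condition and is in any case automatic modulo $J$.
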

 \begin{proof}
 Assertion \ref{ergieogrrgrergwergwreg} is an immediate consequence of   Definition  \ref{hiuwegregwergweg} since direct sums of exact sequences of vector spaces are exact. Therefore we concentrate on the case of $C^{*}$-categories. In this case the assertion has been shown in  \cite[Lemma 8.68]{buen}. For the sake of completeness and since this result is a crucial ingredient of  the proof of the main Theorem  \ref{fbgbgrbsfbsdfbs} of the present paper we provide the argument.
 
By Assertion  \ref{ergieogrrgrergwergwreg}.   we have an exact sequence \eqref{ewqfqwkjuhqkjwefweqfeqf}, or equivalently,  a push-out and pull-back diagram
$$\xymatrix{A^{\alg}(\bC)\ar[r]\ar[d]&A^{\alg}(\bD)\ar[d]\\0[*] \ar[r]&A^{\alg}(\bQ)}$$
in $\nClincat$. Using the fact that all corners belong to $\npClincat$ and the $\op$-version of Proposition  \ref{roijgeqroifqewqfewf} applied to the    colocalization  
 in \eqref{wefqwefefefqwfqfeewfewfqewf}  we can conclude that this square is a   
push-out and pull-back  in $\npClincat$, too.
 The completion functor  from \eqref{oidfhbuihsfuibfefefesdfbsfb} (see also \eqref{ervevwevefvdsfvsdfvsdfv}) is a left-adjoint 
 and therefore preserves push-outs. Applying the completion functor to the square above 
   we get a push-out diagram   \begin{equation}\label{}
\xymatrix{A (\bC)\ar[r]\ar[d]&A (\bD)\ar[d]\\0 \ar[r]&A (\bQ)}
\end{equation}
in $\nCcat$.   
Since $A(\bC)\to A(\bD)$ is an isometric inclusion by Lemma \ref{ergkljewrogwergwregf}
 this square is also a pull-back square.  Hence the sequence in \eqref{adfafadfadsfdasfadsfdasf} is exact.
%
%
  \end{proof}
  
  Recall the functor $L$ from \eqref{wervwervnwkjrvfvdfsv}.
 \begin{lem} \label{qwefewfewffrfrfrfqewfc}
 If  $$0\to \bC\to  \bD \to \bQ\to 0$$ is an exact sequence in $\Fun(BG,\nClincat)$ (resp.  $\Fun(BG,\nCcat)$), then
 $$0\to L( \bC) \to   L(\bD)\to L(  \bQ) \to 0$$
 is an exact sequence in $\Fun(BG,\nCcat)$  (resp.  $\Fun(BG,\nCcat)$).
 \end{lem}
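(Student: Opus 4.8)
The plan is to unwind Definition~\eqref{wervwervnwkjrvfvdfsv} of $L$ and to observe that, after forgetting the $G$-action, the morphisms $L(i)\colon L(\bC)\to L(\bD)$ and $L(\phi)\colon L(\bD)\to L(\bQ)$ are as explicit as possible. On objects they are $i\times\id_{G}$ and $\phi\times\id_{G}$ on $\Ob(\bC)\times G\to\Ob(\bD)\times G\to\Ob(\bQ)\times G$; and on a morphism space $\Hom_{L(\bC)}\big((C,g),(C',g')\big)=\Hom_{\bC}(C,C')$ the map $L(i)$ acts exactly as $i$ on $\Hom_{\bC}(C,C')$, while $L(\phi)$ acts exactly as $\phi$ on $\Hom_{\bD}(C,C')$. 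Thus the morphism-space sequence at each quadruple $\big((C,g),(C',g')\big)$ is literally the sequence $0\to\Hom_{\bC}(C,C')\to\Hom_{\bD}(C,C')\to\Hom_{\bQ}(C,C')\to 0$ coming from the original exact sequence. Since exactness in $\Fun(BG,\nClincat)$ (resp.\ $\Fun(BG,\nCcat)$) means exactness after forgetting the $G$-action, and $L$ is a functor on these equivariant categories, it suffices to verify the conditions of Definition~\ref{ergiowejogergergwergwergwregw} for the non-equivariant sequence $L(\bC)\to L(\bD)\to L(\bQ)$.

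I would first check that $L(\phi)$ is a quotient morphism in the sense of Definition~\ref{wekotegergwergwerg}: it is a bijection on objects because $\phi$ is, and it is surjective on every hom-space because $\phi$ is (the original sequence being exact). Next I would identify the kernel square. Applying $L$ to the cartesian square~\eqref{grgqefeqwfwerwerwerweqef} belonging to the original sequence one obtains
\[\xymatrix{L(\bC)\ar[r]^{L(i)}\ar[d]&L(\bD)\ar[d]^{L(\phi)}\\ 0[\Ob(L(\bQ))]\ar[r]&L(\bQ)}\]
using that $L\big(0[\Ob(\bQ)]\big)=0[\Ob(\bQ)\times G]=0[\Ob(L(\bQ))]$ (the hom-spaces of $0[-]$ being zero, $L$ affects only the object set) and that $L$ sends the counit to the counit. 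This square is cartesian in $\nClincat$: on objects the pullback along the counit is $\Ob(L(\bD))=\Ob(\bD)\times G=\Ob(L(\bC))$, and on each hom-space the pullback is $\ker\big(\Hom_{\bD}(C,C')\to\Hom_{\bQ}(C,C')\big)=\Hom_{\bC}(C,C')=\Hom_{L(\bC)}\big((C,g),(C',g')\big)$ by exactness of the original sequence. Hence $L(i)$ fits into a cartesian square of the shape~\eqref{grgqefeqwfwerwerwerweqef}, and together with the previous point Definition~\ref{ergiowejogergergwergwergwregw}.\ref{wthregwwerggewrgwerg} shows that $0\to L(\bC)\to L(\bD)\to L(\bQ)\to 0$ is exact in $\nClincat$, hence in $\Fun(BG,\nClincat)$.

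For the $C^{*}$-case it remains to note that $L$ preserves $C^{*}$-categories (as recorded right after the construction of $L$), so all three of $L(\bC),L(\bD),L(\bQ)$ lie in $\nCcat$; with the exactness already established, Definition~\ref{ergiowejogergergwergwergwregw} then gives that $0\to L(\bC)\to L(\bD)\to L(\bQ)\to 0$ is an exact sequence of $C^{*}$-categories, i.e.\ an exact sequence in $\Fun(BG,\nCcat)$. (If one prefers to argue with Definition~\ref{weigwioegerww} directly, the image of $L(i)$ in each hom-space of $L(\bD)$ is the kernel of the bounded operator $L(\phi)$ and hence closed, so $L(i)$ is an ideal inclusion in $\nCcat$.)

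The statement is essentially bookkeeping once the description of $L$ is unwound, so there is no genuinely hard step. The only places that deserve an explicit word are the identification $L\big(0[\Ob(\bQ)]\big)=0[\Ob(L(\bQ))]$ together with the fact that $L$ carries the counit to the counit, and, in the $C^{*}$-setting, the closedness of the image of $L(i)$; both are immediate from the definitions.
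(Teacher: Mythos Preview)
Your argument is correct and follows the same approach as the paper, which simply records that the claim is obvious from the definition of $L$; you have merely spelled out the verification in detail.
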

 \begin{proof}
 This is obvious from the definition of $L$ in  \eqref{wervwervnwkjrvfvdfsv}.
 \end{proof}

\begin{proof}[{Proof of Theorem \ref{fbgbgrbsfbsdfbs}.\ref{giowejgoeregwegergrg}}]
By Lemma \ref{qwefewfewffrfrfrfqewfc} we have an exact sequence
 $$0\to L(\bC) \to  L(\bD ) \to L(\bQ) \to 0$$
in $\Fun(BG,\nCcat)$ where in addition $L(\bD)\to L(\bQ)$ is a morphism in $\Fun(BG,\Ccat)$.
By  Lemma \ref{regiueqrhgiqwgewgqgfewrg}.\ref{werthgwterwrg1}  
the sequence  $$\bC\rtimes G\to \bD\rtimes G\to \bQ\rtimes G$$ is isomorphic to the sequence 
$$\colim_{BG} L(\bC) \to \colim_{BG} L(\bD) \to \colim_{BG} L(\bQ) \ .$$
By Lemma \ref{iuerwhgiuregwegergwerg} we conclude that
$  \bD \rtimes G\to  \bQ \rtimes   G$ is a quotient map. Furthermore,
since the inclusion functor in \eqref{qewfoihqoiewfqwfqwefqewf1} is a left-adjoint and therefore preserves colimits and we assume that $\bD$ is unital, the morphism  $  \bD \rtimes G\to  \bQ \rtimes   G$ is a morphism in $\Ccat$, i.e., a unital morphism between unital $C^{*}$-categories.\footnote{This can also be seen directly from the definition of the crossed product.}


Specializing \eqref{grgqefeqwfqeef} to $\bI=BG$ we have a push-out diagram 
\begin{equation}\label{grgqefeqwf3r43ferfeqwerqerqedfvdfvdfvdfvfdvvdfvrqeef}
 \xymatrix{\bC\rtimes G\ar[r]\ar[d]& \bD \rtimes G\ar[d]\\ 0[\Ob( \bQ\rtimes G)]\ar[r]& \bQ\rtimes G} 
\end{equation} 
in $ \nCcat$.
We now form the pull-back square  in $\nCcat$
\begin{equation}\label{grgqefeqwf3r4eefefef3ferfeqwerqerqerqeef}
 \xymatrix{\bK\ar[r]\ar[d]& \bD \rtimes G\ar[d]\\ 0[\Ob( \bQ\rtimes G)]\ar[r]& \bQ\rtimes G} 
\end{equation} 
defining the $C^{*}$-category $\bK$. 
We then have a natural morphism $j:\bC\rtimes G\to \bK$. It remains to show that this morphism is an isomorphism.

We first claim that $j$ is isometric. To this end we consider the commuting diagram
$$\xymatrix{&\bK\ar[d]&\\\bC\rtimes G\ar[ru]^{j}\ar[r]^{!!}\ar[d]^{\rho_{\bC\rtimes G}}& \bD\rtimes G\ar[d]^{\rho_{\bD\rtimes G}}\ar[r]&\bQ\rtimes G\ar[d]^{\rho_{\Q\rtimes G}}\\A(\bC\rtimes G)\ar[r]^{!} &A(\bD\rtimes G) \ar[r]&A(\bQ\rtimes G) \\A(\bC)\rtimes G\ar[r]\ar[u]_{\nu_{\bC}}^{\cong}&A(\bD)\rtimes G\ar[r]\ar[u]_{\nu_{\bD}}^{\cong}&A(\bQ)\rtimes G\ar[u]_{\nu_{\bQ}}^{\cong}}\ .$$
By Theorem \ref{qrioqwfewfewfewfqef}
 the lower vertical morphisms are isomorphisms as indicated.  The lower horizontal sequence is exact by the well-known exactness of the maximal crossed product for $C^{*}$-algebras  and the exactness of $A$ shown in Lemma \ref{qwefewfewfqewfc}.  In particular the morphism marked by $!$ is an isometric embedding. 
Since $\rho_{\bC\rtimes G}$ and $\rho_{\bD\rtimes G}$ are also isometric by Lemma \ref{ergtegwergwergwrg} we conclude that the morphism marked by $!!$  is an isometric embedding.  Since $\bK\to \bD\rtimes G$ is an isometric embedding by definition we conclude that $j$ is an isometric embedding.

In particular we can now define the  quotient $C^{*}$-category $ {\bD\rtimes G}/{\bC\rtimes G}$ fitting into the push-out
$$\xymatrix{\bC\rtimes G\ar[r]\ar[d]&\bD\rtimes G\ar[d]\\0[\Ob(\bD\rtimes G)]\ar[r]& {\bD\rtimes G}/{\bC\rtimes G}}\ .$$
We then have the bold part of the commuting diagram
$$\xymatrix{\bC\ar[rr]^{\iota_{\bC}}\ar[d]&&\bC\rtimes G \ar[d] \\\bD\ar@{-->}[dr]^{\rho}\ar[rr]^{\iota_{\bD}}\ar[d]&&\bD\rtimes G\ar[dl]\ar[d]\\ \bQ\ar@/_2cm/[rr]^{\iota_{\bQ}}\ar@{..>}[r]^-{\psi}& {\bD\rtimes G}/{\bC\rtimes G}\ar[r]^-{\kappa}&\ar@/^0.5cm/@{..>}[l]\bQ\rtimes G}\ .$$
We now use the assumption that $\bD $ is unital. Then   $\bD\rtimes G $ is also unital, and the  morphism $\bD\rtimes G\to  {\bD\rtimes G}/{\bC\rtimes G}$ is a unital morphism.   By Lemma \ref{qergiowegwregergewgrg}.\ref{trhiorghrethtrhrhetr} it provides  a covariant representation $(\rho,\pi)$ of $\bD$ on $ {\bD\rtimes G}/{\bC\rtimes G}$. In particular we have a unitary  natural transformation $\pi(g) :\rho\to g^{*}\rho$ for every $g$ in $G$. 

By a diagram chase we  obtain a factorization   $\psi:\bQ\to  {\bD\rtimes G}/{\bC\rtimes G}$ of $\rho$ such that
$\pi(g):\psi\to g^{*}\psi$  for every $g$ in $G$ (here we use that $\bD \to \bQ$ is a bijection on the  sets of objects).
By  Lemma \ref{qergiowegwregergewgrg}.\ref{oighgioergwergwerg} the covariant representation $(\psi,\pi)$ induces the morphism  
$\bQ\rtimes G\to  {\bD\rtimes G}/{\bC\rtimes G}$ which is necessarily an inverse to $\kappa$.

The fact that $\kappa$ is an isomorphism implies that $j$ is an isomorphism.
\end{proof}

  We consider a square \begin{equation}\label{asdv2e4fwdqwqev}
\xymatrix{\bA\ar[r]\ar[d]&\bB\ar[d]\\\bC\ar[r]&\bD}
\end{equation}
 in $\nCcat$.
  
  \begin{ddd}\label{weiogwegerewrgwreg}
  The square \eqref{asdv2e4fwdqwqev} is called excisive if:
  \begin{enumerate}
  \item The morphisms $\bA\to \bB$ and $\bC\to \bD$ are inclusions of ideals.  
  \item The quotients $\bB/\bA$ and $\bD/\bC$ are unital. \item \label{thgoijroihwthwhhteh} The induced morphism  $ \bB/\bA \to  \bD/\bC $  is  unital and a unitary equivalence.
  \end{enumerate}
  \end{ddd}
 
 \begin{rem}
 There is a topological $K$-theory functor for $C^{*}$-categories defined as the composition \begin{equation}\label{qrgijewofweqfqweqwefqefq}
K^{\nCcat}:\nCcat\stackrel{A^{f}, \eqref{ewvqwvwccxewcqwcxwd}}{\to} \nCalg\stackrel{K^{C^{*}}}{\to} \Sp\ ,
\end{equation}
 where $K^{C^{*}}$ is the topological $K$-theory functor for $C^{*}$-algebras. One motivation for Definition \ref{weiogwegerewrgwreg}  is the following:
 \begin{prop}\label{ergiuheigerwgqwfwefqwef}
 The functor $K^{\nCcat}$ sends excisive squares in $\nCcat$ to push-out squares in $\Sp$.
 \end{prop}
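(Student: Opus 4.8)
The plan is to reduce the statement to two properties of the functor $K^{\nCcat}$: that it is invariant under unitary equivalence, and that it sends exact sequences of $C^{*}$-categories to fibre sequences of spectra. Granting these, the assertion becomes a formal statement about the stable $\infty$-category $\Sp$, in which a commutative square is a push-out square precisely when the morphism induced between the cofibres of its two horizontal arrows is an equivalence.

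First I would unravel Definition \ref{weiogwegerewrgwreg}. An excisive square \eqref{asdv2e4fwdqwqev} consists of two ideal inclusions $\bA\to \bB$ and $\bC\to \bD$ together with the two vertical morphisms; as explained in the text preceding Definition \ref{ergiowejogergergwergwregw}, an ideal inclusion $\bA\to\bB$ determines the exact sequence $0\to \bA\to \bB\to \bB/\bA\to 0$, and likewise for $\bC\to \bD$. The vertical morphisms of \eqref{asdv2e4fwdqwqev} then give a morphism from the first exact sequence to the second, and by the third condition in Definition \ref{weiogwegerewrgwreg} the induced morphism on quotients $\bB/\bA\to \bD/\bC$ is a unitary equivalence.

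Next I would supply the two inputs about $K^{\nCcat}=K^{C^{*}}\circ A^{f}$. Invariance under unitary equivalence belongs to the basic formalism of topological $K$-theory of $C^{*}$-categories and is available from the non-equivariant treatment in \cite[Sec. 8]{buen}. The exactness statement is the more delicate point, and cannot be obtained directly: the free $C^{*}$-algebra functor $A^{f}$ of \eqref{ewvqwvwccxewcqwcxwd} does not send ideal inclusions of $C^{*}$-categories to ideal inclusions of $C^{*}$-algebras. Instead I would route the argument through the functor $A$ of Definition \ref{zhioerhrthtrhehth}, for which Proposition \ref{qwefewfewfqewfc}.\ref{ergieogrrgrergwergwreg1} does give exactness; combining this with the exactness of topological $K$-theory on short exact sequences of $C^{*}$-algebras shows that $K^{C^{*}}\circ A$ sends exact sequences of $C^{*}$-categories to fibre sequences in $\Sp$. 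One then transfers this to $K^{\nCcat}$ by invoking the natural equivalence $K^{C^{*}}\circ A^{f}\simeq K^{C^{*}}\circ A$, i.e. the fact that these two universal $C^{*}$-algebra constructions compute the same $K$-theory of a $C^{*}$-category, again a result of \cite[Sec. 8]{buen}.

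Finally, applying $K^{\nCcat}$ to the two exact sequences from the first step yields fibre sequences $K^{\nCcat}(\bA)\to K^{\nCcat}(\bB)\to K^{\nCcat}(\bB/\bA)$ and $K^{\nCcat}(\bC)\to K^{\nCcat}(\bD)\to K^{\nCcat}(\bD/\bC)$, which in the stable setting are also cofibre sequences. Hence in the square obtained from \eqref{asdv2e4fwdqwqev} the cofibres of the two horizontal morphisms are $K^{\nCcat}(\bB/\bA)$ and $K^{\nCcat}(\bD/\bC)$, and the morphism they induce is $K^{\nCcat}$ applied to the unitary equivalence $\bB/\bA\to \bD/\bC$, hence an equivalence; by the criterion recalled at the outset the square is a push-out square in $\Sp$. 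The step I expect to be the real obstacle is precisely the exactness of $K^{\nCcat}$: since $A^{f}$ is a free construction it does not preserve ideal inclusions, so the argument must be rerouted through the functor $A$ together with the comparison $K^{C^{*}}\circ A^{f}\simeq K^{C^{*}}\circ A$ of \cite{buen}; the concluding homotopy-theoretic step is entirely formal.
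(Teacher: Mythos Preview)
The paper does not actually prove this proposition: immediately after stating it, the text says ``This proposition will be shown in \cite[Thm. 12.4]{cank}.'' So there is no proof in the present paper to compare your proposal against.

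That said, your outline is the natural one and is almost certainly what the cited reference does. You correctly identify the one genuine obstacle: $A^{f}$ is a free construction and does not preserve ideal inclusions, so exactness of $K^{\nCcat}$ cannot be read off directly from its definition \eqref{qrgijewofweqfqweqwefqefq}. Your workaround via $A$ (Proposition \ref{qwefewfewfqewfc}.\ref{ergieogrrgrergwergwreg1}) together with the comparison $K^{C^{*}}\circ A^{f}\simeq K^{C^{*}}\circ A$ is the right manoeuvre; note that this comparison only needs to be natural on $\nCcatinj$, which suffices because all morphisms appearing in an exact sequence are bijections on objects. Once exactness and unitary invariance of $K^{\nCcat}$ are in hand, your concluding step---that a square in $\Sp$ is a push-out iff the induced map on horizontal cofibres is an equivalence---is indeed entirely formal. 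One small point to make explicit when you write this up: you should check that the identification of the cofibre of $K^{\nCcat}(\bA)\to K^{\nCcat}(\bB)$ with $K^{\nCcat}(\bB/\bA)$ is compatible with the vertical maps of the square, so that the induced map on cofibres really is $K^{\nCcat}$ applied to $\bB/\bA\to\bD/\bC$; this holds because the quotient maps $\bB\to\bB/\bA$ and $\bD\to\bD/\bC$ are functorial and $K^{\nCcat}$ is a functor on all of $\nCcat$.
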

This proposition will be shown in \cite[Thm. 12.4]{cank}.  \hB
   \end{rem}

We consider a square of shape  \eqref{asdv2e4fwdqwqev}  in $\Fun(BG,\nCcat)$. 
It is called excisive if it is so after forgetting the $G$-action.
\begin{theorem} \label{rhioohwhtwergergergwergweg}
If \eqref{asdv2e4fwdqwqev} is an excisive square in
$\Fun(BG,\nCcat)$ such that $\bB $ and $ \bD$ are   unital, then
 \begin{equation}\label{asdv2e4fwdqwfefefeffqev}
\xymatrix{\bA\rtimes G\ar[r]\ar[d]&\bB\rtimes G\ar[d]\\\bC\rtimes G\ar[r]&\bD\rtimes G}
\end{equation}
is an excisive square in $\nCcat$.
\end{theorem}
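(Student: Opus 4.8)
The plan is to reduce the statement entirely to three results that are already available: the exactness of the $C^{*}$-crossed product for sequences with unital middle term (Theorem \ref{fbgbgrbsfbsdfbs}.\ref{giowejgoeregwegergrg}); the fact that $-\rtimes G$ restricts to a functor $\Fun(BG,\Ccat)\to\Ccat$, so that it preserves unitality of objects and morphisms (see \eqref{efkjbjkdfviuqr3f}); and the invariance statement that $-\rtimes G$ sends weak equivalences to unitary equivalences (Proposition \ref{efiobgebgewrverbvewvbev}.\ref{wthiorthrherherthetrhtrh1}). Everything else is bookkeeping with universal properties.

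First I would unwind the hypotheses. Since \eqref{asdv2e4fwdqwqev} lies in $\Fun(BG,\nCcat)$ and is excisive, the morphisms $\bA\to\bB$ and $\bC\to\bD$ are $G$-equivariant inclusions of ideals, so that the quotients $\bB/\bA$ and $\bD/\bC$ carry $G$-actions and we obtain equivariant exact sequences of $C^{*}$-categories $0\to\bA\to\bB\to\bB/\bA\to 0$ and $0\to\bC\to\bD\to\bD/\bC\to 0$ in $\Fun(BG,\nCcat)$ whose middle terms are unital by assumption. Applying Theorem \ref{fbgbgrbsfbsdfbs}.\ref{giowejgoeregwegergrg} to each yields exact sequences $0\to\bA\rtimes G\to\bB\rtimes G\to(\bB/\bA)\rtimes G\to 0$ and $0\to\bC\rtimes G\to\bD\rtimes G\to(\bD/\bC)\rtimes G\to 0$ in $\nCcat$. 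This immediately gives the first condition of Definition \ref{weiogwegerewrgwreg} for the square \eqref{asdv2e4fwdqwfefefeffqev}, namely that $\bA\rtimes G\to\bB\rtimes G$ and $\bC\rtimes G\to\bD\rtimes G$ are inclusions of ideals, and it supplies canonical isomorphisms $(\bB\rtimes G)/(\bA\rtimes G)\cong(\bB/\bA)\rtimes G$ and $(\bD\rtimes G)/(\bC\rtimes G)\cong(\bD/\bC)\rtimes G$ (the comparison maps coming from the universal property of the quotient together with the vanishing of $\bA\rtimes G$, resp. $\bC\rtimes G$, under the crossed product of the respective quotient morphism).

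Next I would check the remaining two conditions. For unitality of the quotients: $\bB/\bA$ and $\bD/\bC$ are unital by excisiveness of \eqref{asdv2e4fwdqwqev}, hence $(\bB/\bA)\rtimes G$ and $(\bD/\bC)\rtimes G$ are unital by \eqref{efkjbjkdfviuqr3f}, so the displayed identifications show that the quotients occurring in \eqref{asdv2e4fwdqwfefefeffqev} are unital. For the third condition, let $\phi\colon\bB/\bA\to\bD/\bC$ denote the $G$-equivariant morphism induced by the square \eqref{asdv2e4fwdqwqev}; by functoriality of $-\rtimes G$ on the commuting diagram consisting of the four morphisms of \eqref{asdv2e4fwdqwqev} together with the quotient morphisms $\bB\to\bB/\bA$ and $\bD\to\bD/\bC$, the induced morphism between the quotients in \eqref{asdv2e4fwdqwfefefeffqev} corresponds, under the above identifications, to $\phi\rtimes G$. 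By excisiveness, $\phi$ is unital and becomes a unitary equivalence after forgetting the $G$-action, i.e.\ $\phi$ is a weak equivalence in $\Fun(BG,\Ccat)$ in the sense of Definition \ref{wtihowergergwegergwreg}. Hence $\phi\rtimes G$ is unital (again by \eqref{efkjbjkdfviuqr3f}) and a unitary equivalence by Proposition \ref{efiobgebgewrverbvewvbev}.\ref{wthiorthrherherthetrhtrh1}. Together with the commutativity of \eqref{asdv2e4fwdqwfefefeffqev}, which is just functoriality of $-\rtimes G$ applied to \eqref{asdv2e4fwdqwqev}, this verifies all conditions of Definition \ref{weiogwegerewrgwreg}, so \eqref{asdv2e4fwdqwfefefeffqev} is excisive.

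I do not expect a deep obstacle here; the entire content sits in the two cited theorems. The point requiring care is the naturality bookkeeping in the previous paragraph: one must be sure that the isomorphism identifying $(\bB\rtimes G)/(\bA\rtimes G)$ with $(\bB/\bA)\rtimes G$ is the canonical comparison morphism (so that its inverse-image of the induced map between quotient categories is literally $\phi\rtimes G$ and not merely something abstractly isomorphic to it), and that this comparison is compatible with the one for the pair $(\bC,\bD)$. This is where the force of Theorem \ref{fbgbgrbsfbsdfbs}.\ref{giowejgoeregwegergrg} is used — it says precisely that those canonical comparison maps are isomorphisms — and the compatibility is a formal consequence of functoriality of $-\rtimes G$ together with the functoriality of the quotient construction in exact sequences.
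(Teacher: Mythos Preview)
Your proposal is correct and follows essentially the same route as the paper: apply Theorem \ref{fbgbgrbsfbsdfbs}.\ref{giowejgoeregwegergrg} to identify the quotients of the crossed products with the crossed products of the quotients, and then invoke Proposition \ref{efiobgebgewrverbvewvbev}.\ref{wthiorthrherherthetrhtrh1} on the induced morphism $\phi$ between the quotients. Your write-up is in fact more careful than the paper's own proof in spelling out the unitality checks and the naturality of the comparison isomorphisms, but the underlying argument is identical.
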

\begin{proof}
The horizontal morphisms in \eqref{asdv2e4fwdqwfefefeffqev} are  ideal inclusions by  Theorem \ref{fbgbgrbsfbsdfbs}.\ref{giowejgoeregwegergrg}.
Furthermore, by the same theorem the morphism 
$ {\bB\rtimes G}/{\bA\rtimes G}\to {\bD\rtimes G}/{\bC\rtimes G}$ is isomorphic to the morphism
$({\bB}/{\bA})\rtimes G\to  ({\bD}/{\bC})\rtimes G$. The latter is a unitary equivalence by Assumption \ref{thgoijroihwthwhhteh} and  Proposition \ref{efiobgebgewrverbvewvbev}.
\end{proof}

\begin{rem}
We will use Theorem \ref{rhioohwhtwergergergwergweg} in  \cite{coarsek} in order to verify excisiveness of an equivariant coarse $K$-homology functor. The Theorem  \ref{rhioohwhtwergergergwergweg} was one of the initial motivations for the present paper.
\hB
\end{rem}

\bibliographystyle{alpha}
\bibliography{forschung}

\end{document}